\documentclass[12pt]{amsart}

\usepackage{amsfonts,amssymb,stmaryrd,amscd,amsmath,latexsym,amsbsy,amsthm}

\usepackage{url}

\usepackage{hyperref}

\setlength{\hoffset}{-1in}
\setlength{\voffset}{-1in}
\setlength{\oddsidemargin}{1.2in}
\setlength{\evensidemargin}{1.2in}
\setlength{\textwidth}{6.in}
\setlength{\textheight}{8.5in}
\setlength{\topmargin}{0.9in}
\setlength{\baselineskip}{14pt}

\numberwithin{equation}{section}

\newtheorem{theorem}{Theorem}[section]
\newtheorem{lemma}[theorem]{Lemma}
\newtheorem{proposition}[theorem]{Proposition}
\newtheorem{corollary}[theorem]{Corollary}
\newtheorem{conjecture}[theorem]{Conjecture}
\theoremstyle{definition} 
\newtheorem{definition}[theorem]{Definition}
\newtheorem{example}[theorem]{Example}
\newtheorem{question}[theorem]{Question}
\newtheorem{remark}[theorem]{Remark}



\newcommand{\End}{\text{End}}

\newcommand{\g}{\mathfrak{g}}

\newcommand{\h}{\mathfrak{h}}

\newcommand{\ov}{\overline}

\newcommand{\la}{\lambda}

\renewcommand{\star}{{\dag}}

\newcommand{\bs}{\backslash}

\newcommand{\ben}{\begin{enumerate}}
\newcommand{\een}{\end{enumerate}}

\newcommand{\CC}{{\mathbb{C}}}

\newcommand{\nc}{\newcommand}
\nc{\on}{\operatorname}
\nc{\what}{\widehat}
\nc{\wt}{\widetilde}
\nc{\sw}{{\mathfrak s}{\mathfrak l}}
\nc{\ghat}{\what{\g}}
\nc{\hhat}{\what{\h}}
\nc{\mc}{\mathcal}
\nc{\Bun}{\on{Bun}}
\nc{\ol}{\overline}
\nc{\OO}{\mathcal O}
\nc{\pone}{{\mathbb P}^1}
\nc{\pa}{\partial}
\nc{\Pic}{\on{Pic}}
\nc{\ga}{\gamma}
\nc{\orr}{\underline}
\nc{\mbb}{\mathbb}
\nc{\mbf}{\mathbf}

\theoremstyle{plain}

\newtheorem*{sol}{Solution}

\theoremstyle{definition}

\theoremstyle{remark}

\usepackage{color}

\newcommand{\solu}[1]{\begin{sol}{\bf (\ref{#1})}}
\newcommand{\mP}{\mathbb P}

\newcommand{\mcA}{\mathcal A}
\newcommand{\mcV}{\mathcal V}

\newcommand{\mcC}{\mathcal C}
\newcommand{\mcH}{\mathcal H}
\newcommand{\mcS}{\mathcal S}
\newcommand{\mcD}{\mathcal D}
\newcommand{\mcL}{\mathcal L}
\newcommand{\mcO}{\mathcal O}

\newcommand{\mcF}{\mathcal F}

\newcommand{\fm}{\mathfrak m}
\nc{\Bunc}{\on{Bun}^{\circ}}

\def\g{\mathfrak{g}}
\def\R{\mathbb{R}}

\def\D{\mathcal{D}}
\def\bO{\Omega}

\def\h{\mathfrak{h}}

\def\Z{\mathbb{Z}}

\def\End{\mathrm{End}}

\def\Fq{\mathbb{F}_q}
\def\LG{{}^L\hspace*{-0.4mm}G}
\def\LB{{}^L\hspace*{-0.4mm}B}

\pagestyle{plain}

\begin{document}

\title{An analytic version of the Langlands correspondence for complex curves}

\author{Pavel Etingof}

\address{Department of Mathematics, MIT, Cambridge, MA 02139, USA}

\author{Edward Frenkel}

\address{Department of Mathematics, University of California,
  Berkeley, CA 94720, USA}

\author{David Kazhdan}

\address{Einstein Institute of Mathematics, Edmond J. Safra Campus,
  Givaat Ram The Hebrew University of Jerusalem, Jerusalem, 91904,
  Israel}

\dedicatory{In memory of Boris Dubrovin}

\begin{abstract}
  The Langlands correspondence for complex curves is traditionally
  formulated in terms of sheaves rather than functions. Recently,
  Langlands asked whether it is possible to construct a
  function-theoretic version. In this paper we use the algebra of
  commuting global differential operators (quantum Hitchin
  Hamiltonians and their complex conjugates) on the moduli space of
  $G$-bundles of a complex algebraic curve to formulate a
  function-theoretic correspondence. We conjecture the existence of a
  canonical self-adjoint extension of the symmetric part of this
  algebra acting on an appropriate Hilbert space and link its spectrum
  with the set of opers for the Langlands dual group of $G$ satisfying
  a certain reality condition, as predicted earlier by Teschner. We
  prove this conjecture for $G=GL_1$ and in the simplest non-abelian
  case.
\end{abstract}

\maketitle

\setcounter{tocdepth}{1}
\tableofcontents

\section{Introduction}

\subsection{The Langlands correspondence for curves over finite
  fields}

Let $X$ be a smooth projective curve over a finite field $\Fq$, $F$
the field of rational functions on $X$, ${\mathbb A}_F$ the ring of
adeles of $F$, $\OO_F \subset {\mathbb A}_F$ the ring of integer
adeles. Let $G$ be a connected reductive algebraic group over
$\Fq$. An invariant measure on $G({\mathbb A}_F)$ induces a measure on
the double quotient $G(F)\bs G({\mathbb
  A}_F)/G(\OO_F)$. Using this measure, one can define a Hilbert
space of $L^2$ functions on this double quotient. A commutative
algebra of self-adjoint integral operators, called the Hecke operators,
acts on this Hilbert space. Hence one can consider the corresponding
spectral problem.

The Langlands correspondence predicts the possibility of expressing
the joint eigenvalues of the Hecke operators in a way that links
them to objects of a different nature: the unramified homomorphisms
from the Weil group of $F$ to $\LG$, the Langlands dual group of
$G$. The Langlands correspondence (in the unramified and the
ramified cases) has been proved for $G=GL_n$ by V. Drinfeld
\cite{Dr1,Dr2} and L. Lafforgue \cite{Laf}, and for classical groups
by J. Arthur \cite{AR}. There has also been a lot of progress in other
cases.

\subsection{A categorical formulation of the
Langlands correspondence}

The double quotient $G(F)\bs G({\mathbb A}_F)/G(\OO_F)$ is in
bijection with the set of isomorphism classes of $G$-bundles on $X$,
i.e. the set of $\Fq$-points of an algebraic moduli stack $\Bun_G$ of
$G$-bundles on $X$. Motivated by this fact, A. Beilinson and
V. Drinfeld proposed a geometric formulation of the Langlands
correspondence for curves over algebraically closed fields of
characteristic zero. This formulation of the Langlands correspondence
is inherently categorical: instead of dealing with the spectral
problem on the space of functions on the set of points of $\Bun_G$, it
deals with a ``spectral problem'' of Hecke {\em functors} acting on
the {\em category} of $D$-modules on $\Bun_G$. Conjecturally, the
``categorical spectra'' are, in some sense, parametrized by flat
$\LG$-bundles on $X$, in agreement with the general Langlands
philosophy.

\subsection{The analytic version in the case of complex curves}

Recently, R. Langlands asked (see \cite{L:analyt}) whether it is
possible to develop an analytic theory of automorphic functions for
curves over $\CC$.

Let $G$ be a connected reductive group over $\CC$, $X$ a smooth
projective complex curve of genus greater than 1, and $\Bun_G$ the
moduli stack of principal $G$-bundles on $X$. Langlands' proposal was
to define a Hilbert space $\mcH$ of functions on the set of
$\CC$-points of $\Bun_G$ and develop a suitable spectral theory on
this space. He suggested to achieve this goal by constructing an
analogue of the theory of integral Hecke operators, as in the case of
curves over finite fields.

In this paper, we take a different approach: we use the existence of a
large commutative algebra of {\em global differential operators}
acting on a $C^\infty$ line bundle $\Omega^{1/2}$ of half-densities on
$\Bun_G$ (see \cite{F:analyt} for some further discussion of the
background and our motivation).

To simplify the exposition, we assume throughout this paper (except in
Section \ref{abelian}) that the group $G$ is simple and
simply-connected. Then the stack $\Bun_G$ is irreducible and there is
a unique up to an isomorphism square root $K^{1/2}$ of the canonical
line bundle on $\Bun_G$ (see \cite{BD}).

Let $D_G$ be the algebra of global regular differential operators
acting on $K^{1/2}$. Elements of this algebra are known as quantum
Hitchin Hamiltonians. The following result is due to Beilinson and
Drinfeld \cite{BD}.

\begin{theorem}    \label{BD1isom}
  There is a canonical algebra isomorphism
  $D_G \simeq \mcC_{\LG}$, where $\mcC_{\LG}$ is the algebra
  of regular functions on the variety $\on{Op}_{\LG}(X)$ of
  $\LG$-opers on $X$.
\end{theorem}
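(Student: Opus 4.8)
The plan is to follow the Beilinson--Drinfeld construction via the critical-level limit of affine Kac--Moody algebras and localization of $\wg$-modules. First I would recall the Feigin--Frenkel isomorphism: the center $\mathfrak{z}(\wg)$ of the completed enveloping algebra of $\wg$ at the critical level $-h^\vee$ is canonically isomorphic to the algebra of functions on the space of $\LG$-opers on the punctured disc, $\on{Op}_{\LG}(D^\times)$. This is the local, formal-disc version of the statement, and it is the technical heart on which everything rests. I would treat it as known input (it is proved in Feigin--Frenkel's work, reviewed in Frenkel's book), since a self-contained proof would be far too long.

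Next I would globalize. The construction attaches to the curve $X$ (and a point, or better, in a point-independent way using the Harish-Chandra structure) the sheaf $\mcD'_{\mathrm{crit}}$ of differential operators acting on $K^{1/2}$ on $\Bun_G$, obtained by localizing the vacuum module $\mathbb{V}_{\mathrm{crit}}$ over $\wg$. The key point, established in \cite{BD}, is that the localization functor sends the center $\mathfrak{z}(\wg)$ to \emph{global} differential operators on $K^{1/2}$ — this uses that central elements act ``horizontally'' and hence descend to operators with no symbol obstruction — and that the resulting map
\[
\on{Fun}\bigl(\on{Op}_{\LG}(D^\times)\bigr) \longrightarrow D_G
\]
factors through the quotient corresponding to opers that extend regularly over all of $X$. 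Concretely one shows that for each point $x\in X$ the image lands in the subalgebra cut out by regularity at $x$, and intersecting over all $x$ gives $\mcC_{\LG} = \on{Fun}(\on{Op}_{\LG}(X))$. So one obtains a canonical algebra homomorphism $\mcC_{\LG}\to D_G$.

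To promote this homomorphism to an isomorphism I would argue in two steps. Injectivity follows because $\on{Op}_{\LG}(X)$ is an affine space (a torsor over $\bigoplus_i H^0(X,\Omega^{d_i+1})$, with $d_i$ the exponents of $\g$) whose dimension equals $\dim \Bun_G$, and the symbols of the Hitchin Hamiltonians are precisely the classical Hitchin map components, which are algebraically independent; hence no nonzero function on $\on{Op}_{\LG}(X)$ can map to zero. For surjectivity one compares associated graded algebras: $\gr D_G$ injects into the Poisson algebra of functions on $T^*\Bun_G$, the Hitchin map $h:T^*\Bun_G\to \on{Hitch}(X)=\bigoplus_i H^0(X,\Omega^{d_i+1})$ is flat with the expected dimension, and by a theorem on the Hitchin system $h^*$ is an isomorphism onto $\gr D_G$; meanwhile $\gr \mcC_{\LG}$ is identified with the same space of functions on $\on{Hitch}(X)$ via the symbol of an oper. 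Matching the two filtered algebras degree by degree gives surjectivity, hence the isomorphism.

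The main obstacle, in my view, is \emph{not} the local Feigin--Frenkel statement (which I am importing) but the globalization argument — specifically, verifying that localizing central elements yields genuinely global, everywhere-regular differential operators on $K^{1/2}$ and that the resulting map is compatible with the oper description at \emph{every} point simultaneously. This requires the careful Harish-Chandra / ``chiral'' formalism of \cite{BD}: one must check that the $\wg_x$-action used in localization is independent of auxiliary choices and that the center maps to the intersection $\bigcap_{x\in X}$ of the local regularity conditions, not merely to the one at a single point. Handling this point-independence and the patching over $X$ is where the real work lies; the rest is essentially a dimension count plus the known structure of the Hitchin integrable system.
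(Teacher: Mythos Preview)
The paper does not itself prove Theorem~\ref{BD1isom}; it is stated as a result due to Beilinson and Drinfeld and cited from \cite{BD} (see also the restatement as \eqref{BDisom} in Section~\ref{holom}). Your outline is an accurate sketch of the Beilinson--Drinfeld argument as it appears in \cite{BD} and in Frenkel's exposition \cite{F:book}: the Feigin--Frenkel isomorphism for the center of $\ghat_{\on{crit}}$ as local input, localization of the vacuum module $\mathbb{V}_{\on{crit}}$ to produce ${\mc D}_G$, the observation that central elements descend to global sections, and the comparison of associated graded algebras via the Hitchin map to get the isomorphism. The paper takes all of this as known background and builds on it (e.g., the generalization to parabolic structures in Theorem~\ref{isomparab} is proved by the same method, and the proof of Proposition~\ref{nu} in Section~\ref{proofnu} uses the same localization machinery). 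So there is no discrepancy to report: your proposal matches the approach the paper invokes by citation.
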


Opers are defined in \cite{BD,BD:opers}. We recall the definition in
Section \ref{opers} below. Denote by $\ol{K}^{1/2}$ the
anti-holomorphic line bundle on $\Bun_G$ obtained by complex
conjugation of $K^{1/2}$ and by $\bO ^{1/2}$ the $C^\infty$ line
bundle $K^{1/2} \otimes \ov K^{1/2}$ on $\Bun_G$.\footnote{For
  non-simply connected $G$, there are different choices for
  $K^{1/2}$. However, any two of them differ by a line bundle $L$ such
  that $L^{\otimes 2}$ is the trivial line bundle. But then $L \simeq
  \ol{L}$ and so $\bO ^{1/2}$ does not depend on the choice of
  $K^{1/2}$.} Let $\ol{D}_G$ be the algebra of global anti-holomorphic
differential operators acting on $\ol{K}^{1/2}$. Its elements are the
anti-holomorphic analogues of the quantum Hitchin Hamiltonians. The
algebra
$$
\mcA := D_G \otimes _\CC \ol{D}_G
$$
acts on sections of $\Omega ^{1/2}$.

The algebra $\mcA$ has a natural anti-linear involution. Let
$\mcA_{\mathbb R} \subset \mcA$ be the ${\mathbb R}$-subalgebra of
invariants of this involution. In this paper we propose an
interpretation of $\mcA_{\mathbb R}$ as a commuting family of
unbounded essentially self-adjoint operators on a Hilbert space $\mcH
=L^2(\Bun_G)$.

Recall that if the genus of $X$ is greater than 1, then there is an
open dense substack $\Bunc_G \subset \Bun_G$ of those stable
$G$-bundles whose group of automorphisms is equal to the center $Z(G)$
of $G$. The stack $\Bunc_G$ is a $Z(G)$-gerbe over a smooth algebraic
variety. In what follows, slightly abusing notation, we will use the
same notation $\Bunc_G$ for this algebraic variety and the
corresponding smooth complex manifold.

We define the space $\mcH =L^2(\Bun_G)$ as the completion of the space
$V$ of smooth compactly supported sections of $\bO ^{1/2}$ on the
complex manifold $\Bunc_G$. Observe that the algebra $\mcA$ naturally
acts on $V$.

The first step of our construction is the defintion of an
$\mcA$-invariant extension $S(\mcA) \subset \mcH$ of $V$. Our first
conjecture (Conjecture \ref{first}) is that $({\mc A}_{\mathbb R},
S(\mcA))$ is a strongly commuting (see Subsection \ref{usa}) family of
unbounded essentially self-adjoint operators on $\mcH$. Our second
conjecture (Conjecture \ref{second}) is that the joint spectrum
$\on{Spec}_\mcH(\mcA)$ of $\mcA$ on $\mcH$ is discrete.

Assuming the validity of these conjectures, Theorem \ref{BD1isom}
implies that the spectrum $\on{Spec}_\mcH(\mcA)$ is a countable subset
$\Sigma$ of the space of $\LG$-opers on $X$ and the corresponding
joint $\mcA$-eigen-sections of $\Omega^{1/2}$ over $\Bunc_G$ form a
basis of $L^2(\Bun_G)$ labeled by $\Sigma$.

Our third conjecture (Conjecture \ref{third}) is that $\Sigma$ is
contained in the set of $\LG$-opers on $X(\CC)$ defined over ${\mathbb
  R}$. In Section \ref{specopers} we sketch an argument showing that
this follows from Conjectures \ref{first} and \ref{second} if we also
assume the affirmative answer to a question of Beilinson and Drinfeld
(formulated as Conjecture \ref{RS} below). This assumption also
implies that the spectrum $\on{Spec}_\mcH(\mcA)$ is simple.

In this paper we prove Conjectures \ref{first}--\ref{third} in the
abelian case $G=GL_1$ and in the simplest non-trivial non-abelian case
(in a more general setting with Borel reductions, see Subsection
\ref{borelred}). Furthermore, we show that in these cases the
spectrum $\on{Spec}_\mcH(\mcA)$ is simple and coincides with the set
of $\LG$-opers on $X(\CC)$ that are defined over ${\mathbb R}$ (which
in these cases is equivalent to their monodromy taking values in the
split real form of $\LG$). We leave it as an open question (Question
\ref{question}) whether this last statement holds in general.

The idea of combining commuting holomorphic and anti-holomorphic
global differential operators on $\Bun_G$ and considering the
corresponding spectral problem in the framework of the Langlands
correspondence is due to J. Teschner \cite{Teschner}. A similar idea
was also proposed by one of us in \cite{F:MSRI}. Teschner considered
the problem of describing single-valued sections of $\Omega^{1/2}$
which are eigenfunctions of the algebra $\mcA$ for $G=SL_2$ and
conjectured that the corresponding eigenvalues are parametrized by
$PGL_2$-opers with real monodromy. He did not address the question
whether these single-valued eigenfunctions are in $L^2(\Bun_G)$. In
this paper we propose a conjectural self-adjoint extension for the
algebra $\mcA_{\mbb R}$ acting on $L^2(\Bun_G)$ and conjecture (and
prove in the simplest cases) a description of the spectrum of this
self-adjoint extension. In the case of $G=SL_2$ we obtain the set of
$PGL_2$-opers with real monodromy, which coincides with the set of
joint eigenvalues of $\mcA$ on single-valued sections of
$\Omega^{1/2}$ conjectured by Teschner. Thus, his conjecture is
compatible with ours in this case, and their compatibility means that
all single-valued eigenfunctions belong to $L^2(\Bun_{SL_2})$ and form
a basis there. We expect that the same is true for the generalization
$\Bun_{SL_2}(X,S)$ of $\Bun_{SL_2}$ defined in the next subsection. In
this paper we show that this is indeed so if $X=\pone$ and $|S|=4$.

\subsection{Borel reductions}    \label{borelred}

We will consider the following generalization of the moduli stack
$\Bun_G$. Let $S\subset X$ be a finite subset. We denote by
$\Bun_G(X,S)$ the moduli stack of pairs $(\mcF ,r_S)$, where $\mcF$ is
a $G$-bundle on $X$ and $r_S$ is a collection of Borel reductions at
the points $x \in S$, i.e. reductions of the fibers $\mcF _x,x\in S$,
to a Borel subgroup $B$ of $G$.\footnote{These are also known as
  parabolic structures.} Let $\Bunc_G(X,S)\subset \Bun_G(X,S)$ be the
substack of stable pairs $(\mcF ,r_S)$ whose group of automorphisms is
equal to $Z(G)$.

\begin{remark}    \label{assumption}
  Throughout this paper, we assume that either the genus of $X$ is
  greater than 1; or $X$ is an elliptic curve and $|S| \geq 1$; or
  $X=\mP^1$ and $|S| \geq 3$. Since $\Bun _G(\mP ^1,S)$ is a point
  when $|S| =3$, we will effectively only consider $\Bun _G(\mP ^1,S)$
  with $|S|\geq 4$. In these cases, $\Bunc_G(X,S)$ is open and dense
  in $\Bun_G(X,S)$.
\end{remark}

Let $\on{Op}_{\LG}(X,S)$ be the variety of $\LG$-opers on $X$ that are
regular outside $S$ and have regular singularities at the points $x\in
S$ with oper-residue $\varpi(0)$, see Section \ref{parab} for more
details (we note that $\on{Op}_{\LG}(X,S)$ is a particular component
of the space of $\LG$-opers with regular singularities at the points
$x\in S$ and unipotent monodromies around these points).

The following statement is a generalization of Theorem \ref{BD1isom}
and follows from the results of \cite{FF,F:wak} and \cite{FG1} (see
Section \ref{parab}).

\begin{theorem}    \label{mon}
  There exists a canonical algebra embedding $\mcC_{\LG}(X,S)
  \hookrightarrow
  D_G(X,S)$ where $\mcC_{\LG}(X,S)$ is the algebra of regular
  functions on
  $\on{Op}_{\LG}(X,S)$ and $D_G(X,S)$ is the algebra of global regular
  differential operators on the line bundle $K^{1/2}$ on
  $\Bun_G(X,S)$.
\end{theorem}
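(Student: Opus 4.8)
The plan is to identify $D_G(X,S)$ with a global section ring coming from the localization construction on $\Bun_G(X,S)$, and to realize $\mcC_{\LG}(X,S)$ inside it via the Feigin--Frenkel center of the affine Kac--Moody algebra at the critical level, following the strategy of \cite{BD} but in the parabolic setting. First I would recall that $\Bun_G(X,S)$ admits a local presentation: fixing a point $y \notin S$, the stack $\Bun_G(X,S)$ is uniformized by the double quotient of $G((t_y))$ by $G(X \setminus y)$ on the left and by the ``parahoric with Borel reduction'' group on the right at the points of $S$, so that $\mcD$-modules on $\Bun_G(X,S)$ twisted by $K^{1/2}$ are computed by the critically-twisted chiral differential operator algebra, i.e. by the vacuum module $\mbb{V}_{\text{crit}}$ (at $y$) together with the parabolic Verma-type module $\mbb{M}_{\varpi(0)}$ at each $x \in S$. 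Under this presentation, global differential operators on $K^{1/2}$ correspond to endomorphisms of the corresponding $\mcD$-module that come from the center, i.e. to the image of $\mfz(\ghat) \otimes \bigotimes_{x\in S} (\text{endomorphisms of } \mbb{M}_{\varpi(0)})$ acting by ``insertion'' at the marked points.

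The second step is to invoke the Feigin--Frenkel isomorphism \cite{FF,F:wak} identifying the center $\mfz(\ghat)$ at the critical level with the ring of functions on ${}^L\hspace*{-0.4mm}\g$-opers on the punctured disc, together with its refinement from \cite{FG1}: the part of the center acting on the parabolic module $\mbb{M}_{\varpi(0)}$ with a prescribed residue factors through the ring of functions on opers with regular singularity and residue $\varpi(0)$ on the punctured disc at $x$. Gluing these local statements over $X$ — using that an oper regular away from $S$ with the prescribed singularity type at $S$ is precisely a global section of the relevant twisted $\mcD$-scheme — produces an algebra homomorphism $\mcC_{\LG}(X,S) \to D_G(X,S)$. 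Here one must check that the residue condition ``$\varpi(0)$'' on the geometric side matches the choice of the module $\mbb{M}_{\varpi(0)}$ on the representation-theoretic side; this is exactly the content of the identification of the component of the oper space singled out in the statement (unipotent monodromy, special residue), and it is where \cite{FG1} does the bookkeeping.

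The third step is injectivity of the resulting map. Here I would argue as in \cite{BD}: the symbols of the Hitchin Hamiltonians associated to a homogeneous basis of $\mcC_{\LG}(X,S)$ give, via the classical limit, functions on the Hitchin base $\bigoplus_i H^0(X, \Omega^{d_i}(\sum_{x\in S}(d_i-1)x))$ (the parabolic Hitchin system), and the parabolic Hitchin map is dominant with the expected dimension, so these symbols are algebraically independent; hence no nonzero element of $\mcC_{\LG}(X,S)$ can map into the lower-order filtration, which forces injectivity. (Equivalently: $\on{Op}_{\LG}(X,S)$ is an affine space of the same dimension as $\Bun_G(X,S)$ — it is a torsor over the parabolic Hitchin base — and the composite $\mcC_{\LG}(X,S) \to D_G(X,S) \to \gr D_G(X,S)$ is the pullback along the dominant Hitchin map, which is injective on rings of functions.)

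The step I expect to be the main obstacle is the second one: controlling the behavior at the marked points. Away from $S$ everything is a verbatim copy of \cite{BD}, but at each $x \in S$ one is dealing with $\mcD$-modules on a stack with a Borel reduction, equivalently with a parahoric rather than a maximal-compact uniformization, and one must verify (i) that the twisted differential operators on $K^{1/2}$ over $\Bun_G(X,S)$ are still exhausted by central insertions — i.e. a vanishing/finiteness statement for the relevant Lie algebra cohomology with the Borel reduction imposed — and (ii) that the relevant piece of $\mfz(\ghat)$ acting on $\mbb{M}_{\varpi(0)}$ is precisely $\mcO$ of the residue-$\varpi(0)$ component, with no extra automorphisms of $\mbb{M}_{\varpi(0)}$ contributing. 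Both are handled by the cited references \cite{F:wak,FG1}, but assembling them into a single global statement over $(X,S)$, with matching normalizations of the oper residue, is the delicate part; everything else is a direct transcription of the Beilinson--Drinfeld argument.
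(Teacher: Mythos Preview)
Your proposal is correct and follows essentially the same approach as the paper's treatment in Section~\ref{parab} (Theorem~\ref{isomparab}): localization of critical-level $\ghat$-modules, the Feigin--Frenkel center, the identification from \cite{FG1} of the endomorphisms of the Verma-type modules $\mathbb{M}_{\la,\on{crit}}$ with functions on opers with prescribed residue, and the Beilinson--Drinfeld symbol/Hitchin argument for injectivity. The only cosmetic difference is that the paper uniformizes $\Bun_G(X,S)$ at the marked points themselves (formula~\eqref{double}) rather than at an auxiliary point $y\notin S$, so the vacuum module at $y$ does not appear and your slightly imprecise ``parahoric'' double-quotient description is replaced by the Iwahori-equivariant category $\ghat_{\on{crit}}\on{-mod}_{\la}^{I}$ at each $x_i\in S$.
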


\subsection{Operators on $\mcH$}    \label{opsH}

Let $\ol{K}^{1/2}$ be the
anti-holomorphic line bundle on $\Bun_G(X,S)$ which is the complex
conjugate of $K^{1/2}$ (see Section \ref{gen} for more details) and
$\Omega ^{1/2}$ the $C ^\infty$ line bundle $K^{1/2}\otimes _\CC
\ol{K} ^{1/2}$ on $\Bun_G(X,S)(\CC)$. Let $\ol{D}_G(X,S)$ be the
algebra of global anti-holomorphic differential operators acting on
$\ol{K}^{1/2}$. The algebra
$$
\mcA := D_G(X,S) \otimes _\CC \ol{D}_G(X,S)
$$
is embedded into the algebra of smooth differential operators on
the line bundle $\Omega ^{1/2}$ on $\Bunc_G(X,S)$.

Recall that according to our assumption in Remark \ref{assumption},
$\Bunc_G(X,S)$ is open and dense in $\Bun_G(X,S)$. Moreover,
$\Bunc_G(X,S)$ is a $Z(G)$-gerbe over a smooth algebraic variety
(which is the corresponding coarse moduli space). From now on, we will
use the notation $\Bunc_G(X,S)$ for this algebraic variety and the
corresponding smooth complex manifold.

Let $V$ be the space of smooth compactly supported sections of $\Omega
^{1/2}$ on the complex manifold $\Bunc_G(X,S)$. We have a natural
positive-definite Hermitian form $\langle,\rangle$ on $V$ given by
$$
\langle v,w \rangle :=\int _{\Bunc_G(X,S)(\mbb C)} v \cdot \overline{w},
\qquad v,w\in V.
$$
We denote by $\mcH$ the Hilbert space completion of $V$.

Let $\what{V}$ be the space of smooth sections of $\Omega^{1/2}$ on
$\Bunc_G$. The algebra ${\mc A}$ acts on $\what{V}$. For a homomorphism
$\chi: {\mc A} \to \CC$, denote by $V_\chi \subset \what{V}$ the
$\chi$-eigenspace of ${\mc A}$. It follows from Theorem \ref{mon} that
every $\chi$ can be described as a pair $(\la,\mu)$, where $\la$ is a
holomorphic $\LG$-oper and $\mu$ is an anti-holomorphic $\LG$-oper on
$X$. Furthermore, we expect that for every $\chi = (\la,\mu)$ such
that $V_\chi \neq 0$, $\mu$ is uniquely determined by $\la$ (see
Lemma \ref{ollast}).

Let $\Delta_\la$ be the holomorphic $K^{1/2}$-twisted $D$-module on
$\Bun_G$ corresponding to $\la$ and $\ol\Delta_\mu$ the
anti-holomorphic $\ol{K}^{1/2}$-twisted $D$-module on $\Bun_G$
corresponding to $\mu$ (see Section \ref{diffBun}). These $D$-modules
are known to be holonomic. Let $U$ be an open dense subset of
$\Bunc_G$ on which $\Delta_\la$ and $\ol\Delta_\mu$ restrict to
$C^\infty$ vector bundles of finite rank with projectively flat
connections. Consider the restriction to $U$ of a non-zero element
$\gamma$ of $V_\chi$, where $\chi=(\la,\mu)$. It satisfies two
properties: (1) it is single-valued as a section of $\Omega^{1/2}$, and
(2) it can be expressed locally as a finite sum
\begin{equation}    \label{eigenfunction}
\gamma|_U = \sum_{i,j} a_{ij} \; \phi_i({\mbf z})
\ol\psi_j(\ol{\mbf z})
\end{equation}
where $\{ \phi_i \}$ is a local basis of sections of $\Delta_\la|_U$
and $\{ \ol\psi_j \}$ is a local basis of sections of
$\ol\Delta_\mu|_U$. This implies that $\on{dim} V_\chi < \infty$ for
all $\chi$. Moreover, if $\Delta_\la$ is irreducible and has regular
singularities (as anticipated in \cite{BD}, see Conjecture \ref{RS}
below), then $\Delta_\la|_U$ is also irreducible, and so if $V_\chi
\neq 0$, then $\on{dim} V_\chi = 1$.

We conjecture that the square-integrable eigenfunctions $\gamma$
constructed this way form an orthogonal basis of $\mcH =
L^2(\Bun_G)$. We reformulate this as follows. Let $V^0_\chi = V_\chi
\cap \mcH$ (we leave it as an open question whether $V^0_\chi =
V_\chi$ for all $\chi$, see Question \ref{question}).

\begin{conjecture}    \label{basic}
\hfill
\begin{enumerate}
\item We have an orthogonal decomposition
$$
\mcH = \bigoplus_\chi V^0_\chi.
$$

\item The set $\Sigma = \{ \la \in \on{Op}_{\LG}(X) \, | \,
  \exists \mu: V^0_{(\la,\mu)} \neq 0 \}$ is discrete in the complex
  topology on $\on{Op}_{\LG}(X)$.
\end{enumerate}
\end{conjecture}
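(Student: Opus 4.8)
The plan is to prove Conjecture \ref{basic} in the two cases where the whole paper establishes it, namely $G=GL_1$ (where $\Bun_G$ is essentially a torus of the form $\mbb C^g/(\mbb Z^g + \tau \mbb Z^g)$ together with the discrete Picard data) and the ``simplest non-abelian case'' $X=\pone$, $|S|=4$, $G=SL_2$, where $\Bunc_G(X,S)$ is a punctured $\mbb P^1$ with three marked points removed. In both situations one does not need the general conjectural self-adjoint extension: the algebra $\mcA$ can be described completely explicitly as a two-generator algebra (in the abelian case, translation-invariant first-order operators; in the $SL_2$ case, the quantum Hitchin Hamiltonian is a specific hypergeometric-type second-order operator and its complex conjugate), so the spectral problem reduces to a classical ODE/Fuchsian problem on $\mbb P^1$ minus a few points. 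I would first set up the explicit coordinates on $\Bunc_G(X,S)$ and write out $D_G$, $\ol D_G$, and hence $\mcA$, then describe the Hilbert space $\mcH = L^2$ of half-densities concretely.

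The core of the argument is to solve the eigenvalue problem $\mcA\gamma = \chi\gamma$ directly. For each potential eigenvalue $\chi=(\la,\mu)$ coming from an oper, equation \eqref{eigenfunction} says the eigenfunction must have the form $\gamma = \sum_{i,j} a_{ij}\phi_i(\mbf z)\ol\psi_j(\ol{\mbf z})$ with $\phi_i$ a basis of solutions of the holomorphic oper system and $\ol\psi_j$ of the anti-holomorphic one. The single-valuedness requirement forces the monodromy-invariance of the matrix $(a_{ij})$ under the simultaneous action of $\pi_1$ on the $\phi$'s and $\psi$'s; by a standard argument (as in the theory of single-valued solutions of Fuchsian equations, e.g. the work on the $KZ$/hypergeometric case) such an invariant pairing exists precisely when the oper monodromy is conjugate into the split real form of $\LG$, i.e. when $\la$ is defined over $\mbb R$ and $\mu = \ol\la$. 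So the candidate spectrum $\Sigma$ is exactly the set of real opers. I would then check square-integrability: near the removed points of $\Bunc_G \subset \Bun_G$ (the strictly semistable or non-stable locus, and the parabolic points), the local exponents of the oper must be such that $\int |\gamma|$ of the half-density converges; this is a finite local computation at finitely many points and in these cases it is automatically satisfied by all real opers, so $V^0_\chi = V_\chi$ is one-dimensional for $\la\in\Sigma$ and zero otherwise. Completeness of the resulting family in $L^2$ — part (1) of the conjecture — would follow by identifying the span of the $\gamma_\la$ with a known complete system: in the abelian case these are literally the Fourier characters on the torus, and in the $X=\pone,|S|=4$ $SL_2$ case they can be matched with the known spectral decomposition of the relevant Laplace-type operator (a hypergeometric/Lamé-type operator on the thrice-punctured sphere), using self-adjointness of the explicit operator to get orthogonality and a spectral-theorem/Stone–Weierstrass type argument for density. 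Part (2), discreteness of $\Sigma$ in $\on{Op}_{\LG}(X)$, reduces to the statement that the set of (normalized) Fuchsian connections on $\mbb P^1$ with prescribed regular singularities and monodromy in the split real form is discrete, which follows because the monodromy map from opers to character varieties is a local biholomorphism and the real locus of the $\LG$-character variety is a real-analytic submanifold of half the dimension whose preimage is therefore a discrete set of points in the complex oper variety.

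The main obstacle, I expect, is the square-integrability and completeness analysis rather than the formal matching of $\Sigma$ with real opers. The self-adjoint extension $S(\mcA)$ is defined abstractly in the paper, and one must verify that the explicit eigenfunctions $\gamma_\la$ actually lie in its domain (not merely in $\mcH$) and that no eigenfunctions are missed — i.e., that the explicitly constructed family already exhausts the spectrum of the chosen extension. In the abelian case this is transparent. In the $SL_2$, $|S|=4$ case, this amounts to a careful hands-on study of a second-order Fuchsian operator with four singular points on $\mbb P^1$: one must show the deficiency indices are what the general conjecture predicts and that the explicit real-oper eigenfunctions span, which is where the bulk of the genuine analysis (asymptotics of hypergeometric solutions at the singular points, growth/decay estimates, and an argument that the point spectrum is all of the spectrum) will be concentrated. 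The parts of the statement that are ``soft'' — finite-dimensionality of $V_\chi$, the pairing/single-valuedness criterion, discreteness via the monodromy map — I would dispatch quickly using Theorem \ref{mon} and standard facts about holonomic $D$-modules and Fuchsian systems, and I would reserve the detailed work for the two explicit models.
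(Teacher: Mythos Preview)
Your plan for the abelian case is correct and essentially the paper's argument: the eigenfunctions are Fourier harmonics on the Jacobian, and discreteness plus completeness come for free from classical Fourier analysis on a compact torus.

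For the $SL_2$, $X=\pone$, $|S|=4$ case, your outline correctly identifies the shape of the problem (one Fuchsian second-order operator $L$ plus its conjugate $L^\dagger$, single-valuedness forcing real monodromy, the hard part being completeness in $L^2$), but there is a genuine gap in the mechanism you propose for completeness and discreteness. Writing ``spectral-theorem/Stone--Weierstrass type argument for density'' and ``known spectral decomposition of the relevant Laplace-type operator'' is not a plan: the operator $L$ is a degenerate (non-elliptic at the four singular points) second-order operator on $\mbb P^1$, and there is no off-the-shelf spectral decomposition to invoke. The paper's actual argument goes through a concrete compactness statement that you are missing: one forms either the matrix operator $\mathcal{L}=\begin{pmatrix}0 & L\\ L^\dagger & 0\end{pmatrix}$ or the product $D=L^\dagger L$, constructs a specific self-adjoint extension (on a carefully defined domain $H^2_{\mbf a}$ or $H^4_{\mbf a}$, built from new local Sobolev spaces $H^{2m}_{\alpha,\rm loc}$ adapted to the singular points), and then proves by an explicit computation that the Green function is square-integrable, hence the resolvent is Hilbert--Schmidt, hence compact. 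Compactness of the resolvent is what gives discreteness of the spectrum and completeness of eigenfunctions simultaneously; nothing in your proposal supplies this. A further subtlety you would hit: $D=L^\dagger L$ is \emph{not} essentially self-adjoint on smooth sections (the deficiency space is eight-dimensional), so ``self-adjointness of the explicit operator'' is not automatic and requires the Sobolev-space machinery the paper develops.

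Your argument for discreteness of $\Sigma$ via ``the monodromy map is a local biholomorphism and the real locus has half the real dimension'' is plausible in outline but not what the paper does and would require a transversality check you have not supplied. The paper instead argues: eigenfunctions with distinct eigenvalues are orthogonal in $\mcH$, hence the eigenvalue set is countable by separability, and a countable real-analytic subset of $\mbb C$ is discrete. This is shorter and avoids transversality. (Minor correction: the base is $\mbb P^1$ with \emph{four} singular points for the Darboux operator, not three.)
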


In the abelian case, Conjecture \ref{basic} can be proved directly
(and moreover, $V^0_\chi = V_\chi$ for all $\chi$), see
Section \ref{abelian}. In the non-abelian case, we first need to
construct a self-adjoint extension of an ${\mbb R}$-subalgebra ${\mc
  A}_{\mbb R} \subset {\mc A}$. This is discussed in the next
subsection.

\subsection{Self-adjoint extension}

For the explanation of the notation and the proof of the following
result see Section \ref{gen}.

Let $C\mapsto C^*$ be the Verdier anti-involution on $D_G$ (i.e.,
taking the algebraic adjoint). We denote by $A\to A^\star$ the
anti-involution on $\mcA$ given by $C_1 \otimes \ol{C_2} \mapsto C_2^*
\otimes \ol{C_1^*}$. Integration by parts implies the following.

\begin{lemma} $\langle Av,w \rangle = \langle v,A^\star w \rangle$ for
  any $A\in \mcA , v,w\in V $.
\end{lemma}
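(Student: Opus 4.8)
The plan is to verify the identity $\langle Av, w\rangle = \langle v, A^\star w\rangle$ by reducing to the case of a single first-order differential operator and then integrating by parts, using that $v$ and $w$ are compactly supported on the smooth variety $\Bunc_G(X,S)$.

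First I would observe that both sides are $\CC$-bilinear (or sesquilinear, depending on the convention) in the pair $(A, w)$ once $v$ is fixed, and that the map $A \mapsto A^\star$ is an anti-homomorphism of algebras, so it suffices to check the identity for $A$ running over a set of algebra generators of $\mcA$. Since $\mcA = D_G(X,S)\otimes_\CC \ol D_G(X,S)$ and $D_G(X,S)$ is a subalgebra of the algebra of all smooth differential operators on $\Omega^{1/2}$ over $\Bunc_G$, it is enough to treat operators of the form $C\otimes 1$ with $C\in D_G(X,S)$ and $1\otimes \ol C$ with $\ol C\in \ol D_G(X,S)$; and by the anti-homomorphism property it is enough to handle $C$ ranging over a generating set of $D_G(X,S)$ as a ring, hence (locally) over holomorphic vector fields and functions, i.e. first-order operators. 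The Verdier anti-involution $C\mapsto C^*$ is precisely the algebraic formal adjoint of $C$ as an operator on sections of $K^{1/2}$, characterized locally by the usual integration-by-parts rule for half-forms; this is the content I would import from Section \ref{gen}.

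Next, working in a local analytic chart on $\Bunc_G(X,S)$ with holomorphic coordinates ${\mbf z} = (z_1,\dots,z_n)$ and a trivialization of $\Omega^{1/2} = K^{1/2}\otimes\ol K^{1/2}$, the pairing $(v,\ol w)$ becomes an honest top-degree form $f(z,\bar z)\, |dz\wedge d\bar z|$ (a density), and a holomorphic vector field acts only on the holomorphic half-form factor while an anti-holomorphic vector field acts only on the anti-holomorphic factor. Then $\langle Cv, w\rangle$ is a sum of integrals of the form $\int_{\Bunc_G} (\partial_{z_k}(\text{stuff})\cdot \overline{\text{stuff}})$, and since $v,w$ have compact support inside the smooth locus $\Bunc_G(X,S)$, Stokes' theorem kills the boundary term and moves the derivative onto $w$, producing exactly the formal adjoint $C^*$ acting on the holomorphic factor of $w$; the anti-holomorphic factor is untouched, which matches the definition $C_1\otimes\ol{C_2}\mapsto C_2^*\otimes\ol{C_1^*}$ once one also swaps the roles of the two tensor factors via complex conjugation inside the Hermitian form. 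Patching these local computations together with a partition of unity subordinate to a cover by such charts gives the global identity.

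The main obstacle is bookkeeping rather than conceptual: one must be careful that $\Bunc_G(X,S)$ is only a variety (the open dense stable locus), so that compact support of $v,w$ genuinely lands in the smooth part and no boundary contributions appear — this is why $V$ was defined using compactly supported sections on $\Bunc_G$ in the first place — and one must match the conjugation conventions so that the Verdier adjoint on $D_G$, which is defined algebraically and a priori has no reference to the Hermitian metric, really does coincide with the analytic formal adjoint with respect to $\langle,\rangle$. The latter compatibility is the key input from Section \ref{gen} and, granting it, the lemma follows by the routine integration-by-parts argument sketched above.
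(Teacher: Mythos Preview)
Your proposal is correct and follows essentially the same route as the paper: the paper simply says ``integration by parts implies the following'' and refers to Lemma~\ref{formaladj}, whose proof is exactly the local reduction you describe---trivialize in a ball, check on functions and vector fields where $\nu(f)=f$ and $\nu(v)=-v$, and observe that the statement is local so a partition of unity gives the global identity. Your write-up is more detailed than the paper's but the argument is the same.
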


We set $\mcA _{\mathbb R} =\{ A\in \mcA | A= A^\star \}$. 

\begin{corollary}
Every $A\in \mcA _{\mathbb R}$ is a symmetric operator on $V$.
\end{corollary}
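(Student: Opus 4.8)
The plan is to deduce this directly from the preceding Lemma together with the definition of $\mcA_{\mathbb R}$. Suppose $A \in \mcA_{\mathbb R}$, so that $A = A^\star$. The Lemma states that $\langle Av, w\rangle = \langle v, A^\star w\rangle$ for all $v, w \in V$. Substituting $A^\star = A$ immediately gives $\langle Av, w\rangle = \langle v, Aw\rangle$ for all $v, w \in V$, which is precisely the statement that $A$, viewed as a densely-defined operator on $\mcH$ with domain $V$, is symmetric.

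The only point requiring any comment is that $A$ genuinely maps $V$ into $V$ (or at least into $\mcH$), so that both sides of the identity make sense and $A$ is honestly an operator on the domain $V$. This is because $A$, being an element of $\mcA = D_G(X,S) \otimes_\CC \ol D_G(X,S)$, is a smooth differential operator on the line bundle $\Omega^{1/2}$ on $\Bunc_G(X,S)$, as noted in Subsection \ref{opsH}; hence it preserves the space of smooth compactly supported sections of $\Omega^{1/2}$, which is exactly $V$. (Differential operators do not enlarge supports and preserve smoothness.) Therefore $Av, Aw \in V$ whenever $v, w \in V$, and the Hermitian form $\langle\,,\,\rangle$ is defined on all the relevant vectors.

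There is really no obstacle here: the corollary is a formal consequence of the Lemma and the definitions, and the proof is a one-line substitution. If anything, the substantive content lies entirely in the Lemma (integration by parts on the non-compact variety $\Bunc_G(X,S)$, using compact support to kill boundary terms) and in the identification of the Verdier adjoint $C^*$ with the analytic formal adjoint, both of which are addressed in Section \ref{gen}. Accordingly, I would simply write: \emph{For $A \in \mcA_{\mathbb R}$ we have $A = A^\star$, so by the Lemma $\langle Av, w\rangle = \langle v, A^\star w\rangle = \langle v, Aw\rangle$ for all $v, w \in V$; since $A$ preserves $V$, this says $A$ is symmetric on $V$.} $\square$
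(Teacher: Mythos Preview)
Your proof is correct and matches the paper's approach: the paper states the corollary immediately after the lemma without giving any proof, treating it as an obvious consequence of the lemma together with the definition $\mcA_{\mathbb R} = \{A \in \mcA \mid A = A^\star\}$. Your one-line substitution is exactly what is intended.
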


Here ``symmetric'' means ``Hermitian symmetric''.

The following notions are discussed in detail in Subsection \ref{preli}. 

\begin{definition}\label{es1} The {\it Schwartz space} $S(\mcA)$ 
  is the space of $u\in \mathcal{H}$ such that the linear functional
  $v\mapsto \langle Av,u \rangle$ on $V$ extends to a continuous
  functional on $\mcH$ for any $A\in \mcA$. 
\end{definition}

\begin{definition}\label{es2b} We say that $\mcA_{\mbb R}$ is an {\it
    essentially self-adjoint algebra on} $V$ if
every element $A\in \mcA_{\mbb R}$ is essentially self-adjoint on $S(\mcA)$. 
\end{definition}

\subsection{The Main Conjectures}    \label{main}

\begin{conjecture}\label{first}
The algebra $\mcA_{\mbb R}$ is essentially self-adjoint on $V$. 
\end{conjecture}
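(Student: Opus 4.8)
The plan is to reduce the abstract self-adjointness statement to a concrete analytic input about the local behavior of $\mcA$-eigensections near the boundary of $\Bunc_G(X,S)$, and then to verify that input using the holonomicity of the Hitchin $D$-modules. First I would recall the functional-analytic reformulation: since every $A \in \mcA_{\mbb R}$ is already symmetric on $V$ (by the Corollary), essential self-adjointness of $A$ on $S(\mcA)$ amounts to showing that the deficiency spaces $\Ker(A^* \mp i)$ in $\mcH$ are contained in $S(\mcA)$ and that $A$ is essentially self-adjoint there; more robustly, it suffices to show that for each $A \in \mcA_{\mbb R}$ the domain $S(\mcA)$ is a core, i.e. that the graph of $A|_{S(\mcA)}$ is dense in the graph of its closure, equivalently that any $u \in \mcH$ with $(A \pm i) u = 0$ in the distributional sense already lies in $S(\mcA)$ and is killed by $A \pm i$ as an operator on that domain. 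The point of introducing $S(\mcA)$ (Definition \ref{es1}) rather than working with each $A$ separately is that $S(\mcA)$ is simultaneously a core-candidate for the whole commuting family, so one needs the \emph{same} space to work for all $A$; this is exactly what elliptic regularity will provide.

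The key steps, in order. \emph{(1) Elliptic regularity and local structure of eigensections.} Given $u \in \mcH = L^2(\Bun_G)$ and $A \in \mcA_{\mbb R}$ with $Au = \la u$ distributionally (for $\la \in \CC$), on the smooth locus $\Bunc_G(X,S)$ the operator $A$ together with its complex conjugate generates, after adjoining the algebra $\mcA$, a holonomic system; hence $u$ restricted to a suitable open dense $U \subset \Bunc_G$ is real-analytic and satisfies a finite-rank local expansion of the form \eqref{eigenfunction}, $u|_U = \sum_{i,j} a_{ij}\,\phi_i(\mbf z)\,\ol\psi_j(\ol{\mbf z})$, with $\phi_i$ a local basis of a Hitchin $D$-module $\Delta_\la$ and $\ol\psi_j$ of $\ol\Delta_\mu$. \emph{(2) Boundary analysis.} Near the boundary divisor $\Bun_G \setminus \Bunc_G$ (and near the walls of the parabolic moduli space, using the regular-singularity behavior built into $\on{Op}_{\LG}(X,S)$ from Theorem \ref{mon}), the $D$-modules $\Delta_\la$ have at worst regular singularities, so the local solutions $\phi_i$ grow at most polynomially in $\log$ of the distance to the boundary; combined with the $L^2$-hypothesis on $u$ this pins down which exponents can occur and forces $u$ to lie in a weighted Sobolev space adapted to the boundary. \emph{(3) Integration by parts with no boundary term.} Using that bound, for any $A \in \mcA$ and any $v \in V$ one shows $\langle A v, u\rangle = \langle v, A^\star u\rangle$ with the boundary contribution vanishing (the compact support of $v$ handles the interior, and the controlled growth of $u$ plus the vanishing of the half-density pairing against the boundary kills the rest), so $v \mapsto \langle Av, u\rangle$ extends continuously to $\mcH$ — i.e. $u \in S(\mcA)$. \emph{(4) Core property.} Finally one checks that $V \subset S(\mcA)$ is dense in the graph norm of each $A$ (an approximation/cutoff argument near the boundary, again using the weighted bounds), so $A$ is essentially self-adjoint on $S(\mcA)$; since this worked uniformly for all $A \in \mcA_{\mbb R}$, the algebra is essentially self-adjoint on $V$ in the sense of Definition \ref{es2b}.

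The main obstacle is Step (2): controlling the boundary behavior of $\mcA$-eigensections precisely enough to conclude both the $L^2$-rigidity needed for $u \in S(\mcA)$ and the absence of boundary terms in integration by parts. This requires a genuine understanding of the monodromy and the local exponents of the Hitchin $D$-modules $\Delta_\la$ along \emph{all} components of the boundary of $\Bun_G(X,S)$ — including the non-generic strata where stability fails — and in general this rests on the expected (but not known in full) regular-singularity property of $\Delta_\la$, which is precisely Conjecture \ref{RS} alluded to in the excerpt. Consequently I expect the honest proof to establish Conjecture \ref{first} unconditionally only in the cases where the relevant $D$-modules and boundary geometry are completely explicit — namely $G = GL_1$ (Section \ref{abelian}), where everything is abelian and one computes directly, and the simplest non-abelian case $X = \pone$, $|S| = 4$, where $\Bun_G(X,S)$ is one-dimensional and the Hitchin system reduces to a classical second-order ODE (a Heun-type equation) whose solutions and their boundary asymptotics are classically understood — leaving the general case as the conjecture it is stated to be.
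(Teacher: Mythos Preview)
Your conclusion is correct --- the paper states this as a conjecture in general and proves it only for $G = GL_1$ and for $G = SL_2$, $X = \pone$, $|S| = 4$ --- and your Steps (2)--(3) (explicit boundary asymptotics near the singular points, and an integration-by-parts identity with vanishing boundary terms) are genuine ingredients of the paper's proof in the $SL_2$ case; see Proposition \ref{intparts} and the surrounding Sobolev-space analysis. But Step (1) contains a real gap that blocks the argument even in that case.

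The problem is this: a deficiency vector $u$ for a \emph{single} $A \in \mcA_{\mbb R}$ satisfies only the one equation $Au = \pm i u$, and that equation is not holonomic. In the $|S|=4$ case $\mcA = \CC[L,L^\dagger]$ with $L$ a second-order holomorphic operator; if, say, $A = L + L^\dagger$, then $Au = \lambda u$ is a single real second-order PDE on a real surface, with infinite-dimensional local solution space. You cannot ``adjoin the algebra $\mcA$'' to get more equations, since $u$ is not assumed to be an eigenvector for any other element of $\mcA$. So there is no reason for $u$ to admit a local expansion of the form \eqref{eigenfunction}, and elliptic regularity gives smoothness on $\Bunc_G$ but no usable control at the boundary. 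Your Step (4) has the same issue: the ``uniform for all $A$'' clause is doing work that the argument as written does not supply.

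The paper sidesteps this entirely: rather than analyze deficiency vectors of each $A$, it proves that the \emph{generator} $L$ is essentially normal on $S(\mcA)$, and then invokes the abstract Proposition \ref{esa1}(i) to conclude that $\CC[L,L^\dagger]_{\mbb R}$ is essentially self-adjoint. Essential normality of $L$ is established by spectral methods: one shows that the matrix operator $\mathcal{L} = \left(\begin{matrix} 0 & L \\ L^\dagger & 0\end{matrix}\right)$ (alternatively $D = L^\dagger L$, on the carefully constructed domain $H^4_{\mbf a}$) has compact resolvent, by computing its Green's function near the singular points and checking it lies in $L^2$ (Propositions \ref{green1}, \ref{green2}). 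Compactness forces discrete spectrum; on each finite-dimensional eigenspace the commuting operators $L, L^\dagger$ are simultaneously diagonalizable, yielding the orthonormal eigenbasis and essential normality at once. A third proof (Proposition \ref{essno}) avoids Green's functions by verifying directly that $D = ({\rm Re}\,L)^2 + ({\rm Im}\,L)^2$ is essentially self-adjoint on $S_{\mbf a}$ and applying Nelson's Theorem \ref{ne1}. Your integration-by-parts work is exactly what is needed to show $\mathcal{L}$ (resp.\ $D$) is symmetric on the right domain, and the identification $S(\mcA)=S_{\mbf a}$ (Corollary \ref{matrop1}) is close in spirit to your Step (3); but the passage from there to the full algebra goes through normality of the generator, not through a per-element deficiency-index argument.
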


As explained in Section \ref{preli} (see Proposition
\ref{spectrumexists}), this conjecture implies that one can define the
joint spectrum $\on{Spec}_{\mcA}(\mcH)$ of the algebra $\mcA$ on
$\mcH$.

\begin{conjecture}\label{second}
  The spectrum $\on{Spec}_{\mcA}(\mcH)$ is discrete. In other words,
  $\mathcal{H}$ is the (completed) direct sum of finite-dimensional
  eigenspaces of $\mcA$.
\end{conjecture}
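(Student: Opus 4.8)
The plan is to deduce Conjecture \ref{second} from Conjecture \ref{first} together with the existence of a single nonnegative operator $T\in\mcA_{\mathbb R}$ whose self-adjoint extension (the one furnished by Conjecture \ref{first}) has compact resolvent. Indeed, such a $T$ produces an orthogonal decomposition $\mcH=\bigoplus_t\mcH_t$ (completed direct sum) into its finite-dimensional eigenspaces $\mcH_t=\Ker(T-t)$; since $\mcA$ is commutative it preserves each $\mcH_t$, and breaking the finitely many vectors of $\mcH_t$ into joint eigenspaces of $\mcA$ (honest eigenspaces, by the self-adjointness of $\mcA_{\mathbb R}$) exhibits $\mcH$ as a completed orthogonal direct sum of finite-dimensional $\mcA$-eigenspaces --- which is exactly the assertion. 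So the task reduces to producing such a $T$.

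To this end, choose generators $\mathcal D_1,\dots,\mathcal D_N$ of $D_G(X,S)$ and put
\[
T\;=\;\sum_{i=1}^N\mathcal D_i^\star\mathcal D_i\;=\;\sum_{i=1}^N\mathcal D_i\otimes\ol{\mathcal D_i^*}\ \in\ \mcA,
\]
where $\mathcal D_i$ is viewed in $\mcA$ via $\mathcal D_i\mapsto\mathcal D_i\otimes 1$. Since $*$ is an anti-involution with $(\mathcal D_i^*)^*=\mathcal D_i$, one checks $T^\star=T$, so $T\in\mcA_{\mathbb R}$; and $\langle Tv,v\rangle=\sum_i\|\mathcal D_i v\|^2\ge 0$ for $v\in V$. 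The principal symbol of $T$, as a real differential operator on the manifold $\Bunc_G(X,S)(\mathbb C)$, vanishes exactly along the fibers of the global nilpotent cone, i.e.\ along the common zeros, away from the zero section, of the classical Hitchin Hamiltonians $h_i=\sigma(\mathcal D_i)$. Hence $T$ is \emph{elliptic} precisely when that cone reduces to the zero section, which happens for $G=GL_1$ (there is no Hitchin fibration) and in the simplest non-abelian case $G=SL_2$, $X=\pone$, $|S|=4$, where $\Bunc_G(X,S)$ is a curve; for general $G$ and $X$ the nilpotent cone is of positive dimension and $T$ is not elliptic.

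In the two special cases the plan can be carried out. For $G=GL_1$, $\Bun_{GL_1}=\Pic(X)$ is an extension of $\ZZ$ by the complex torus $J=H^1(X,\OO)/H^1(X,\ZZ)$, the bundle $K^{1/2}$ is (equivariantly) trivial, and $\mcA$ is the algebra of translation-invariant (constant-coefficient) differential operators; Fourier analysis on $J$ (Peter--Weyl for the underlying compact torus) decomposes $L^2(J,\Omega^{1/2})$ as a Hilbert direct sum of one-dimensional joint $\mcA$-eigenspaces indexed by a discrete lattice-like set, the eigenvalues $(\la,\mu)$ being read off directly and $\mu$ a fixed function of $\la$; this establishes Conjectures \ref{first}--\ref{third} at once and shows $V^0_\chi=V_\chi$ (Section \ref{abelian}). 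In the simplest non-abelian case one first identifies $Y:=\Bunc_G(\pone,S)$ explicitly as a punctured rational curve (for $SL_2$, the three-punctured sphere $\pone\setminus\{0,1,\infty\}$, with coordinate given by the cross-ratio of the four Borel reductions) and writes the single generator $\mathcal D$ of $D_G$ as an explicit second-order operator on $K^{1/2}_Y$ of hypergeometric/Heun type, whose eigenvalue is the accessory parameter of the corresponding oper ODE on $X$; then $T=\mathcal D^\star\mathcal D$ is a positive fourth-order elliptic operator on $Y$, and one must prove (i) that $\mcA_{\mathbb R}$ is essentially self-adjoint on $S(\mcA)$, by a limit-point analysis at the finitely many punctures that uses the oper-residue condition $\varpi(0)$ to fix the indicial exponents; and (ii) that $(T+C)^{-1}$ is compact, by combining interior elliptic estimates with a confining bound near each puncture, of the schematic form $\langle Tv,v\rangle+C\|v\|^2\ge\epsilon\,\|w^{1/2}v\|^2$ for a weight $w\to\infty$ at the puncture, the divergence coming from pairing the singular part of $\mathcal D$ against the half-density twist $\Omega^{1/2}$. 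Items (i) and (ii) give the discrete decomposition $\mcH=\bigoplus_\chi V^0_\chi$, and one then checks that the resulting set $\Sigma$ consists exactly of the $\LG$-opers defined over $\R$ (equivalently, with monodromy in the split real form of $\LG$).

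The main obstacle has two faces. Already in the simplest non-abelian case, the analytic heart is the behavior near the punctures (the unstable locus of $\Bun_G(X,S)$): pinning down the precise Fuchsian normal form of $\mathcal D$ and of $\Omega^{1/2}$ there, and extracting from it simultaneously the deficiency-index computation behind Conjecture \ref{first} and the confining estimate behind compactness, is where the real work lies. More fundamentally, for general $G$ and $X$ the operator $T$ above fails to be elliptic, and it fails precisely along the global nilpotent cone --- which is nonempty of positive dimension once $\dim\Bunc_G(X,S)>1$, and which is exactly the locus supporting the singular support of the holonomic $D$-modules $\Delta_\la$. There no naive elliptic-regularity argument applies; one would need a genuinely microlocal analysis of $\mcA$ along the nilpotent cone, together with control of the intricate geometry of $\Bun_G\setminus\Bunc_G$ (in particular, ruling out continuous spectrum arising from that locus). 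Supplying these ingredients is what is missing, and is why Conjectures \ref{first} and \ref{second} remain open in general.
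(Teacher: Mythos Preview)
Your reduction --- find a nonnegative $T\in\mcA_{\mathbb R}$ with compact resolvent, take $T=\sum_i\mathcal D_i^\star\mathcal D_i$, and observe that this is elliptic away from the zero locus of the Hitchin symbols --- is exactly the paper's second route in the $SL_2$, $|S|=4$ case (there $T=D:=L^\dagger L$ for the single Darboux generator $L$), and your $GL_1$ sketch via Fourier analysis on the Jacobian matches Section~\ref{abelian}. You also correctly isolate why the general case remains open.

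Two problems with your $SL_2$ sketch, however. First, a factual slip: the Gaudin system with $|S|=4$ reduces to a Darboux (Heun-type) operator on $\pone$ with \emph{four} regular singularities, at $0,1,a,\infty$ (with $a$ the cross-ratio of the four marked points on $X$), not three. Second, and more seriously, the confining bound you propose near a singularity does not hold. In the local normal form $L=\partial z\partial-\alpha\partial$ with $\alpha\in i\mathbb R$, the functions $v=|z|^{2\alpha}h(\bar z)$ with $h$ antiholomorphic satisfy $Lv=0$ pointwise while $|v|=|h|$ stays bounded away from zero; hence no inequality $\langle Tv,v\rangle+C\|v\|^2\ge\epsilon\|w^{1/2}v\|^2$ with $w\to\infty$ at the puncture can hold even locally. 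Relatedly, $D$ is \emph{not} essentially self-adjoint on smooth sections (Remark~\ref{remnonsa}): one is in the limit-circle regime, and the correct self-adjoint extension (the Friedrichs one, on $H^4_{\mathbf a}$) must be singled out via a bespoke Sobolev-space analysis (Proposition~\ref{selfadex}, Corollary~\ref{selfad}) rather than a limit-point argument.

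The paper establishes compactness not through coercivity but by writing the Green's function of $L^\dagger$ (respectively of $D$) as an explicit singular integral and checking directly that it lies in $L^2$ of both variables (Propositions~\ref{green1}, \ref{green2}); the inverse is then Hilbert--Schmidt. Two alternative proofs are also given: one replaces $D$ by the block-antidiagonal matrix operator $\mathcal L$ built from $L,L^\dagger$, which \emph{is} essentially self-adjoint on smooth sections (Corollary~\ref{matrop}) and again has $L^2$ Green's function; and one combines Nelson's theorem with a direct ``almost-eigenfunction'' contradiction ruling out non-eigenvalue spectrum (Theorem~\ref{main2}). Your plan sits closest to the $D$-based proof; to complete it you would need the Green-function computation, or an equivalent Rellich-type compactness argument adapted to the degenerate symbol $z(z-1)(z-a)p^2$, in place of the confining estimate.
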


According to Theorem \ref{mon}, $\on{Spec}_{\mcA}(\mcH)$ is naturally
embedded as a subset of \linebreak $\on{Op}_{\LG}(X,S)$, which we
denote by $\on{Op}_{\LG,\mcH}(X,S)$.

On the other hand, let $\on{Op}_{\LG}(X,S)_{{\mathbb R}}$ be the set
of $\LG$-opers $\la$ for which the underlying $C^\infty$ flat
$\LG$-bundle on $X$ is isomorphic to the $C^\infty$ flat $\LG$-bundle
underlying the complex conjugate anti-holomorphic $\LG$-oper $\ol\la$
(see the definition in Section \ref{opers}).

To each $\LG$-oper $\lambda $ we associate its monodromy
representation
$$
\rho_\lambda: \pi _1(X \bs S)\to \LG(\CC).
$$
The set $\on{Op}_{\LG}(X,S)_{{\mathbb R}}$ may be equivalently
described as the set of $\LG$-opers $\la$ such that that the
representation $\rho_\lambda$ and its complex conjugate representation
$\ol\rho_\lambda$ are isomorphic. We expect that this set is a
discrete subset of the space of $\on{Op}_{\LG}(X,S)$ (this is known
for $G=SL_2$, \cite{F}).

\begin{conjecture}    \label{third}
  The set $\on{Op}_{\LG ,\mcH}(X,S)\subset \on{Op}_{\LG}(X,S)$ is a subset
  of the set \linebreak $\on{Op}_{\LG}(X,S)_{{\mathbb R}}\subset
  \on{Op}_{\LG}(X,S)$.
\end{conjecture}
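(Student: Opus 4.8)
The plan is to show that any joint eigenvalue $\chi = (\la,\mu)$ occurring in $\on{Spec}_{\mcA}(\mcH)$ must satisfy $\mu = \ol\la$, which is precisely the condition $\la \in \on{Op}_{\LG}(X,S)_{\mathbb R}$. The starting point is the self-adjointness input: assuming Conjecture \ref{first}, every $A \in \mcA_{\mbb R}$ is essentially self-adjoint, so its spectrum is real, and therefore every joint eigenvalue $\chi: \mcA \to \CC$ restricts to a \emph{real-valued} character on $\mcA_{\mbb R}$. I would first unwind what this says about the pair $(\la,\mu)$: under the identification coming from Theorem \ref{mon}, an element $C_1 \otimes \ol{C_2} \in \mcA$ acts on $V_\chi$ by the scalar $f_1(\la)\,\ol{f_2(\mu)}$, where $f_i$ are the regular functions on $\on{Op}_{\LG}(X,S)$ corresponding to the differential operators $C_i$. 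The anti-involution $\star$ sends $C_1 \otimes \ol{C_2}$ to $C_2^* \otimes \ol{C_1^*}$, and the Verdier adjoint $C \mapsto C^*$ corresponds on the oper side to an anti-holomorphic involution $\sigma$ of $\on{Op}_{\LG}(X,S)$ (complex conjugation of the oper together with whatever sign/transpose data the Verdier duality contributes). So the reality condition $\chi(A) = \chi(A^\star)$ for all $A$ becomes the family of identities $f_1(\la)\,\ol{f_2(\mu)} = f_2(\sigma\la)\,\ol{f_1(\sigma\mu)}$ for all regular functions $f_1, f_2$ on the oper variety.

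The second step is to extract $\mu = \sigma\la$ (equivalently $\ol\mu = \ol{\sigma\la}$, i.e.\ $\mu$ is the complex conjugate oper of $\la$ after accounting for the Verdier twist) from these identities. Taking $f_2 = 1$ gives $f_1(\la) = \ol{f_1(\sigma\mu)}$ for all $f_1$; since the regular functions separate points of $\on{Op}_{\LG}(X,S)$ (it is an affine space of opers, so $\mcC_{\LG}(X,S)$ is a polynomial algebra and in particular point-separating), and since $f \mapsto \ol{f(\sigma(-))}$ is again evaluation at a point — namely at $\ol{\sigma\mu}$ viewed appropriately — one concludes $\la = \ol{\sigma\mu}$, hence $\mu$ is determined by $\la$. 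Translating back through the definition of $\on{Op}_{\LG}(X,S)_{\mathbb R}$ in Section \ref{opers}: the condition that the $C^\infty$ flat $\LG$-bundle underlying $\la$ is isomorphic to the one underlying $\ol\la$ is exactly the statement that $\la$ and its conjugate oper carry the same underlying local system, and the analysis above shows the $\mcA$-eigenvalue forces precisely this pairing. One also recovers here the expected statement from Lemma \ref{ollast} that $\mu$ is uniquely determined by $\la$.

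The third step is a compatibility/consistency check: one must verify that the anti-holomorphic oper $\mu$ extracted from the eigenvalue is genuinely the complex conjugate of $\la$ in the sense used to \emph{define} $\on{Op}_{\LG}(X,S)_{\mathbb R}$, i.e.\ that the Verdier adjoint $C \mapsto C^*$ on $D_G$, transported to the oper side via Theorem \ref{mon}, is honest complex conjugation of opers and not merely some other anti-holomorphic involution. This is where I expect the main obstacle to lie. The content is a comparison between two a priori different structures on $\on{Op}_{\LG}(X,S)$: the real structure coming from complex conjugation of the curve's conjugate, and the real structure induced by formal adjunction of differential operators on $\Bun_G$ with respect to the half-density pairing. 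One should be able to pin this down by noting that the Verdier adjoint is characterized by $\langle Cv, w\rangle = \langle v, C^* w\rangle$ for the pairing on $K^{1/2}$, that this pairing is itself built from complex conjugation of sections, and that the Beilinson–Drinfeld construction of the isomorphism $D_G \simeq \mcC_{\LG}$ is compatible with the natural real structures on both sides (opers being defined over the real subfield when the curve is, and quantum Hitchin Hamiltonians having real symbols). Granting this identification, Conjecture \ref{third} follows; without it, one is left only with the weaker statement that $\on{Op}_{\LG,\mcH}(X,S)$ is stable under \emph{some} anti-holomorphic involution, which is not quite the claim. The remaining routine points — that real eigenvalues of essentially self-adjoint operators give real characters, and that $\mcA_{\mbb R}$ spans enough of $\mcA$ after complexification to see all of $f_1, f_2$ — I would relegate to short lemmas.
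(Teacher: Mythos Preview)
There is a genuine gap. Your entire argument establishes only what the paper calls Step~I (Lemma~\ref{ollast}): that self-adjointness forces the anti-holomorphic component $\mu$ to equal $\ol\la^*$ (the complex conjugate of $\la$ twisted by the Chevalley involution~$\tau$; Proposition~\ref{nu} shows the Verdier anti-involution $\nu$ corresponds to $\tau$, not to complex conjugation as you guess). But the relation $\mu=\ol\la^*$ is a constraint on the \emph{pair} $(\la,\mu)$, not on $\la$ alone: it simply says the anti-holomorphic eigenvalue is determined by the holomorphic one, and it holds for \emph{every} $\la$ in the spectrum. It does not by itself force $\rho_\la\simeq\ol\rho_\la$, which is the defining condition of $\on{Op}_{\LG}(X,S)_{\mathbb R}$. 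Your sentence ``$\mu=\ol\la$ \ldots\ is precisely the condition $\la\in\on{Op}_{\LG}(X,S)_{\mathbb R}$'' conflates these two unrelated statements.

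What is missing is the paper's Step~II. One must use not just the eigen\emph{value} but the existence of a nonzero single-valued eigen\emph{function} $\Phi$. Restricted to an open $U\subset\Bun_G$ where $\Delta_\la$ is a flat bundle $\mcV_\la$, $\Phi$ gives (via Proposition~\ref{Her}) an embedding $(\OO_U,d)\hookrightarrow \mcV_\la\otimes\mcV'_{\ol\la^*}$. A separate computation (Lemma~\ref{V*}, Verdier duality plus the identification $\nu\leftrightarrow\tau$) shows $\mcV'_{\ol\la^*}\simeq(\ol{\mcV_\la})^*$, so one gets $(\OO_U,d)\hookrightarrow\mcV_\la\otimes(\ol{\mcV_\la})^*$. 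Assuming Conjecture~\ref{RS} (irreducibility of $\Delta_\la^0$), this forces $\mcS_\la\simeq\ol{\mcS_\la}$ as local systems on $U\subset\Bun_G$. Finally, and this is the step your outline has no analogue of, one invokes the Hecke eigensheaf property (Theorem~\ref{Heckeeig}, Corollaries~\ref{eig},~\ref{eig1}) to transport this isomorphism on $\Bun_G$ to the isomorphism $\rho_\la\simeq\ol\rho_\la$ of monodromy representations on the curve $X$. Without the eigenfunction, Conjecture~\ref{RS}, and the Hecke property, there is no bridge from the eigenvalue relation $\mu=\ol\la^*$ to the reality of the monodromy of $\la$.
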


Conjectures \ref{first}--\ref{third} imply and refine Conjecture
\ref{basic}.

In this paper we prove the validity of the above three conjectures in
the following two cases:

\begin{enumerate}
\item $G=GL_1$, $X$ is an arbitrary curve of genus greater than 0, and
  $S=\emptyset$;

\item $G=SL_2, X=\mP ^1$, and $|S|=4$.
\end{enumerate}

In both cases, we actually prove a stronger statement:

\medskip

\begin{theorem}
In the above two cases, the set $\on{Op} _{\LG ,\mcH}(X,S) $ coincides
with the set $\on{Op}_{\LG}(X,S)_{{\mathbb R}}$.
\end{theorem}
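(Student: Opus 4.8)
The plan is to treat the two cases separately, since they rely on very different techniques, but in each case the strategy is the same: produce enough honest $L^2$-eigenfunctions to account for all real opers, and separately show that no eigenfunction can correspond to an oper that is not real. The inclusion $\on{Op}_{\LG,\mcH}(X,S)\subseteq \on{Op}_{\LG}(X,S)_{\mathbb R}$ is the content of Conjecture \ref{third}, so the new content here is the reverse inclusion: every real oper actually occurs in the spectrum.

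\emph{The abelian case $G=GL_1$.} Here $\Bun_{GL_1}$ is (a $\mathbb{G}_m$-gerbe over) the Picard variety, which up to the component group is a complex torus $\mathbb{C}^g/\Lambda$, and $D_G$ is generated by the translation-invariant holomorphic vector fields $\partial_{z_1},\dots,\partial_{z_g}$ while $\ol D_G$ is generated by $\partial_{\bar z_1},\dots,\partial_{\bar z_g}$. First I would diagonalize $\mcA$ explicitly by Fourier analysis on the torus: the joint eigenfunctions of $\mcA$ on $L^2(\Bun_{GL_1})$ are (up to the half-density twist) the characters $e^{2\pi i\langle n,z\rangle}$ for $n$ in the dual lattice, these automatically form an orthogonal basis, and their $\mcA$-eigenvalues are exactly the pairs $(\partial\mapsto$ something linear in $n$, $\bar\partial\mapsto$ its conjugate$)$. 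Second I would match these eigenvalues, via the $GL_1=\LG$ version of Theorem \ref{BD1isom}, with $\LG$-opers, which in the $GL_1$ case are just flat line bundles (equivalently, characters of $\pi_1(X)$); the reality condition $\ol\lambda\simeq\lambda$ on the flat bundle translates precisely into the statement that the monodromy character is valued in $\mathbb{R}_{>0}$, equivalently that the oper lies in the real locus. The discreteness of the dual lattice then gives both the discreteness of the spectrum and the exact identification $\on{Op}_{\LG,\mcH}=\on{Op}_{\LG,\mathbb R}$. This case is essentially a bookkeeping exercise once the Fourier picture is set up, with the only subtlety being the correct normalization of the half-density bundle $\Omega^{1/2}$ and the identification of the lattice of allowed characters; this is carried out in Section \ref{abelian}.

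\emph{The case $G=SL_2$, $X=\mP^1$, $|S|=4$.} Here $\Bunc_{SL_2}(\mP^1,S)$ with four parabolic points is one-dimensional --- it is $\mP^1$ minus finitely many points, with coordinate the cross-ratio-type parameter --- and the single generator of $D_G$ is (a gauge transform of) the Gaudin/Lamé quantum Hamiltonian, so that $\mcA_{\mathbb R}$ is generated by one second-order operator of hypergeometric type. The key computation is that the eigenvalue equation for $\mcA$ on single-valued sections of $\Omega^{1/2}$ becomes, after separating holomorphic and anti-holomorphic variables, a pair of conjugate ordinary differential equations of Heun/hypergeometric type; the $PGL_2=\LG$-opers regular outside $S$ with the prescribed residues are parametrized by the accessory parameter, and the monodromy of the oper is the monodromy of this ODE. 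I would then invoke the classical fact (this is where the hypothesis $X=\mP^1$, $|S|=4$ is essential, reducing everything to the hypergeometric/Heun setting) that the monodromy representation of this ODE is conjugate to its complex conjugate --- i.e.\ the oper is real --- if and only if a single-valued $L^2$-combination of the form \eqref{eigenfunction} exists on the four-punctured sphere, and that the set of such accessory parameters is discrete. Concretely: for a real oper the monodromy lands (after conjugation) in $SL_2(\mathbb{R})$ or $SU(2)$, and one builds the single-valued section as $\sum_{ij} h^{ij}\phi_i(z)\ol{\phi_j(z)}$ using the invariant Hermitian pairing $h$ on the two-dimensional solution space, checking square-integrability near the punctures from the local exponents (the residue condition $\varpi(0)$ guarantees the local monodromy is unipotent, and one verifies the logarithmic local solutions give an integrable singularity for the density); conversely an $L^2$-eigenfunction forces, via its monodromy-invariance and finite local expansion, the existence of such an invariant pairing, hence reality. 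Finally I would combine this with Conjecture \ref{third} (proved in this case) to get the two-sided equality, and with the discreteness of real accessory parameters to get Conjecture \ref{second}.

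The main obstacle is the second case, specifically the square-integrability analysis at the four punctures: one must show that for \emph{every} real oper the invariant-pairing construction yields a section whose associated density is integrable near each $x\in S$ (this is delicate precisely when the local monodromy is unipotent non-trivial, so the local solutions involve $\log$), and, in the converse direction, that an $L^2$ joint eigenfunction genuinely lies in the image of the $D$-module construction \eqref{eigenfunction} with a \emph{flat} (not merely projectively flat) invariant pairing --- i.e.\ controlling the gerbe/determinant twist so that the would-be pairing is honestly monodromy-invariant rather than invariant up to scalar. Establishing essential self-adjointness (Conjecture \ref{first}) in this case is then a matter of checking the deficiency-index condition for this explicit Sturm--Liouville-type operator on the punctured $\mP^1$, which I expect to follow from the limit-point classification at each singular point given the local exponent data.
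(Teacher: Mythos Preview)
Your treatment of the abelian case is essentially the paper's: Fourier analysis on the Jacobian, matching characters to $GL_1$-opers with real monodromy (Section \ref{abelian}). One small correction: the monodromy lands in $\mathbb R^\times$, not $\mathbb R_{>0}$.

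For the $SL_2$, $|S|=4$ case your outline correctly identifies the reduction to a single second-order operator $L$ (the Darboux/Heun operator, Section 9) and the equivalence ``real monodromy $\Leftrightarrow$ existence of a single-valued joint eigenfunction of $L,L^\dagger$'' (the paper's Corollary \ref{realmon}). Your local integrability check is also right in spirit: with residue $\varpi(0)$ the local solutions are $1$ and $\log|w|$, and the resulting density is integrable (Proposition \ref{l2prop}(i)). So you have the map $\on{Op}_{\LG}(X,S)_{\mathbb R}\to\{\text{$L^2$ joint eigenfunctions}\}$.

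The genuine gap is completeness. Producing an $L^2$ eigenfunction for each real oper does not show these span $\mathcal H$, and hence does not show $\on{Op}_{\LG,\mathcal H}=\on{Op}_{\LG,\mathbb R}$; there could be continuous spectrum or additional discrete spectrum. Your proposed fix, ``check the deficiency-index condition for this explicit Sturm--Liouville-type operator\ldots limit-point classification,'' does not work as stated: $L$ is not symmetric, and the relevant symmetric operator $D=L^\dagger L$ is \emph{not} essentially self-adjoint on smooth sections (Remark \ref{remnonsa} computes an $8$-dimensional defect space). The paper's actual argument is substantially harder: it builds generalized Sobolev spaces $H^{2m}_{\mathbf a}$ adapted to the singular points (Section 13), identifies the correct domain $H^4_{\mathbf a}$ on which $D$ \emph{is} self-adjoint (Proposition \ref{selfadex}), and then proves compactness of the resolvent by showing the Green function of $L^\dagger$ (or of $D$) is $L^2$ in both variables (Propositions \ref{green1}, \ref{green2}). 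Compactness gives discrete spectrum with finite multiplicities; only then can one conclude that the joint eigenfunctions form a basis and that the spectrum is exactly $\Sigma$.

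A secondary point: you allow the invariant Hermitian form to have signature giving $SU(2)$, but since the local monodromies here are parabolic/hyperbolic (eigenvalues real positive), the form is forced to have signature $(1,1)$, so only $SU(1,1)\cong SL_2(\mathbb R)$ occurs (see the paragraph before Corollary \ref{realmon}).
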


\begin{question}\label{question}
  Is it true in general that the set $\on{Op}_{\LG ,\mcH}(X,S)\subset
  \on{Op}_{\LG}(X,S)$ coincides with $\on{Op}_{\LG}(X,S)_{{\mathbb R}}$?
\end{question}

\begin{remark}
  Consider the case $G=SL_2$ and $\LG=PGL_2$. In this case, a
  $PGL_2$-oper is the same as a projective connection on $X$ (see
  \cite{BD:opers}). Suppose that the genus of $X$ is greater than 1
  and $S=\emptyset$. It follows from \cite{GKM} that a $PGL_2$-oper
  $\lambda$ satisfies the condition $\rho_\la \simeq \ol\rho_\la$ if
  and only if the monodromy representation $\rho_\lambda$ takes
  values, up to an inner automorphism of $PGL_2(\CC)$, in the split
  real form $PGL_2(\R) \subset PGL_2(\CC)$.

  Thus, $\on{Op}_{PGL_2}(X)_{{\mathbb R}}$ may be equivalently
  described as the set of $PGL_2$-opers with monodromy in
  $PGL_2(\R)$. Such opers were studied in \cite{G,F,T}. We expect that
  the same is true for $PGL_2$-opers even if $S \neq
  \emptyset$. However, we are not aware of similar results for
  non-abelian groups $\LG$ other than $PGL_2$.
\end{remark}

\subsection{The structure of the paper}

The paper is organized as follows. 

In Part I we discuss the conjectural picture for general $X$ and
$G$. In Section \ref{gen} we collect some properties of differential
operators acting on line bundles which we use in this paper. In
Section \ref{diffBun}, we discuss the global holomorphic and
anti-holomorphic differential operators acting on the square root of
the canonical line bundle on $\Bun_G$, in the case when there are no
parabolic structures (i.e. $N=0$). In Section \ref{specopers}, we
describe the link between the spectra of (canonical self-adjoint
extension of) the global differential operators on $\Bun_G$ and
$\LG$-opers $\la$ on the curve $X$ satisfying a reality condition,
leaving aside the question of the self-adjoint extension.  In Section
\ref{abelian} we describe the spectrum explicitly and prove
Conjectures \ref{first}--\ref{third} in the abelian case $G=GL_1$. In
Section \ref{parab} we include the parabolic structures at finitely
many marked points on $X$. In Section \ref{SL2}, we discuss in detail
the case of $G=SL_2$, $X=\pone$, and $|S|>3$. Section \ref{App}
contains the proofs of two results that we need in Section
\ref{specopers}.

In Part II we prove Conjectures \ref{first}--\ref{third} in the
special case $G=SL_2, X=\mbb P^1$, and $|S|=4$. In the course of
the proof, we develop a theory of Sobolev and Schwartz spaces relevant
to this problem. See Section 11 for a general discussion of unbounded
self-adjoint operators and their spectral theory. A more detailed
description of the contents of Sections 9--19 is given at the
beginning of Part II.

\medskip

\noindent{\bf Acknowledgements.} We thank Dennis Gaitsgory for his
help with the proof of Proposition \ref{nu}. E.F. thanks Dima Arinkin
for useful discussions. The work of P.E. and D.K. on this project was
supported by ERC under grant agreement 669655. The work of P.E. was
partially supported by the NSF grant DMS-1502244, and he is grateful
to the Department of Mathematics of Hebrew University for hospitality.

\bigskip

\bigskip

\part*{\hspace*{70mm}Part I}

\bigskip

\bigskip

\section{Differential operators on line bundles}    \label{gen}

\subsection{$D$-modules on line bundles}    \label{Dmodlb}

Let $Y$ be a smooth connected complex algebraic variety. Abusing
notation, we will also denote by $Y$ the complex manifold $Y(\mbb
C)$. Let $K$ the canonical line bundle on $Y$. Let $\mcD$ be the sheaf
of holomorphic differential operators on $Y$ in the complex analytic
topology. For any algebraic line bundle $L$ on $Y$ we denote by
$\mcD_L$ the sheaf of holomorphic differential operators acting on
$L$. Then
$$
\mcD_L \simeq L \otimes_{\mcO_Y} \mcD \otimes_{\mcO_Y} L^{-1},
$$
To any $\mcD_L$-module $M$ on $Y$ we associate the $\mcD$-module $M^0 :=
L^{-1} \otimes_{\mcO_Y} M$ on $Y$. Conversely, $M \simeq L
\otimes_{\mcO_Y} M^0$. Hence we obtain an equivalence between the
categories of $\mcD_L$-modules and $\mcD$-modules on $Y$ (see
e.g. \cite{BB}).

If a $\mcD$-module $M^0$ is regular holonomic, then there exists an
open dense subset $U\subset Y$ such that the restriction $M^0|_U$ is a
holomorphic vector bundle with a holomorphic flat connection. We
denote by ${\mcS}_U $ the corresponding local system (with respect to
the analytic topology on $U$).

\begin{lemma}\label{reg}
An irreducible regular holonomic $\mcD$-module $M^0$ such that $M^0|_U\neq
\{0\}$ is uniquely determined by the local system ${\mcS}_U$.
\end{lemma}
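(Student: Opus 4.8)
The plan is to recover $M$ from $\mcS_U$ as the intermediate (Goursat/minimal) extension of the corresponding flat connection on $U$. First I would use the Riemann--Hilbert correspondence: since $M$ is regular holonomic, $M|_U$ corresponds to the local system $\mcS_U$, and conversely the flat bundle $(\mcE_U, \nabla)$ on $U$ attached to $\mcS_U$ together with its $\OO_U$-module structure is determined by $\mcS_U$. Let $j \colon U \hookrightarrow Y$ be the open embedding. The two natural extensions of $M|_U$ to all of $Y$ are $j_! (M|_U)$ and $j_* (M|_U)$ in the category of regular holonomic $D$-modules (equivalently, via Riemann--Hilbert, the $!$- and $*$-pushforwards of the perverse sheaf attached to $\mcS_U$), and these depend only on $\mcS_U$. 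There is a canonical morphism $j_!(M|_U) \to j_*(M|_U)$, and its image is by definition the minimal extension $j_{!*}(M|_U)$, which is the unique simple subquotient of either extension whose restriction to $U$ is $M|_U$; it, too, depends only on $\mcS_U$.

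The key step is then to identify $M$ with $j_{!*}(M|_U)$. Since $M$ is irreducible and $M|_U \neq \{0\}$, the adjunction morphisms give nonzero maps $M \to j_* j^* M = j_*(M|_U)$ and $j_!(M|_U) = j_! j^* M \to M$; irreducibility of $M$ forces the first to be injective and the second surjective. Composing, the canonical map $j_!(M|_U) \to j_*(M|_U)$ factors through $M$, so $M$ is simultaneously a quotient of $j_!(M|_U)$ and a sub of $j_*(M|_U)$ with the correct restriction to $U$; by the universal property (uniqueness) of the minimal extension this identifies $M \cong j_{!*}(M|_U)$ canonically. Hence $M$ is determined by $M|_U$, and $M|_U$ is determined by $\mcS_U$ via Riemann--Hilbert, which finishes the argument. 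One should also note that the equivalence between $\mcD_L$-modules and $D$-modules recalled just above (via $M \mapsto M^0 = L^{-1}\otimes_{\OO_Y} M$) lets us carry out this entire discussion at the level of plain $D$-modules, which is where the extension functors $j_!, j_*, j_{!*}$ live.

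The main obstacle, or at least the point requiring care, is the bookkeeping around the precise formulation of ``determined by'': one must make sure that $U$ can be taken large enough that $M|_U$ is genuinely a flat vector bundle (which holonomicity and regularity guarantee after shrinking), and that the minimal-extension construction is insensitive to the choice of such $U$ — i.e. if $U' \subset U$ is smaller with the same properties, then $j_{!*}$ over $U$ and over $U'$ produce the same $D$-module. This follows from transitivity of minimal extension ($j_{!*}$ is idempotent under further restriction and re-extension on locally closed embeddings), but it is the step where one could slip. Everything else — Kashiwara's equivalence, Riemann--Hilbert for regular holonomic modules, and the standard properties of $j_{!*}$ — is classical and can be cited. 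I expect no serious difficulty; the lemma is essentially the statement that an irreducible regular holonomic $D$-module is the minimal extension of its generic local system, packaged in the $\mcD_L$-module language used in this paper.
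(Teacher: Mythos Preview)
Your proposal is correct and follows essentially the same approach as the paper's own proof: reconstruct $M|_U$ from $\mcS_U$ via Riemann--Hilbert, then identify $M$ with the intermediate extension $j_{!*}(M|_U)$ using irreducibility. The paper's proof is just a terse two-sentence version of exactly this argument, asserting the last step without the adjunction details you spell out; your extra care about independence of the choice of $U$ is not addressed there but is harmless.
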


\begin{proof} By the Riemann-Hilbert correspondence, the local system
  ${\mcS}_U $ defines a holomorphic vector bundle with a holomorphic
  flat connection with regular singularities, which we denote by
  $\mcV_U$. Since $M^0$ is regular we have $M^0|_U \simeq
  {\mcV}_U$. The irreducibility of $M^0$ then implies that $M^0$ is
  equal to the intermediate extension of ${\mcV}_U$.
\end{proof}

\subsection{Complex conjugation}    \label{cpxconj}

For an algebraic line bundle $L$ on $Y$, we denote by the same letter
$L$ the corresponding holomorphic line bundle on $Y$ and by $\ol{L}$
the complex conjugate anti-holomorphic line bundle on $Y$. To define
$\ol{L}$, pick an open covering $U_i,i\in I$, of $Y$ and
trivializations $\alpha_i:L|_{U_i}\cong U_i\times \mbb C$.  Then we
have $\mbb C^\times$-valued holomorphic gluing functions $\phi_{ij}$
on $U_{ij}:=U_i\cap U_j$. The line bundle $\ol{L}$ on $Y$ is then
defined by the same covering and the anti-holomorphic gluing functions
$\ol\phi_{ij}$. It is clear that this definition does not depend on
the covering $U_i$ and trivializations $\alpha_i$. Moreover, for any
local section $s\in \Gamma(U,L)$ on an open subset $U \subset Y$ (in
analytic topology), there is a canonically defined complex conjugate
section $\ol{s}\in \Gamma(U,\ol{L})$ such that
$\alpha_i(\ol{s})=\ol{\alpha_i(s)}$.

To any local section $D$ of $ \mcD_L$, we associate a differential
operator $\ol{D}$ on $\ol{L}$ acting by the formula
$\ol{D}(\ol{s})=\ol{D(s)}$ for every local section $s$ of
$L$. Thus, we obtain a sheaf $\ol{\mc D}_L$ of anti-holomorphic
differential operators acting on $\ol{L}$. 

\subsection{Canonical anti-involution}    \label{cananti}

The following realization of the Verdier duality is proved in
\cite{B}, Sect. 6.3 (see also \cite{BB}, Sect. 2.4).

\begin{proposition}
  Let $E,F$ be two vector bundles on $Y$ and $A:E\to F$ a differential
  operator. Then there exists a unique differential operator $A^* :
  K_Y\otimes F^* \to K_Y\otimes E^* $ such that for any local
  sections $e$ of $E$ and $f$ of $F^* $ the differential
  form $\langle Ae,f \rangle - \langle e,A^* f \rangle$ is
  exact.

Furthermore, we have
$$
(AB)^* =B^*  A^*, \qquad   (A ^* )^* =A.
$$
\end{proposition}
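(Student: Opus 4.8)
The statement to prove is the proposition on the canonical anti-involution $A\mapsto A^*$ realizing Verdier duality for differential operators between vector bundles on $Y$. The plan is to reduce to the local, coordinate description and then verify the two functoriality identities. First I would work locally: choose an open $U\subset Y$ with coordinates $x_1,\dots,x_n$ over which $E$, $F$, and $K_Y$ all trivialize. Writing $A = \sum_\alpha a_\alpha(x)\,\partial^\alpha$ with $a_\alpha$ a matrix of regular functions (acting $E\to F$), I would define $A^*$ by the classical formal adjoint formula, twisted by $K_Y$: on sections $e$ of $E$ and $f$ of $F^*$, the pairing $\langle Ae,f\rangle$ is a function times the local volume form $dx_1\wedge\cdots\wedge dx_n$, and integration by parts gives $\langle Ae,f\rangle - \langle e, A^*f\rangle = d\eta$ for an explicit $(n-1)$-form $\eta$ built from $e$, $f$, and the lower-order "boundary" terms of the Leibniz expansion. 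Concretely $A^*$ is characterized by $\langle e,A^*f\rangle\,dx = \langle Ae,f\rangle\,dx - d\eta$, i.e. $A^* = \sum_\alpha (-1)^{|\alpha|}\partial^\alpha\circ a_\alpha(x)^{t}$ in the trivialization, interpreted as an operator $K_Y\otimes F^*\to K_Y\otimes E^*$.

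Second, I would establish \emph{uniqueness}, which also shows the local formula glues to a global operator independent of coordinates. If $A^*$ and $A'$ both satisfy the defining property, then for all local $e,f$ the form $\langle e,(A^*-A')f\rangle$ is exact, hence in particular has zero periods and, being a top-degree form, is globally exact only if it vanishes pointwise once we let $e,f$ range over all sections: choosing $e$ supported near a point and running over a spanning set of jets forces $(A^*-A')f=0$ for all $f$, so $A^*=A'$. Uniqueness immediately implies that the locally-defined operators on overlapping charts agree, giving a well-defined global $A^*:K_Y\otimes F^*\to K_Y\otimes E^*$; it also makes the construction manifestly independent of the chosen coordinates and trivializations.

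Third, the two identities $(AB)^* = B^*A^*$ and $(A^*)^* = A$ follow formally from the defining property plus uniqueness, without any further computation. For $(A^*)^*$: apply the defining property of $A^*$ with the roles of $E,F$ swapped and with $e\mapsto f$, $f\mapsto e$, and use that $K_Y\otimes (K_Y\otimes E^*)^* \cong E$ canonically; one gets that $A$ itself satisfies the property defining $(A^*)^*$, so they coincide by uniqueness. For $(AB)^*$: given $A:E\to F$, $B:G\to E$, for local sections $g$ of $G$ and $f$ of $F^*$ write $\langle ABg,f\rangle - \langle g, B^*A^*f\rangle = \big(\langle ABg,f\rangle - \langle Bg, A^*f\rangle\big) + \big(\langle Bg,A^*f\rangle - \langle g,B^*A^*f\rangle\big)$, and each parenthesized term is exact by the defining property of $A^*$ (applied to $Bg$) and of $B^*$ (applied to $A^*f$) respectively; a sum of exact forms is exact, so $B^*A^*$ satisfies the property defining $(AB)^*$, and uniqueness finishes it.

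\textbf{Main obstacle.} The genuinely content-bearing step is the \emph{existence}: producing the $(n-1)$-form $\eta$ and checking that the coordinate-chart formula for $A^*$ is consistent across charts — equivalently, that the "integration by parts up to an exact form" can be done functorially rather than just on a fixed chart. This is where the $K_Y$-twist is essential: it is precisely what makes $A^*$ coordinate-independent, since under a change of coordinates the Jacobian factors from $\partial^\alpha$ and from the volume form cancel only after the $K_Y$-twist is built in. I expect this to be routine but slightly delicate bookkeeping, and in the write-up one would either cite the cohomological formulation in \cite{B}, Sect. 6.3 (and \cite{BB}, Sect. 2.4) directly, or reproduce the chart computation with care; everything else (uniqueness and the two identities) is then purely formal.
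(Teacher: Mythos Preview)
The paper does not give its own proof of this proposition: it cites \cite{B}, Sect.~6.3 and \cite{BB}, Sect.~2.4, and only records the local formula $x_i^*=x_i$, $\partial_i^*=-\partial_i$ in coordinates. Your proposal supplies precisely the standard argument that lies behind those citations --- local existence via integration by parts (producing the explicit boundary $(n-1)$-form $\eta$), uniqueness, and then the purely formal derivation of $(AB)^*=B^*A^*$ and $(A^*)^*=A$ from the characterizing property. So the approach is correct and is the expected one.

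One point to tighten: your uniqueness paragraph is slightly garbled. An exact top-degree form need not ``vanish pointwise,'' and ``zero periods'' is not the right phrase here. The clean argument (in the $C^\infty$ or analytic setting) is: if $\langle e,(A^*-A')f\rangle$ is exact for all local $e,f$, take $e$ compactly supported and apply Stokes to get $\int \langle e,(A^*-A')f\rangle=0$; varying $e$ over bump sections forces $(A^*-A')f=0$. In the purely algebraic setting, where compactly supported sections are unavailable, one either checks directly that the local formula is coordinate-independent (which your ``main obstacle'' paragraph correctly identifies as the real content), or one strengthens the defining condition to require the primitive $\eta$ to be a bidifferential expression in $e,f$, which makes uniqueness immediate by symbol considerations. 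This is a write-up issue rather than a genuine gap; the overall plan is sound.
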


Namely, if $x_1,...,x_n$ are local coordinates on $Y$ and we fix local trivializations of $E,F$, as well as the trivialization of $K_Y$ 
using the volume form $dy_1\wedge...\wedge dy_n$, then the map $A\mapsto A^*$ is defined by the formula 
$x_i^*=x_i$, $\partial_i^*=-\partial_i$. In other words, $A^*$ is the algebraic adjoint of $A$. 

Thus, we obtain a canonical anti-involution
\begin{eqnarray*}
\nu: \mcD_L &\to &\mcD _{KL^{-1}} \\
A &\mapsto & A^*
\end{eqnarray*}
In particular, if we fix an isomorphism $L^{\otimes 2}\to K$, then
$\nu$ is an anti-involution of the sheaf $\mcD_L$. We will describe
this anti-involution explicitly in the case of $Y=\Bunc_G$ in
Proposition \ref{nu} below.

\subsection{Formal adjoint operators}\label{ratsing}

Let again $L$ be a line bundle on $Y$ and $\ol{L}$ the complex
conjugate line bundle. Let $V$ be
the space of smooth sections with compact support of
the $C^\infty$ line bundle $L\otimes_{\CC} \ol{KL^{-1}}$. For any section $\phi \in V$ we can view $\ol\phi$
as a section of the $C ^\infty$ line bundle $KL^{-1}\otimes_{\CC}
\ol{L}$. Then $\phi\ol\psi$ is a section of $K\otimes \ol K=\Omega$, the bundle of $C^\infty$ top differential forms on $Y$. 
We define a Hermitian form $\langle \cdot,\cdot \rangle$ on
$V$ by
$$
\langle\phi , \psi \rangle := \int_Y \phi \ol\psi.
$$

\begin{lemma}    \label{formaladj}
For any  $\phi ,\psi \in V$ we have
$$
\langle D \phi,\psi \rangle=\langle \phi,D^\dagger \psi \rangle 
$$
where
\begin{equation}    \label{Dstar}
D^\star :=\overline {\nu (D)}.
\end{equation}
\end{lemma}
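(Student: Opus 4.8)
The plan is to localize the identity to a coordinate chart and reduce it to repeated integration by parts.

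First I would note that both $\langle D\phi,\psi\rangle$ and $\langle\phi,D^\dagger\psi\rangle$ are $\CC$-linear in $\phi$, and that $(D\phi)\ol\psi-\phi\,\overline{D^\dagger\psi}$ is a smooth section of $\Omega$ supported in $\supp\phi$. So, fixing a trivializing cover $\{U_i\}$ of $Y$ (on which $L$, hence also $K$, $KL^{-1}$, $\ol{KL^{-1}}$, and $\Omega$, is trivial) together with a subordinate partition of unity $\{\rho_i\}$ and writing $\phi=\sum_i\rho_i\phi$ (a finite sum by compactness of $\supp\phi$), it suffices to treat the case where $\phi$ is supported in a single chart $U$ with holomorphic coordinates $y_1,\dots,y_n$ and a frame $e_L$ of $L|_U$; then $\psi$ may be replaced by $\psi|_U$.

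On $U$, using the induced frames ($\ol{e_L}$ of $\ol L$, $\omega=dy_1\wedge\dots\wedge dy_n$ of $K$, $\ol\omega$ of $\ol K$, and their tensor products), $D$ becomes an operator $\sum_\alpha a_\alpha(y)\,\partial_y^\alpha$ with holomorphic coefficients acting on the $L$-factor. By the explicit description of $\nu$ recalled after the Verdier-duality Proposition in Subsection~\ref{cananti} (namely $x_i^*=x_i$, $\partial_i^*=-\partial_i$), $\nu(D)$ becomes $f\mapsto\sum_\alpha(-1)^{|\alpha|}\partial_y^\alpha(a_\alpha f)$ on $KL^{-1}$, and by the definition of complex conjugation of differential operators in Subsection~\ref{cpxconj}, $D^\dagger=\overline{\nu(D)}$ becomes $g\mapsto\sum_\alpha(-1)^{|\alpha|}\partial_{\bar y}^\alpha(\overline{a_\alpha}\,g)$ on $\ol{KL^{-1}}$. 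Writing $\phi=\Phi\,(e_L\otimes\ol\omega\otimes\ol{e_L}^{-1})$ and $\psi=\Psi\,(e_L\otimes\ol\omega\otimes\ol{e_L}^{-1})$ with $\Phi,\Psi\in C_c^\infty(U)$, I would then compute that $(D\phi)\ol\psi=\big(\sum_\alpha a_\alpha\partial_y^\alpha\Phi\big)\ol\Psi\,(\omega\otimes\ol\omega)$ and $\phi\,\overline{D^\dagger\psi}=\Phi\,\big(\sum_\alpha(-1)^{|\alpha|}\partial_y^\alpha(a_\alpha\ol\Psi)\big)(\omega\otimes\ol\omega)$, while $\int_Y h\,(\omega\otimes\ol\omega)$ is a fixed nonzero constant times the Lebesgue integral of $h$ over $U$. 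Hence $\langle D\phi,\psi\rangle-\langle\phi,D^\dagger\psi\rangle$ equals, up to that constant, $\sum_\alpha\int_U\big[(\partial_y^\alpha\Phi)(a_\alpha\ol\Psi)-(-1)^{|\alpha|}\Phi\,\partial_y^\alpha(a_\alpha\ol\Psi)\big]$, which vanishes term by term: each $\partial/\partial y_i$ is a constant-coefficient combination of the real coordinate derivatives, so $\int_U(\partial_y^\alpha\Phi)\,G=(-1)^{|\alpha|}\int_U\Phi\,\partial_y^\alpha G$ with no boundary contribution since $\Phi$ has compact support in $U$.

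The computation is routine; the only point needing care is the bookkeeping of trivializations, i.e. checking that $\phi\ol\psi$ and $\phi\,\overline{D^\dagger\psi}$ genuinely land in $\Omega=K\otimes\ol K$ with the stated local expressions, and that the frame-wise conjugation used to form $\overline{\nu(D)}$ is the same one entering the definitions of $\ol\psi$ and of the Hermitian pairing $\langle\cdot,\cdot\rangle$. I do not expect a genuine obstacle here.
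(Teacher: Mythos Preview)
Your proposal is correct and follows essentially the same route as the paper: the paper's proof also reduces to the local case (an open ball with $L$ trivialized), observes there that $\nu$ fixes functions and negates vector fields, and declares the resulting identity ``obvious'' by integration by parts. Your argument is a carefully spelled-out version of exactly this, with the partition-of-unity reduction and the coordinate bookkeeping made explicit; no genuinely different idea is involved.
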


\begin{proof}
  In the case when $Y$ is an open ball and $L \simeq \mathcal
  O$ we have $\nu (f)=f, \nu (v)=-v$ where $f$ is a
  holomorphic function and $v$ a holomorphic vector field. In
  this case the equality is obvious. Since the general statement is
  local, it follows immediately from this special case.
\end{proof}

\subsection{Solutions and pairings}

To simplify notation, we assume until the end of this section that $L$
is equipped with an isomorphism $L^{\otimes 2}\to K$ and denote by
$\Omega ^{1/2}$ the $C ^\infty$ line bundle $L\otimes \ol{L}$.

Let $\mcC \subset \Gamma (Y,\mcD _L)$ be a commutative subalgebra.
Let $U$ be the set of $y\in Y$ such that the common zero set of the
symbols of all $D\in \mcC$ on $T_y^*Y$ is $\lbrace{0\rbrace}$.  Then
$U$ is a Zariski open subset of $Y$, and we assume that it is dense.
Then for any homomorphism $\la: \mcC\to \mbb C$, the system of
equations $Df=\la(D)f$, $D\in \mcC$, is holonomic on $Y$. Thus the
left $\mcD _L $-module
$$
M(\la):=\mcD _L /(\mcD _L \cdot {\rm Ker}\la)
$$
and the left $\mcD$-module
$$
M(\la)^0 = L^{-1} \otimes_{\mcO_Y} \mcD _L/(\mcD _L \cdot {\rm Ker}\la)
\simeq (\mcD \otimes_{\mcO_Y} L^{-1})/(\mcD _L \cdot {\rm Ker}\la)
$$
are $\mathcal{O}$-coherent on $U$.

In particular, $M(\la)^0|_U$ is a holomorphic vector bundle
$\mcV(\la)$ (of some rank $n$ independent on $\la$) with a holomorphic
flat connection. Let us fix $y\in U$ and denote by $\rho_\la$ the
corresponding representation of the fundamental group $\pi_1(U,y)$ on
the fiber $M(\la)^0|_y$ of $M(\la)^0$ at $y$.

On the other hand, denote by $\mcF_\la $ the sheaf of local
holomorphic solutions $f$ of the system $\{ Df=\la(D)f, D\in \mcC \}$
on $U$. If $B$ is a small ball around $y$, then we have a natural
isomorphism $M(\la)|_y\to \mcF_\la(B), v\mapsto f_v$ given by solving the
initial value problem. This enables us to associate to
  $\mcF_\la $ a representation $\wt\rho_\la$ of the group $\pi_1(U,y)$
  on the fiber $M(\la)|_y$. Since $M(\la)|_y = L_y \otimes
  M(\la)^0|_y$, we have $\wt\rho_\la \simeq \rho_\la$.

\begin{definition} Given two homomorphisms $\la,\eta$, we denote by $V_{\la,\eta}$ the space of
  sections $\gamma$ of $\Omega ^{1/2}$ on $U$ such that
\begin{equation}\label{eq400}
D\gamma =\la(D)\gamma, \qquad \ol{D} \gamma =\ol{\eta(D)}\gamma, \qquad D\in \mcC. 
\end{equation} 
\end{definition}

\begin{proposition}    \label{Her} 
  Let $\la , \eta: \mcC\to \mbb C$ be two homomorphisms. Then there is a natural isomorphism
  $$
  V_{\la,\eta}\cong (\rho_\la\otimes \ol\rho_\eta)^{\pi_1(U,y)}.
  $$ 
  \end{proposition}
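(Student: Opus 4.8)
The plan is to exhibit a chain of canonical isomorphisms
$$
V_{\la,\eta} \;\cong\; \Hom_{\pi_1(U)}(\rho_\la^\vee \otimes \ol\rho_\eta^\vee, \text{triv})^{\text{op}}?\ \ \text{—more precisely}\ \ V_{\la,\eta}\cong (\rho_\la\otimes \ol\rho_\eta)^{\pi_1(U)},
$$
built by trivializing $\Omega^{1/2}|_U$ and analyzing the two systems of differential equations separately. First I would pass to the open dense $U$ where $M(\la)^0|_U=\mcV(\la)$ and $M(\eta)^0|_U=\mcV(\eta)$ are holomorphic vector bundles with flat connections, and note that since the equations $D\gamma=\la(D)\gamma$ and $\ol D\gamma=\ol{\eta(D)}\gamma$ are holonomic (indeed the joint system is holonomic because the characteristic variety of $\mcC$ meets $T^*_yY$ only at $0$ for $y\in U$), any distributional solution $\gamma$ is automatically a real-analytic section of $\Omega^{1/2}$ on $U$: it lies in the kernel of an elliptic system (holomorphic plus anti-holomorphic equations together control all derivatives). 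So $V_{\la,\eta}$ consists of honest single-valued real-analytic sections.

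Next I would work locally on a small ball $B\subset U$. On $B$ the line bundle $\Omega^{1/2}=L\otimes\ol L$ is trivial, and a section $\gamma$ of it can be written as in the heuristic formula $\gamma|_B=\sum_{i,j}a_{ij}\phi_i\ol\psi_j$: concretely, the holomorphic equations $D\gamma=\la(D)\gamma$ say that, viewing $\gamma$ as an $\ol L|_B$-valued solution of the holomorphic system, it is a flat section of $\mcV(\la)\otimes C^\infty_{\ol B}$, hence has the form $\sum_i \phi_i(\mbf z)\,c_i(\ol{\mbf z})$ with $\{\phi_i\}$ a basis of holomorphic solutions on $B$ (i.e. of $\mcF_\la(B)$) and $c_i$ smooth anti-holomorphic-coefficient functions; the anti-holomorphic equations $\ol D\gamma=\ol{\eta(D)}\gamma$ then force each $c_i(\ol{\mbf z})$ to be an anti-holomorphic solution of the conjugate system, i.e. a $\CC$-linear combination of a basis $\{\ol\psi_j\}$ of $\ol{\mcF_\eta(B)}$. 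Using the initial-value isomorphisms $M(\la)_y\xrightarrow{\sim}\mcF_\la(B)$, $v\mapsto f_v$, and its conjugate, this identifies $V_{\la,\eta}(B)$ with $M(\la)_y\otimes_\CC \ol{M(\eta)_y}$ — i.e. with $\rho_\la\otimes\ol\rho_\eta$ as a vector space, canonically once we fix $y$.

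Finally I would impose single-valuedness globally on $U$: a local solution on $B$ extends to a single-valued section of $\Omega^{1/2}$ over all of $U$ precisely when the corresponding element of $M(\la)_y\otimes\ol{M(\eta)_y}$ is invariant under the joint monodromy action of $\pi_1(U,y)$, which acts by $\rho_\la\otimes\ol\rho_\eta$ (the monodromy of the trivialization of $\Omega^{1/2}=L^{\otimes2}\otimes\ol{L}^{\otimes2}\cong K\otimes\ol K$ being absorbed correctly — this is where the hypothesis $L^{\otimes2}\cong K$, so that $\gamma$ transforms as a half-density pair, matters, and the monodromy factors acting on $\phi_i$ and on $\ol\psi_j$ are exactly $\rho_\la$ and $\ol\rho_\eta$ with no extra twist). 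Analytic continuation shows this correspondence $\gamma\mapsto (\text{its germ at }y)$ is a bijection onto $(\rho_\la\otimes\ol\rho_\eta)^{\pi_1(U)}$, and it is manifestly $\CC$-linear and independent of auxiliary choices up to the canonical identifications. The main obstacle is the second step: carefully checking that the joint holomorphic/anti-holomorphic system on the $C^\infty$ bundle $\Omega^{1/2}$ decouples so that the general smooth (a priori only distributional) solution really is the finite sum $\sum a_{ij}\phi_i\ol\psi_j$ with \emph{constant} $a_{ij}$ — i.e. that there is no room for solutions mixing the two structures in a non-product way, and that elliptic regularity upgrades distributions to real-analytic sections; once that local structure theorem is in hand, the globalization via monodromy invariance is formal.
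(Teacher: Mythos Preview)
Your proposal is correct and follows essentially the same route as the paper: identify the space of local solutions near $y$ with $\rho_\la\otimes\ol\rho_\eta$ via the product solutions $f_v\otimes\overline{f_w}$, then note that global single-valued solutions are exactly the $\pi_1(U)$-invariants under analytic continuation. The only cosmetic difference is that the paper handles the local step by a dimension count---the joint real holonomic system has rank $n^2$ and the $n^2$ products $f_{v_i}\otimes\overline{f_{w_j}}$ are independent solutions, hence span---rather than your explicit decoupling argument, which also makes real-analyticity of distributional solutions fall out without a separate appeal to elliptic regularity.
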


\begin{proof} Let $v\in M(\la)|_y$, $w\in M(\eta)|_y$. Then the local
  section $f_v\otimes \overline{f_w}$ of $\Omega^{1/2}$ satisfies
  \eqref{eq400}.
But \eqref{eq400} is a real holonomic system of rank $n^2$. Hence any
local solution $\gamma$ of \eqref{eq400} can be written as
\begin{equation}    \label{locsum}
\gamma =\sum_jf_{v_j} \overline{f_{w_j}}, \qquad v_j\in M(\la)|_y ,
\quad w_j \in M(\eta)|_y.
\end{equation}
This implies that the linear map $v\otimes \ol{w}
  \mapsto f_v\otimes \overline{f_w}$
defines an isomorphism from $\wt\rho_\la\otimes \ol{\wt\rho}_\eta \simeq
\rho_\la\otimes \ol\rho_\eta$ to the space $W_{\la,\eta}$ of local
solutions of \eqref{eq400} near $y$ 
(in particular, all such solutions are real analytic). This isomorphism clearly commutes with the action of $\pi_1(U,y)$ (given by analytic continuation), hence defines an isomorphism 
$$
(\rho_\la\otimes \ol\rho_\eta)^{\pi_1(U,y)}\cong W_{\la,\eta}^{\pi_1(U,y)}=V_{\la,\eta},
$$ 
as claimed. 
\end{proof} 

\begin{remark}    \label{Her1}
Let again $\mcV(\la) = M(\la)^0|_U$. We denote by the same symbol
the corresponding $C^\infty$ vector bundle with a flat connection on
$U$. Proposition \ref{Her} is equivalent to the existence of an
isomorphism
\begin{equation}    \label{Her2}
V_{\la,\eta} \simeq \on{Hom}(({\mc C}^\infty_U,d),{\mc V}(\la) \otimes
\ol{{\mc V}(\eta)}),
\end{equation}
where $({\mc C}^\infty_U,d)$ denotes the $C^\infty$ trivial flat line
bundle on $U$.
\end{remark}

\section{Differential operators on $\Bun_G$}    \label{diffBun}

In this section, we describe some properties of the algebra $\mcA$ of
differential operators acting on the line bundle $\Omega^{1/2}$ on
$\Bun_G$ in the case when there are no parabolic structures
(i.e. $N=0$). We will extend these results to the case of parabolic
structures in Section \ref{parab}.

\subsection{Opers}    \label{opers}

Recall from \cite{BD:opers} that an (algebraic, hence holomorphic)
$\LG$-oper on $X$ is a triple $({\mc F},\nabla,{\mc F}_{\LB})$, where
${\mc F}$ is a (holomorphic) $\LG$-bundle on $X$, $\nabla$ is a
(holomorphic) connection on ${\mc F}$, and ${\mc F}_{\LB}$ is a
(holomorphic) reduction of ${\mc F}$ to a Borel subgroup $\LB
\subset \LG$ satisfying the oper transversality condition (as defined
in \cite{BD:opers}). Denote the  variety of $\LG$-opers on 
$X$ by $\on{Op}_{\LG}(X)$.

Recall that we are under the assumption that $G$ is simple and
simply-connected. Then $\LG$ is of adjoint type. In this case,
$\on{Op}_{\LG}(X)$ is the affine space of all holomorphic connections
on a particular holomorphic $\LG$-bundle ${\mc F}_0$ on $X$. In other
words, for every holomorphic connection $\nabla$ on ${\mc F}_0$, there
is a unique Borel reduction of ${\mc F}$ satisfying the oper condition
(see \cite{BD}).

Given $\lambda \in \on{Op}_{\LG}(X)$, we denote the corresponding flat
$\LG$-bundle $({\mc F}_0,\nabla_\la)$ by ${\mc E}(\la)$.

\subsection{Global holomorphic differential operators on
  $\Bun_G$}    \label{holom}

Let ${\mc D}_G$ be the sheaf of holomorphic differential operators
acting on the line bundle $K^{1/2}$ and $D_G=\Gamma (\Bun_G, \mcD_G)$.
In the case we are considering now (there are no marked points on $X$,
i.e.  $S=\emptyset$) the commutative subalgebra of $D_G$ discussed in
the Introduction coincides with $D_G$ itself. Furthermore, Beilinson
and Drinfeld proved \cite{BD} that there is a canonical isomorphism
\begin{equation}    \label{BDisom}
D_G \simeq \mcC_{\LG},
\end{equation}
where $\mcC_{\LG}$ is the algebra of regular functions on
$\on{Op}_{\LG}(X)$.

\begin{remark} The generalization of the Beilinson-Drinfeld
  construction to the moduli stack of $G$-bundles on $X$ with
  parabolic structures at $N$ marked points is discussed in Section
  \ref{parab} below. Then we obtain a commutative subalgebra of the
  algebra of global differential operators, which is isomorphic to the
  space of functions on an appropriate space of $\LG$-opers with
  regular singularities at the marked points.
\end{remark}

For an $\LG$-oper $\la \in \on{Op}_{\LG}(X)$ we denote by $\fm_\la$ the
corresponding maximal ideal in the commutative algebra $D_G$ and by
$I_\la = {\mc D}_G \cdot \fm_\la$ the corresponding left ideal in the
sheaf ${\mc D}_G$. Beilinson and Drinfeld \cite{BD} constructed the
$K^{1/2}$-twisted holonomic $D$-module on $\Bun_G$
$$
\Delta_\la := {\mc D}_G/I_\la
$$
They proved that the corresponding untwisted $D$-module 
$$
\Delta_\la^0 = K^{-1/2} \otimes_{\mc O} \Delta_\la
$$
is a {\em Hecke eigensheaf} with respect to ${\mc E}(\la)$.

For completeness, we recall this notion in the next subsection.

\subsection{Hecke eigensheaves}    \label{Hecke1}

Let $D(\Bun_G)$ be the category of $D$-modules on $\Bun_G$ and $D(X
\times \Bun_G)$ the category of $D$-modules on $X \times \Bun_G$.

To any algebraic finite-dimensional representation $R$ of $\LG$ one
can associate (see \cite{BD}) the {\it Hecke correspondence} $\mcH_R$
over $\Bun_G\times (X \times \Bun _G)$ and a $D$-module ${\mc F}_R$ on
$\mcH_R$ which gives rise to the {\it Hecke functor} $H_R: D(\Bun_G)
\to D(X \times \Bun_G)$.

On the other hand, let ${\mc E}$ be an algebraic (equivalently,
holomorphic) flat $\LG$-bundle on $X$. For each representation $R$
of $\LG$, let
\begin{equation}    \label{assocbdle}
R_{\mc E} := {\mc E} \underset{\LG}\times R
\end{equation}
be the associated flat vector bundle on $X$. We view it as a
$D$-module on $X$.

A $D$-module $M$ on $\Bun_G$ is called a {\em Hecke eigensheaf} with
respect to ${\mc E}$ if for every $R \in \on{Rep} \LG$ there is an
isomorphism
\begin{equation}    \label{Hecke}
H_R(M) \simeq R_{{\mc E}} \boxtimes M,
\end{equation}
and these isomorphisms are compatible with the monoidal structures on
both sides.

Now let $\la \in \on{Op}_{\LG}(X)$ and ${\mc E}(\la)$ the
corresponding flat $\LG$-bundle on $X$. Let $\Delta^0_\la$ be the
$D$-module on $\Bun_G$ defined in the previous subsection. The
following statement is proved in \cite{BD}.

\begin{theorem}    \label{Heckeeig}
$\Delta^0_\la$ is a Hecke eigensheaf with respect to 
${\mc E}(\la)$.
\end{theorem}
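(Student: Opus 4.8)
The theorem asserts that in the two cases ($G=GL_1$ with $S=\emptyset$, and $G=SL_2$, $X=\mathbb{P}^1$, $|S|=4$), the set $\on{Op}_{\LG,\mcH}(X,S)$ of $\LG$-opers appearing in the spectrum of $\mcA$ on $\mcH = L^2(\Bun_G)$ coincides with the set $\on{Op}_{\LG}(X,S)_{\mathbb R}$ of opers whose monodromy representation is isomorphic to its complex conjugate (equivalently, has monodromy in the split real form).

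The plan is to deduce the Hecke eigensheaf property from the Beilinson--Drinfeld localization construction for the affine Kac--Moody algebra $\ghat$ at the critical level $\kappa_c$. Recall that $K^{1/2}$-twisted $D$-modules on $\Bun_G$ arise from smooth $\ghat_{\kappa_c}$-modules via a localization functor $\on{Loc}$: using the uniformization of $\Bun_G$ by the formal loop group at a point $x\in X$, a $\g(\OO_x)$-equivariant $\ghat_{\kappa_c}$-module $M$ yields a $\mcD_G$-module $\on{Loc}(M)$, and the vacuum module $\mathbb{V}_{\kappa_c}=\on{Ind}_{\g(\OO_x)}^{\ghat_{\kappa_c}}\CC$ localizes to $\mcD_G$ regarded as a left module over itself. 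By the Feigin--Frenkel theorem \cite{FF}, the center of the completed enveloping algebra of $\ghat$ at the critical level is the algebra of functions on the space of $\LG$-opers on the formal disc; passing to global sections, its action on $\mathbb{V}_{\kappa_c}$ localizes to the action on $D_G=\Gamma(\Bun_G,\mcD_G)$ that exhibits the isomorphism $D_G\simeq\mcC_{\LG}$ of \eqref{BDisom}. For $\la\in\on{Op}_{\LG}(X)$, the module $\mathbb{V}_\la:=\mathbb{V}_{\kappa_c}/(\fm_\la\cdot\mathbb{V}_{\kappa_c})$ then satisfies $\on{Loc}(\mathbb{V}_\la)\simeq\mcD_G/I_\la=\Delta_\la$, so that $\Delta_\la^0=K^{-1/2}\otimes_{\mcO}\Delta_\la$ is the localization of $\mathbb{V}_\la$.

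Next I would use the Kac--Moody description of the Hecke functors. Geometric Satake, in its critical-level incarnation through Weyl modules (\cite{FG1}, using Wakimoto modules \cite{F:wak}), attaches to each $R\in\Rep\LG$ a family of critical-level $\ghat$-modules $\mathbb{W}_{R,x}$ parametrized by $x\in X$, such that fusion with $\mathbb{W}_{R,x}$ at the moving point $x$ corresponds under $\on{Loc}$ to the Hecke functor $H_R$ on $D(X\times\Bun_G)$. The heart of the argument is then the critical-level Hecke computation: an isomorphism of families over $X$
$$
\mathbb{W}_{R}\ast\mathbb{V}_\la\;\simeq\;R_{\mcE(\la)}\boxtimes\mathbb{V}_\la,
$$
where $R_{\mcE(\la)}=\mcE(\la)\times_{\LG}R$ is the flat vector bundle on $X$ associated to the oper $\la$. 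This follows from the factorizable form of the Feigin--Frenkel isomorphism: specializing the action of the center on the Weyl-module family through the oper character $\la$ reconstructs exactly the flat $\LG$-bundle $\mcE(\la)$ underlying $\la$, hence its associated bundles. Applying $\on{Loc}$, which by construction is exact on these modules and intertwines the fusion product with $H_R$, turns this into $H_R(\Delta_\la^0)\simeq R_{\mcE(\la)}\boxtimes\Delta_\la^0$.

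Finally I would verify compatibility with the monoidal structures: the composition isomorphism $H_{R_1}\circ H_{R_2}\simeq H_{R_1\otimes R_2}$ must match the canonical isomorphism $(R_1)_{\mcE(\la)}\otimes(R_2)_{\mcE(\la)}\simeq(R_1\otimes R_2)_{\mcE(\la)}$ of associated bundles. On the Kac--Moody side this reduces to the associativity and commutativity of the fusion product of the modules $\mathbb{W}_{R}$ together with the factorization structure on $\mathbb{V}_{\kappa_c}$, both of which are built into the chiral-algebra formalism in which these modules are constructed; the compatibility then follows formally.

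\textbf{Main obstacle.} The decisive and hardest ingredient is the critical-level Hecke computation $\mathbb{W}_{R}\ast\mathbb{V}_\la\simeq R_{\mcE(\la)}\boxtimes\mathbb{V}_\la$. It requires the Feigin--Frenkel isomorphism not merely for the vacuum module but in its full factorizable form, describing how the center acts on Weyl modules, together with a precise analysis of the way an oper $\la$ canonically determines the flat $\LG$-bundle $\mcE(\la)$ (so that twisting the Weyl-module family by $\la$ produces the associated bundles $R_{\mcE(\la)}$ rather than some other twist). A second, more technical point is setting up $\on{Loc}$ carefully enough --- in particular passing from the one-point uniformization of $\Bun_G$ to a Ran-space version --- so that it is manifestly exact on the relevant modules and genuinely intertwines fusion with the Hecke functors; this is standard in the Beilinson--Drinfeld framework but is where most of the bookkeeping lives.
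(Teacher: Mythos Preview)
Your opening paragraph misidentifies the statement: Theorem \ref{Heckeeig} is the assertion that $\Delta^0_\la$ is a Hecke eigensheaf with respect to $\mcE(\la)$, not the statement about the spectrum coinciding with $\on{Op}_{\LG}(X,S)_{\mathbb R}$ in the two special cases. That is a different theorem in the paper. You should delete that paragraph entirely, since nothing in the rest of your proposal depends on it.

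From the second paragraph on, your proposal does address the correct statement and sketches the Beilinson--Drinfeld localization argument: the critical-level vacuum module localizes to $\mcD_G$, the Feigin--Frenkel center acts through $\on{Fun}\on{Op}_{\LG}$, Hecke functors correspond to fusion with Weyl modules, and the key step is the identification $\mathbb{W}_R\ast\mathbb{V}_\la\simeq R_{\mcE(\la)}\boxtimes\mathbb{V}_\la$. This is indeed the shape of the proof in \cite{BD}. Note, however, that the paper does not give its own proof of this theorem at all: it is stated with the attribution ``The following statement is proved in \cite{BD}'' and used as a black box. So there is no in-paper argument to compare against; your sketch is a reasonable outline of what happens in the cited reference, and your identification of the main obstacle (the factorizable Feigin--Frenkel isomorphism and the precise intertwining of localization with Hecke functors) is accurate.
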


\begin{remark}
  The $D$-module $\Delta^0_\la$ (or the corresponding perverse sheaf)
  on $\Bun_G$ may be viewed as an analogue of an unramified Hecke
  eigenfunction of the classical theory (over $\Fq$) in the
  sheaf-theoretic version of the geometric Langlands correspondence
  over $\CC$.
\end{remark}

Applying the Riemann-Hilbert correspondence to the $D$-module
$\Delta_\la^0$, we obtain a perverse sheaf ${\mc K}_{{\mc E}(\la)}$ on
$\Bun_G$. In the same way as above, one defines the notion of the
Hecke functors on the category of perverse sheaves on $\Bun_G$. A
Hecke eigensheaf in this category is then a perverse sheaf $M$
equipped with a system of isomorphisms \eqref{Hecke} with respect to
these functors (compatible with the monoidal structures) in which
instead of the flat vector bundle $R_{\mc E}$ on $X$ we take the
locally constant sheaf of its horizontal sections (in the analytic
topology). We obtain that the sheaf ${\mc K}_{{\mc E}(\la)}$ is a
Hecke eigensheaf with respect to the sheaf of horizontal sections of
${\mc E}(\la)$.

Recall that for an $\LG$-oper $\la$ on $X$ we denote by $\rho_\la$ the
corresponding monodromy representation $\pi_1(X) \to \LG$.

Let $U$ be an open dense subset of $\Bun_G$ such that
$\Delta^0_\la{}|_U$ is a vector bundle with a flat connection, which
we denote by $\mcV_\la$. We denote the corresponding locally constant
sheaf on $U$ by ${\mc S}_\la$. Theorem \ref{Heckeeig} and Lemma
\ref{reg} imply the following statements.

\begin{corollary}\label{eig}
\hfill 
\begin{enumerate}
\item The flat $\LG$-bundle ${\mc E}(\la)$ and the monodromy
  representation $\rho_\la$ are uniquely determined by the $D$-module
  $\Delta^0_\la$ and the corresponding perverse sheaf ${\mc K}_{{\mc
      E}(\la)}$.

\item If the $D$-module $\Delta^0_\la$ is regular holonomic and
  irreducible, then ${\mc E}(\la)$ and $\rho_\la$ are uniquely
  determined by the flat vector bundle $\mcV_\la$ and the
  corresponding locally constant sheaf ${\mc S}_\la$.
\end{enumerate}
\end{corollary}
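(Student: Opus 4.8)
The plan is to deduce both parts from the Hecke eigensheaf property (Theorem \ref{Heckeeig}) together with Lemma \ref{reg}; the only substantive ingredient is Tannakian reconstruction of a flat $\LG$-bundle from its associated system of flat vector bundles.

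For part (1), observe first that the Hecke functors $H_R\colon D(\Bun_G)\to D(X\times\Bun_G)$, $R\in\Rep\LG$, depend only on the geometry of $X$ and $\Bun_G$, so $H_R(\Delta^0_\la)$ is intrinsically attached to the $D$-module $\Delta^0_\la$. By Theorem \ref{Heckeeig} we have $H_R(\Delta^0_\la)\simeq R_{\mcE(\la)}\boxtimes\Delta^0_\la$. Since $\Delta^0_\la\neq 0$, the factor $R_{\mcE(\la)}$ is recovered from this external product: choosing $b_0\in\Bun_G$ in the dense open $U$ where $\Delta^0_\la$ restricts to a flat vector bundle of some rank $n\geq 1$ and restricting $R_{\mcE(\la)}\boxtimes\Delta^0_\la$ along $X\times\{b_0\}\hookrightarrow X\times\Bun_G$ yields $R_{\mcE(\la)}^{\oplus n}$ as a $D$-module on $X$; since regular holonomic $D$-modules form a finite-length, hence Krull--Schmidt, category, this determines $R_{\mcE(\la)}$ up to isomorphism.

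Doing this for every $R$ and keeping track of the monoidal isomorphisms that are part of the eigensheaf data, one recovers the tensor functor $R\mapsto R_{\mcE(\la)}$ from $\Rep\LG$ to flat vector bundles on $X$; composing with the fiber at a point of $X$ gives a fiber functor on $\Rep\LG$, and Tannakian duality returns the flat $\LG$-bundle $\mcE(\la)=({\mc F}_0,\nabla_\la)$, and hence its monodromy $\rho_\la$, uniquely up to isomorphism. The statement for the perverse sheaf ${\mc K}_{\mcE(\la)}$ follows identically via the Riemann--Hilbert correspondence: ${\mc K}_{\mcE(\la)}$ is the perverse sheaf attached to $\Delta^0_\la$, the eigensheaf property becomes $H_R({\mc K}_{\mcE(\la)})\simeq {\mc L}_R\boxtimes {\mc K}_{\mcE(\la)}$ with ${\mc L}_R$ the local system of horizontal sections of $R_{\mcE(\la)}$, and the same recovery-plus-Tannaka argument, now with local systems on $X$, reconstructs $\rho_\la$ and therefore $\mcE(\la)$.

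For part (2), assume in addition that $\Delta^0_\la$ is regular holonomic and irreducible. By the choice of $U$, its restriction there is the flat vector bundle $\mcV_\la$ with associated locally constant sheaf $\mcS_\la$, and $\mcV_\la\neq 0$ since a nonzero Hecke eigensheaf cannot be supported on a proper closed substack of $\Bun_G$. Lemma \ref{reg} then identifies $\Delta^0_\la$ with the intermediate extension of $\mcV_\la$, so $\Delta^0_\la$ is uniquely determined by $\mcS_\la$ (equivalently by $\mcV_\la$); together with part (1) this shows that $\mcE(\la)$ and $\rho_\la$ are uniquely determined by $\mcV_\la$ and $\mcS_\la$. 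The only point that is not purely formal is the Tannakian step: one must check that the monoidal compatibilities in the definition of a Hecke eigensheaf are genuinely recovered from the bare $D$-module $\Delta^0_\la$, so that the reconstructed tensor functor retains enough structure to pin down the $\LG$-bundle $\mcE(\la)$ and not merely the individual vector bundles $R_{\mcE(\la)}$; the remaining ingredients are routine.
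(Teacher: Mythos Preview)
Your proof is correct and follows the same approach as the paper, which simply states that the corollary follows from Theorem~\ref{Heckeeig} and Lemma~\ref{reg} without further elaboration. You have spelled out the mechanism the paper leaves implicit: recovering each $R_{\mcE(\la)}$ from $H_R(\Delta^0_\la)$ by restriction and Krull--Schmidt, then invoking Tannakian reconstruction to pass from the associated flat vector bundles back to the flat $\LG$-bundle; your caveat about recovering the monoidal compatibilities is a genuine subtlety that the paper does not address explicitly either, but it is handled by the fact that the Hecke functors themselves carry a canonical monoidal structure intrinsic to the geometry.
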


\begin{remark} It is known \cite{BD} that $\Delta^0_\la|_U$ is a
  vector bundle with a flat connection if we take as $U$ the subset of
  $G$-bundles ${\mc F}$ such that the vector bundle $\g_{\mc F}
  \otimes K_X$, where $K_X$ is the canonical line bundle on $X$, does
  not admit non-zero sections taking nilpotent values everywhere on
  $X$.
\end{remark}

\subsection{The involution $\nu$}

Recall the anti-involution $\nu$ from Section \ref{cananti}. In our
case, we have an anti-involution $\nu$ on the sheaf ${\mc D}_G$ and
hence on its algebra $D_G$ of global sections.
Hence it defines an involution on the space
$\on{Op}_{\LG}(X)$, which we also denote by $\nu$.

On the other hand, let $\tau$ be a Chevalley involution of $\LG$, see
\cite{AV}, Sect. 2 for a precise definition. This is an automorphism
of $\LG$ that is well-defined up to an inner automorphism.\footnote{To
  fix $\tau$, we need to fix a ``pinning'' of $\g$, i.e. a Borel
  subgroup $B$ of $G$, a Cartan subgroup $T \subset B$ and non-zero
  nilpotent generators of the nilpotent radical of $\on{Lie}(B)$
  corresponding to the simple roots. Then there is a unique $\tau$
  preserving these data \cite{AV}.} We recall the following properties
of $\tau$:

\begin{enumerate}

\item For any representation $R$ of $\LG$, the representation
  $\tau^*(R)$ is isomorphic to $R^*$.

\item If $\LG$ is abelian, then $\tau(g) = g^{-1}$.

\item If $\LG=PGL_n$, then we can take $\tau(g) = (g^t)^{-1}$.

\item If $\LG$ is simple, then $\tau$ gives rise to the automorphism
  $\sigma$ of the Dynkin diagram that sends the node $a$ to
  $\sigma(a)$, where $\alpha_{\sigma(a)} = -w_0(\alpha_a)$.

\end{enumerate}

Now we can describe explicitly the action of $\nu$ on
$\on{Op}_{\LG}(X)$. We will assume that in the definition of
$\LG$-opers, we will use a Borel subgroup $\LB$ stable under $\tau$.
Given an $\LG$-oper $({\mc F},\nabla,{\mc F}_{\LB})$, we obtain a new
$\LG$-oper $(\tau({\mc F}),\tau(\nabla),\tau({\mc F}_{\LB}))$. It is
easy to see that both the oper bundle ${\mc F}$ and its Borel
reduction ${\mc F}_{\LB}$ are stable under $\tau$. Therefore, we
obtain an involution on the space of opers, which we denote by
$\tau$.

Recall that $\on{Op}_{\LG}(X)$ consists of all holomorphic connections
$\nabla$ on a particular $\LG$-bundle ${\mc F}_0$ (the Borel reduction
of ${\mc F}_0$ is uniquely determined by $\nabla$). The bundle
${\mc F}_0$ is preserved by $\tau$. Hence $\tau$ sends the flat bundle
$({\mc F}_0,\nabla)$ to $({\mc F}_0,\tau(\nabla))$.

\begin{proposition}    \label{nu}
  The action of $\nu$ on $\on{Op}_{\LG}(X)$ coincides with the action
  of the Chevalley involution $\tau$.
\end{proposition}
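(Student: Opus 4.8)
The plan is to compare the two involutions $\nu$ and $\tau$ on $\on{Op}_{\LG}(X)$ by using the Hecke eigensheaf property, which is the only bridge we have between the differential operators on $\Bun_G$ and the dual group. The key point is that $\nu$, being the Verdier/algebraic adjoint anti-involution on $\mcD_G$, intertwines the left $\mcD_G$-module $\Delta_\la$ with a right $\mcD_G$-module built from $\Delta_{\nu(\la)}$; dualizing in the $D$-module sense, one expects that the $D$-module $\Delta^0_{\nu(\la)}$ is (up to the standard holonomic duality and perhaps twist by a line bundle) the Verdier dual of $\Delta^0_\la$. Since $\Delta^0_\la$ is a Hecke eigensheaf with eigenvalue ${\mc E}(\la)$ (Theorem \ref{Heckeeig}), and Verdier duality on $\Bun_G$ interacts with the Hecke functors by sending a Hecke eigensheaf with eigenvalue ${\mc E}$ to one with eigenvalue obtained by applying $R \mapsto R^*$ to all the eigenvalue data, we conclude that $\Delta^0_{\nu(\la)}$ is a Hecke eigensheaf whose eigenvalue is the "dual" local system. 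By property (1) of the Chevalley involution $\tau$ ($\tau^*(R) \simeq R^*$ for all $R$), this dual eigenvalue is precisely ${\mc E}(\tau(\la))$. Then Corollary \ref{eig}(1), which says the eigenvalue $\LG$-bundle is uniquely determined by the eigensheaf, forces $\nu(\la) = \tau(\la)$.

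First I would make precise the relation between $\nu$ and $D$-module duality. Concretely: the anti-involution $\nu: \mcD_G \to \mcD_G$ (using the fixed isomorphism $K^{1/2}\otimes K^{1/2} \simeq K$) turns the left ideal $I_\la = \mcD_G \cdot \fm_\la$ into the situation governing $\Delta_{\nu(\la)}$, because $\nu(\fm_\la) = \fm_{\nu(\la)}$ by definition of the action of $\nu$ on opers. One then checks, locally using the explicit formula $x_i^* = x_i$, $\partial_i^* = -\partial_i$ from Section \ref{cananti}, that the naive (holonomic) dual of the cyclic module $\mcD_G/I_\la$ is the cyclic module for $\nu(\la)$, twisted appropriately by $K$. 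This is essentially the statement that the system of differential equations $\{Df = \la(D)f\}$ has formal adjoint system $\{D^\star g = \overline{\la(D)}\, g\}$ from Lemma \ref{formaladj}, read in the holomorphic (algebraic) rather than $C^\infty$ setting.

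Next I would invoke the compatibility of Verdier duality with Hecke functors. This is standard in the geometric Langlands formalism: the Hecke functor $H_R$ is built from a correspondence, and $\mathbb{D} \circ H_R \simeq H_{R^*} \circ \mathbb{D}$ (up to shift), because the correspondence kernel ${\mc F}_R$ has a self-duality relating $R$ and $R^*$. Hence if $M$ is a Hecke eigensheaf with eigenvalue system $\{R \mapsto R_{\mc E}\}$, then $\mathbb{D}M$ is a Hecke eigensheaf with eigenvalue system $\{R \mapsto (R^*)_{\mc E} = (R_{\mc E})^*\}$. The flat bundle $R \mapsto (R_{\mc E})^*$ is the one associated to the $\LG$-bundle ${\mc E}$ composed with $g \mapsto (g^{-1})^t$ in each representation, i.e. it is $\tau^*({\mc E})$ by property (1) of $\tau$. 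Applying this with $M = \Delta^0_\la$ and combining with the previous step gives that $\Delta^0_{\nu(\la)}$ has eigenvalue ${\mc E}(\tau(\la))$; since also, tautologically, $\Delta^0_{\nu(\la)}$ has eigenvalue ${\mc E}(\nu(\la))$, Corollary \ref{eig}(1) yields ${\mc E}(\nu(\la)) \simeq {\mc E}(\tau(\la))$ as flat bundles, and since both $\nu(\la)$ and $\tau(\la)$ are opers on the same bundle ${\mc F}_0$ with the oper Borel reduction determined by the connection, this isomorphism of flat bundles upgrades to the equality $\nu(\la) = \tau(\la)$ of opers.

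The main obstacle will be making the first step — identifying $\nu$ with holonomic duality on the level of these specific cyclic $\mcD_G$-modules — fully rigorous, including the bookkeeping of the $K^{1/2}$-twist and the shift, and checking that $\nu(\fm_\la) = \fm_{\nu(\la)}$ is compatible with the isomorphism $D_G \simeq \mcC_{\LG}$ of \eqref{BDisom} in the way required (that is, that the involution $\nu$ on $\on{Op}_{\LG}(X)$ is literally induced by the anti-involution $\nu$ on $D_G$ through this isomorphism, which is how it was defined, but one must verify it threads through the Beilinson–Drinfeld construction correctly). It is plausible that the authors instead give a more direct/local argument — e.g. writing opers in the canonical form as connections $\partial_z + p_{-1} + \sum_i v_i(z) p_i$ and tracking how the algebraic adjoint acts on the corresponding differential operator on $X$ — which would bypass the global duality statement; but the eigensheaf route above is the conceptually cleanest and relies only on results already quoted in the excerpt (Theorems \ref{Heckeeig}, \ref{BD1isom}, Corollary \ref{eig}, and the properties of $\tau$). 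I would present the eigensheaf argument as the main line and relegate the local verification of $\nu(\fm_\la) = \fm_{\nu(\la)}$ to a lemma.
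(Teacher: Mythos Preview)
Your approach is correct in outline but genuinely different from the paper's. The paper (Section \ref{proofnu}) does not use Hecke functors or Verdier duality at all; instead it works at the level of the vertex algebra $\mathfrak{D}_{G,\on{crit}}$ of chiral differential operators on $G$, whose localization is $\Delta_*(K)$ on $\Bun_G\times\Bun_G$. They identify $\nu$ as the involution exchanging the two commuting $\mcD_G$-actions on $\Delta_*(K)$ (their Lemma \ref{omega}), and then invoke a result of Frenkel--Gaitsgory (\cite{FG}, Theorem 5.4 $=$ their Proposition \ref{images of z}) which says that the two embeddings of the center $\mathfrak{z}_{\on{crit}}$ into $\mathfrak{D}_{G,\on{crit}}$ differ precisely by the Chevalley involution $\tau$. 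A commutative diagram then pushes this down to $D_G\simeq\on{Fun}\on{Op}_{\LG}(X)$.

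Your route trades the vertex-algebra input for two standard geometric facts: (a) $\mathbb{D}\circ H_R\simeq H_{R^*}\circ\mathbb{D}$, and (b) injectivity of the forgetful map from opers to flat $\LG$-bundles. Amusingly, your ``main obstacle'' --- identifying $\mathbb{D}(\Delta_\la)$ with $\Delta_{\nu(\la)}$ --- is exactly what the paper proves later as Lemma \ref{exact} (Section \ref{proof*}), using only the Koszul resolution and exactness of the localization functor; so that step is solid. The trade-off: the paper's proof is local in spirit (everything happens at the level of $\ghat_{\on{crit}}$ and its center) and needs no statement about injectivity of opers into local systems, but requires the nontrivial \cite{FG} result; your argument stays on $\Bun_G$, uses only tools already present in the Beilinson--Drinfeld framework, and in fact gives Lemma \ref{V*} for free along the way, but you must import (a) and (b) as black boxes. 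Both are legitimate; the paper's choice is presumably because \cite{FG} was written by one of the authors and the vertex-algebra picture is closer to how the isomorphism \eqref{BDisom} is constructed in the first place.
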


We sketch the proof in Section \ref{proofnu}; it relies on Theorem 5.4
of \cite{FG}.

For an $\LG$-oper $\la$, we denote $\tau(\la)$ by $\la^*$. The above
property (1) of $\tau$ 
implies that the flat $\LG$-bundles ${\mc E}(\la)$ and ${\mc
  E}(\la^*)$ corresponding to $\la$ and $\la^*$, respectively, have
the following property:
\begin{equation}    \label{dual}
R_{{\mc E}(\la^*)} \simeq R^*_{{\mc E}(\la)}, \qquad \forall \; R \in
\on{Rep} \LG.
\end{equation}
(see formula \eqref{assocbdle} for the notation).

Likewise, applying $\tau$ to an anti-holomorphic $\LG$-oper $\mu$, we
obtain another anti-holomor\-phic $\LG$-oper, which we denote by
$\mu^*$. Clearly, $\ol{\la^*} = (\ol\la)^*$, i.e. it doesn't matter in
which order we apply $*$ and the complex conjugation, so we'll just
use the notation $\ol\la^*$.

\subsection{Global anti-holomorphic differential operators on
  $\Bun_G$}

We define an anti-holomorphic counterpart of the space of
$\LG$-opers: the space $\ol{\on{Op}}_{\LG}(X)$ of {\em
  anti-holomorphic} $\LG$-opers on $X$.

\begin{remark}
Instead of considering anti-holomorphic objects on a variety $Y$, we
may consider holomorphic objects on the complex conjugate variety
$\ol{Y}$ (see Section \ref{cpxconj} for more details).
\end{remark}

By definition, an anti-holomorphic $\LG$-oper $\mu$ on $X$ is a triple
$(\ol{\mc F},\ol\nabla,\ol{\mc F}_{\LB})$, where $\ol{\mc
  F}$ is an anti-holomorphic $\LG$-bundle on $X$, $\ol\nabla$ is an
anti-holomorphic connection on $\ol{\mc F}$, and $\ol{\mc F}_{\LB}$ is
an anti-holomorphic reduction of $\ol{\mc F}$ to a Borel subgroup $\LB
\subset \LG$ which satisfies the obvious analogue of the oper
transversality condition (as defined in \cite{BD:opers}).

We will denote by ${\mc E}(\mu)$ the flat $\LG$-bundle $(\ol{\mc
  F},\ol\nabla)$ on $X$ obtained by forgetting the Borel reduction in
$\mu$. As in the holomorphic case, under our assumption that $\LG$ is
of adjoint type, the map $\mu \mapsto {\mc E}(\mu)$ is an
embedding, and the image of $\ol{\on{Op}}_{\LG}(X)$ is the space of
all anti-holomorphic connections on the anti-holomorphic $\LG$-bundle
$\ol{\mc F}_0$ obtained by complex conjugation of the holomorphic
$\LG$-bundle ${\mc F}_0$ discussed above.

We will also denote by $\rho_\mu$ the monodromy representation
$\pi_1(X) \to \LG$ corresponding to ${\mc E}(\mu)$.

Note that the complex conjugate of every $\la \in \on{Op}_{\LG}(X)$,
which we denote by $\ol\la$, is in $\ol{\on{Op}}_{\LG}(X)$, and vice
versa.

\medskip

Now we discuss the anti-holomorphic counterpart of the
Beilinson-Drinfeld construction.

We apply the construction of Section \ref{gen} to the case of the line
bundle $K^{1/2}$ on $\Bun_G$. Then we obtain an anti-holomorphic line
bundle $\ol{K}^{1/2}$ on $\Bun_G$ and the sheaf $\ol{\mc D}_G$ of
anti-holomorphic differential operators on it. Denote by $\ol{D}_G$
the corresponding commutative subalgebra of the algebra of its global
sections. The Beilinson-Drinfeld isomorphism \eqref{BDisom} implies
\begin{equation}    \label{BDisomanti}
\ol{D}_G \simeq \ol{\mcC}_{\LG},
\end{equation}
where $\ol{\mcC}_{\LG}$ is the algebra of regular functions on the
space $\ol{\on{Op}}_{\LG}(X)$ of anti-holomorphic opers on $X$
introduced in Section \ref{opers}.

Now, given $\mu \in \ol{\on{Op}}_{\LG}(X)$, we construct the maximal
ideal $\ol{\fm}_\mu$ in $\ol{D}_G$, the left ideal $\ol{I}_\la =
\ol{\mc D}_G \cdot \ol{\fm}_\mu$ in $\ol{\mc D}_G$, and the $\ol{\mc
  D}_G$-module
$$
\ol\Delta_\mu = \ol{\mc D}_G/\ol{I}_\mu.
$$
Applying the anti-holomorphic version of the Riemann-Hilbert
correspondence to $\ol\Delta_\mu^0 = \ol{K}^{-1/2} \otimes
\ol\Delta_\mu$, we obtain a perverse sheaf ${\mc K}_{{\mc
    E}(\mu)}$. Theorem \ref{Heckeeig} then implies the following
result (in which we use the same notation ${\mc E}(\mu)$ for the
locally constant sheaf on $X$ of horizontal sections of the flat
$\LG$-bundle ${\mc E}(\mu)$).

\begin{theorem}
  The sheaf ${\mc K}_{{\mc E}(\mu)}$ is a Hecke eigensheaf with
  respect to ${\mc E}(\mu)$.
\end{theorem}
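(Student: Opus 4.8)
The plan is to deduce this statement by directly transporting the Beilinson--Drinfeld result (Theorem \ref{Heckeeig}) through complex conjugation, exactly as in the holomorphic case treated above. The key observation is that all the data entering the statement --- the sheaf of anti-holomorphic differential operators $\ol{\mc D}_G$, the module $\ol\Delta_\mu$, the anti-holomorphic oper $\mu$, the Hecke correspondences and Hecke functors --- are obtained from their holomorphic counterparts by the complex-conjugation functor of Section \ref{cpxconj} applied on $\Bun_G$ and on $X$. Concretely, I would first record the formal principle: if $Y$ is a smooth complex algebraic variety and $L$ a line bundle on $Y$, then complex conjugation gives an equivalence between the category of $\mcD_L$-modules and the category of $\ol{\mcD}_L$-modules, under which a regular holonomic $D$-module $M$ with underlying local system $\mcS_U$ on an open dense $U$ goes to the regular holonomic $\ol{\mcD}_L$-module whose underlying (anti-holomorphic) local system is the complex conjugate $\ol{\mcS}_U$. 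This is immediate from the cocycle description of $\ol L$ and the formula $\ol D(\ol s) = \ol{D(s)}$.

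Next I would apply this to $Y = \Bun_G$, $L = K^{1/2}$: the Beilinson--Drinfeld $D$-module $\Delta_\la$ attached to a holomorphic oper $\la$ has complex conjugate $\ol{\Delta_{\ol\la}}$; since the isomorphism $\ol D_G \simeq \ol{\mcC}_{\LG}$ of \eqref{BDisomanti} is obtained from \eqref{BDisom} by conjugation, the maximal ideal $\ol{\fm}_\mu$ corresponds under conjugation to $\fm_{\ol\mu}$ (viewing $\ol\mu \in \on{Op}_{\LG}(X)$), and hence $\ol\Delta_\mu = \ol{\Delta_{\ol\mu}}$ and likewise $\ol\Delta_\mu^0 = \ol{\Delta_{\ol\mu}^0}$. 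Applying the (anti-holomorphic) Riemann--Hilbert correspondence --- which is the composite of ordinary Riemann--Hilbert with conjugation --- to $\ol\Delta_\mu^0$ therefore produces the complex conjugate of the perverse sheaf ${\mc K}_{{\mc E}(\ol\mu)}$ attached to $\ol\mu$ by the holomorphic theory. One then notes that the flat $\LG$-bundle ${\mc E}(\mu) = (\ol{\mc F},\ol\nabla)$ is precisely the complex conjugate of ${\mc E}(\ol\mu) = ({\mc F}_0, \nabla_{\ol\mu})$, and its locally constant sheaf of horizontal sections is the complex conjugate local system.

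The final step is to check that the Hecke eigensheaf property is preserved under conjugation. Here one uses that the Hecke correspondence $\mcH_R$ over $\Bun_G \times (X \times \Bun_G)$ and the kernel $D$-module ${\mc F}_R$ on it are defined over the algebraic variety, so their complex conjugates are the Hecke correspondence and kernel for the ``anti-holomorphic'' Hecke functor; consequently $\ol{H_R(M)} = H_R^{\mathrm{anti}}(\ol M)$ for any $\mcD$-module $M$ on $\Bun_G$, compatibly with the monoidal structure in $R$. Conjugating the eigensheaf isomorphism $H_R(\Delta_{\ol\mu}^0) \simeq R_{{\mc E}(\ol\mu)} \boxtimes \Delta_{\ol\mu}^0$ from Theorem \ref{Heckeeig}, and using $\ol{R_{{\mc E}(\ol\mu)}} = R_{{\mc E}(\mu)}$ (the associated bundle construction commutes with conjugation), yields $H_R^{\mathrm{anti}}({\mc K}_{{\mc E}(\mu)}) \simeq R_{{\mc E}(\mu)} \boxtimes {\mc K}_{{\mc E}(\mu)}$ with the required compatibilities. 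The main --- indeed only --- obstacle is bookkeeping: one must set up the conjugation functor on $D$-modules, perverse sheaves, Hecke correspondences and the associated-bundle construction carefully enough that all the ``conjugation commutes with everything'' claims are genuinely functorial and compatible with monoidal structures; once that formalism is in place the proof is a one-line transport of Theorem \ref{Heckeeig}. I would therefore phrase the argument as: ``apply complex conjugation (Section \ref{cpxconj}) to all the structures in Theorem \ref{Heckeeig}, noting that it intertwines the holomorphic and anti-holomorphic versions of Riemann--Hilbert, of the Beilinson--Drinfeld isomorphism, of the Hecke functors, and of the associated-bundle functor $R \mapsto R_{\mc E}$,'' and leave the verifications to the reader as routine.
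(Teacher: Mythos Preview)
Your proposal is correct and matches the paper's approach exactly: the paper simply states that the result follows from Theorem \ref{Heckeeig} (via the complex-conjugation functor of Section \ref{cpxconj}), and your write-up spells out precisely that transport-of-structure argument in more detail than the paper does.
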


Let $U$ be an open dense subset of $\Bun_G$ such that ${\mc
  K}_{{\mc E}(\mu)}{}|_U$ is locally constant sheaf on $U$, which
we denote by ${\mc S}_\mu$. We have the following analogue of
Corollary \ref{eig}.

\begin{corollary}\label{eig1}
\hfill
\begin{enumerate}
\item The locally constant sheaf ${\mc E}(\mu)$ and the corresponding
  monodromy representation $\rho_\mu$ are uniquely determined by
  $\ol\Delta_\mu^0$ and ${\mc K}_{{\mc E}(\mu)}$.

\item If $\ol\Delta^0_\mu$ is regular holonomic and irreducible, then
  ${\mc E}(\mu)$ and $\rho_\mu$ are uniquely determined by the
  corresponding locally constant sheaf ${\mc S}_\mu$.
\end{enumerate}
\end{corollary}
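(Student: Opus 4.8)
The plan is to run exactly the argument used for Corollary \ref{eig}, but with the anti-holomorphic Beilinson--Drinfeld construction in place of the holomorphic one. For part (1): by the preceding Theorem, the perverse sheaf ${\mc K}_{{\mc E}(\mu)}$ obtained from $\ol\Delta^0_\mu$ via the (anti-holomorphic) Riemann--Hilbert correspondence is a Hecke eigensheaf with respect to the locally constant sheaf of horizontal sections of ${\mc E}(\mu)$. One then invokes the fact that a flat $\LG$-bundle (equivalently, its monodromy representation) is recovered from the collection of associated flat vector bundles $R_{{\mc E}(\mu)}$, $R \in \on{Rep}\LG$, via Tannakian reconstruction: the Hecke eigensheaf property pins down each $R_{{\mc E}(\mu)}$ as the ``eigenvalue'' of the Hecke functor $H_R$ applied to $\ol\Delta^0_\mu$ (or to ${\mc K}_{{\mc E}(\mu)}$), and the monoidal compatibility of these isomorphisms identifies the fiber functor $R \mapsto (R_{{\mc E}(\mu)})_x$ with $R \mapsto R$, hence reconstructs ${\mc E}(\mu)$ and $\rho_\mu$. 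This is the same reasoning that proves Corollary \ref{eig}(1), applied verbatim in the complex-conjugate setting (or, as the Remark after Section \ref{cpxconj} suggests, by passing to holomorphic objects on the complex conjugate variety $\ol{\Bun_G}$, where it literally is Corollary \ref{eig}(1)).

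For part (2): here we additionally assume $\ol\Delta^0_\mu$ is regular holonomic and irreducible. Choose an open dense $U \subset \Bun_G$ on which ${\mc K}_{{\mc E}(\mu)}|_U$ is a locally constant sheaf ${\mc S}_\mu$ (equivalently, on which the $D$-module restricts to a vector bundle with flat connection); such $U$ exists by the regular holonomicity, and one may take the explicit $U$ from the Remark after Corollary \ref{eig}. By Lemma \ref{reg} (the anti-holomorphic analogue, or the holomorphic one applied on $\ol{\Bun_G}$), an irreducible regular holonomic $D$-module with nonzero restriction to $U$ is the intermediate extension of that restriction, hence is uniquely determined by the local system ${\mc S}_\mu$. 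Combining this with part (1) — which recovers ${\mc E}(\mu)$ and $\rho_\mu$ from $\ol\Delta^0_\mu$ — shows ${\mc E}(\mu)$ and $\rho_\mu$ are determined by ${\mc S}_\mu$ alone.

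The only point requiring a little care, and the place I expect the main (minor) obstacle, is the transfer of the Riemann--Hilbert correspondence and of Lemma \ref{reg} to the anti-holomorphic setting: one must check that complex conjugation sends regular holonomic $D$-modules to regular holonomic $\ol{\mc D}$-modules, irreducibles to irreducibles, and intermediate extensions to intermediate extensions, so that the cited statements apply. This is formal once one adopts the viewpoint of Section \ref{cpxconj}, identifying anti-holomorphic objects on $\Bun_G$ with holomorphic objects on $\ol{\Bun_G}$; under this identification all the needed inputs (the anti-holomorphic Beilinson--Drinfeld isomorphism \eqref{BDisomanti}, the Hecke eigensheaf Theorem just stated, Lemma \ref{reg}) become instances of their holomorphic counterparts, and the corollary follows.
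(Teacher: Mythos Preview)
Your proposal is correct and matches the paper's approach: the paper gives no separate proof for Corollary \ref{eig1}, simply presenting it as the anti-holomorphic analogue of Corollary \ref{eig}, which in turn was stated as an immediate consequence of the Hecke eigensheaf theorem (Theorem \ref{Heckeeig}) together with Lemma \ref{reg}. Your write-up just makes this reasoning explicit, including the (formal) passage to the anti-holomorphic side via the complex-conjugate-variety viewpoint of Section \ref{cpxconj}.
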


\section{The spectrum and opers}    \label{specopers}

As in Section \ref{opsH}, let $\what{V}$ be the space of smooth
sections of $\Omega^{1/2}$ on $\Bunc_G$. The algebra ${\mc A} = D_G
\otimes \ol{D}_G$ acts on $\what{V}$. Given a homomorphism $\chi: {\mc
  A} \to \CC$, denote by $V_\chi \subset \what{V}$ the
$\chi$-eigenspace of ${\mc A}$. It follows from the isomorphisms
\eqref{BDisom} and \eqref{BDisomanti} that every $\chi$ is a pair
$(\la,\mu)$, where $\la$ is a holomorphic $\LG$-oper and $\mu$ is an
anti-holomorphic $\LG$-oper on $X$.

Let $\Xi$ be the set of $\chi = (\la,\mu)$ such that $V_\chi \neq
0$. We would like to show that (1) the projection onto the first
factor $\chi \mapsto \la$ from $\Xi$ to $\on{Op}_{\LG}(X)$ is an
embedding, and (2) the image $\Sigma$ of this embedding is contained
in the subset $\on{Op}_{\LG}(X)_{{\mathbb R}}$ of $\LG$-opers $\la$ on
the curve $X$ satisfying the following reality condition: the
$C^\infty$ flat $\LG$-bundles on $X$ corresponding to $\la$ and to its
complex conjugate oper $\ol\la$ are isomorphic.

Let $\mcA_{\mbb R} \subset \mcA$ be the ${\mbb R}$-subalgebra of
symmetric operators. In order to establish assertion (1), we need
additional information; namely, we need to know that the eigenvalues
of the operators corresponding to elements of $\mcA_{\mbb R}$ are real
numbers. In the abelian case (see Section \ref{abelian}) these
operators are obviously self-adjoint, so we can prove both assertions
directly. But in the non-abelian case we need another argument, and
this is where we need to know that the eigenfunction is in
$\mcH=L^2(\Bun_G)$. More precisely, we need to know that it belongs to
a dense subspace of $\mcH$ on which the elements of $\mcA_{\mbb R}$
are symmetric operators. For the subspace $S(\mcA) \subset \mcH$ (see
Definition \ref{es1}), this follows from our Conjecture
\ref{first}. In this section we will assume this conjecture and will
use it (as well as Conjecture \ref{RS}) to derive the above properties
(1) and (2) of the spectrum $\on{Spec}_{\mcA}(\mcH)$ of the
self-adjoint extension of $\mcA_{\mbb R}$ to $S(\mcA)$.\footnote{In
  the case of $G=SL_2$, $X=\pone$, and $|S|=4$, an alternative
  argument is given in the proof of Proposition \ref{l2prop} (it
  can probably be generalized to $|S|>4$).}

However, according to Conjecture \ref{basic}, $\on{Spec}_{\mcA}(\mcH)$
coincides with the above set $\Xi$. Therefore we do expect that the
above properties (1) and (2) also hold for $\Xi$.

\subsection{From the spectrum to opers: step I}

We start with the following observation.

\begin{lemma}
The operators
\begin{equation}    \label{sym}
D+\ol{\nu(D)} \quad \on{and} \quad (D-\ol{\nu(D)})/i, \qquad D \in
D_G,
\end{equation}
acting on the space $V$ of compactly supported sections of
$\Omega^{1/2} = K^{1/2} \otimes \ol{K}^{1/2}$ are symmetric.
\end{lemma}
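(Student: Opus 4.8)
The plan is to show that each operator in \eqref{sym} lies in the $\mbb R$-subalgebra $\mcA_{\mbb R} = \{A \in \mcA : A = A^\star\}$ and then to invoke the Corollary of the previous subsection, according to which every element of $\mcA_{\mbb R}$ is a (Hermitian) symmetric operator on $V$. First I would pin down the interpretation of the two expressions as elements of $\mcA = D_G \ot_\CC \ol D_G$. As recalled in the subsection ``The involution $\nu$'', the Verdier anti-involution $\nu$ is an anti-involution of $D_G$ itself, so $\nu(D) \in D_G$ and hence $\ol{\nu(D)} \in \ol D_G$; under the embedding of $\mcA$ into the algebra of smooth differential operators on $\Omega^{1/2} = K^{1/2}\ot\ol K^{1/2}$, the operator ``$D$'' of \eqref{sym} is $D \ot 1$ and ``$\ol{\nu(D)}$'' is $1 \ot \ol{\nu(D)}$.

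Next I would apply the anti-involution $A \mapsto A^\star$ on $\mcA$, defined by $C_1 \ot \ol{C_2} \mapsto C_2^* \ot \ol{C_1^*}$ with $C^* = \nu(C)$, to these two building blocks. Taking $C_1 = D$, $C_2 = 1$ gives $(D \ot 1)^\star = 1 \ot \ol{\nu(D)}$; taking $C_1 = 1$, $C_2 = \nu(D)$ and using $\nu^2 = \id$ gives $(1 \ot \ol{\nu(D)})^\star = D \ot 1$. Hence $(D + \ol{\nu(D)})^\star = D + \ol{\nu(D)}$, so the first operator lies in $\mcA_{\mbb R}$. For the second operator I would use that $A \mapsto A^\star$ is conjugate-linear: since $\overline{1/i} = -1/i$, we get $\bigl((D - \ol{\nu(D)})/i\bigr)^\star = \overline{(1/i)}\,(1 \ot \ol{\nu(D)} - D \ot 1) = (-1/i)(1 \ot \ol{\nu(D)} - D \ot 1) = (D - \ol{\nu(D)})/i$, so it too lies in $\mcA_{\mbb R}$. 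The Corollary then completes the proof.

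There is no real obstacle; the argument is pure bookkeeping with the anti-involution, and the only point needing care is the conjugation of the scalar $1/i$ — precisely what singles out $(D - \ol{\nu(D)})/i$, rather than $D - \ol{\nu(D)}$, as the symmetric combination. If one prefers to avoid $\mcA_{\mbb R}$ and the Corollary, a self-contained variant applies Lemma \ref{formaladj} twice: it gives $\langle (D\ot1)v, w\rangle = \langle v, (1\ot\ol{\nu(D)})w\rangle$ for $v,w\in V$, and conjugating the same identity with $v$ and $w$ interchanged gives $\langle (1\ot\ol{\nu(D)})v, w\rangle = \overline{\langle (D\ot1)w,v\rangle} = \langle v, (D\ot1)w\rangle$; adding these (and inserting the factor $1/i$, whose conjugate supplies the compensating sign, in the second case) yields $\langle Tv,w\rangle = \langle v,Tw\rangle$ for $T$ either of the operators in \eqref{sym}.
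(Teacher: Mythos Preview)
Your proof is correct and takes essentially the same approach as the paper, whose proof is the one-liner ``Follows from Lemma \ref{formaladj} and Proposition \ref{nu}.'' Your second variant is exactly the direct unpacking of Lemma \ref{formaladj}, and your first variant routes through the Introduction's Corollary (that every element of $\mcA_{\mbb R}$ is symmetric), which is itself an immediate consequence of that same lemma; the citation of Proposition \ref{nu} in the paper's proof is not actually needed for the symmetry statement itself.
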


\begin{proof} Follows from Lemma \ref{formaladj} and Proposition
  \ref{nu}.
\end{proof}

Let $\mcA_{\mbb R}$ be the ${\mbb R}$-span of operators \eqref{sym} in
$\mcA$.

Conjecture \ref{first} states that there is a canonical way to extend
the domain $V \subset \mcH$ to a bigger domain $S(\mcA) \subset \mcH$
so that all operators $A \in \mcA_{\mbb R}$ are essentially
self-adjoint on $S(\mcA)$. Furthermore, Conjecture \ref{second} states
that their joint spectrum $\on{Spec}_{\mcA}(\mcH)$ is discrete. (We
shall prove this in the abelian case in Section \ref{abelian} and in
the case of $\pone$ and $N=4$ points in Part II.) Therefore we can
identify $\on{Spec}_{\mcA}(\mcH)$ with a subset
$\on{Op}_{\LG,\mcH}(X)$ of pairs
$$
(\la,\mu) \in
\on{Op}_{\LG}(X) \times \ol{\on{Op}}_{\LG}(X)
$$
which is discrete with respect to the analytic topology on the right
hand side. For every $(\la,\mu) \in \on{Op}_{\LG,\mcH}(X)$ we have a
non-zero solution $\Phi \in S(\mcA)$ of the equations
\begin{equation}    \label{lamu}
(D-\la(D))\Phi = 0, \quad D \in D_G; \qquad (\ol{D} - \mu(\ol{D}))\Phi
= 0, \quad \ol{D} \in \ol{D}_G.
\end{equation}

Assuming Conjectures \ref{first} and \ref{second}, as well as
Conjecture \ref{RS} below, we will now prove Conjecture \ref{third}
which we restate as follows:

\begin{theorem}    \label{incl}
If $(\la,\mu) \in \on{Spec}_{\mcA}(\mcH)$, then (i) $\mu=\ol\la^*$; and
(ii) the monodromy representation
$\rho_\la:\pi_1(X) \to \LG$ associated to $\la$ is isomorphic to its
complex conjugate representation $\ol\rho_\la$.
\end{theorem}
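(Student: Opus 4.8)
The plan is to extract reality information from the self-adjointness hypothesis, convert it into an isomorphism of the relevant $D$-modules, and then read off the consequences for opers and monodromy. First I would use Conjecture \ref{first}: since the eigenfunction $\Phi$ from \eqref{lamu} lies in $S(\mcA)$, on which every $A \in \mcA_{\mbb R}$ is essentially self-adjoint, the corresponding eigenvalue $\chi(A)$ is real for all $A \in \mcA_{\mbb R}$. Applying this to the symmetric operators $D + \ol{\nu(D)}$ and $(D - \ol{\nu(D)})/i$ from \eqref{sym}, and using that $\chi = (\la,\mu)$ acts on $D$ by $\la(D)$ and on $\ol{\nu(D)}$ by $\ol{\mu(\nu(D))} = \ol{(\nu^*\mu)(D)}$, one gets $\la(D) + \ol{(\nu^*\mu)(D)} \in \R$ and $(\la(D) - \ol{(\nu^*\mu)(D)})/i \in \R$ for all $D$, which forces $\la(D) = \ol{(\nu^*\mu)(D)}$ as functionals on $D_G$. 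Unwinding this via the Beilinson--Drinfeld isomorphism \eqref{BDisom} and Proposition \ref{nu} (which identifies $\nu$ on $\on{Op}_{\LG}(X)$ with the Chevalley involution $\tau$, so $\nu^*\mu = \mu^*$), the identity $\la = \ol{\mu^*}$, i.e. $\mu = \ol{\la^*}$, follows. This gives part (i).

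For part (ii), I would now feed $\mu = \ol{\la^*}$ into the structure of the eigenspace. By the local description \eqref{eigenfunction} (equivalently Proposition \ref{Her} and Remark \ref{Her1}), a nonzero $\Phi \in V_\chi$ restricted to a suitable open dense $U \subset \Bunc_G$ gives a nonzero element of $(\rho^{\Bun}_\la \otimes \ol{\rho^{\Bun}_\mu})^{\pi_1(U)}$, where $\rho^{\Bun}_\la$ denotes the monodromy of the flat bundle $\mcV_\la = \Delta_\la^0|_U$ (and similarly for $\mu$). Invoking Conjecture \ref{RS} (regular singularities and irreducibility of $\Delta_\la^0$, hence of $\ol\Delta_\mu^0$), irreducibility of these local systems plus the existence of a nonzero $\pi_1(U)$-invariant in the tensor product forces $\rho^{\Bun}_\la \simeq (\ol{\rho^{\Bun}_\mu})^*$ as representations of $\pi_1(U)$. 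Since $\mu = \ol{\la^*}$, the anti-holomorphic side is the complex conjugate of the $D$-module for $\la^*$, so $\ol{\rho^{\Bun}_\mu} = \rho^{\Bun}_{\la^*}$, and thus $\rho^{\Bun}_\la \simeq (\rho^{\Bun}_{\la^*})^*$. The Hecke-eigensheaf property (Theorem \ref{Heckeeig}, Corollary \ref{eig}) lets me transport this isomorphism of $\Bun_G$-side monodromies into an isomorphism of the associated flat bundles $R_{\mc E(\la)}$ on $X$ for all $R \in \on{Rep}\LG$: the Hecke eigensheaf with eigenvalue $\mc E(\la)$ is determined by, and determines, $\mc E(\la)$. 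Combining with \eqref{dual}, which says $R_{\mc E(\la^*)} \simeq R^*_{\mc E(\la)}$, the relation $\rho^{\Bun}_\la \simeq (\rho^{\Bun}_{\la^*})^*$ translates into $R_{\mc E(\la)} \simeq R_{\ol{\mc E(\la)}}$ for all $R$, i.e. $\rho_\la \simeq \ol\rho_\la$ as homomorphisms $\pi_1(X) \to \LG(\CC)$, which is (ii).

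The main obstacle I expect is the passage in part (ii) from an isomorphism of the monodromy local systems on the open subset $U \subset \Bun_G$ to an isomorphism of the flat $\LG$-bundles $\mc E(\la)$ on the curve $X$ itself. This step genuinely uses that $\Delta_\la^0$ is \emph{irreducible} and has \emph{regular singularities} (so that, by Lemma \ref{reg}, the $D$-module is the intermediate extension of its restriction to $U$ and is thus recovered from the local system $\mcS_\la$), together with the fact that the Hecke-eigensheaf condition pins down the oper-side bundle $\mc E(\la)$ uniquely (Corollary \ref{eig}(2)). Without Conjecture \ref{RS} one only controls $\rho_\la$ up to the ambiguity of non-regular or reducible extensions, so the argument as stated is conditional; this is exactly the point flagged before Theorem \ref{incl}. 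A secondary, more technical, point is checking that the open set $U$ can be chosen simultaneously for $\Delta_\la$ and $\ol\Delta_\mu$ and that it is connected with $\pi_1(U) \twoheadrightarrow \pi_1(\Bun_G)$ behaving well, but this is handled by the remark following Corollary \ref{eig} (take $U$ to be the locus of $G$-bundles $\mc F$ with $\g_{\mc F}$ having no everywhere-nilpotent sections), so I regard it as routine once the irreducibility input is granted.
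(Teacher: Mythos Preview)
Your part~(i) is correct and matches the paper (Lemma~\ref{ollast}). Part~(ii), however, contains a complex-conjugation slip that makes the argument circular. With your convention that $\rho^{\Bun}_\mu$ is the monodromy of $\ol\Delta_\mu^0|_U$, the invariant supplied by $\Phi$ lies in $(\rho^{\Bun}_\la \otimes \rho^{\Bun}_\mu)^{\pi_1(U)}$, \emph{without} the extra bar: the anti-holomorphic bundle $\mcV'_\mu$ already carries one conjugation (compare Remark~\ref{Her1} with \eqref{Vbar}). Irreducibility then gives $\rho^{\Bun}_\la \simeq (\rho^{\Bun}_\mu)^* = (\ol{\rho^{\Bun}_{\la^*}})^*$, not your $\rho^{\Bun}_\la \simeq (\rho^{\Bun}_{\la^*})^*$. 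The latter is a tautology holding for every $\la$---it is exactly the content of Lemma~\ref{V*}---so the information carried by the eigenfunction has been discarded, and your final step is trying to manufacture a complex conjugation on $X$ out of a relation on $\Bun_G$ that contains none.

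The missing ingredient is Lemma~\ref{V*} itself, the isomorphism $\mcV_{\la^*} \simeq (\mcV_\la)^*$ on $U$, proved in Section~\ref{proof*} via Verdier duality and the Koszul resolution (using exactness of localization). Plugging this into the \emph{corrected} relation yields $(\ol{\rho^{\Bun}_{\la^*}})^* \simeq \ol{\rho^{\Bun}_\la}$, hence $\mcS_\la \simeq \mcS_{\ol\la}$ on $U$; Corollaries~\ref{eig} and~\ref{eig1} then transport this to $\rho_\la \simeq \ol\rho_\la$ on $X$. Your appeal to \eqref{dual} is on the wrong side of the Hecke correspondence: the duality statement you need is Lemma~\ref{V*} on $\Bun_G$, not \eqref{dual} on $X$.
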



Thus, we show that the spectrum $\on{Op}_{\LG,\mcH}(X)$ can be
identified with a subset of the set of $\LG$-opers $\la$ on $X$
satisfying the reality condition $\rho_\la \simeq \ol\rho_\la$. This
is the set $\on{Op}_{\LG}(X)_{{\mathbb R}}$ introduced in Section
\ref{main}.

\medskip

The first step in the proof of Theorem \ref{incl} is to use the fact
that the eigenvalues of self-adjoint operators are real numbers. Lemma
\ref{formaladj} and Proposition \ref{nu} imply the following result.

\begin{lemma}    \label{ollast}
If $(\la,\mu) \in \on{Op}_{\LG,\mcH}(X)$, then $\mu = \ol\la^*$.
\end{lemma}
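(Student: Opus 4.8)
\textbf{Proof proposal for Lemma \ref{ollast}.}

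The plan is to exploit the self-adjointness provided by Conjecture \ref{first}: since the operators \eqref{sym} are essentially self-adjoint on $S(\mcA)$, every joint eigenvalue of $\mcA_{\mbb R}$ on $\mcH$ is a real number. Concretely, suppose $(\la,\mu) \in \on{Op}_{\LG,\mcH}(X)$, so there is a nonzero $\Phi \in S(\mcA)$ satisfying \eqref{lamu}. For any $D \in D_G$, the operator $D + \ol{\nu(D)}$ belongs to $\mcA_{\mbb R}$, hence its eigenvalue on $\Phi$, namely $\la(D) + \ol{\mu(\nu(D))}$, must be real; similarly $(D - \ol{\nu(D)})/i \in \mcA_{\mbb R}$ forces $\la(D) - \ol{\mu(\nu(D))}$ to be real. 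Adding and subtracting these two reality statements yields $\la(D) = \ol{\mu(\nu(D))}$ for all $D \in D_G$.

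It remains to translate the identity $\la(D) = \ol{\mu(\nu(D))}$ of algebra homomorphisms $D_G \to \CC$ into the geometric statement $\mu = \ol\la^*$. Here I would use the Beilinson-Drinfeld isomorphisms \eqref{BDisom} and \eqref{BDisomanti}, under which homomorphisms $D_G \to \CC$ correspond to points of $\on{Op}_{\LG}(X)$ and homomorphisms $\ol D_G \to \CC$ to points of $\ol{\on{Op}}_{\LG}(X)$. The anti-involution $\nu$ on $D_G$ induces, via \eqref{BDisom}, the involution $\nu$ on $\on{Op}_{\LG}(X)$, which by Proposition \ref{nu} is exactly the Chevalley involution $\tau$, i.e. $\la \circ \nu$ corresponds to the oper $\la^*$. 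Complex conjugation intertwines $\on{Op}_{\LG}(X)$ with $\ol{\on{Op}}_{\LG}(X)$ and sends $\la^*$ to $\ol{\la^*} = \ol\la^*$. Chasing through: the homomorphism $D \mapsto \ol{\mu(\nu(D))}$ on $D_G$ is, after applying complex conjugation, the homomorphism on $\ol D_G$ given by $\mu$ precomposed with the (anti-holomorphic avatar of) $\nu$; unwinding the correspondence, the equality $\la(D) = \ol{\mu(\nu(D))}$ for all $D$ says precisely that the anti-holomorphic oper $\mu$ equals $\ol\la^*$.

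The only real content beyond bookkeeping is the invocation of Proposition \ref{nu} (identifying $\nu$ with $\tau$ on opers) and of Conjecture \ref{first} (to license the use of real eigenvalues); given those, the argument is a short diagram chase, so I do not anticipate a serious obstacle. The one point requiring slight care is making sure the two instances of $\nu$ — the one on $D_G$ and the one appearing after complex conjugation on $\ol D_G$ — are matched up correctly, i.e. that $\ol{\nu(D)} = \ol\nu(\ol D)$ in the notation of Section \ref{cpxconj}, which is immediate from the definition of $\ol{\mc D}_G$ and the compatibility of $\nu$ with complex conjugation already implicit in the formula $D^\star = \ol{\nu(D)}$ of Lemma \ref{formaladj}.
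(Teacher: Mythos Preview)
Your approach is exactly the one the paper indicates: the lemma is stated there as an immediate consequence of Lemma \ref{formaladj} and Proposition \ref{nu}, via the fact that eigenvalues of (essentially) self-adjoint operators are real, and your proposal simply unpacks that sentence.

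One small slip to fix: the eigenvalue of $(D-\ol{\nu(D)})/i$ on $\Phi$ is $(\la(D)-\mu(\ol{\nu(D)}))/i$, so reality of this eigenvalue says that $\la(D)-\mu(\ol{\nu(D)})$ is \emph{purely imaginary}, not real. With this correction (and writing the eigenvalue of $\ol{\nu(D)}$ as $\mu(\ol{\nu(D)})$ rather than $\ol{\mu(\nu(D))}$, since $\mu$ is a homomorphism on $\ol{D}_G$), the two conditions combine to give $\mu(\ol{\nu(D)})=\ol{\la(D)}$, which is precisely $\mu=\ol\la^*$ after invoking Proposition \ref{nu}. As written, your ``both real, add and subtract'' would only yield that $\la(D)$ and $\ol{\mu(\nu(D))}$ are each real, which is not what you want.
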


We have established property (i) of Theorem \ref{incl}. Thus, we can
identify $\on{Op}_{\LG,\mcH}(X)$ with a subset of $\on{Op}_{\LG}(X)$.

\subsection{From the spectrum to opers: step II}

The second step of the construction is to choose an open dense subset
$U$ of $\Bunc_G$ such that

\begin{enumerate}
\item $K^{-1/2} \otimes \Delta_\la|_U$ is a holomorphic vector bundle
  with a holomorphic flat connection ${\mc V}_\la$;

\item $\ol{K}^{-1/2} \otimes \ol\Delta_{\ol\la^*}|_U$ is an
  anti-holomorphic vector bundle with an anti-holomorphic flat
  connection ${\mc V}'_{\ol\la^*}$.
\end{enumerate}

According to Proposition \ref{Her} and Remark \ref{Her1}, the
existence of a non-zero single-valued solution to the equations
\eqref{lamu} with $\mu=\ol\la^*$ gives rise to an embedding of
$C^\infty$ flat vector bundles
\begin{equation}    \label{emb0}
({\mc C}^\infty_U,d) \hookrightarrow {\mc V}_\la \otimes {\mc
    V}'_{\ol\la^*}
\end{equation}
on $U$. We claim that ${\mc V}'_{\ol\la^*}$ can be described in terms
of ${\mc V}_\la$.

First, it is clear from the definition that there is an isomorphism of
anti-holomorphic flat vector bundles
\begin{equation}    \label{Vbar}
{\mc V}'_{\ol\la} \simeq \ol{\mc V_\la}, \quad \forall \; \la \in
\on{Op}_{\LG}(X).
\end{equation}

Second, we have the following isomorphism.

\begin{lemma}    \label{V*}
We have an isomorphism of flat holomorphic vector bundles
\begin{equation}    \label{Vdual}
{\mc V}_{\la^*} \simeq ({\mc V}_\la)^*, \quad \forall \; \la \in
\on{Op}_{\LG}(X).
\end{equation}
\end{lemma}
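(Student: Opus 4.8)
The plan is to deduce the isomorphism ${\mc V}_{\la^*} \simeq ({\mc V}_\la)^*$ from the Hecke eigensheaf property together with the duality relation \eqref{dual} between the flat bundles ${\mc E}(\la)$ and ${\mc E}(\la^*)$. Recall from Section \ref{holom} that $\Delta^0_\la = K^{-1/2}\otimes_{\mc O}\Delta_\la$ is a Hecke eigensheaf with eigenvalue ${\mc E}(\la)$, and likewise $\Delta^0_{\la^*}$ is a Hecke eigensheaf with eigenvalue ${\mc E}(\la^*)$. By restriction to the open dense $U$ on which both become vector bundles with flat connection, ${\mc V}_\la = \Delta^0_\la|_U$ and ${\mc V}_{\la^*} = \Delta^0_{\la^*}|_U$ are vector bundles with flat connection on $U$ carrying Hecke-type structures. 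The key input is property (1) of the Chevalley involution $\tau$: for every $R \in \on{Rep}\LG$, $R_{{\mc E}(\la^*)}\simeq R^*_{{\mc E}(\la)}$, i.e. the eigenvalue local systems of $\Delta^0_{\la^*}$ are the duals (with $R$ replaced by $R^*$) of those of $\Delta^0_\la$.

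First I would observe that the dual vector bundle with flat connection $({\mc V}_\la)^*$ (equivalently, the Verdier dual of $\Delta^0_\la$, restricted to $U$) is again a Hecke eigensheaf. Indeed, Hecke functors commute with Verdier duality in an appropriate sense, and dualizing the eigensheaf equation $H_R(\Delta^0_\la)\simeq R_{{\mc E}(\la)}\boxtimes\Delta^0_\la$ for all $R$ produces a system of isomorphisms $H_R(({\Delta^0_\la})^\vee)\simeq (R^*)_{{\mc E}(\la)}^{*}\boxtimes({\Delta^0_\la})^\vee$; here I use that $H_R$ applied to a dual is $H_{R^*}$ applied to the original up to duality, together with the self-duality of the Hecke correspondences $\mcH_R$. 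Substituting $R \mapsto R^*$ and comparing with property (1), the Verdier dual $({\Delta^0_\la})^\vee$ is a Hecke eigensheaf with eigenvalue ${\mc E}(\la^*)$ — exactly the same eigenvalue as $\Delta^0_{\la^*}$.

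Next I would invoke the uniqueness of Hecke eigensheaves with a given eigenvalue, in the form already packaged in this paper. Both $\Delta^0_{\la^*}$ and $({\Delta^0_\la})^\vee$ are irreducible regular holonomic $D$-modules (assuming Conjecture \ref{RS}, which is in force throughout this section) whose restrictions to $U$ are nonzero; by Lemma \ref{reg} (or rather Corollary \ref{eig}(2)) an irreducible regular holonomic Hecke eigensheaf is determined by its eigenvalue, hence by the local system ${\mc S}_\la$ on $U$. Since the two have the same eigenvalue ${\mc E}(\la^*)$, they are isomorphic, and restricting to $U$ gives ${\mc V}_{\la^*}\simeq({\mc V}_\la)^*$ as flat holomorphic vector bundles, which is \eqref{Vdual}. (For the purposes of the argument in Section \ref{specopers} one only needs this isomorphism over the open set $U$, where everything is a genuine local system, so the more delicate global $D$-module statements can be bypassed if desired.)

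The main obstacle I anticipate is making precise the compatibility of the Hecke functors with Verdier duality — i.e. the identity $\mathbb{D}\circ H_R \simeq H_{R^*}\circ\mathbb{D}$ (up to the appropriate shift/twist coming from the geometry of $\mcH_R$), and checking that the monoidal compatibilities of the eigensheaf isomorphisms are preserved under dualizing and under the substitution $R\mapsto R^*$. This is where property (1) of $\tau$ and the fact ${\mc E}(\la^*)={}^{\tau}{\mc E}(\la)$ must be matched carefully: one needs $\tau^*(R)\simeq R^*$ to convert the ``$R^*$-eigenvalue of the dual'' into the ``$R$-eigenvalue of $\Delta^0_{\la^*}$''. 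Once this bookkeeping is in place, the conclusion is immediate from the uniqueness lemma. I would also note in passing that combining \eqref{Vdual} with \eqref{Vbar} yields ${\mc V}'_{\ol\la^*}\simeq\ol{({\mc V}_\la)^*}$, which is the form in which the lemma will be used to rewrite the embedding \eqref{emb0}.
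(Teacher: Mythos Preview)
Your approach is genuinely different from the paper's, and it has a real gap.

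The paper proves Lemma \ref{V*} by a direct algebraic computation of the Verdier dual: using a Koszul resolution of $\Delta_\la$ (valid because the localization functor is exact, by flatness of the Hitchin map), one computes $R^d{\mc Hom}(\Delta_\la,{\mc D}_G)$ explicitly and applies $\nu^*$. Proposition \ref{nu} then identifies the result with $\Delta_{\la^*}$. This argument is unconditional and does not touch Hecke functors at all.

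Your approach via Hecke eigensheaves has the correct first half: it is true that Verdier duality interacts with Hecke functors by $\mathbb{D}\circ H_R\simeq H_{R^*}\circ\mathbb{D}$, and combined with property (1) of $\tau$ this does show that $(\Delta^0_\la)^\vee$ is a Hecke eigensheaf with eigenvalue ${\mc E}(\la^*)$. The gap is in the uniqueness step. You invoke Corollary \ref{eig}(2) and Lemma \ref{reg}, but those statements go in the \emph{opposite} direction: they say that the eigenvalue ${\mc E}(\la)$ (equivalently $\rho_\la$) is determined by the eigensheaf, not that the eigensheaf is determined by its eigenvalue. What you need is a ``multiplicity one'' statement --- that an irreducible Hecke eigensheaf with given eigenvalue is unique up to isomorphism --- and this is neither stated nor proved anywhere in the paper. (It is part of the geometric Langlands picture, but it is a much deeper assertion than anything available here.)

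A secondary point: your argument explicitly invokes Conjecture \ref{RS} to get irreducibility of $\Delta^0_\la$ and $(\Delta^0_\la)^\vee$, whereas the paper's proof of Lemma \ref{V*} is unconditional. Since the lemma is used as an ingredient in the derivation that \emph{assumes} Conjecture \ref{RS}, this is perhaps harmless in context, but it does mean your proof of the lemma itself is weaker than the paper's.
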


The proof is given in Section \ref{proof*}.

Combining \eqref{Vbar} and \eqref{Vdual}, we obtain an isomorphism of
anti-holomorphic flat vector bundles
\begin{equation}    \label{bardual}
{\mc V}'_{\ol\la^*} \simeq \ol{{\mc V}_\la}^*.
\end{equation}
In words: the flat vector bundle given by the restriction of the
anti-holomorphic $D$-module $\ol{K}^{-1/2} \otimes
\ol\Delta_{\ol\la^*}$ to $U$ is isomorphic to the flat vector bundle
obtained from ${\mc V}_\la$ by applying both the complex conjugation
and taking the dual (it does not matter in which order we apply these
operations).

Therefore, the embedding \eqref{emb0} becomes an embedding of
$C^\infty$ flat vector bundles
\begin{equation}    \label{emb}
({\mc C}^\infty_U,d) \hookrightarrow {\mc V}_\la \otimes (\ol{{\mc V}_\la})^*.
\end{equation}
Hence we have an embedding of the corresponding locally constant
sheaves
\begin{equation}    \label{emb1}
\underline\CC_U \hookrightarrow {\mc S}_{\la} \otimes (\ol{{\mc
    S}_\la})^*,
\end{equation}
where $\underline\CC_U$ is the constant sheaf on $U$, and ${\mc
  S}_{\la}$ and $(\ol{{\mc S}_\la})^*$ denote the local systems on $U$
(with respect to the analytic topology) corresponding to ${\mc V}_\la$
and $(\ol{{\mc V}_\la})^*$, respectively.

The following conjecture was formulated by Beilinson and Drinfeld in
\cite{BD} as an open question.

\begin{conjecture}    \label{RS}
$\Delta^0_\la$ is irreducible and has regular singularities on each
connected component of $\Bun_G$.
\end{conjecture}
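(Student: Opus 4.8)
The plan is to treat the two assertions separately, using the Beilinson--Drinfeld realization of $\Delta_\la$ as the localization of the critical-level $\wg$-module $\mathbb{V}_\la$ (the quotient of the vacuum module by the central character attached to $\la$), together with the Hecke eigensheaf property of Theorem \ref{Heckeeig}. As a preliminary one records that $\Delta^0_\la$ is holonomic with singular support contained in the global nilpotent cone $\N\subset T^*\Bun_G$, the zero fibre of the Hitchin map $h\colon T^*\Bun_G\to \mathrm{Hitch}$: this is immediate since the symbols of the generators of $\fm_\la$ coincide with the symbols of $D_G$ and generate the ideal of $\N$, which is a conic Lagrangian nilpotent cone by the theorems of Ginzburg and Faltings--Laumon.

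For \emph{regular singularities}, I would reduce to a local statement along the boundary of $\Bun_G$, namely along the Harder--Narasimhan stratification by instability type (and, in the parabolic variant of Section \ref{parab}, along the divisors where the Borel reductions degenerate). On each boundary stratum, the loop-group uniformization $\Bun_G\simeq G_{\mathrm{out}}\backslash G(\!(t)\!)/G[[t]]$ lets one describe the formal completion of $\Delta_\la$ there; its leading (``indicial'') term is governed by the restriction of the oper $\la$ to a formal punctured disc, which is again an oper with at worst a regular singularity and hence contributes no irregular part, while properness of the Hitchin map ensures that the characteristic cycle acquires no irregular component as one approaches the stratum. A complementary, purely formal, input is that regular holonomicity is stable under the operations entering the Hecke functors --- smooth inverse image and proper direct image along the Hecke correspondence $\mc{H}_R$ --- so that the isomorphism $H_R(\Delta^0_\la)\simeq R_{\mc E(\la)}\boxtimes\Delta^0_\la$, in which $R_{\mc E(\la)}$ is manifestly regular (a flat bundle on the projective curve $X$, or with regular singularities along $S$ in the parabolic case), propagates regularity from the open locus $U$ of Corollary \ref{eig} to all of $\Bun_G$. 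Concretely, one wants to resolve $\mathbb{V}_\la$ by Wakimoto (semi-infinitely induced) modules, whose localizations are $!$- and $*$-extensions from locally closed substacks and hence regular by construction; the technical heart is controlling the Wakimoto filtration so that regularity descends to $\Delta_\la$.

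For \emph{irreducibility}, once regular singularities are in hand, Lemma \ref{reg} shows that $\Delta^0_\la$ is the intermediate extension of $\mcV_\la=\Delta^0_\la|_U$, so it suffices to prove that the monodromy of $\mcV_\la$ on $U$ is irreducible. Here I would use the Whittaker (Drinfeld--Sokolov) normalization of Beilinson--Drinfeld: $\Delta^0_\la$ has a canonical one-dimensional Whittaker coefficient, and the Whittaker coefficient functor is conservative on the category of Hecke eigensheaves with eigenvalue $\mc E(\la)$. Given a nonzero proper $D$-submodule $M'\subset\Delta^0_\la$, the Hecke property forces both $M'$ and $\Delta^0_\la/M'$ to be Hecke eigensheaves with eigenvalue $\mc E(\la)$, so one of them has vanishing Whittaker coefficient and therefore vanishes --- a contradiction. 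This reduces irreducibility to (a) conservativity of the Whittaker functor on the oper Hecke eigencategory and (b) the input, needed for $M'$ and $\Delta^0_\la/M'$ to again be eigensheaves, that $\mc E(\la)$ is an ``irreducible'' Hecke eigenvalue; for $\LG=PGL_2$ and genus $\ge 2$ the latter is automatic because the monodromy of an unbranched projective structure is non-elementary, and an analogous statement is expected for general $\LG$.

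The main obstacle is precisely (a)--(b) above together with the Wakimoto-convergence point in the regular-singularity argument: these are substantive open pieces of the geometric Langlands program rather than routine verifications. Moreover, for special $\la$ whose oper monodromy $\rho_\la$ is reducible --- which can occur for general $\LG$, though apparently not for $\LG=PGL_2$ in positive genus --- the conjecture as literally stated must be weakened, e.g. by restricting to opers with irreducible monodromy or by reinterpreting $\Delta^0_\la$ as the appropriate intermediate extension; pinning down the correct formulation in the reducible case, and establishing Whittaker conservativity unconditionally, is where the real difficulty lies.
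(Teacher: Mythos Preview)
The statement you are attempting to prove is labeled in the paper as a \emph{Conjecture}, not a theorem: the paper does not claim to prove it, and in fact explicitly records that it was formulated by Beilinson and Drinfeld in \cite{BD} as an open question. The only further information the paper gives is a remark attributed to Gaitsgory that for $G=SL_n$ the conjecture would follow by comparing the Beilinson--Drinfeld construction with the construction of \cite{Dr1,FGV}, \emph{provided} one assumes the validity of the quasi-theorems in \cite{Gai:outline}. In other words, even the one case where the paper indicates a route to a proof is conditional on deep unproved statements from the categorical geometric Langlands program.

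Your proposal is a reasonable high-level outline of where one might look for a proof, and you are honest that the key inputs --- Whittaker conservativity on the oper Hecke eigencategory, irreducibility of the monodromy of $\mc E(\la)$, and control of the Wakimoto filtration near the boundary --- are ``substantive open pieces of the geometric Langlands program rather than routine verifications.'' That honesty is warranted: none of these steps is currently a theorem in the generality needed, and your sketch does not supply proofs of them. In particular, the argument for regular singularities via ``properness of the Hitchin map ensures that the characteristic cycle acquires no irregular component'' is not a proof: properness of $h$ controls the singular support but does not by itself rule out irregular formal structure along the boundary strata, and the Wakimoto-resolution approach you allude to has not been carried out in the literature for this purpose. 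Likewise, the irreducibility argument via Whittaker coefficients presupposes that a sub- or quotient of a Hecke eigensheaf is again a Hecke eigensheaf with the same eigenvalue, which is not automatic and is closely tied to exactly the conservativity you flag as open. So what you have written is a plausible research program, not a proof; and since the paper itself treats the statement as a conjecture, there is no ``paper's own proof'' to compare against.
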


\begin{remark}
  According to D. Gaitsgory, for $G=SL_n$ this conjecture follows by
  comparing the  Beilinson-Drinfeld construction with the
  construction of \cite{Dr1,FGV}, if one assumes the validity of the
  quasi-theorems in \cite{Gai:outline}.

\end{remark}

Assuming Conjecture \ref{RS}, we obtain that ${\mc V}_\la$ and ${\mc
  S}_{\la}$ are irreducible. Hence formula \eqref{emb1} is equivalent
to the existence of an isomorphism of local systems on the open dense
subset $U$:
\begin{equation}    \label{isoS}
{\mc S}_{\la} \simeq {\mc S}_{\ol\la}
\end{equation}
According to Corollaries \ref{eig} and \ref{eig1}, this implies
the existence of an isomorphism of the corresponding monodromy
representations
\begin{equation}    \label{rhola}
\rho_\la \simeq \ol\rho_\la
\end{equation}
(indeed, $\rho_{\ol\la} = \ol\rho_\la$). Thus, we have established
property (ii) of Theorem \ref{incl}.

Note that \eqref{rhola} is equivalent to the existence of an
isomorphism of the $C^\infty$ flat $\LG$-bundles on $X$ corresponding
to ${\mc E}(\la)$ and ${\mc E}(\ol\la)$.

Thus, Conjectures \ref{first}, \ref{second}, and \ref{RS} imply
Theorem \ref{incl} (which is in turn equivalent to Conjecture
\ref{third}).



\begin{remark}
  Note that the property ${\mc E}(\la) \simeq {\mc E}(\ol\la)$ (as
  $C^\infty$ flat $\LG$-bundles) is equivalent to the existence of an
  embedding
  \begin{equation} \label{embX} ({\mc C}^\infty_X,d) \hookrightarrow R_\la
    \otimes \ol{R}^*_\la, \qquad \forall \; R \in \on{Rep} \LG
\end{equation}
(here we use the notation introduced in formula
\eqref{assocbdle}). Comparing it with formula \eqref{emb}, one could
say that we have related the existence of a kind of ``weird pairing''
(it is neither on ${\mc V} \otimes \ol{\mc V}$ nor on ${\mc V} \otimes
{\mc V}^*$, but rather on ${\mc V} \otimes \ol{\mc V}^*$) on an open
dense subset $U$ of $\Bun_G$ and on the curve $X$ itself.
\end{remark}

Finally, let us discuss the inverse map from the set
$\on{Op}_{\LG}(X)_{{\mathbb R}}$ of $\LG$-opers satisfying the
property ${\mc E}(\la) \simeq {\mc E}(\ol\la)$ (as $C^\infty$ flat
$\LG$-bundles) to the spectrum $\on{Spec}_{\mcA}(\mcH)$. So, let $\la$
be an $\LG$-oper $\on{Op}_{\LG}(X)_{{\mathbb R}}$. Reversing the above
argument, we obtain an embedding \eqref{emb}. This implies the
existence of a single-valued solution $\Phi$ to the equations
\eqref{lamu} with $\mu=\ol\la^*$ on the open subset $U \subset
\Bun_G$. If this solution is square-integrable (i.e. belongs to
$\mcH = L^2(\Bun_G)$) then $\la$ belongs to the spectrum
$\on{Spec}_{\mcA}(\mcH)$.

Thus, the existence of the inverse map $\on{Op}_{\LG}(X)_{{\mathbb R}}
\to \on{Spec}_{\mcA}(\mcH)$ (and hence a bijection between the two
sets) depends on whether the above solutions $\Phi$ are square
integrable for all $\la \in \on{Spec}_{\mcA}(\mcH)$. Question
\ref{question} is therefore equivalent to the question whether this is
always so. In the two cases in which we have proved our Conjectures
\ref{first}--\ref{third} ($G=GL_1$ in the next section and $G=SL_2,
X=\pone, |S|=4$ in Part II) the answer is affirmative.

\section{The abelian case}    \label{abelian}

In this section, we describe explicitly the spectrum of the algebra
${\mc A}$ in the case of the group $G=GL_1$ and prove Conjectures
\ref{first}--\ref{third}. Furthermore, we show that the spectrum is in
one-to-one correspondence with the $GL_1$-opers (which are in this
case holomorphic connections on the trivial line bundle on $X$) with
{\em real monodromy}. And we show that the Fourier harmonics form a
basis of eigenfunctions of the algebra ${\mc A}$.

The stack $\Bun_{GL_1}(X)$ of $GL_1$-bundles, or equivalently, line
bundles on a smooth projective curve $X$ is the quotient of the Picard
variety $\Pic(X)$ by the trivial action of the multiplicative group
${\mathbb G}_m=GL_1$, which is the group of automorphisms of every
line bundle on $X$. The Picard variety $\Pic(X)$ is in this case a
fine moduli space of line bundles on $X$. It is a union of components
labeled by integers (corresponding to the degree of the line bundles).

The Hilbert space $\mcH$ that we consider in this case is the space of
$L^2$ functions on the neutral component $\Pic^0(X)$ of $\Pic(X)$,
which is the Jacobian variety of $X$.

\subsection{The case of an elliptic curve}

Let's start with the case of the elliptic curve
$$
X = E_i \simeq \CC/(\Z + \Z i).
$$
This is already a representative example, and we can make contact with
the classical Fourier analysis.

Choosing a reference point $p_0$ on $E_i$ enables us to identify
$\Pic^0(E_i)$ with $E_i$ via the Abel-Jacobi map; namely, we map a
point $p \in E_i$ to the degree 0 line bundle
$\OO(p-p_0)$. Furthermore, we identify $E_i$ with $\CC/(\Z + \Z i)$ by
sending the point $p_0$ to $0 \in \CC/(\Z + \Z i)$. Thus $\Pic^0(E_i)$
is identified with $\CC/(\Z + \Z i)$.
 
Under this identification, the algebra $D_{GL_1}$
(resp. $\ol{D}_{GL_1}$) of global holomorphic (resp. anti-holomorphic)
differential operators on $\Pic^0(E_i)$ becomes the algebra of
constant holomorphic (resp. anti-holomorphic) differential operators
on $E_i$:
$$
D_{GL_1} = \CC[\pa_z], \qquad \ol{D}_{GL_1} = \CC[\pa_{\ol{z}}],
$$
where $z$ is the natural coordinate on $\CC/(\Z + \Z i)$.

Note that in this case the involution $\nu$ on $D_G$ (see Section
\ref{cananti}) sends $\pa_z$ to $-\pa_z$, and we have the standard
formula $\pa_z^\star = - \pa_{\ol{z}}$ (see equation \eqref{Dstar}).

The commutative algebra ${\mc A}$ is the tensor product
$$
{\mc A} = D_{GL_1} \otimes \ol{D}_{GL_1},
$$
and
$$
{\mc A}_{{\mathbb R}} = {\mathbb
  R}[\pa_z-\pa_{\ol{z}},(\pa_z+\pa_{\ol{z}})/i].
$$
In this case, $V$ is the space of smooth functions on $E_i$, $S({\mc
  A}) = V$, and ${\mc A}_{{\mathbb R}}$ is essentially self-adjoint on
$V$ (see Example \ref{exam}(1)).

The eigenfunctions of ${\mc A}_{{\mathbb R}}$ are the standard Fourier
harmonics $f_{m,n}$ on $\CC/(\Z + \Z i)$ given by the formula
\begin{equation}    \label{fnm}
f_{m,n} = e^{2\pi i mx} \cdot e^{2\pi i ny}, \qquad m,n \in \Z,
\end{equation}
where we set $z=x+yi$. We rewrite them in terms of $z$ and $\ol{z}$:
\begin{equation}    \label{fnm1}
f_{m,n} = e^{\pi z (n+im)} \cdot e^{-\pi \ol{z}(n-im)},
\end{equation}
to find the eigenvalues of $\pa_z$ and $\pa_{\ol{z}}$ on
$f_{m,n}$. They are equal to $\pi(n+im)$ and $-\pi(n-im)$,
respectively. Let us recast these eigenvalues in terms of the
corresponding $GL_1$-opers.

By definition, a $GL_1$-oper on $X$ is a holomorphic connection on the
trivial line bundle on $X$. The space of such connections is
canonically isomorphic to the space of holomorphic one-forms on
$X$. We write such a connection as
\begin{equation}    \label{GL1op}
\lambda(a) = \nabla_z - a \, dz, \qquad a \in \CC,
\end{equation}
where $\nabla_z = dz \otimes \pa_z$ is the trivial holomorphic
connection on the trivial line bundle and $a \, dz$ is a holomorphic
one-form (the reason for the minus sign will become clear
below). Together with $\nabla_{\ol{z}} = d\ol{z} \otimes
\pa_{\ol{z}}$, it gives rise to the flat connection
\begin{equation}    \label{GL1op1}
\nabla(a) = d - a \, dz, \qquad a \in \CC
\end{equation}
on the trivial line bundle on $E_i$.

The isomorphism \eqref{BDisom} specializes in the case of $GL_1$ to
the isomorphism
$$
\on{Spec} D_{GL_1} \simeq \on{Op}_{GL_1}(E_i)
$$
under which the oper \eqref{GL1op} (and the connection \eqref{GL1op1})
corresponds to the eigenvalue $a$ of $\pa_z$. Note that the Chevalley
involution $\tau$ acting on the $GL_1$-oper \eqref{GL1op} sends
$\lambda(a) \mapsto \lambda(-a)$ in accordance with the action of
$\nu$ sending $\pa_z \mapsto -\pa_z$ (see Proposition \ref{nu}). In
other words, we have
\begin{equation}    \label{last}
\la(a)^* = \la(-a).
\end{equation}

Next, consider the space $\ol{\on{Op}}_{GL_1}(E_i)$ of
anti-holomorphic $GL_1$-opers on $E_i$. These are anti-holomorphic
connections
\begin{equation}    \label{antiGL1op}
\mu(b) = \nabla_{\ol{z}}-b \, d\ol{z}, \qquad b \in \CC,
\end{equation}
on the trivial line bundle on $E_i$, which together with $\nabla_z$
give rise to the flat connections
\begin{equation}    \label{antiGL1op1}
\ol\nabla(b) = d - b \, d\ol{z}, \qquad b \in \CC.
\end{equation}
The isomorphism \eqref{BDisomanti} specializes to
$$
\on{Spec} \ol{D}_{GL_1} \simeq \ol{\on{Op}}_{GL_1}(E_i),
$$
under which the oper \eqref{antiGL1op} (and the connection
\eqref{antiGL1op1}) corresponds to the eigenvalue $b$ of
$\pa_{\ol{z}}$.

We have found above that the eigenvalues of $\pa_z$ and $\pa_{\ol{z}}$
on $f_{m,n} \in L^2(\Bun_{GL_1})$ are $\pi(n+im)$ and $-\pi(n-im)$,
respectively, where $m,n \in \Z$. The corresponding holomorphic and
anti-holomorphic opers are
\begin{equation}    \label{lamn}
\lambda_{m,n} = \nabla_z - \pi(n+im) \; dz
\end{equation}
\begin{equation}
\mu_{m,n} = \nabla_{\ol{z}} + \pi(n-im) \; d\ol{z},
\end{equation}
We have
$$
\mu_{m,n} = \ol{\la}_{m,n}^*
$$
(see formula \eqref{last} for the action of $*$ on $\la$). This
is the statement of Lemma \ref{ollast} in this case.

It turns out that the $GL_1$-opers $\lambda_{m,n}$ given by formula
\eqref{lamn} are precisely the $GL_1$-opers with real monodromy
(i.e. the corresponding monodromy representation of $\pi_1(E_i)$ takes
values in $\R^\times \subset \CC^\times$).

\begin{lemma}    \label{realmono}
The connection $\nabla(a)$ (see formula \eqref{GL1op1}) on the trivial
line bundle on $E_i = \CC(\Z + \Z i)$ has real monodromy if and only
if $a = \pi(n+im)$, where $m,n \in \Z$.
\end{lemma}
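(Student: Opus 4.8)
The plan is to compute the monodromy of the flat connection $\nabla(a) = d - a\,dz$ on the trivial line bundle over $E_i = \CC/(\Z + \Z i)$ directly, by integrating along the two generators of $\pi_1(E_i)$, and then impose the reality condition.

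First I would observe that a flat section of $\nabla(a)$ is a (multivalued) function $s$ on $\CC$ satisfying $ds = a\,s\,dz$, i.e. $s(z) = C e^{az}$. Since $\pi_1(E_i) \cong \Z^2$ is generated by the loops corresponding to translations by $1$ and by $i$, the monodromy representation $\rho_{\la(a)} \colon \pi_1(E_i) \to \CC^\times$ is determined by its values on these two generators. Analytic continuation of $s(z) = e^{az}$ around the loop $z \mapsto z+1$ multiplies $s$ by $e^{a}$, and around the loop $z \mapsto z+i$ multiplies $s$ by $e^{ai}$. Hence $\rho_{\la(a)}$ sends the two generators to $e^{a}$ and $e^{ia}$ respectively (up to orientation/normalization conventions, which I would fix to match \eqref{GL1op1}).

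Next I would impose that both monodromy eigenvalues lie in $\R^\times \subset \CC^\times$. Writing $a = \alpha + i\beta$ with $\alpha, \beta \in \R$, the condition $e^{a} \in \R^\times$ forces $\operatorname{Im}(a) \in \pi\Z$, i.e. $\beta = \pi m$ for some $m \in \Z$; and the condition $e^{ia} \in \R^\times$ forces $\operatorname{Re}(a) \in \pi\Z$, i.e. $\alpha = \pi n$ for some $n \in \Z$. Conversely, if $a = \pi(n + im)$ with $m, n \in \Z$, then $e^{a} = e^{\pi n}e^{i\pi m} = (-1)^m e^{\pi n} \in \R^\times$ and $e^{ia} = e^{-\pi m}e^{i\pi n} = (-1)^n e^{-\pi m} \in \R^\times$, so the monodromy is real. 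This gives the claimed equivalence $a = \pi(n+im)$, $m,n \in \Z$, and matches the eigenvalues found for $f_{m,n}$ in \eqref{lamn}.

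This argument is essentially a routine computation once the identification $\Pic^0(E_i) \cong \CC/(\Z+\Z i)$ and the explicit form of the connection are in place; there is no serious obstacle. The only point requiring a little care is bookkeeping of conventions: making sure the lattice $\Z + \Z i$ (rather than $2\pi i(\Z + \Z i)$ or similar), the sign in $\nabla(a) = d - a\,dz$, and the orientation of the loops are all consistent, so that the real-monodromy locus comes out as $\pi(\Z + i\Z)$ and not some rescaled version. I would double-check this against the eigenvalue computation for $f_{m,n}$ already carried out above, where $\pa_z$ acts on $f_{m,n}$ by $\pi(n+im)$, which confirms the normalization.
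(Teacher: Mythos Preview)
Your argument is correct and essentially the same as the paper's: both compute the monodromy along the two generators of $\pi_1(E_i)$ and impose reality. The only cosmetic difference is that the paper rephrases ``$e^a,\, e^{ia} \in \R^\times$'' as ``the flat line bundle ${\mc L}_a \otimes \ol{{\mc L}_a}^{-1}$ has trivial monodromy,'' yielding the equivalent conditions $e^{a-\ol a}=1$ and $e^{i(a+\ol a)}=1$; this reformulation is what generalizes cleanly to higher-genus curves in Lemma~\ref{real mon line}, but for $E_i$ it is the same computation you wrote.
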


\begin{proof}
Let ${\mc L}_a$ be the trivial line bundle on $E_i$ with the
connection $\nabla(a) = d-adz, a \in \CC$. The monodromy
representation $\pi_1(E_i) \to \CC^\times$ associated to the flat line
bundle ${\mc L}_a$ takes values in $\R^\times$ if and only if the
monodromy representation associated to the flat line bundle ${\mc L}_a
\otimes \ol{{\mc L}_a}{}^{-1}$ is trivial. The latter means that
$e^{a-\ol{a}}=1$ and $e^{i(a+\ol{a})}=1$, which is equivalent to $a =
\pi(n+im)$, where $m,n \in \Z$.
\end{proof}

Now we prove a stronger version of our main Conjectures
\ref{first}--\ref{third} in the case $G=GL_1, X=E_i$:

\begin{theorem}    \label{specell}
  The spectrum of the algebra ${\mc A}$ on $\mcH = L^2(\Pic^0(E_i))$
  is in one-to-one correspondence with the set of $GL_1$-opers on
  $E_i$ with real monodromy (i.e. the monodromy takes values in
  $\R^\times \subset \CC^\times$). The eigenfunctions are the Fourier
harmonics $f_{m,n}, m, n \in \Z$.
\end{theorem}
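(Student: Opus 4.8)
The plan is to assemble the theorem from the pieces already established in this section. First I would recall that $\mcH = L^2(\Pic^0(E_i)) = L^2(\CC/(\Z+\Z i))$ under the Abel--Jacobi identification, and that by classical Fourier analysis the harmonics $f_{m,n}$, $m,n\in\Z$, given by \eqref{fnm} form an orthogonal basis of this Hilbert space. Since $D_{GL_1}=\CC[\pa_z]$ and $\ol D_{GL_1}=\CC[\pa_{\ol z}]$ act on these harmonics diagonally (as computed via \eqref{fnm1}), each $f_{m,n}$ is a joint eigenfunction of $\mcA = D_{GL_1}\otimes\ol D_{GL_1}$, with $\pa_z$-eigenvalue $\pi(n+im)$ and $\pa_{\ol z}$-eigenvalue $-\pi(n-im)$. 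Because $V=C^\infty(E_i)$, $S(\mcA)=V$, and $\mcA_{\mbb R}$ is essentially self-adjoint on $V$ (invoking Example \ref{exam}(1)), the spectral theorem applies and $\on{Spec}_{\mcA}(\mcH)$ is exactly the set of these joint eigenvalues; in particular the decomposition $\mcH=\bigoplus V^0_\chi$ holds with each $V^0_\chi$ one-dimensional, spanned by the corresponding $f_{m,n}$.

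Next I would translate this spectral data into the language of opers. Under the isomorphism $\on{Spec}D_{GL_1}\simeq\on{Op}_{GL_1}(E_i)$ of \eqref{BDisom}, the eigenvalue $a=\pi(n+im)$ of $\pa_z$ corresponds to the $GL_1$-oper $\lambda_{m,n}=\nabla_z-\pi(n+im)\,dz$ of \eqref{lamn}, and similarly the $\pa_{\ol z}$-eigenvalue gives $\mu_{m,n}=\ol\lambda_{m,n}^*$, consistent with Lemma \ref{ollast}. So the spectrum is in bijection with $\{\lambda_{m,n}\}_{m,n\in\Z}\subset\on{Op}_{GL_1}(E_i)$.

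Finally I would identify this set with the opers of real monodromy: this is precisely the content of Lemma \ref{realmono}, which says $\nabla(a)$ has monodromy in $\R^\times$ iff $a=\pi(n+im)$ for $m,n\in\Z$. Combining the three steps gives the claimed one-to-one correspondence between $\on{Spec}_{\mcA}(\mcH)$ and the set of $GL_1$-opers on $E_i$ with real monodromy, realized concretely by the Fourier harmonics $f_{m,n}$.

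The mildest subtlety here is making sure that the joint spectrum of the commuting family really is exhausted by the $f_{m,n}$ rather than being larger (there is no continuous part, no multiplicity, nothing hidden); but since $\{f_{m,n}\}$ is already a complete orthogonal system of joint eigenfunctions and the operators in $\mcA_{\mbb R}$ are essentially self-adjoint on $V$, standard spectral theory closes this off. The remaining verifications — diagonalizing $\pa_z,\pa_{\ol z}$ on $f_{m,n}$, and the elementary computation in Lemma \ref{realmono} that $e^{a-\ol a}=1$ and $e^{i(a+\ol a)}=1$ iff $a\in\pi(\Z+i\Z)$ — are routine and already recorded above, so there is no real obstacle; this theorem is essentially a summary of the section.
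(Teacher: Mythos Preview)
Your proposal is correct and matches the paper's first proof essentially line for line: compute the eigenvalues of $\pa_z,\pa_{\ol z}$ on the Fourier basis $f_{m,n}$, observe these exhaust the spectrum since the $f_{m,n}$ form a complete orthogonal system, and then invoke Lemma \ref{realmono} to identify $\{\pi(n+im):m,n\in\Z\}$ with the opers of real monodromy. The paper also records a second, more conceptual proof that avoids explicit formulas for the eigenfunctions and instead argues via the factorization \eqref{locsum} and self-adjointness that any eigenfunction forces $\Phi_a(z)\ol{\Phi_a(z)}^{-1}$ to be single-valued, hence the monodromy real; you may want to be aware of this alternative since it is the template that generalizes beyond the abelian case.
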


We give two proofs of this theorem.

\medskip

\noindent{\em First proof.} We use an explicit formula for the
eigenfunction $f_{m,n}$ and the corresponding eigenvalues of $\pa_z$
and $\pa_{\ol{z}}$ on $f_{m,n}$ (they are $\pi(n+im)$ and
$-\pi(n-im)$, respectively, where $m,n \in \Z$), which we found
above. Then we apply Lemma \ref{realmono}.

\medskip

\noindent{\em Second proof.} This is a more conceptual proof, in which
we do not use an explicit formula for the eigenfunctions and
eigenvalues.

Suppose that there is an eigenfunction of the operators $\pa_z$ and
$\pa_{\ol{z}}$ with eigenvalues $a$ and $b$, respectively. We then use
the factorization formula \eqref{locsum} which in this case expresses
(locally) this eigenfunction as the product of horizontal sections of
holomorphic and anti-holomorphic flat bundles (in fact, this is
formula \eqref{fnm1}, but we do not want to rely on an explicit
formula for the eigenfunctions). There is no summation in this case
because these flat bundles have rank one. In fact, they are defined by
the connections $\nabla(a)$ (formula \eqref{GL1op1}) and
$\ol\nabla(b)$ (formula \eqref{antiGL1op}), respectively, on the
trivial line bundle on $E_i$. This means that the product of non-zero
solutions $\Phi_a(z)$ and $\Psi_b(\ol{z})$ of the equations
$$
(\pa_z - a) \Phi_a(z) = 0 \qquad \on{and} \qquad (\pa_{\ol{z}} - b)
\Psi_b(\ol{z}) = 0
$$
is single-valued, and is an eigenfunction of ${\mc A}$.

Since the operators $(\pa_z-\pa_{\ol{z}})$ and
$(\pa_z+\pa_{\ol{z}})/i$ are essentially self-adjoint, we find that
$b=-\ol{a}$. Therefore $\Psi_b(\ol{z}) = \ol{\Phi_a(z)}{}^{-1}$ (up to
a scalar). Thus, we find that $a$ must be such that the function
$$
f_a(z,\ol{z}) = \Phi_a(z) \ol{\Phi_a(z)}{}^{-1}
$$
is a single-valued function on $E_i$. This means that the monodromy of
$\Phi_a(z)$ (which takes values in $\CC^\times$) must coincide with
the monodromy of $\ol{\Phi_a(z)}$. Equivalently, the monodromy of
$\Phi_a(z)$ takes values in $\R^\times \subset \CC^\times$. Then we
apply Lemma \ref{realmono} to describe explicitly the values of $a$
satisfying this property; namely, $a = \pi(n+im)$, where $m,n \in
\Z$. Note that we did all this without using an explicit formula for
the eigenfunctions $f_{m,n}$.\qed

\subsection{General elliptic curves}

Now we consider the general elliptic curve $E_\tau = \CC/(\Z+\Z\tau)$,
where $\on{Im}\tau>0$. As before, we identify the Jacobian
$\Pic^0(E_\tau)$ with $E_\tau$ using a reference point $p_0$
corresponding to $0 \in \CC$. The algebras of differential operators
are the same as in the case $\tau=i$. Their joint eigenfunctions in
$L^2(\Pic^0(E_\tau))$ are the Fourier harmonics
\begin{equation}    \label{fmntau}
f^\tau_{m,n} = e^{2\pi
  im(z\ol\tau-\ol{z}\tau)/(\ol\tau-\tau)} \cdot e^{2\pi
  in (z-\ol{z})/(\tau-\ol\tau)}, \qquad m,n \in \Z
\end{equation}
(compare with formula (2.24) from \cite{F:analyt}).

The corresponding holomorphic $GL_1$-opers are
$$
_\tau\lambda_{m,n} = \nabla_z - 2\pi i
\frac{n-m\ol\tau}{\tau-\ol\tau} \qquad m,n \in \Z
$$
(compare with formula (2.27) of \cite{F:analyt}). One checks by a
direct computation that these are precisely the holomorphic
$GL_1$-opers on $E_\tau$ with real monodromy.

\begin{remark}
  One simplification that occurs in the case of elliptic curves is
  that $\Pic^0(E_\tau)$ is isomorphic to the curve $E_\tau$
  itself. For curves of higher genus this is not so. However, as we
  will see below, the isomorphism $H^1(X,\CC) \simeq
  H^1(\Pic^0(X),\CC)$ suffices for our purposes.
\end{remark}

\subsection{General curves}

We generalize these results to a curve $X$ of an arbitrary genus
$g>0$. The Jacobian $\Pic^0(X)$ is then a real $2g$-dimensional torus
(see, e.g., \cite{GH})
\begin{equation}    \label{Pic}
\Pic^0(X) \simeq H^0(X,\Omega^{1,0})^*/H_1(X,\Z).
\end{equation}
We give an explicit formula for the Fourier harmonics in
$L^2(\Pic^0(X))$ following \cite{F:analyt}, Sect. 2.4. Recall the
Hodge decomposition
\begin{equation}    \label{Hodge}
H^1(X,\CC) = H^0(X,\Omega^{1,0}) \oplus H^0(X,\Omega^{0,1}) =
H^0(X,\Omega^{1,0}) \oplus \ol{H^0(X,\Omega^{1,0})}.
\end{equation}
It enables us to identify $H^0(X,\Omega^{1,0})$, viewed as an
$\R$-vector space, with $H^1(X,\R)$ by the map
\begin{equation}    \label{ident vsp}
\omega \in H^0(X,\Omega^{1,0}) \mapsto \omega + \ol\omega.
\end{equation}
Under this identification, any class $c \in H^1(X,\R)$ is represented
by a unique real-valued {\em harmonic} one-form $\omega_c +
\ol{\omega}_c$, where $\omega_c \in H^0(X,\Omega^{1,0})$.

Formula \eqref{Pic} then implies that, as a real torus,
\begin{equation}   \label{Picident}
\Pic^0(X) \simeq H^1(X,\R)^*/H_1(X,\Z),
\end{equation}
where $H_1(X,\Z)$ is embedded into $H^1(X,\R)^*$ by sending $\beta \in
H_1(X,\Z)$ to the linear functional on $H^1(X,\R)$
\begin{equation}    \label{embed}
H^1(X,\R) \; \ni \; c \quad \mapsto \quad \int_\beta c = \int_\beta
(\omega_c + \ol{\omega}_c).
\end{equation}

Now, for each $\gamma \in H^1(X,\Z)$ we denote by $\varphi_\ga$ the
harmonic representative of its image in $H^1(X,\R)$, i.e.
\begin{equation}    \label{omegaga}
\varphi_\ga = \omega_\ga + \ol\omega_\ga, \qquad \omega_\ga \in
H^0(X,\Omega^{1,0}).
\end{equation}

Consider $\varphi_\ga$ as a linear functional on the dual vector space
$H^1(X,\R)^*$. Then the Fourier harmonics on $\Pic^0(X)$, which we
identify with a real $2g$-dimensional torus via formula
\eqref{Picident}, can be written as
\begin{equation}    \label{Fourier}
f_\ga = e^{2 \pi i \varphi_\ga}, \qquad \ga \in H^1(X,\Z).
\end{equation}
This formula is well-defined because
$$
\varphi_\ga(\beta) \in \Z, \qquad \forall \ga \in H^1(X,\Z), \quad
\beta \in H_1(X,\Z).
$$

It is clear that the Fourier harmonics $f_\ga, \ga \in H^1(X,\Z)$,
form an orthogonal basis of the Hilbert space $L^2(\Pic^0(X))$. We
claim that they form an eigenbasis of our algebra
$$
{\mc A} = D_{GL_1} \otimes \ol{D}_{GL_1}
$$
of global differential operators on $\Pic^0(X)$.

Indeed, since we have an abelian group structure on $\Pic^0(X)$, the
tangent bundle to $\Pic^0(X)$ is isomorphic to the trivial vector
bundle with the fiber being the tangent space at the point
corresponding to the trivial line bundle. This space is isomorphic to
$H^1(X,\OO_X) \simeq H^0(X,\Omega^{1,0})^*$, which can be viewed as
the space of translation vector fields on $\Pic^0(X)$. Since
$\Pic^0(X)$ is compact, we obtain that $D_{GL_1}$ is isomorphic to the
symmetric algebra of the space of translation vector fields,
i.e. $\on{Sym} H^1(X,\OO_X)$. Likewise, we find that $\ol{D}_{GL_1}
\simeq \on{Sym} H^1(X,\OO_X)$. Hence ${\mc A}$ is freely generated by
the $C^\infty$ translation vector fields on $\Pic^0(X)$. Therefore the
Fourier harmonics $f_\ga, \ga \in H^1(X,\Z)$, form a basis of joint
eigenfunctions of the algebra ${\mc A}$.

Next, we interpret the eigenvalues in terms of $GL_1$-opers on $X$
with real monodromy. Since
$$
D_{GL_1} = \on{Sym} H^1(X,\OO_X) \simeq \on{Fun} H^0(X,\Omega^{1,0}),
$$
we obtain that
$$
\on{Spec} D_{GL_1} = H^0(X,\Omega^{1,0}),
$$
and so a point in the spectrum of $D_{GL_1}$ is a holomorphic one-form
on $X$, or equivalently a holomorphic connection on the trivial line
bundle on $X$, i.e. a $GL_1$-oper on $X$. We write this connection as
\begin{equation}    \label{GL1opgen}
\lambda({\mathbf a}) = \nabla_z - {\mathbf a}, \qquad {\mathbf a} \in
H^0(X,\Omega^{1,0}).
\end{equation}
Together with $\nabla_{\ol{z}}$, it gives rise to the flat connection
\begin{equation}    \label{GL1opgen1}
\nabla({\mathbf a}) = d - {\mathbf a}, \qquad {\mathbf a} \in
H^0(X,\Omega^{1,0}).
\end{equation}
on the trivial line bundle on $X$.

Thus, the isomorphism \eqref{BDisom} specializes for $G=GL_1$ to the
isomorphism
$$
H^0(X,\Omega^{1,0}) = \on{Op}_{GL_1}(X)
$$
under which ${\mathbf a} \in \on{Spec} D_{GL_1} = H^0(X,\Omega^{1,0})$
goes to the oper \eqref{GL1opgen} (and the connection
\eqref{GL1opgen1}).

For each ${\mathbf a} \in \on{Spec} D_{GL_1}$ we have the $D$-module
$\Delta_{\la({\mathbf a})}$ on $\Pic(X)$. It follows from the
definition that its restriction to the neutral component $\Pic^0(X)$
is the trivial line bundle on $\Pic^0(X)$ with the (holomorphic) flat
connection $d-\phi({\mathbf a})$, where $\phi$ denotes the isomorphism
\begin{equation}    \label{isom phi}
\phi: H^1(X,\CC) \overset{\sim}\rightarrow H^1(\Pic^0(X),\CC).
\end{equation}
restricted to the $(1,0)$ Hodge summands on both sides (see the
decomposition \eqref{Hodge}):
$$
H^0(X,\Omega^{1,0}) \overset{\sim}\rightarrow
H^0(\Pic^0(X),\Omega^{1,0}).
$$

Next, we consider the anti-holomorphic counterpart. An
anti-holomorphic $GL_1$-oper is an anti-holomor\-phic connection on
the trivial line bundle on $X$ of the form
\begin{equation}    \label{antiGL1opgen}
\mu({\mathbf b}) = \nabla_{\ol{z}}-{\mathbf b}, \qquad {\mathbf b} \in
H^0(X,\Omega^{0,1}).
\end{equation}
Together with $\nabla_z$, it gives rise to the flat connection on the
trivial line bundle
\begin{equation}    \label{antiGL1op1gen}
\ol\nabla({\mathbf b}) = d - {\mathbf b}, \qquad {\mathbf b} \in
H^0(X,\Omega^{0,1}).
\end{equation}
The isomorphism \eqref{BDisomanti} specializes for $G=GL_1$ to
$$
\on{Spec} \ol{D}_{GL_1} = H^0(X,\Omega^{0,1}) = \ol{\on{Op}}_{GL_1}(X),
$$
under which ${\mathbf b} \in H^0(X,\Omega^{0,1})$ goes to the oper
\eqref{antiGL1opgen} (and the connection \eqref{antiGL1op1gen}).

The restriction of the anti-holomorphic $D$-module
$\ol\Delta_{\mu({\mathbf b})}$ to $\Pic^0(X)$ is the trivial line
bundle with the (anti-holomorphic) flat connection $d-\phi({\mathbf
  b})$, where $\phi$ is the isomorphism \eqref{isom phi} restricted
to the $(0,1)$ Hodge summands on both sides (see \eqref{Hodge}):
\begin{equation}    \label{isom phi bar}
H^0(X,\Omega^{0,1}) \overset{\sim}\rightarrow
H^0(\Pic^0(X),\Omega^{0,1}).
\end{equation}

As in the case of an elliptic curve, $V$ is the space of smooth
functions on $\Pic^0(X)$ and $S({\mc A}) = V$. The algebra ${\mc
  A}_{\mathbb R}$ spanned by the operators $(D+\ol{\nu(D)})$ and
$(D-\ol{\nu(D)})/i$ is essentially self-adjoint on $V$.

The involution $\nu$ acts on an element of $\on{Sym}^i H^1(X,\OO_X)
\simeq D_{GL_1}$ by $(-1)^i$. The corresponding action of the
Chevalley involution $\tau$ on the $GL_1$-oper \eqref{GL1opgen} is
given by the formula $\lambda({\mathbf a}) \mapsto \lambda(-{\mathbf
  a})$. Thus, we have
\begin{equation}    \label{lastgen}
\la({\mathbf a})^* = \la(-{\mathbf a}).
\end{equation}

To a point in the spectrum of ${\mc A}_{\mathbb R}$ in
$L^2(\Pic^0(X))$ we attach a holomorphic $GL_1$-oper $\la({\mathbf
  a})$ (formula \eqref{GL1opgen}), and an anti-holomor\-phic
$GL_1$-oper $\mu({\mathbf b})$ (formula \eqref{antiGL1opgen}) for some
${\mathbf a} \in H^0(X,\Omega^{1,0})$ and ${\mathbf b} \in
H^0(X,\Omega^{0,1})$.

The self-adjointness of ${\mc A}_{\mathbb R}$ implies that
$$
\mu({\mathbf b}) = \ol{\la({\mathbf a})}{}^* = \ol{\la(-{\mathbf a})}
$$
(this is a special case of Lemma \ref{ollast}), i.e.
$$
{\mathbf b} = - \ol{{\mathbf a}}.
$$

We will now prove a stronger version of our main Conjectures
\ref{first}-\ref{third} in the case $G=GL_1$ and a general curve $X$
of genus greater than 0.

\begin{theorem}\hfill    \label{realmonogen}
\begin{enumerate}
\item
  The spectrum of the algebra ${\mc
    A}$ acting on $L^2(\Pic^0(X))$ is discrete and in one-to-one
  correspondence with the set of $GL_1$-opers on $X$ with real
  monodromy.

\item
  The $GL_1$-opers with real monodromy have the form
$$
\lambda(2\pi i \, \omega_\ga) = \nabla_z - 2\pi i \,
  \omega_\ga, \qquad \gamma \in H^1(X,\Z).
$$
  The corresponding eigenfunction of ${\mc A}$ in $L^2(\Pic^0(X))$ is
the Fourier harmonic $f_\ga$ given by formula \eqref{Fourier}.
\end{enumerate}
\end{theorem}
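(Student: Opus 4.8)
The plan is to leverage everything already established in this section: that $\mcA = D_{GL_1} \otimes \ol{D}_{GL_1}$ is freely generated by the $C^\infty$ translation vector fields on $\Pic^0(X)$, that the Fourier harmonics $f_\ga = e^{2\pi i\varphi_\ga}$, $\ga \in H^1(X,\Z)$, form an orthogonal basis of joint eigenfunctions of $\mcA$ in $\mcH = L^2(\Pic^0(X))$, and that $S(\mcA) = V$ with $\mcA_{\mbb R}$ essentially self-adjoint on $V$. Granting these, the proof reduces to three things: (a) identifying, for each $\ga$, the pair of opers attached to the eigenfunction $f_\ga$; (b) describing the set of $GL_1$-opers on $X$ with real monodromy; and (c) matching (a) with (b).

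For (a) I would argue as follows. Regard $\varphi_\ga$ as a linear functional on the tangent space $H^1(X,\R)^*$ at the origin of $\Pic^0(X)$; then any translation-invariant vector field $\xi$ acts on $f_\ga = e^{2\pi i\varphi_\ga}$ by the scalar $2\pi i\langle \xi,\varphi_\ga\rangle$. Taking $\xi$ to be a holomorphic translation vector field, i.e. an element of $H^1(X,\OO_X) \simeq H^0(X,\Omega^{1,0})^*$, and using the Hodge decomposition $\varphi_\ga = \omega_\ga + \ol\omega_\ga$ together with the vanishing of the pairing between $(1,0)$-vectors and $(0,1)$-forms, this scalar equals $2\pi i\langle \xi,\omega_\ga\rangle$. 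Under the identification $\on{Spec}D_{GL_1} = H^0(X,\Omega^{1,0}) = \on{Op}_{GL_1}(X)$ of Section~\ref{abelian} (and the isomorphism $\phi$ of \eqref{isom phi}), this says precisely that the holomorphic oper attached to $f_\ga$ is $\la(2\pi i\,\omega_\ga)$; by Lemma~\ref{ollast} (or by the analogous computation with anti-holomorphic vector fields) the anti-holomorphic one is then $\ol{\la(2\pi i\,\omega_\ga)}{}^*$. Since $\ga \mapsto \varphi_\ga$, and hence $\ga \mapsto \omega_\ga$, is injective, distinct $f_\ga$ carry distinct opers.

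For (b) I would use that a holomorphic one-form ${\mathbf a} \in H^0(X,\Omega^{1,0})$ is closed, so the monodromy of $\nabla({\mathbf a}) = d - {\mathbf a}$ around a loop $\beta$ equals $\exp\!\big(\int_\beta {\mathbf a}\big)$ and depends only on $[\beta] \in H_1(X,\Z)$. This monodromy takes values in $\R^\times$ for all $\beta$ if and only if $\int_\beta({\mathbf a} - \ol{\mathbf a}) \in 2\pi i\,\Z$ for all $\beta$, i.e. if and only if the real harmonic one-form $({\mathbf a} - \ol{\mathbf a})/2\pi i$ has integral periods, hence equals $\varphi_\ga$ for a unique $\ga \in H^1(X,\Z)$; taking $(1,0)$-parts gives ${\mathbf a} = 2\pi i\,\omega_\ga$, and conversely ${\mathbf a} = 2\pi i\,\omega_\ga$ manifestly has real monodromy. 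Thus $\on{Op}_{GL_1}(X)_{\mbb R} = \{\la(2\pi i\,\omega_\ga) : \ga \in H^1(X,\Z)\}$. Combining with (a): since $\{f_\ga\}$ is a complete orthogonal eigensystem and $\mcA_{\mbb R}$ is essentially self-adjoint on $S(\mcA) = V$, the joint spectrum $\on{Spec}_\mcA(\mcH)$ consists exactly of the eigenvalues realized on the $f_\ga$; projecting to the holomorphic factor identifies it bijectively with $\{\la(2\pi i\,\omega_\ga)\} = \on{Op}_{GL_1}(X)_{\mbb R}$, with $f_\ga$ the eigenfunction attached to $\la(2\pi i\,\omega_\ga)$. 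Discreteness is immediate because $\ga \mapsto \omega_\ga$ sends the lattice $H^1(X,\Z)$ to a full-rank lattice in $H^0(X,\Omega^{1,0}) = \on{Op}_{GL_1}(X)$. This yields both parts of the theorem, and in particular Conjectures~\ref{first}--\ref{third} together with the equality $\on{Op}_{\LG,\mcH}(X) = \on{Op}_{\LG}(X)_{\mbb R}$ in this case.

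The only step requiring real care is (a): one must keep straight the chain of identifications — the Hodge decompositions of $H^1(X,\CC)$ and of $H^1(\Pic^0(X),\CC)$, the isomorphism $\phi$ between them, the identification of $D_{GL_1}$ with translation-invariant holomorphic operators, and the duality between tangent vectors and one-forms — so as to pin down that a holomorphic translation vector field acts on $f_\ga$ with eigenvalue corresponding to $\la(2\pi i\,\omega_\ga)$ (as opposed to, say, $\la(2\pi i\,\ol\omega_\ga)$ or with a spurious scalar factor). Once the normalization in (a) is fixed, (b) and (c) are routine; (b) in fact generalizes Lemma~\ref{realmono}.
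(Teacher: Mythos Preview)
Your proposal is correct and follows essentially the same route as the paper: you compute the eigenvalue of the translation vector fields on $f_\ga$ to identify the attached oper as $\la(2\pi i\,\omega_\ga)$ (the paper phrases this as $f_\ga$ being a horizontal section of $d - 2\pi i\phi(\omega_\ga + \ol\omega_\ga)$, which is the same computation), and your step (b) is exactly the paper's Lemma~\ref{real mon line} with the same proof. Your closing caveat about keeping the identifications in (a) straight is well placed, but there is no substantive difference in strategy.
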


\begin{proof}
As explained above, the algebra ${\mc A}$ is freely generated by the
$C^\infty$ translation vector fields on $\Pic^0(X)$, and therefore the
Fourier harmonics $f_\ga, \gamma \in H^1(X,\Z)$, form a basis of joint
eigenfunctions of ${\mc A}$. Furthermore, it follows from the
construction of $f_\ga$ that
\begin{equation}
(d - 2\pi i\phi(\omega_\ga+\ol\omega_\ga)) f_\ga = 0
\end{equation}
where $\phi$ is the isomorphism \eqref{isom phi}. In other words,
$f_\ga$ is a horizontal section of the trivial line bundle on
$\Pic^0(X)$ with the flat connection $d - 2\pi
i\phi(\omega_\ga+\ol\omega_\ga)$ (this flat line bundle is nothing but
the tensor product of the flat line bundles $\Delta_{\la(2\pi i
  \omega_\ga)}$ and $\ol\Delta_{\mu(2\pi i \ol\omega_\ga)}$ on
$\Pic^0(X)$ described above). Thus, the joint eigenvalues of ${\mc A}$
on $f_\ga$ correspond to the $GL_1$-oper $\lambda(2\pi i \,
\omega_\ga)$ (see formula \eqref{GL1opgen}). As the following lemma
shows, these are precisely the $GL_1$-opers with real monodromy.

\begin{lemma}    \label{real mon line}
  The connection $d - {\mathbf a}$, where ${\mathbf a} \in
  H^0(X,\Omega^{1,0})$, on the trivial line
  bundle on $X$ has real monodromy if and only if ${\mathbf a} = 2 \pi
  i \, \omega_\ga$ for some $\ga \in H^1(X,\Z)$.
\end{lemma}

\begin{proof}
Denote by ${\mc L}_{{\mathbf a}}$ the trivial line bundle with the
connection $d - {\mathbf a}$. The monodromy representation associated
to this flat line bundle is real if and only if the monodromy
representation associated to the flat line bundle ${\mc L}_{{\mathbf
    a}} \otimes \ol{{\mc L}_{{\mathbf a}}}{}^{-1}$ is trivial. The
latter is the trivial line bundle with the flat connection $d -
({\mathbf a} - \ol{\mathbf a})$. The corresponding monodromy along a
cycle $\beta \in H_1(X,\Z)$ is $e^{\int_\beta ({\mathbf a} -
  \ol{\mathbf a})}$. Hence it is equal to $1$ if and only if
$$
\int_\beta ({\mathbf a} - \ol{\mathbf a}) \in 2\pi i \, \Z, \qquad
\forall \beta \in H_1(X,\Z).
$$
This means that ${\mathbf a} - \ol{\mathbf a}$ is equal to $2\pi i$
times a one-form representing a class in $H^1(X,\Z)$, which is
equivalent to the statement of the lemma.
\end{proof}

This proves the theorem.
\end{proof}

\begin{remark} For each ${\mathbf a} \in H^0(X,\Omega^{1,0})$ we have
the flat line bundle ${\mc L}_{\mathbf a}$ on $X$ and the flat line
bundle $\Delta_{\la({\mathbf a})}$ on $\Pic(X)$, which is the Hecke
eigensheaf corresponding to ${\mc L}_{\mathbf a}$. The abelian
quotients of the fundamental groups of $X$ and $\Pic^0(X)$ are
naturally isomorphic to $H_1(X,\Z)$ and $H_1(\Pic^0(X),\Z)$,
respectively, and hence to each other. The one-dimensional monodromy
representations of $\pi_1(X)$ and $\pi_1(\Pic^0(X))$ corresponding to
${\mc L}_{\mathbf a}$ and the restriction of $\Delta_{\la({\mathbf
    a})}$ to $\Pic^0(X)$, respectively, are isomorphic to each other
(see \cite{F:rev}, Sect. 4.3). Hence the former is real if and only if
the latter is real. This property is also equivalent to the property
that the monodromy representations associated to the flat line bundles
${\mc L}_{\mathbf a} \otimes \ol{{\mc L}_{\mathbf a}}{}^{-1}$ and
$\Delta_{\la({\mathbf a})} \otimes \ol\Delta_{\mu(-\ol{\mathbf a})}$
are trivial. According to Lemma \ref{real mon line}, these properties
are satisfied if and only if ${\mathbf a} = 2 \pi i \, \omega_\ga$ for
some $\ga \in H^1(X,\Z)$, and then $f_\ga$ is a horizontal section of
the last flat line bundle.
\end{remark}

\section{Bundles with parabolic structures}    \label{parab}

The Beilinson-Drinfeld construction discussed in Section \ref{holom}
can be generalized to the moduli stack $\Bun_G(X,S)$ of $G$-bundles on
a smooth projective curve $X$ with parabolic structures
(i.e. reductions to a Borel subgroup $B$ of $G$) at the points from a
finite set $S=\{ x_i, i \in J \}$, see Section \ref{borelred}. We then
obtain a commutative subalgebra in the algebra of global twisted
differential operators on $\Bun_G(X,S)$, which is isomorphic to the
algebra of regular functions on an appropriate space of opers with
regular singularities and unipotent monodromy at the points of $S$.

Let us briefly describe this construction in the case of a simple
simply-connected group $G$ (the key elements of this construction are
discussed in \cite{F:icmp,FG1}, see also \cite{F:rev}, Sect. 9.8).

The starting point is the isomorphism
\begin{equation}    \label{double}
\Bun_G(X,S) \simeq G(\CC[X \bs S]) \bs \prod_{i\in J} G(F_{x_i})/
\prod_{i\in J} G({\mc O}_{x_i}),
\end{equation}
where $F_x$ is the formal completion of the field $\CC(X)$ of rational
functions on $X$ and ${\mc O}_x$ is its ring of integers. Using this
isomorphism, we construct a localization functor. To do that, we need
to introduce some notation:

\begin{enumerate}
\item We denote by $\Lambda$ the group of characters of a Cartan
  subgroup $H \subset B \subset G$ of $G$. A pair $(x,\la)$ where
  $x\in S$ and $\la \in \Lambda$ defines a line bundle $\mcL
  _{(x,\la)}$ on $\Bun _G(X,S)$.
\item For $\orr\la: J\to \Lambda$ we define a line bundle
  $\pi^*(K^{1/2}) \otimes \mcL(\orr\la)$ on $\Bun_G(X,S)$,
  where $\pi$ is the natural projection $\Bun_G(X,S) \to \Bun_G$ and
$$
\mcL(\orr\la):= \bigotimes _{i\in J} \mcL
_{(x_i,\orr\la(i))}.
$$
\item We denote by $D_{\orr\la}(\Bun_G(X,S))$ the category of modules
  over the sheaf ${\mc D}_{S,\orr\la}$ of (holomorphic) differential
  operators acting on the line bundle $\pi^*(K^{1/2}) \otimes
  \mcL(\orr\la)$. This category is in fact well-defined for all
  $\orr\la: S \to \Lambda \otimes_{\Z} \CC = \h^*$, where
  $\h=\on{Lie}(H)$.
\item The line bundle $\pi^*(K^{1/2}) \otimes \mcL(\orr\la)$, where
  $\orr\la(i) = -\rho$ for all $i \in J$, is a square root of the
  canonical line bundle on $\Bun_G(X,S)$. In this paper we will mostly
  study this case. However, some of our results can be generalized to
  other values of $\orr\la$.
\end{enumerate}

For $\la \in \h^*$, introduce the $\ghat_{\on{crit}}$-module of
critical level
$$
{\mathbb M}_{\la,\on{crit}} = \on{Ind}_{\g[[t]] \oplus \CC{\mathbf
    1}}^{\ghat_{\on{crit}}} M_\la,
$$
where $M_\la$ is the Verma module of highest weight $\la$ over
$\g$. Let $Z(\ghat_{\on{crit}})$ be the center of the completed
enveloping algebra of
$\ghat_{\on{crit}}$. We have a natural homomorphism
$$
\xi_\la: Z(\ghat_{\on{crit}}) \to \on{End}_{\ghat_{\on{crit}}}
{\mathbb M}_{\la,\on{crit}}.
$$
It is proved in \cite{FG1} (see \cite{F:book}, Theorem 9.5.3) that this
homomorphism is surjective and there is a canonical isomorphism
\begin{equation}    \label{locisom}
\on{Im} \xi_\la \simeq \on{Fun}
\on{Op}^{\on{RS}}_{\LG}(D)_{\varpi(-\la-\rho)},
\end{equation}
where $\on{Op}^{\on{RS}}_{\LG}(D)_{\varpi(-\la-\rho)}$ is the space of
opers on the disc $D = \on{Spec} \CC[[t]]$ with regular singularity at
the origin and residue $\varpi(-\la-\rho)$, as defined in \cite{BD}
(see also \cite{F:book}, Sect. 9.1). Here $\varpi$ denotes the
projection $\h^* \to \h^*/W$. Thus, in the basic case when all
$\orr\la(i)=-\rho$ the residue is equal to $0 \in \h^*/W$ for all $i
\in J$. (This does not mean that the oper is regular at these points;
on the contrary, it has regular unipotent monodromy around each of
these points, see \cite{F:book}, Sect. 9 for more details.)

Let $I$ be the Iwahori subgroup of the formal loop group
$G(\!(t)\!)$. For $\la \in \h^*$, denote by
$\ghat_{\on{crit}}\on{-mod}_{\la}^{I}$ the category of
$(\ghat_{\on{crit}},I)$ Harish-Chandra modules on which the
  action of the center $Z(\ghat_{\on{crit}})$ factors through the
  algebra $\on{End}_{\ghat_{\on{crit}}} {\mathbb
    M}_{\la,\on{crit}}$.

Given $\orr\la$ as above, we have a localization functor
$$
\Delta_{S,\orr\la}: \prod_{i \in J}
\ghat_{\on{crit}}\on{-mod}_{\orr\la(i)}^{I} \to
D_{\orr\la}(\Bun_G(X,S)).
$$
It sends $\bigotimes_{i\in J} {\mathbb M}_{\orr\la(i),\on{crit}}$ to
the sheaf ${\mc D}_{S,\orr\la}$ of differential operators on the line
bundle $\pi^*(K^{1/2}) \otimes \mcL(\orr\la)$. Hence we obtain a
homomorphism
$$
\bigotimes_{i\in J} \on{End} {\mathbb M}_{\orr\la(i),\on{crit}} \to
\Gamma(\Bun_G(X,S)),{\mc D}_{S,\orr\la})^{\on{opp}}.
$$
Denote the image of this homomorphism by $D_{G,S,\orr\la}$. Since this
is a commutative subalgebra of $\Gamma(\Bun_G(X,S)),{\mc
  D}_{S,\orr\la})^{\on{opp}}$, it can also be viewed as a commutative
subalgebra of $\Gamma(\Bun_G(X,S)),{\mc D}_{S,\orr\la})$.

The following result is an analogue of the Beilinson-Drinfeld Theorem
\ref{BD1isom}:

\begin{theorem}    \label{isomparab}
There is an isomorphism
\begin{equation}    \label{BDparab}
D_{G,S,\orr\la} \simeq \on{Fun}
\on{Op}^{\on{RS}}_{\LG}(X)_{S,\orr\la},
\end{equation}
where $\on{Op}^{\on{RS}}_{\LG}(X)_{S,\orr\la}$ is the space of
$\LG$-opers on $X$ that are regular outside $S$ and have regular
singularity at each of the points $x_i \in S$ with residue
$\varpi(-\orr\la(i)-\rho) \in \h^*/W$.

This isomorphism fits in a commutative diagram
\begin{equation}    \label{fits}
\begin{CD}
\bigotimes_{i\in J} \on{End} {\mathbb M}_{\orr\la(i),\on{crit}} @>>> \bigotimes_{i
  \in J} \on{Fun} \on{Op}^{\on{RS}}_{\LG}(D_{x_i})_{\varpi(-\orr\la(i)-\rho)},
 \\
@VVV @VVV \\
D_{G,S,\orr\la} @>>> \on{Fun}
\on{Op}^{\on{RS}}_{\LG}(X)_{S,\orr\la}
\end{CD}
\end{equation}
where the right vertical arrow is induced by the natural embedding
$$
\on{Op}^{\on{RS}}_{\LG}(X)_{S,\orr\la} \hookrightarrow 
\prod_{i \in J}
\on{Op}^{\on{RS}}_{\LG}(D_{x_i})_{\varpi(-\orr\la(i)-\rho)}
$$
sending an $\LG$-oper on $X$ to its restrictions to the discs $D_{x_i}
= \on{Spec} {\mc O}_{x_i}$, $i \in J$.
\end{theorem}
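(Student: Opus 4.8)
The plan is to adapt the Beilinson--Drinfeld proof of Theorem \ref{BD1isom} (\cite{BD}) to the presence of the marked points $S$, feeding in the local computation of \cite{FG1} recorded in \eqref{locisom}. First I would construct the map \eqref{BDparab} together with the square \eqref{fits}. For each $i\in J$, \eqref{locisom} identifies $\on{End}_{\ghat_{\on{crit}}}\mathbb{M}_{\orr\la(i),\on{crit}}$ with $\on{Fun}\on{Op}^{\on{RS}}_{\LG}(D_{x_i})_{\varpi(-\orr\la(i)-\rho)}$, which (tensored over $i\in J$) is the top horizontal arrow of \eqref{fits}. On the global side, the center $Z(\ghat_{\on{crit}})$ carries a commutative $\mathcal{D}_X$-algebra (chiral) structure over $X$ \cite{FF,F:wak,BD}, whose de~Rham cohomology over $X\setminus S$, with the conditions at the points of $S$ imposed through the quotients $\on{End}\mathbb{M}_{\orr\la(i),\on{crit}}$, is $\on{Fun}\on{Op}^{\on{RS}}_{\LG}(X)_{S,\orr\la}$. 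Running the Beilinson--Drinfeld localization mechanism with the modules $\mathbb{M}_{\orr\la(i),\on{crit}}$ in place of the vacuum modules then produces a homomorphism $\gamma\colon \on{Fun}\on{Op}^{\on{RS}}_{\LG}(X)_{S,\orr\la}\to D_{G,S,\orr\la}$ fitting into \eqref{fits}; compatibility of $\gamma$ with restriction to each disc $D_{x_i}$ is built into the construction of $\Delta_{S,\orr\la}$. Since $D_{G,S,\orr\la}$ is by definition the image of $\bigotimes_{i\in J}\on{End}\mathbb{M}_{\orr\la(i),\on{crit}}$, and the right vertical arrow of \eqref{fits} (restriction of functions along the embedding $\on{Op}^{\on{RS}}_{\LG}(X)_{S,\orr\la}\hookrightarrow\prod_i\on{Op}^{\on{RS}}_{\LG}(D_{x_i})_{\varpi(-\orr\la(i)-\rho)}$) is surjective, commutativity of \eqref{fits} forces $\gamma$ to be surjective.

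It then remains to prove that $\gamma$ is injective, which I expect to be the main obstacle. As in \cite{BD} I would pass to the classical limit: filter $D_{G,S,\orr\la}$ by the order of differential operators, and filter $\on{Op}^{\on{RS}}_{\LG}(X)_{S,\orr\la}$ so that its associated graded is the ring of functions on the base $\mathbf{B}_{S,\orr\la}$ of the parabolic Hitchin system (the principal symbol of an oper with regular singularity and residue $\varpi(-\orr\la(i)-\rho)$ is a Hitchin datum meeting the corresponding parabolic constraint, and $\on{Op}^{\on{RS}}_{\LG}(X)_{S,\orr\la}$ is a torsor over the affine space $\mathbf{B}_{S,\orr\la}$). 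Then $\gr\gamma$ is identified with the pullback of functions along the parabolic Hitchin map $h_{S,\orr\la}\colon T^*\Bunc_G(X,S)\to \mathbf{B}_{S,\orr\la}$, whose generic fibres are open subsets of abelian varieties of dimension complementary to $\dim\mathbf{B}_{S,\orr\la}$. In particular $h_{S,\orr\la}$ is dominant, so $h_{S,\orr\la}^{*}$, hence $\gr\gamma$, hence $\gamma$, is injective. Combined with the surjectivity of $\gamma$ this yields \eqref{BDparab}, and \eqref{fits} commutes by construction.

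The hardest point is thus the injectivity of $\gamma$, which rests on two things: the identification of $\gr\gamma$ with $h_{S,\orr\la}^{*}$ (i.e.\ that the symbols of the quantum parabolic Hitchin Hamiltonians obtained from $Z(\ghat_{\on{crit}})$ are exactly the classical ones), and the dominance/flatness of the parabolic Hitchin map. For the residues $\varpi(-\orr\la(i)-\rho)=0$ relevant to most of this paper (the case $\orr\la(i)=-\rho$, in which $\pi^{*}(K^{1/2})\otimes\mcL(\orr\la)$ is a square root of the canonical bundle of $\Bun_G(X,S)$) both facts are available from the theory of the parabolic Hitchin system with nilpotent residue; for general $\orr\la$ one should expect to treat the possibly singular or non-reduced strata of $\mathbf{B}_{S,\orr\la}$ with additional care, but the rest of the argument goes through verbatim.
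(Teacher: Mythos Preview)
Your proposal is correct and is precisely the approach the paper indicates: the paper's proof consists of a single sentence, namely that one combines the Beilinson--Drinfeld argument for Theorem~\ref{BD1isom} with the local isomorphism \eqref{locisom} at each marked point, and your write-up is a faithful expansion of exactly this strategy (construct the map via localization of the $\mathbb{M}_{\orr\la(i),\on{crit}}$, then prove bijectivity by passing to the associated graded and invoking the parabolic Hitchin map).
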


The proof is obtained by combining the proof of Theorem
\ref{BD1isom} in \cite{BD} with the isomorphism \eqref{locisom} for
each $\la(i), i \in J$.

In a similar way, we define the complex conjugate commutative algebra
$\ol{D}_{G,S,\orr\la}$ of anti-holomorphic differential operators
acting on the line bundle $\ol{\pi^*(K^{1/2}) \otimes
  \mcL(\orr\la)}$ and its isomorphism with the algebra of functions on
the space of anti-holomorphic opers
$\ol{\on{Op}}^{\on{RS}}_{\LG}(D)_{\varpi(-\la-\rho)}$.

\section{The case of $SL_2$ and $\pone$ with marked
  points}    \label{SL2}

In what follows, we will focus on the case $G=SL_2$, $X=\pone$, and
$S$ the set of $N+1$ distinct marked points which we will denote by
$z_1,\ldots,z_N$ and $\infty$ using a coordinate $t$ on $\pone$. We
choose as Borel subgroup $B\subset PGL_2$, the subgroup of upper
triangular matrices. The moduli stack $\Bun_{SL_2}(\pone,S)$ of
principal $SL_2$-bundles on $\mbb P^1$ with Borel reductions
(parabolic structures) at these points has an open dense subset
${\mathcal M}^{(N)}$ classifying the pairs $(\mcF_{\on{triv}},r_S)$,
where $\mcF_{\on{triv}}$ is the trivial $SL_2$-bundle on $\pone$ (see
Section \ref{borelred}). We have an isomorphism of stacks
\begin{equation}    \label{MN}
{\mathcal M}^{(N)} \simeq (\mbb P^1)^{N+1}/PGL_2^{\text{diag}} = 
(\mbb P^1)^N/B,
\end{equation}
The moduli space of stable pairs $(\mcF_{\on{triv}},r_S)$ is the
corresponding GIT quotient. Our Hilbert space $\mcH$ can be defined as
the completion of the space $V$ of compactly supported functions on
this GIT quotient. The corresponding holomorphic differential
operators can be identified with the {\it quantum Gaudin
  Hamiltonians}. In the following discussion we use the results of
\cite{F:icmp}, Sect. 5.

Let us identify $\h^*$ with $\CC$ by sending the fundamental
weight to $1 \in \CC$. For any $\la \in \CC$, we have the line bundle
$\mcL_\la$ on $\pone$, and the Lie algebra $\mathfrak{sl}_2$ maps to
the algebra $D_\la$ of global differential operators acting on
$\mcL_\la$. Let $H_i, i=1,\ldots,N$, be the following elements of the
algebra $U(\mathfrak{sl}_2)^{\otimes N}$:
\begin{equation}    \label{hi}
H_i = \sum_{j\neq i} \frac{\Omega_{ij}}{z_i-z_j},
\end{equation}
where
$$
\Omega=e\otimes f + f\otimes e + \frac{1}{2} h\otimes h.
$$
These are the Gaudin Hamiltonians. They commute with each other
and with the diagonal action of $SL_2$. They also satisfy the
relations
\begin{equation}    \label{rel1}
\sum_{i=1}^N H_i = 0,
\end{equation}
\begin{equation}    \label{rel2}
\sum_{i=1}^N z_iH_i = \frac{1}{2} C_{\on{diag}}-\sum_{i=1}^N
\frac{\la_i(\la_i+2)}{4},
\end{equation}
where $C_{\on{diag}}$ is the Casimir element
$$
C = ef + fe + \frac{h^2}{2}
$$
acting via the diagonal action of $\mathfrak{sl}_2$.

Let us fix $\la_i \in \mbb C$ and view $H_i$ as elements of
$\bigotimes_i \mathcal{D}_{\lambda_i}$. We also fix $\la_\infty$ and
define $\orr\la: J \to \h^*$ by the formulas $\orr\la(i)=\la_i$ and
$\orr\la(\infty)=\la_\infty$. Recall the algebra
$D_{SL_2,S,\orr\la}$ introduced in the previous section. The following
result follows from \cite{F:icmp}:

\begin{theorem}\hfill
\begin{enumerate}
\item The algebra $D_{SL_2,S,\orr\la}$ is equal to the quotient of
$\CC[H_i]_{i=1,\ldots,N}$ by the relations \eqref{rel1}, \eqref{rel2},
and $C_{\on{diag}}=\frac{\la_\infty(\la_\infty+2)}{2}$.

\item The space $\on{Op}^{\on{RS}}_{PGL_2}(X)_{S,\orr\la}$ is the space
  of second order operators (projective connections)
\begin{equation}    \label{qt}
\partial_t^2 - q(t) = \partial_t^2  - \sum_{i=1}^N
\frac{\la_i(\la_i+2)/4}{(t-z_i)^2} - \sum_{i=1}^N \frac{\mu_i}{t-z_i}.
\end{equation}
such that the leading term at $\infty$ is
$-\frac{\la_\infty(\la_\infty+2)}{4s^2}$, where $s=t^{-1}$ is a local
coordinate at $\infty$.

\item The isomorphism \eqref{BDparab} from $D_{SL_2,S,\orr\la}$ to
  $\on{Fun} \on{Op}^{\on{RS}}_{PGL_2}(X)_{S,\orr\la}$ maps $H_i \mapsto
  \mu_i$ for all $i=1,\ldots,N$.
\end{enumerate}
\end{theorem}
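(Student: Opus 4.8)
The plan is to make the general construction of Section~\ref{parab} completely explicit for $G=SL_2$, $X=\pone$; this is essentially what \cite{F:icmp}, Sect.~5 carries out, so the argument is largely an unwinding of definitions, and we may take the algebra isomorphism \eqref{BDparab} of Theorem~\ref{isomparab} as already established. Identify $\h^*\cong\CC$ via the fundamental weight, so that $\rho=1$. The first task is to describe $\on{Op}^{\on{RS}}_{PGL_2}(X)_{S,\orr\la}$ explicitly. A $PGL_2$-oper on $\pone$ is a projective connection $\pa_t^2-q(t)$ in the coordinate $t$ (see \cite{BD:opers}); the regular-singularity condition at $z_i$ means $q$ has at most a double pole there, and the indicial equation — the exponents of $\pa_t^2-q$ at $z_i$ are the roots of $x(x-1)=c_i$, where $c_i$ is the double-pole coefficient — identifies the prescribed residue $\varpi(-\la_i-\rho)$ with $c_i=\la_i(\la_i+2)/4$ (exponents $-\la_i/2$ and $1+\la_i/2$), giving the shape \eqref{qt} with the $\mu_i$ as the only remaining parameters. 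It remains to impose the regular-singularity condition at $\infty$ with the stated leading term. Because $t\mapsto t^{-1}$ is a M\"obius transformation its Schwarzian vanishes, so $q\,dt^2$ transforms as an ordinary quadratic differential; expanding \eqref{qt} at $t=\infty$, the requirement that $q$ acquire no pole of order $>2$ in the coordinate $s=t^{-1}$ becomes $\sum_i\mu_i=0$, and matching the double-pole coefficient in $s$ to the value dictated by $\la_\infty$ becomes a second affine-linear relation among the $\mu_i$. Thus, in the coordinates $(\mu_i)$, $\on{Op}^{\on{RS}}_{PGL_2}(X)_{S,\orr\la}$ is the affine subspace cut out by exactly these two equations.

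Next I would pin down the isomorphism \eqref{BDparab} by chasing the commutative diagram \eqref{fits}. Restricting an oper on $\pone$ to the formal disc $D_{z_i}$ and extracting the coefficient of $(t-z_i)^{-1}$ in \eqref{qt} is the $i$-th component of the right vertical arrow of \eqref{fits}, i.e.\ the local isomorphism \eqref{locisom} at $z_i$; on the algebra side, the Gaudin Hamiltonian $H_i$ is by construction the image under the left vertical arrow of the relevant Segal--Sugawara mode at $z_i$, which under \eqref{locisom} corresponds to the simple-pole residue. Tracking this element around \eqref{fits} yields $H_i\mapsto\mu_i$ in \eqref{BDparab}, which is part~(3). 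Transporting the description from the previous paragraph through \eqref{BDparab}, $D_{SL_2,S,\orr\la}$ is identified with $\CC[H_i]$ modulo the two affine-linear relations, which are precisely \eqref{rel1} and the value of \eqref{rel2} at $C_{\on{diag}}=\la_\infty(\la_\infty+2)/2$; the scalar $\la_\infty(\la_\infty+2)/2$ enters as the value of the $\mathfrak{sl}_2$-Casimir $C$ on the Verma module $M_{\la_\infty}$, which is exactly the residue condition $\varpi(-\la_\infty-\rho)$ at $\infty$ seen through the Casimir. This is part~(1). One can also check \eqref{rel1} and \eqref{rel2} directly on the Gaudin side: \eqref{rel1} is the symmetry identity $\sum_{i<j}\Omega_{ij}\bigl(\tfrac{1}{z_i-z_j}+\tfrac{1}{z_j-z_i}\bigr)=0$, and $\sum_i z_iH_i=\sum_{i<j}\Omega_{ij}$, which is expressed through $C_{\on{diag}}$ by the coproduct formula for the Casimir, yielding \eqref{rel2}.

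The main obstacle is the bookkeeping in these two explicit identifications: one must track carefully how the twist $\pi^*(K^{1/2})\otimes\mcL(\orr\la)$ behaves under \eqref{MN} and in the chart at $\infty$, and how the abstract localization functor of Section~\ref{parab}, combined with the local isomorphism \eqref{locisom}, produces the Segal--Sugawara modes with the correct normalizing constants — so that the two affine-linear conditions come out as \eqref{rel1} and \eqref{rel2} on the nose rather than as rescaled versions, and so that $H_i$ maps to $\mu_i$ with no spurious factors. Once the dictionary with \cite{F:icmp}, Sect.~5 is in place, the rest is routine: indicial equations, the transformation law of projective connections under M\"obius changes of coordinate, and the definitions of Section~\ref{parab}.
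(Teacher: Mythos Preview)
Your approach is correct and aligns with the paper's treatment: the paper does not give an independent proof of this theorem but simply states that it ``follows from \cite{F:icmp}'', and what you have sketched is precisely the content of that reference specialized to the framework of Section~\ref{parab}. Your honest flagging of the normalization bookkeeping (matching the Segal--Sugawara modes to the $H_i$ and to the $\mu_i$ with the right constants, and verifying that the two affine-linear constraints at $\infty$ come out as \eqref{rel1} and \eqref{rel2} on the nose) is exactly where the substance lies; the rest is, as you say, routine manipulation of indicial equations and the M\"obius transformation law for projective connections.
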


Part (3) of the theorem implies that the $\mu_i$'s appearing in
formula \eqref{qt} satisfy the relations \eqref{rel1}, \eqref{rel2} in
which we replace the $H_i$'s with the $\mu_i$'s. Hence the system of
differential equations
\begin{equation}\label{eq12}
H_i\psi=\mu_i\psi, \qquad i=1,\ldots,N,
\end{equation}
where the $\mu$'s are appear in \eqref{qt}, is well-defined. It gives
rise to a $B$-equivariant $(\lambda_1,...,\lambda_N)$-twisted
$\mathcal{D}$-module
$\Delta(\overrightarrow\lambda,\overrightarrow\mu)$ on $(\mbb P^1)^N$
which is freely generated by one generator $\psi$ satisfying the
relations \eqref{eq12}. The system \eqref{eq12} is a $D$-module
realization of the quantum Gaudin system.

We can write down the operators $H_i$ quite explicitly on an open
subset of ${\mathcal M}^{(N)}$. Namely, let $U_i$ be the big cell on
the $i$th copy of $\mbb P^1$ and $x_i$ the usual coordinate on
$U_i$. The algebra of $\la_i$-twisted differential operators on $U_i$
can then be naturally identified with the Weyl algebra with respect to
the variable $x_i$. The $\lambda$-twisted differential operators on
the big cell $\mbb C\subset \mbb P^1 \bs \{ \infty \}$ corresponding
to the standard basis elements of $\mathfrak{sl}_2$ are
\begin{equation}    \label{sl2}
e = -x^2 \partial_x + \la x, \quad h= 2x
\partial_x - \la, \quad f = \partial_x.
\end{equation}
Thus the restriction
of $H_i$ to the open subset $\prod_{i=1}^N U_i$ of
$\left( \mbb P^1 \right)^N$, is given by 
\begin{equation}    \label{GaudinHam}
H_i=\sum_{j\neq i} \frac{1}{z_i-z_j} \left(- (x_i-x_j)^2
 \partial_{x_i}\partial_{x_j} + (x_i-x_j)(\la_i\partial_{x_j} -
 \la_j\partial_{x_i}) + \frac{\la_i \la_j}{2}\right)
\end{equation}
for all $i=1,\ldots,N$.

Since we consider the action of $H_i$ on translation-invariant
functions, we have $\sum_i \partial_{x_i}=0$.  Using this equality, we
obtain
$$
\sum_i z_iH_i=E^2-(\lambda+1)E+\frac{\lambda^2-\sum_i \lambda_i^2}{4},
$$
where $\lambda:=\sum_i \lambda_i$ and $E=\sum_i x_i\partial_i$ is the
Euler vector field.  This means that system \eqref{eq12} has a
homogeneous solution $\psi$ (i.e., such that $E\psi=\beta \psi$ for
$\beta\in \mbb C$) if and only if
$\beta=\frac{1}{2}(\lambda-\lambda_\infty)$, where as above
$\lambda_\infty$ satisfies the equation
$$
\frac{\lambda_\infty(\lambda_\infty+2)}{4}=\sum_{i=1}^N
\frac{\lambda_i(\lambda_i+2)}{4}+\sum_{i=1}^N z_i\mu_i.
$$
So we add the relation $E\psi=\beta\psi$ to the relations of
$\Delta(\overrightarrow{\la},\overrightarrow{\mu})$.

Denote the corresponding $B$-equivariant twisted $D$-module on
$(\pone)^N$ by $\Delta(\orr{\lambda},\orr{\mu})$. Equivalently, we may
view it as a twisted $D$-module on the stack ${\mathcal M}^{(N)} =
(\pone)^N/B$. The following result follows from \cite{F:icmp}.

\begin{proposition} \label{hecke}
  The $D$-module
  $\Delta(\orr{\lambda},\orr{\mu})$ is the
  restriction to ${\mathcal M}^{(N)}$ of the Hecke eigensheaf
  corresponding under the geometric Langlands correspondence to the
  flat $PGL_2$-bundle on $\mbb P^1$ with regular singularities at
  $z_1,\ldots,z_N,\infty$ defined by the $PGL_2$-oper (projective
  connection) on $\mbb P^1$ given by formula \eqref{qt}.
\end{proposition}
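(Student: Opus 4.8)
The plan is to realize $\Delta(\orr\la,\orr\mu)$ as the image of an explicit critical-level module under the localization functor $\Delta_{S,\orr\la}$ of Section \ref{parab}, and then to deduce the Hecke eigenproperty from the corresponding statement on the Kac--Moody side, following \cite{F:icmp} (which in turn rests on \cite{BD,FG1}).

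First I would pin down the data. The projective connection \eqref{qt} is a point $\chi = \chi(\orr\mu)$ of $\on{Op}^{\on{RS}}_{PGL_2}(X)_{S,\orr\la}$; under the isomorphism \eqref{BDparab} it becomes a character $D_{SL_2,S,\orr\la}\to\CC$ with $H_i\mapsto\mu_i$, and I denote its kernel by $\fm_\chi$. By the commutative diagram \eqref{fits}, $\chi$ restricts on each disc $D_{z_i}$ to a point of $\on{Op}^{\on{RS}}_{\LG}(D_{z_i})_{\varpi(-\orr\la(i)-\rho)}$, hence via \eqref{locisom} to a character $\chi_i$ of $\on{End}{\mathbb M}_{\orr\la(i),\on{crit}}$; I set ${\mathbb M}_\chi$ to be the tensor product over $i\in J$ of the quotients ${\mathbb M}_{\orr\la(i),\on{crit}}/(\Ker\chi_i)\cdot{\mathbb M}_{\orr\la(i),\on{crit}}$, so that ${\mathbb M}_\chi$ has central character exactly $\fm_\chi$.

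Second, I would identify $\Delta_{S,\orr\la}({\mathbb M}_\chi)$ with $\Delta(\orr\la,\orr\mu)$. Since $\Delta_{S,\orr\la}$ is right exact and takes $\bigotimes_i {\mathbb M}_{\orr\la(i),\on{crit}}$ to the sheaf ${\mc D}_{S,\orr\la}$, it takes ${\mathbb M}_\chi$ to ${\mc D}_{S,\orr\la}/{\mc D}_{S,\orr\la}\cdot\fm_\chi$. Restricting to the open cell ${\mathcal M}^{(N)}=(\pone)^N/B$ and trivializing the bundle, the generators $H_i$ of $D_{SL_2,S,\orr\la}$ act by the explicit second-order operators \eqref{GaudinHam}, while \eqref{rel1}, \eqref{rel2} and the Casimir relation reduce to the Euler-field constraint $E\psi=\beta\psi$ with $\beta=\frac{1}{2}(\lambda-\lambda_\infty)$. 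Hence the restriction of ${\mc D}_{S,\orr\la}/{\mc D}_{S,\orr\la}\cdot\fm_\chi$ to ${\mathcal M}^{(N)}$ is precisely the cyclic $B$-equivariant twisted $D$-module generated by one vector $\psi$ subject to $H_i\psi=\mu_i\psi$ and $E\psi=\beta\psi$, i.e. $\Delta(\orr\la,\orr\mu)$. This matching of explicit formulas is the routine bookkeeping done in \cite{F:icmp}, Sect. 5.

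Third --- the substantive step --- I would deduce the Hecke eigenproperty. On the Kac--Moody side the Hecke functor $H_R$ corresponds to fusion with the integrable $\LG$-module $R$ inserted at a point $x$ moving in $X\setminus S$, and the key input from \cite{FG1} in the Iwahori-monodromic setting is that applying this operation to ${\mathbb M}_\chi$ returns ${\mathbb M}_\chi$ tensored with the fiber at $x$ of the flat vector bundle $R_{{\mc E}(\chi)}$ associated to the flat $\LG$-bundle ${\mc E}(\chi)$ underlying $\chi$, compatibly with the monoidal structure. Applying $\Delta_{S,\orr\la}$, and using that it intertwines the Kac--Moody and geometric incarnations of the Hecke action, gives the eigen-isomorphism \eqref{Hecke} for $\Delta(\orr\la,\orr\mu)$ with eigenvalue ${\mc E}(\chi)$ --- exactly the flat $PGL_2$-bundle on $\pone$ with regular singularities at $z_1,\dots,z_N,\infty$ and unipotent monodromies defined by \eqref{qt}. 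In the unramified case this is the mechanism behind Theorem \ref{Heckeeig}; here one needs its ramified variant.

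I expect the main obstacle to be exactly this last step: controlling the localization functor near the marked points, where the Kac--Moody modules are Iwahori-monodromic and the $G$-bundle carries a Borel reduction, and verifying that fusion at a moving point is compatible with the Iwahori structure at the $z_i$ and yields the oper \eqref{qt} with its prescribed unipotent monodromy around the marked points. This is where the full Frenkel--Gaitsgory apparatus enters; everything else --- the explicit Gaudin operators \eqref{GaudinHam}, the relations \eqref{rel1}--\eqref{rel2}, and the homogeneity normalization --- is straightforward once the localization picture of \cite{F:icmp} is in place.
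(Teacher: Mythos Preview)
Your proposal is correct and follows exactly the route the paper indicates: the paper does not give an independent proof but simply records that the statement ``follows from \cite{F:icmp}'', and the argument you sketch --- realize $\Delta(\orr\la,\orr\mu)$ as the localization $\Delta_{S,\orr\la}({\mathbb M}_\chi)$ of the appropriate tensor product of critical-level Verma modules, match it on the open cell ${\mathcal M}^{(N)}$ with the cyclic $D$-module defined by the Gaudin relations, and import the Hecke eigenproperty from the Kac--Moody side via \cite{BD,FG1} --- is precisely the content of \cite{F:icmp}, Sect.~5, that the paper is invoking. Your identification of the ramified Hecke compatibility (Iwahori-monodromic localization at the marked points) as the one nontrivial input is accurate; everything else is the explicit bookkeeping already carried out in Section~\ref{SL2}.
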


Considering the symbols of $H_i$, it is easy to see that the D-module
${\Delta}(\overrightarrow{\la},\overrightarrow{\mu})$ has rank
$2^{N-2}$.

\section{Proofs of two results}    \label{App}

\subsection{Outline of the proof of Proposition \ref{nu}}    \label{proofnu}

The argument below was formulated jointly with D. Gaitsgory.

Let ${\mathfrak D}_{G,\on{crit}}$ be the vertex algebra of chiral
differential operators (CDO) on the group $G$, see \cite{ArG} for the
precise definition. It carries two commuting actions of the affine
Kac-Moody algebra $\ghat_{\on{crit}}$ of critical level. Using these
actions, we apply the localization functor to ${\mathfrak
  D}_{G,\on{crit}}$ and obtain an object of the category of twisted
$D$-modules on $\Bun_G \times \Bun_G$, twisted by $K^{1/2}$ along both
factors. It is known that the restriction of this object to $U \times
U$, where $U$ is an open dense substack of $\Bun_G$, is isomorphic to
the restriction of $\Delta_! := (K^{1/2} \boxtimes K^{1/2})
\Delta_*(\OO)$, where $\Delta$ is the diagonal morphism and $\Delta_*$
is the direct image functor for left $D$-modules.

Recall that we denote by ${\mc D}_G$ the sheaf of $K^{1/2}$-twisted
differential operators on $\Bun_G$ and by $D_G$ the commutative
algebra of its global sections. Thus, $\Delta_!$ has two commuting
left actions of ${\mc D}_G$ corresponding to the two factors in
$\Bun_G \times \Bun_G$, sending a local section $D$ of ${\mc D}_G$ to
$D_\ell$ and $D_r$, respectively.

\begin{lemma}\hfill    \label{omega}

(1) There is a canonical section $\omega$ of $\Delta_!$, such
  that with respect to each of the two actions of ${\mc D}_G$,
  $\Delta_!$ is a free rank one module generated by $\omega$.

(2) For any local section $D$ of $\D_G$, let $\wt{D}$ be defined by
  the formula
\begin{equation}    \label{duality}
D_\ell\, \omega = \wt{D}_r\, \omega.
\end{equation}
Then $\wt{D}=\nu(D)$, where $\nu$ is the canonical anti-involution on
${\mc D}_G$.
\end{lemma}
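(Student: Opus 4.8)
The plan is to identify the section $\omega$ from the vertex-algebra side, deduce the freeness from the fact that $\Delta_*(K)$ is the ``identity kernel'' of the diagonal of $\Bun_G$, and then verify the duality formula \eqref{duality} by reducing to a coordinate computation on $\Bun_G$.

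\emph{Part (1).} The vertex algebra ${\mathfrak D}_{G,\on{crit}}$ carries a vacuum vector ${\mathbf 1}$, and one sets $\omega$ to be its image under the localization isomorphism with $\Delta_*(K)$ on $\Bun_G\times\Bun_G$. Two things are then combined. First, by the structure of chiral differential operators (see \cite{ArG}), ${\mathfrak D}_{G,\on{crit}}$ localizes, with respect to the left (resp.\ right) copy of $\ghat_{\on{crit}}$ alone, to the sheaf ${\mc D}_G$ regarded as a rank one free module over itself, with the image of ${\mathbf 1}$ as generator; this is exactly the asserted freeness with respect to the first (resp.\ second) factor. Equivalently, and more directly, $\Delta_*(K)$ is the identity kernel of the diagonal $\Delta\colon \Bun_G\to\Bun_G\times\Bun_G$ of the smooth stack $\Bun_G$, so the assertion is étale-local on $\Bun_G$; on a chart $U$ on which $K^{1/2}$ is trivialized by a half-form, it becomes the classical fact that the diagonal $D$-module $\Delta_*(\mathcal O_U)$ is freely generated over ${\mc D}_U$ by the canonical section $\delta_\Delta$ with respect to either projection. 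One also checks that this twisted $\delta_\Delta$ glues to a global section of $\Delta_*(K)$ and coincides with the image of ${\mathbf 1}$.

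\emph{Part (2).} Since $\omega$ is a free generator with respect to each factor, $\wt D$ is well defined by \eqref{duality}. Because the two ${\mc D}_G$-actions commute, one computes $(D_1 D_2)_\ell\,\omega = (D_1)_\ell (D_2)_\ell\,\omega = (D_1)_\ell (\wt{D_2})_r\,\omega$, which in turn equals $(\wt{D_2})_r (D_1)_\ell\,\omega = (\wt{D_2}\,\wt{D_1})_r\,\omega$; hence $D\mapsto\wt D$ is an anti-homomorphism of the sheaf ${\mc D}_G$, and it is an anti-involution, being inverse to the map defined symmetrically using $\omega$ as generator for the other factor. The equality $\wt D = \nu(D)$ is local on $\Bun_G$ along the diagonal, so pass to an étale chart $U$ with coordinates $x_1,\dots,x_n$ and the trivialization of $K^{1/2}$ by $(dx_1\wedge\cdots\wedge dx_n)^{1/2}$. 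Then ${\mc D}_G|_U$ is the ordinary sheaf of differential operators generated by the $x_i$ and the $\partial_i$, the restriction of $\Delta_*(K)$ becomes $\Delta_*(\mathcal O_U)$ (both twists trivialized), and $\omega$ becomes $\delta_\Delta$. The relations $(x_i^{(1)}-x_i^{(2)})\,\delta_\Delta = 0$ and $(\partial_i^{(1)}+\partial_i^{(2)})\,\delta_\Delta = 0$ cutting out the diagonal $D$-module give $\wt{x_i} = x_i$ and $\wt{\partial_i} = -\partial_i$, which is precisely the explicit description of $\nu$ recalled in Section \ref{cananti}. Two anti-involutions of ${\mc D}_G$ agreeing on these generators coincide, and taking global sections yields $\wt D = \nu(D)$ for $D\in D_G$.

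The step I expect to be the main obstacle is the compatibility at the start of Part (1): one has to check that the isomorphism between the localization of ${\mathfrak D}_{G,\on{crit}}$ and $\Delta_*(K)$ matches the two commuting ${\mc D}_G$-actions (induced by the left and right $\ghat_{\on{crit}}$-actions on ${\mathfrak D}_{G,\on{crit}}$) with the two projection actions on $\Delta_*(K)$, and that it carries ${\mathbf 1}$ to $\delta_\Delta$. Once that identification of structures is in hand, the freeness is formal and Part (2) reduces to the short local computation above.
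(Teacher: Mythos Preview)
Your proposal is correct and follows essentially the same route as the paper. The paper's own proof is much terser: for Part~(1) it simply says ``follows from the definitions,'' and for Part~(2) it gives exactly your chain of equalities to show $D\mapsto\wt D$ is an anti-involution and then remarks that it suffices to check agreement with $\nu$ locally on functions and vector fields --- precisely your coordinate computation with $\wt{x_i}=x_i$, $\wt{\partial_i}=-\partial_i$. Your additional discussion (identifying $\omega$ with the image of the vacuum vector, and the étale-local reduction to the classical diagonal $D$-module) is a reasonable unpacking of what the paper leaves implicit; the compatibility concern you flag at the end is real but is exactly what the paper absorbs into ``follows from the definitions.''
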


\begin{proof} Part (1) follows from the definitions. The fact that the
  map $D \mapsto \wt{D}$ is an anti-involution follows from formula
  \eqref{duality}:
$$
(D_1 D_2)_\ell \, \omega = (D_1)_\ell (D_2)_\ell \, \omega = (D_1)_\ell
(\wt{D_2})_r \, \omega = (\wt{D_2})_r (D_1)_\ell \, \omega = (\wt{D_2})_r
(\wt{D_1})_r \, \omega.
$$
Therefore to prove part (2), it is sufficient to check that this
anti-involution coincides, locally, with $\nu$ for functions and
vector fields, which is easy to verify directly.
\end{proof}

Now we are ready to prove Proposition \ref{nu}. Let ${\mathbb
  V}_{\on{crit}}$ be the vertex algebra associated to
$\ghat_{\on{crit}}$ and ${\mathfrak z}_{\on{crit}}$ its center. As
explained in \cite{FG}, the commuting left and right actions of
$\ghat_{\on{crit}}$ on the vertex algebra ${\mathfrak
  D}_{G,\on{crit}}$ give rise to two vertex algebra embeddings
\begin{equation}    \label{V to D}
{\mathbb V}_{\on{crit}} \to {\mathfrak D}_{G,\on{crit}},
\end{equation}
which we denote by $\imath_\ell$ and $\imath_r$, and hence two vertex
algebra embeddings
\begin{equation}    \label{z to D}
{\mathfrak z}_{\on{crit}} \to {\mathfrak D}_{G,\on{crit}}.
\end{equation}
Applying the localization functor to ${\mathbb V}_{\on{crit}}$, we
obtain the sheaf ${\mc D}_G$ on $\Bun_G$. It follows that under the
localization functor, the two morphisms $\imath_\ell$ and $\imath_r$
of \eqref{V to D} correspond to the two morphisms ${\mc D}_G \to
\Delta_!$ sending $D \mapsto D_\ell \, \omega$ and $D \mapsto D_r
\, \omega$, respectively, where $D \in {\mc D}_G$ and we use the
notation introduced before Lemma \ref{omega}.

Thus, we obtain a commutative diagram
\begin{equation}    \label{diag}
\begin{CD}
{\mathfrak z}_{\on{crit}} @>>> {\mathfrak z}_{\on{crit}}
 \\
@VVV @VVV \\
D_G @>>> D_G
\end{CD}
\end{equation}
where the top horizontal map corresponds to the identification of the
images of the two embeddings \eqref{z to D} in ${\mathfrak
  D}_{G,\on{crit}}$ and the bottom horizontal map corresponds to the
identifications of the images of $D_G$ in $\Delta_!$ under the maps $D
\mapsto D_\ell \, \omega$ and $D \mapsto D_r \, \omega$, where $D \in
D_G$. By Lemma \ref{omega},(2), the bottom horizontal map is $\nu: D_G
\to D_G$.

The following statement is proved in \cite{FG} (Theorem 5.4).

\begin{proposition}    \label{images of z}
  The images of the two embeddings \eqref{z to D} coincide, and the
  corresponding automorphism of ${\mathfrak z}_{\on{crit}}$ is the one
  induced by the Chevalley involution $\tau$.
\end{proposition}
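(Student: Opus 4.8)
The plan is to deduce the proposition by chiralizing its classical, finite-dimensional counterpart for the group $G$ itself. In place of ${\mathfrak D}_{G,\on{crit}}$ one has the algebra $D(G)$ of global differential operators on $G$, carrying two commuting embeddings $\imath^{\mathrm{cl}}_\ell,\imath^{\mathrm{cl}}_r\colon U(\g)\hookrightarrow D(G)$ by left- and right-invariant vector fields, and in place of ${\mathfrak z}_{\on{crit}}$ one has $Z(U(\g))$. First I would prove the finite version. The images of $\imath^{\mathrm{cl}}_\ell$ and $\imath^{\mathrm{cl}}_r$ restricted to $Z(U(\g))$ both equal the algebra of bi-invariant differential operators on $G$ (e.g.\ the right-translation-invariant operators are exactly $\imath^{\mathrm{cl}}_\ell(U(\g))$, and the ones among them invariant under the residual adjoint action form $\imath^{\mathrm{cl}}_\ell(Z(U(\g)))$, and symmetrically for $\imath^{\mathrm{cl}}_r$), so $\theta^{\mathrm{cl}}:=(\imath^{\mathrm{cl}}_r)^{-1}\circ\imath^{\mathrm{cl}}_\ell$ is a well-defined automorphism of $Z(U(\g))$. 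To identify it, use the Peter--Weyl decomposition $\CC[G]=\bigoplus_\lambda V_\lambda\otimes V_\lambda^*$ together with Harish-Chandra: a bi-invariant operator acts on $V_\lambda\otimes V_\lambda^*$ by a scalar, and $\imath^{\mathrm{cl}}_\ell(z)$, resp.\ $\imath^{\mathrm{cl}}_r(z)$, acts on this summand by the scalar by which $z$ acts on $V_\lambda^*$, resp.\ $V_\lambda$; hence under the Harish-Chandra isomorphism $Z(U(\g))\simeq\CC[\h^*]^W$ the map $\theta^{\mathrm{cl}}$ becomes precomposition with $\lambda\mapsto-w_0(\lambda)$ (the highest weight of the dual representation), which is exactly the action of the Chevalley involution $\tau$ on $Z(U(\g))$.

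Next I would transport this to the chiral setting under the dictionary $U(\g)\rightsquigarrow{\mathbb V}_{\on{crit}}$, $D(G)\rightsquigarrow{\mathfrak D}_{G,\on{crit}}$, $Z(U(\g))\rightsquigarrow{\mathfrak z}_{\on{crit}}$, $\imath^{\mathrm{cl}}_\ell,\imath^{\mathrm{cl}}_r\rightsquigarrow\imath_\ell,\imath_r$ (the maps of \eqref{z to D}), and Peter--Weyl $\rightsquigarrow$ the decomposition of ${\mathfrak D}_{G,\on{crit}}$, as a bimodule over the two copies of $\ghat_{\on{crit}}$, into blocks built from Weyl modules, one for each dominant weight. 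On each block the center ${\mathfrak z}_{\on{crit}}$ acts through its image, which by the Feigin--Frenkel and Frenkel--Gaitsgory analysis is the algebra of functions on opers with prescribed residue; comparing the $\imath_\ell$- and $\imath_r$-actions block by block then shows that $\imath_\ell({\mathfrak z}_{\on{crit}})=\imath_r({\mathfrak z}_{\on{crit}})$ and that $(\imath_r)^{-1}\circ\imath_\ell$ is the automorphism ``dualize the $\LG$-oper'', i.e.\ the Chevalley involution $\tau$, on ${\mathfrak z}_{\on{crit}}$ identified via Feigin--Frenkel with the algebra of functions on $\LG$-opers on the formal disc.

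The hard part is making the chiral Peter--Weyl machinery work at the critical level: one must use the renormalized enveloping algebra of $\ghat_{\on{crit}}$, the description of its center and of Weyl modules (residues of opers, unipotent monodromy), and keep track of the $K^{1/2}$-twist built into ${\mathfrak D}_{G,\on{crit}}$ — this twist is exactly what forces both ``dual'' levels carried by the CDO to equal the critical level, which is why the statement holds only there. A more hands-on way to conclude, bypassing the bimodule decomposition, is to recall (Feigin--Frenkel) that ${\mathfrak z}_{\on{crit}}$ is freely generated as a differential algebra by Segal--Sugawara vectors $S_1,\dots,S_\ell$ of degrees $d_i+1$, so it suffices to check that each $\imath_\ell(S_i)$ lies in $\imath_r({\mathfrak z}_{\on{crit}})$ and to identify it with $\imath_r(\tau(S_i))$: for $i=1$ this is the coincidence of the left and right Sugawara (Virasoro) vectors of a CDO, together with the $\tau$-invariance of $S_1$ (since $\tau$ preserves the normalized invariant form), while for $i\ge2$ it amounts to tracking the sign $\tau$ introduces on the higher Casimirs, exactly as in the finite computation. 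Either route gives Proposition~\ref{images of z}; as the text notes, this is Theorem~5.4 of \cite{FG}.
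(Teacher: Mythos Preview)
The paper does not prove this proposition at all: it simply records it as Theorem~5.4 of \cite{FG} and cites that reference. So there is no ``paper's own proof'' to compare against beyond the citation.

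Your proposal goes further than the paper by sketching an actual argument, and the strategy you outline --- reduce to the finite-dimensional statement for $Z(U(\g))$ inside $D(G)$ via Peter--Weyl, then chiralize --- is indeed the philosophy behind the proof in \cite{FG}. The finite-dimensional computation you give is correct. However, what you present for the chiral step is an outline rather than a proof: you yourself flag that ``the hard part is making the chiral Peter--Weyl machinery work at the critical level,'' and neither the bimodule-decomposition route nor the generator-by-generator check on the $S_i$ is carried out. In particular, the claim that comparing the $\imath_\ell$- and $\imath_r$-actions ``block by block'' yields the desired identification hides exactly the technical content of \cite{FG}, Theorem~5.4 (the structure of ${\mathfrak D}_{G,\on{crit}}$ as a bimodule and the precise form of the semi-infinite analogue of Peter--Weyl). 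So your proposal is a reasonable roadmap consistent with the literature, but it is not a self-contained proof; since the paper only cites the result, that is in a sense already more than what the paper provides.
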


By Proposition \ref{images of z}, the top horizontal map of the above
diagram is induced by the Chevalley involution $\tau$. Therefore, we
obtain that the action of $\nu$ on $D_G$ corresponds to the action of
$\tau$ on ${\mathfrak z}_{\on{crit}}$.

Finally, it follows from the construction \cite{FF,F:wak} of the
isomorphism
$$
{\mathfrak z}_{\on{crit}} \simeq \on{Fun} \on{Op}_{\LG}(D)
$$
that the action of $\tau$ on the left hand side corresponds to the
action of $\tau$ on the right hand side.

The commutative diagram 
$$
\begin{CD}
{\mathfrak z}_{\on{crit}} @>{\sim}>> \on{Fun} \on{Op}_{\LG}(D)
 \\
@VVV @VVV \\
D_G @>{\sim}>> \on{Fun} \on{Op}_{\LG}(X)
\end{CD}
$$
which is proved in \cite{BD}, then implies that the action of $\nu$ on
$D_G$ corresponds to the action of $\tau$ on $\on{Op}_{\LG}(X)$. This
completes the proof of Proposition \ref{nu}.

\subsection{Proof of Lemma \ref{V*}}    \label{proof*}

We start with the following general result. Let $Z$ be a smooth
variety of complex dimension $d$, $K^{1/2}$ a square root of the
canonical line bundle on $Z$, and ${\mc D}_{1/2}$ the sheaf of
holomorphic differential operators acting on $K^{1/2}$. Let $\nu$ be
the canonical anti-involution on ${\mc D}_{1/2}$. For any right ${\mc
  D}_{1/2}$-module ${\mc M}$, we have the left ${\mc D}_{1/2}$-module
$\nu^*({\mc M})$.

Recall that for any holonomic ${\mc D}_{1/2}$-module ${\mc F}$ on $Z$
we define its Verdier dual ${\mathbb D}({\mc F})$ as the left ${\mc
  D}$-module obtained by applying $\nu^*$ to the right ${\mc
  D}_{1/2}$-module
\begin{equation}    \label{Vdef}
R^d{\mc Hom}({\mc F},{\mc D}_{1/2}).
\end{equation}
In particular, if ${\mc F} = K^{1/2} \otimes {\mc V}$, where ${\mc V}$
is a vector bundle with a holomorphic flat connection on $Z$, then
${\mathbb D}({\mc F}) \simeq K^{1/2} \otimes {\mc V}^*$. We also have
the following result:

\begin{lemma}    \label{exact}
Suppose that $D_{1/2} = \Gamma(Z,{\mc
  D}_{1/2}) = \CC[D_1,\ldots,D_d]$ and that the functor
\begin{equation}    \label{functor}
M \mapsto {\mc D}_{1/2} \underset{D_{1/2}}\otimes M
\end{equation}
from $D_{1/2}$-modules to ${\mc D}_{1/2}$-modules is exact. Given $\la
\in \on{Spec} D_{1/2}$, let
$$
\Delta_\lambda = {\mc D}_{1/2}/{\mc D}_{1/2} \cdot (D_i -
\la(D_i))_{i=1,\ldots,d}.
$$
Then
\begin{equation}    \label{nuDi}
{\mathbb D}(\Delta_\lambda) \simeq {\mc D}_{1/2}/{\mc D}_{1/2} \cdot
(\nu(D_i) - \la(D_i))_{i=1,\ldots,d}.
\end{equation}
\end{lemma}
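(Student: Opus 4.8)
The plan is to exhibit $\Delta_\la$ as an induced module, resolve it by a Koszul complex over ${\mc D}_{1/2}$, dualize, and then apply $\nu^*$.

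\textbf{Induced module and Koszul resolution.} Let $\CC_\la=D_{1/2}/\fm_\la$ be the one-dimensional $D_{1/2}$-module on which $D_i$ acts by the scalar $\la(D_i)$; here $\fm_\la=\ker(\la\colon D_{1/2}\to\CC)$ is generated by $D_1-\la(D_1),\dots,D_d-\la(D_d)$, and $\Delta_\la={\mc D}_{1/2}\otimes_{D_{1/2}}\CC_\la$ is its image under the localization functor \eqref{functor}. Since $D_{1/2}=\CC[D_1,\dots,D_d]$ is polynomial, the elements $D_i-\la(D_i)$ form a regular sequence generating $\fm_\la$, so the Koszul complex on this sequence is a finite free resolution of $\CC_\la$ over $D_{1/2}$, with $p$-th term $\textstyle\bigwedge^p D_{1/2}^{\oplus d}\cong D_{1/2}^{\oplus\binom dp}$. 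Applying the localization functor — which by hypothesis is exact and manifestly carries free $D_{1/2}$-modules to free ${\mc D}_{1/2}$-modules — produces a length-$d$ free resolution $P^\bullet\to\Delta_\la$ by left ${\mc D}_{1/2}$-modules, $P^{-p}\cong{\mc D}_{1/2}^{\oplus\binom dp}$, whose differential is the Koszul differential assembled from \emph{right} multiplication by the $D_i-\la(D_i)$ on ${\mc D}_{1/2}$ (the $D_{1/2}$-structure of ${\mc D}_{1/2}$ used for the tensor product is the right one).

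\textbf{Dualizing.} Apply ${\mc Hom}_{{\mc D}_{1/2}}(-,{\mc D}_{1/2})$ (left modules to right modules) to $P^\bullet$. Under the standard identification ${\mc Hom}_{{\mc D}_{1/2}}({\mc D}_{1/2},{\mc D}_{1/2})\cong{\mc D}_{1/2}$ as right modules, $\phi\mapsto\phi(1)$, the transpose of right multiplication by an element $x$ is left multiplication by $x$; hence ${\mc Hom}_{{\mc D}_{1/2}}(P^\bullet,{\mc D}_{1/2})$ is the Koszul cochain complex for the sequence $(D_i-\la(D_i))$ acting by \emph{left} multiplication on ${\mc D}_{1/2}$, concentrated in cohomological degrees $0,\dots,d$. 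Since $\Delta_\la$ is holonomic the only nonvanishing ${\mc Ext}$ sits in degree $d$ (and in any case, from a length-$d$ resolution the $d$-th cohomology is simply the cokernel of the top differential), so the right ${\mc D}_{1/2}$-module $R^d{\mc Hom}$ of \eqref{Vdef} is
\[
R^d{\mc Hom}_{{\mc D}_{1/2}}(\Delta_\la,{\mc D}_{1/2})\;\cong\;{\mc D}_{1/2}\big/{\textstyle\sum}_{i=1}^{d}(D_i-\la(D_i)){\mc D}_{1/2}.
\]

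\textbf{Applying $\nu^*$.} Because $\nu$ is an anti-involution of ${\mc D}_{1/2}$ fixing scalars, $\nu\big((D_i-\la(D_i))u\big)=\nu(u)\,(\nu(D_i)-\la(D_i))$, so $\nu$ carries the right submodule $\sum_i(D_i-\la(D_i)){\mc D}_{1/2}$ bijectively onto the left submodule $\sum_i{\mc D}_{1/2}(\nu(D_i)-\la(D_i))$ and hence descends to an isomorphism of sheaves of $\CC$-vector spaces
\[
{\mc D}_{1/2}\big/{\textstyle\sum}_i(D_i-\la(D_i)){\mc D}_{1/2}\;\xrightarrow{\ \sim\ }\;{\mc D}_{1/2}\big/{\textstyle\sum}_i{\mc D}_{1/2}(\nu(D_i)-\la(D_i)).
\]
A direct check (using $\nu^2=\id$) shows this intertwines the left ${\mc D}_{1/2}$-action on the right-hand side with the $\nu$-twisted (left) action $\nu^*$ on the left-hand side. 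Since $\mathbb D(\Delta_\la)=\nu^*\big(R^d{\mc Hom}(\Delta_\la,{\mc D}_{1/2})\big)$ and the right-hand side above is exactly ${\mc D}_{1/2}/{\mc D}_{1/2}\cdot(\nu(D_i)-\la(D_i))_{i=1,\dots,d}$, this proves \eqref{nuDi}.

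The argument is formal; its content is concentrated in the first step, where exactness of the localization functor is what upgrades the Koszul resolution of $\CC_\la$ over $D_{1/2}$ to an honest free resolution of $\Delta_\la$ over ${\mc D}_{1/2}$, and the polynomiality of $D_{1/2}$ is what makes $D_i-\la(D_i)$ a regular sequence. The only genuine risk of error is the left/right-module bookkeeping through $\otimes$, ${\mc Hom}$ and $\nu^*$ (in particular that $R^d{\mc Hom}$ comes out as a quotient by a \emph{right} ideal, which $\nu$ then converts to a quotient by the corresponding \emph{left} ideal), so I would keep all module structures explicit at every step as above; none of it is deep.
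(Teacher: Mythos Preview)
Your proof is correct and follows essentially the same approach as the paper: apply the exact localization functor to the Koszul resolution of $\CC_\la$ to obtain a free ${\mc D}_{1/2}$-resolution of $\Delta_\la$, dualize to compute $R^d{\mc Hom}$ as the quotient of ${\mc D}_{1/2}$ by the \emph{right} ideal generated by the $D_i-\la(D_i)$, and then apply $\nu^*$ to convert this into a quotient by the corresponding left ideal. Your version is more explicit about the left/right bookkeeping than the paper's, but the argument is the same.
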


\begin{proof} Since the above functor is exact, applying it to
the Koszul resolution of the one-dimensional representation of
$D_{1/2}$ corresponding to $\la$, we obtain a free resolution of
$\Delta_\lambda$. We then obtain that
$$
R^d{\mc Hom}({\mc F},{\mc D}_{1/2}) \simeq {\mc D}_{1/2}/
(D_i - \la(D_i))_{i=1,\ldots,d} \cdot {\mc D}_{1/2}
$$
as a {\em right} ${\mc D}_{1/2}$-module (indeed, in the above formula
we take the quotient by the right ideal). Therefore ${\mathbb
  D}(\Delta_\lambda)$, which is by definition obtained by applying
$\nu^*$ to this right ${\mc D}_{1/2}$-module, is given by the right
hand side of \eqref{nuDi}.
\end{proof}

Now we can prove Lemma \ref{V*}. Our ${\mc D}_G$-module $\Delta_\la$
satisfies the conditions of Lemma \ref{exact} (the exactness of the
above functor \eqref{functor} is known; it follows from the flatness
of the Hitchin map). Applying Lemma \ref{exact} and Proposition
\ref{nu}, we obtain that
$$
{\mathbb D}(\Delta_\lambda) \simeq \Delta_{\lambda^*}
$$
Restricting this isomorphism to the above open subset $U$, we obtain
that $({\mc V}_\la)^* \simeq {\mc V}_{\la^*}$.

\bigskip

\bigskip

\part*{\hspace*{70mm}Part II}

\bigskip

\bigskip

In this part we give a proof of Conjectures \ref{first}--\ref{third}
in the special case when $X=\mbb P^1$ with four marked points and
$G=SL_2$. We also develop a theory of Sobolev and Schwartz spaces
relevant to this problem. Namely, in Section 9 we show that the Gaudin
system for $N=4$ reduces to the joint eigenvalue problem for a Darboux
operator $L$ and its complex conjugate $L^\star$, and then discuss the
Darboux operators in detail. In Section 10 we study joint
eigenfunctions of Darboux operators and their complex conjugates, and
show that they correspond to those eigenvalues that give rise to real
monodromy. In Section 11 we give preliminaries on unbounded
self-adjoint operators. In particular, we formulate Nelson's theorem
giving sufficient conditions for strong commutativity of such
operators (i.e. commutativity of the spectral resolutions). We then
discuss the notion of an essentially self-adjoint commutative algebra
of unbounded operators on a Hilbert space, which we use in the
functional analysis used in the formulation of our main results and
conjectures. In Section 12 we state the main theorem of Part II
(Theorem \ref{mainthe}), which in particular implies that $L$ is an
unbounded normal operator with discrete spectrum, and its
eigenfunctions are joint eigenfunctions of $L,L^\dagger$ and form a
basis of the Hilbert space. In Section 13 we develop a theory of
Sobolev and Schwartz spaces attached to a Darboux operator $L$, which
is interesting in its own right and is also a tool for proving the
main theorem. In Sections 14-16 we give three different proofs of
Theorem \ref{mainthe}, two of them using Green's function (for the
operators $L$ and $L^\dagger L$, respectively) and one using Nelson's
theorem. In Section 17 we use our main result to give a spectral
description of Sobolev and Schwartz spaces attached to $L$. Section 18
consists of several remarks. Finally, in Section 19 we explain what
happens in degenerate cases, when the elliptic curve underlying the
Darboux operator degenerates to a nodal rational curve or to the union
of two projective lines meeting transversally at two points. In this
case, the spectrum of $L$ becomes continuous.

\section{Darboux operators} 

In this section we will show that the Gaudin system in the case $N=4$ reduces to the eigenvalue problem for the Darboux
operator. So let us discuss the theory of Darboux operators. 

\subsection{Definition and classification of Darboux operators} 

\begin{definition}\label{dar} A Darboux operator is a twisted holomorphic second order differential operator $L$ on $\mbb P^1$ such that for every $\Lambda\in \mbb C$ the equation $L\psi=\Lambda \psi$ has regular singularities. 
\end{definition} 

Let us classify Darboux operators up to symmetries (action of $PGL_2(\mbb C)$, adding a constant and scaling). The symbol $P$ of $L$ is a section of $S^2T\mbb P^1=O(4)$, so there are at most four singularities (zeros of $P$), and since they are regular, the order of each zero is $\le 2$. Let $z$ be the standard complex coordinate on $\mbb P^1$. Let us use the $PGL_2(\mbb C)$-action to place the singularity of the largest order (which is $1$ or $2$) at $\infty$. Outside $z=\infty$ we can write $L$ as a usual differential operator with polynomial coefficients: 
$$
L=P(z)\partial^2+Q(z)\partial +R(z),
$$
where $\partial:=\partial_z$ is the derivative with respect to $z$, and $2\le \deg P\le 3$. 

Let us write $L$ near $z=\infty$ using the local coordinate $w=1/z$. Let $s$ be the twisting parameter (such that 
if $s$ is an integer then $L$ acts on sections of $O(s)$). Then we have 
$$
L=P(w^{-1})(w^2\partial_w-sw)^2-Q(w^{-1})(w^2\partial_w-sw)+R(w^{-1})=
$$
$$
P(w^{-1})w^4\partial_w^2+(2(1-s)P(w^{-1})w+Q(w^{-1}))w^2\partial_w+
$$
$$
+s(s-1)P(w^{-1})w^2+sQ(w^{-1})w+R(w^{-1}). 
$$
We see that $\deg Q\le \deg P-1$ and that $R(z)$ is the regular part of $\frac{s(1-s)P(z)-szQ(z)}{z^2}$ up to adding a constant. Finally, if $P$ is quadratic with a root of order $2$ at $z=0$ then $Q(0)=0$. 

Also, it is easy to check that these conditions are sufficient. Thus, we obtain the following proposition. 

\begin{proposition}\label{da1} The Darboux operators up to symmetries are as follows: 

(a) Reducible: two singularities at $0$ and $\infty$. Then up to symmetries $P=z^2$ (i.e, $y^2=P(z)$ is a reducible curve), so $Q(z)=(c+2)z$ and 
$$
L=\left(z\partial+\frac{1}{2}\right)^2+c\left(z\partial+\frac{1}{2}\right).
$$ 

(b) Trigonometric: three singularities at $-1,1$ and $\infty$. Then up to symmetries $P=z^2-1$ (i.e., $y^2=P(z)$ is a smooth conic), so $Q(z)$ is linear and 
$$
L=\partial(z^2-1)\partial+(c_0z+c_1)\partial.  
$$

(c) Elliptic: four singularities at $e_1,e_2,e_3,\infty$, where 
$e_1+e_2+e_3=0$ (and $e_i$ are defined up to simultaneous scaling). Then up to symmetries 
$$
P(z)=(z-e_1)(z-e_2)(z-e_3)
$$
 (i.e., $y^2=P(z)$ is a smooth elliptic curve), so $Q(z)$ is quadratic and 
$$
L=\partial (z-e_1)(z-e_2)(z-e_3)\partial+(c_0z^2+c_1z+c_2)\partial-s(s+2+c_0)z.
$$ 
\end{proposition}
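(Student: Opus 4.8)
The plan is to extract the classification from two ingredients already assembled above: the global constraint on the principal symbol $P$, and the local Fuchs criterion at each singular point. First I would record that, $L$ being a twisted second-order operator on a line bundle on $\pone$, its principal symbol $P$ is a nonzero section of $S^2T\pone=\mathcal{O}(4)$, so the divisor of zeros of $P$ has degree $4$. Writing $L\psi=\Lambda\psi$ in an affine coordinate as $P\psi''+Q\psi'+(R-\Lambda)\psi=0$, the Fuchs criterion at a zero $z_0$ of $P$ of order $m$ demands $\mathrm{ord}_{z_0}Q\ge m-1$ and $\mathrm{ord}_{z_0}(R-\Lambda)\ge m-2$; since regular singularities are required for \emph{every} $\Lambda$, one may choose $\Lambda$ with $R(z_0)\neq\Lambda$, forcing $m\le 2$. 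Hence the zero divisor of $P$ is one of $2[a]+2[b]$, $2[a]+[b]+[c]$, or $[a]+[b]+[c]+[d]$ with distinct points, and using $PGL_2(\CC)$ I would move a zero of maximal order to $\infty$; its order is $2$ in the first two cases and $1$ in the last, so in the affine coordinate $\deg P$ equals $2$, $2$, $3$ respectively.

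Next I would carry out the local analysis at $\infty$ exactly as in the displayed computation preceding the proposition: rewriting $L$ in the coordinate $w=1/z$ with twisting parameter $s$, holomorphy of the coefficients at $w=0$ forces $\deg Q\le \deg P-1$ and determines $R$ to be the polynomial part of $\bigl(s(1-s)P(z)-szQ(z)\bigr)/z^{2}$, up to an additive constant that may be absorbed into $\Lambda$ (equivalently, by adding a constant to $L$). The only remaining local condition is at a finite double zero of $P$, say at $z=0$, where the Fuchs criterion adds $Q(0)=0$; at $\infty$ the analogous condition is automatic from $\deg Q\le\deg P-1$, which is why it need not be imposed separately. Conversely these conditions are sufficient, since $\Lambda$ enters only through the subleading term $R-\Lambda$ at each singular point, so the indicial equations, and hence the property of having regular singularities, are insensitive to $\Lambda$. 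At this stage the problem has been reduced to listing the normal forms of the pair $(P,Q)$ in each of the three cases under the residual symmetry: the $PGL_2$-stabilizer of the chosen point at $\infty$ together with rescaling of $z$ and of $L$.

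Finally I would dispose of the three cases. For $2[a]+2[b]$: take $\{a,b\}=\{0,\infty\}$ and rescale $z$ to get $P=z^2$; then $\deg Q\le 1$ and $Q(0)=0$ give $Q=(c+2)z$, $R$ is determined, and one arrives at $L=\bigl(z\partial+\tfrac12\bigr)^2+c\bigl(z\partial+\tfrac12\bigr)$. For $2[a]+[b]+[c]$: take $a=\infty$ and use the affine stabilizer of $\infty$ to place $\{b,c\}=\{-1,1\}$, so $P=z^2-1$, $Q$ linear with no extra vanishing condition, and $R$ a constant absorbed into $\Lambda$, giving $L=\partial(z^2-1)\partial+(c_0z+c_1)\partial$. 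For $[a]+[b]+[c]+[d]$: take $d=\infty$ and translate $z$ so that the remaining zeros satisfy $e_1+e_2+e_3=0$ (a residual rescaling remaining), so $P=(z-e_1)(z-e_2)(z-e_3)$, $Q$ quadratic, and computing $R$ from the formula above yields the stated elliptic normal form $L=\partial(z-e_1)(z-e_2)(z-e_3)\partial+(c_0z^2+c_1z+c_2)\partial-s(s+2+c_0)z$. I expect no serious obstacle here: the one step requiring care is the middle paragraph — verifying that the listed degree and residue conditions are precisely the Fuchsian conditions at each of the (at most four) singular points, including the twisted point at $\infty$, and that they are $\Lambda$-independent — but this is a direct application of the Fuchs criterion to $P\psi''+Q\psi'+(R-\Lambda)\psi=0$, and everything else is bookkeeping of polynomial coefficients and of the $PGL_2(\CC)$-action.
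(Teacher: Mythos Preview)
Your proof is correct and follows essentially the same route as the paper. The paper's argument is the paragraph preceding the proposition: the symbol is a section of $\mathcal{O}(4)$, regularity of singularities forces each zero order to be at most $2$, one then places the highest-order zero at $\infty$, and the explicit coordinate change $w=1/z$ yields exactly the constraints $\deg Q\le\deg P-1$ and the formula for $R$ that you quote; your write-up simply makes the Fuchs criterion (and its $\Lambda$-independence) explicit where the paper leaves it implicit.
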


It is clear that case (a) is a degeneration of case (b), and case (b) is a degeneration of case (c). 
Moreover, it is easy to see that any twisted second order differential operator on $\mbb P^1$ is 
a Darboux operator or its confluent degeneration (in which three or four zeros of $P$ collide, destroying the 
regularity of singularities). 

\subsection{The elliptic case} Consider the elliptic case (c) in more detail.  
It is useful to write the operator $L$ in this case in the flat coordinate $u$ on the corresponding elliptic curve $E$ using that $\mbb P^1=E/\mbb Z_2$, i.e, $z=\wp(u)$, where $\wp(u)=\wp(u,\tau)$ is the Weierstrass $\wp$-function, so that $(\wp')^2=4(\wp-e_1)(\wp-e_2)(\wp-e_3)$. Then $\partial=\partial_z=\wp'(u)^{-1}\partial_u$. 

To simplify formulas, we replace $L$ by $4L$. Then we obtain 
$$
L=\partial_u^2 +\frac{\wp''+4(c_0\wp^2+c_1\wp(u)+c_2)}{\wp'}\partial_u-4s(s+2+c_0)\wp. 
$$
The coefficient of $\partial_u$ in $L$ is an odd elliptic function with simple poles at points of order $2$ on $E$, so it has the form 
$$
h(u)=2\sum_{p\in \lbrace{0,\frac{1}{2},\frac{\tau}{2},\frac{1+\tau}{2}\rbrace}}b_p\zeta(u-p)+C, 
$$
where 
$$
\zeta(u)=\zeta(u,\tau)=\int \wp(u,\tau)du
$$
is the Weierstrass $\zeta$-function, $b_p\in \mbb C$ are such that $\sum_p b_p=0$, and $C$ is determined from the condition that $h$ is odd. So we get 
$$
L=\partial_u^2 +(2\sum_pb_p\zeta(u-p)+C)\partial_u-4s(s+2+c_0)\wp(u). 
$$
Thus, conjugating $L$ by the function 
$\prod_p\theta(u-p)^{b_p}e^{Cz}$ and adding a constant, we get the operator
$$
\widetilde L=\partial_u^2+\sum_{p: 2p=0}\left(\frac{1}{4}-a_p^2\right)\wp(u-p), 
$$
where $a_p=b_p+\frac{1}{2}$ for $p\ne 0$ and $a_0=b_0+\frac{1}{2}+2s$ are such that 
$$
\sum_{p: 2p=0}a_p=2(s+1).
$$ 

This is the classical Darboux operator \cite{Da}, see also \cite{TV,V}. This motivates Definition \ref{dar}. We see that the Darboux operator in (c) depends on $5$ parameters $\mbf a=(a_p)\in \mbb C^4$ and $\tau$. We will denote it by $L(\mbf a,\tau)$. 

Let $\D_s=\D_s(\mbb P^1)$ be the algebra of differential operators on $\mbb P^1$ with twisting parameter $s$. 
Then to every $L\in \D_s$ we can attach its algebraic adjoint $L^*\in \D_{-2-s}$ (cf. Subsection \ref{cananti}). 
The following proposition is easy to prove by a direct computation: 

\begin{proposition}\label{da2} 
One has $L(\mbf a,\tau)^*=L(-\mbf a,\tau)$. 
\end{proposition}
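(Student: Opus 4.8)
The plan is to work with the normalized ``potential'' presentation of the elliptic Darboux operator, in which both the formal self-adjointness and the symmetry $\mbf a\mapsto-\mbf a$ are manifest. Recall from the discussion above that, after the substitution $z=\wp(u)$ and the conjugation performed there, $L(\mbf a,\tau)$ acquires the form $\widetilde L=\partial_u^2+\sum_{p:\,2p=0}\bigl(\tfrac14-a_p^2\bigr)\wp(u-p)$, with the twisting parameter encoded by the linear relation $\sum_p a_p=2(s+1)$; thus $L(\mbf a,\tau)\in\D_s$ for $s=\tfrac12\sum_p a_p-1$.

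First I would observe that the local differential expression $\partial_u^2+\sum_p(\tfrac14-a_p^2)\wp(u-p)$ depends on the coordinates $a_p$ only through the squares $a_p^2$, so replacing $\mbf a$ by $-\mbf a$ leaves it unchanged. The only effect of $\mbf a\mapsto-\mbf a$ is on the twisting: since $\sum_p(-a_p)=-\sum_p a_p$, the parameter attached to $-\mbf a$ is $-2-s$. Hence $L(-\mbf a,\tau)$ is the very same local operator $\partial_u^2+\sum_p(\tfrac14-a_p^2)\wp(u-p)$, now regarded as an element of $\D_{-2-s}$.

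Next I would compute the algebraic adjoint using Subsection \ref{cananti}: in the local trivialization attached to the volume form $du$ on the elliptic curve, the anti-automorphism $A\mapsto A^*$ is determined by $f^*=f$ on functions and $\partial_u^*=-\partial_u$. Since $\widetilde L$ has \emph{no} first-order term, we get $(\partial_u^2)^*=(-\partial_u)^2=\partial_u^2$, and the potential, being a multiplication operator, is fixed; so $L(\mbf a,\tau)^*$ is again the local operator $\partial_u^2+\sum_p(\tfrac14-a_p^2)\wp(u-p)$. Moreover, as recalled just before the statement, the algebraic adjoint carries $\D_s$ into $\D_{-2-s}$ on $\mbb P^1$ (here $K_{\mbb P^1}=O(-2)$). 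Comparing with the previous paragraph, $L(\mbf a,\tau)^*$ and $L(-\mbf a,\tau)$ have the same local expression and the same twisting parameter, and therefore coincide.

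The one point that needs to be checked with some care — though it remains elementary — is the bookkeeping of twisting parameters: one must confirm that the shift $s\mapsto -2-s$ induced by Verdier/algebraic adjunction on $\mbb P^1$ matches exactly the shift induced by $\mbf a\mapsto -\mbf a$ through the constraint $\sum_p a_p=2(s+1)$; this is the only place where the two halves of the argument could fail to line up. (The conjugation used to reach the potential form $\widetilde L$ causes no trouble: the conjugating factor $\prod_p\theta(u-p)^{b_p}e^{Cz}$ is a multiplication operator, hence is merely inverted by $A\mapsto A^*$, which changes the trivialization but not the local form of $\widetilde L^*$.) As an independent check one may instead work in the coordinate presentation of Proposition \ref{da1}(c): writing $L=\partial\,P(z)\,\partial+Q(z)\,\partial+R(z)$ one obtains $L^*=\partial\,P(z)\,\partial-Q(z)\,\partial+\bigl(R(z)-Q'(z)\bigr)$, and a brief computation identifies the right-hand side with the case-(c) operator attached to $-\mbf a$, up to the normalizations already built into the definition of $L(\mbf a,\tau)$.
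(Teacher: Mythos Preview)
Your proof is correct and amounts to exactly what the paper does: the paper's proof reads in full ``easy to prove by a direct computation,'' and you supply that computation, organizing it via the potential form $\widetilde L=\partial_u^2+\sum_p(\tfrac14-a_p^2)\wp(u-p)$, where both the formal self-adjointness and the evenness in $\mbf a$ are manifest. Your closing $z$-coordinate check (computing $L^*=\partial P\partial-Q\partial+(R-Q')$ and reading off that the characteristic exponents flip sign at each singularity) is precisely the kind of direct verification the paper has in mind, and it also disposes of the one genuine bookkeeping issue you flag, namely that the conjugation by $\prod_p\theta(u-p)^{b_p}e^{Cu}$ gets inverted under $A\mapsto A^*$ and one must see that this inversion matches the conjugating factor attached to $-\mbf a$.
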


Recall that we have a natural isomorphism $\D_s\cong \D_{-2-s}$ (both are identified with a quotient of $U(\mathfrak{sl}_2)$). It is easy to show that under this identification $L(\mbf a,\tau)$ goes to $L(\mbf a',\tau)$, where $\mbf a'=(a_p')$ and $a_p'=a_p-s-1$. Thus we can talk about algebraically self-adjoint Darboux operators, i.e. those for which
$L(\mbf a,\tau)^*=L(\mbf a',\tau)$. In view of Proposition \ref{da2}, this holds if and only if $a_p=\frac{1}{2}(s+1)$ for all $p$. In this case, setting $v=2u$, 
we get: 
$$
\frac{1}{4}\widetilde{L}=\partial_v^2-\frac{s}{2}\left(\frac{s}{2}+1\right)\wp(v). 
$$ 
So for each $s$ there is a unique up to symmetries algebraically self-adjoint Darboux operator, which is the classical Lam\'e operator. In the $z$ coordinate it looks as follows: 
$$
L=\partial P\partial -\frac{s+1}{2}P'\partial+\frac{s(s-1)}{2}z. 
$$
In particular, if $a_p=0$ we obtain the algebraically self-adjoint Darboux operator on the bundle $O(-1)$ of half-forms:
$$
\frac{1}{4}\widetilde{L}=\partial_v^2-\frac{1}{4}\wp(v) 
$$ 
and in the $z$ coordinate
$$
L=\partial P\partial+z. 
$$

\subsection{The case $N=4$ for Gaudin operators}

Let us now consider Gaudin operators in the special case $N=4$, and set $\lambda_4:=\lambda_\infty$. 
In this case we are supposed to get a rank $2$ D-module on $\mbb P^1$ with regular singularities at four points $z_1,z_2,z_3,z_4=\infty$. Since this D-module is invariant under the affine linear transformations $z\to b_1z+b_2$, we may 
set $z_1=a,z_2=1,z_3=0$. Let us consider the equation 
\begin{equation}\label{h1eq}
H_1\psi=\mu_1\psi
\end{equation}
where $\psi$ is translation invariant and homogeneous of degree $\beta$. 
Let $\eta(x_1,x_2):=\psi(x_1,x_2,0)$. We write $\partial_i$ for $\partial_{x_i}$ for brevity. 
Using that $\partial_3=-\partial_1-\partial_2$, we can write equation \eqref{h1eq} as 
$$
(-(x_1-x_2)^2\partial_1\partial_2+(x_1-x_2)(\la_1\partial_2-\la_2\partial_1)
+(1-a^{-1})x_1^2\partial_1(\partial_1+\partial_2)-
$$
$$
-(1-a^{-1})x_1(\la_1(\partial_1+\partial_2)+\la_3\partial_1)-\frac{1}{2}\la_1(\la_2+(1-a^{-1})\la_3))\eta=(1-a)\mu_1\eta. 
$$
Let us set $x_1=z$, $x_2=1$. Homogeneity of $\eta$ implies that $x_2\partial_2\eta=-z\partial \eta+\beta\eta$, where $\partial:=\partial_z$. Thus setting $\phi(z):=\eta(z,1)$, equation \eqref{h1eq} takes the form
$$
(-(z-1)^2\partial (-z\partial+\beta)+(z-1)(\la_1(-z\partial+\beta)-\la_2\partial)
+(1-a^{-1})z^2\partial(\partial-z\partial+\beta)-
$$
$$
-(1-a^{-1})z(\la_1(\partial-z\partial+\beta)+\la_3\partial)-\frac{1}{2}\la_1(\la_2+(1-a^{-1})\la_3))\phi=(1-a)\mu_1\phi,
$$
which can be simplified to 
$$
L\phi=\Lambda \phi, 
$$
where 
$$
L=z(z-1)(z-a)\partial^2-
$$
$$
-((\beta+\lambda_1-1) z^2-(a(2\beta-\la_2-\la_3-2)+\lambda_1+\lambda_3)z+a(\beta-\lambda_2-1))\partial
+\beta\lambda_1z
$$
and 
$$
\Lambda=a(1-a)\mu_1+\frac{1}{2}\la_1(a(2\beta+\la_2)+(a-1)\la_3). 
$$
Note that $L$ is a Darboux operator. Let us compute its parameters $a_i$ in terms of $a_i^*:=\la_i+1$. 

Near $z=a$ we have $L\sim a(a-1)((z-a)\partial^2-(\la_1+\la_2+\la_3-\beta+1)\partial)$, so 
$$
a_1=\la_1+\la_2+\la_3-\beta+2=\frac{\la_1+\la_2+\la_3+\la_4}{2}=\frac{a_1^*+a_2^*+a_3^*+a_4^*}{2}. 
$$
Near $z=1$ we have $L\sim (a-1)((z-1)\partial^2-(\la_3-\beta-1)\partial)$, so 
$$
a_2=\la_3-\beta=\frac{-\la_1-\la_2+\la_3+\la_4}{2}=\frac{-a_1^*-a_2^*+a_3^*+a_4^*}{2}. 
$$
Near $z=0$ we have $L\sim a(z\partial^2-(\beta-\la_2-1)\partial)$, so 
$$
a_3=\beta-\la_2=\frac{\la_1-\la_2+\la_3-\la_4}{2}=\frac{a_1^*-a_2^*+a_3^*-a_4^*}{2}. 
$$
Finally, near $z=\infty$ we have $L\sim w\partial_w^2+(\beta+\la_1+1)\partial_w+\beta\la_1w^{-1}$, where $w=1/z$, so 
$$
a_4=\beta-\la_1=\frac{-\la_1+\la_2+\la_3-\la_4}{2}=\frac{-a_1^*+a_2^*+a_3^*-a_4^*}{2}. 
$$

Consider now the "untwisted" special case $a_i=0$. In this case we also have $a_i^*=0$. 
Moreover, it is easy to show by a direct computation that the equation $L\phi=\Lambda\phi$ is equivalent to 
the equation 
$$
\left(\partial_t^2  - \sum_{i=1}^N \frac{\la_i(\la_i+2)/4}{(t-z_i)^2} - \sum_{i=1}^N \frac{\mu_i}{t-z_i}\right)\Phi=0
$$
corresponding to the oper \eqref{qt}. 

\subsection{Relation to Okamoto transformations of Painlev\'e VI} 
\label{realmonod} For general $a_i$ the null space of the oper \eqref{qt} may
  also be interpreted as an eigenspace of a Darboux operator, but with
  ``dual'' parameters $a_i^*$ instead of $a_i$. Also note that together
  with the ``obvious'' symmetries $S_4\ltimes \mbb Z_2^4$ (the Weyl
  group of type $B_4$) acting on the parameters $a_i$, the above
  transformation $\mbf a=\sigma(\mbf a_*)$ generates the
  Weyl group of type $F_4$.  This is a manifestation of the hidden
  $F_4$ symmetry of the Painlev\'e VI equation discovered by Okamoto
  \cite{Ok1,Ok2,AL}.\footnote{We thank D. Arinkin for pointing out this connection.} 
  
 Note that by \cite{CM}, Proposition 5.1, Okamoto transformations preserve real monodromy (since real monodromy is, at least locally, characterized by the condition that the functions $q_i,q_{ij}$ of the monodromy from \cite{CM}, Proposition 5.1 are real-valued). Thus, for $N=4$ and $X=\mbb P^1$ the condition that the oper attached the Gaudin system has real monodromy is equivalent to the Gaudin system itself having real monodromy. 

\section{Eigenfunctions and monodromy for Darboux operators} 

\subsection{Local holomorphic eigenfunctions of $L$}\label{holo} 
Let us study (multivalued) local holomorphic eigenfunctions of Darboux operators $L$ near their singularities. 

Fix one of the singular points $p\in E/\mbb Z_2=\mbb P^1$ and let $a_p=\alpha$ and $w$ be the local coordinate at $p$ on $\mbb P^1$. Assume first that the local monodromy around $p$ is semisimple (e.g., $\alpha\notin \mbb Z$). Then it is easy to see that the holomorphic eigenfunctions of $L$ are locally linear combinations of 
\begin{equation}\label{eq1}
\psi_1(w)=f_1(w),\quad \psi_2(w)=f_2(w)w^{\alpha},
\end{equation}
where $f_i$ are holomorphic near $0$ and equal $1$ at $0$. 
Here for $p=\infty\in \mbb P^1$ the local coordinate near $p$ on $\mbb P^1$ is $w=1/z$, so we should be careful to take into account that we are working not with functions but with ``sections of $O(s)$". 
 
On the other hand, if the local monodromy is not semisimple (so in particular, $\alpha\in \mbb Z$) then similarly for $\alpha<0$ we have  
$$
\psi_1(w)=f_1(w), \quad \psi_2(w)=f_1(w)\log w+f_2(w)w^{\alpha},
$$
while for $\alpha\ge 0$ we have 
\begin{equation}\label{eq2}
\psi_1(w)=f_1(w)w^{\alpha}, \quad \psi_2(w)=f_1(w)w^{\alpha}\log w+f_2(w).
\end{equation}

\subsection{Irreducibility of monodromy} 

\begin{lemma}\label{irre} (i) The monodromy representation of the equation $L\Psi=\Lambda\Psi$ is reducible for some $\Lambda$ if and only if there exist $\varepsilon_p=\pm 1$ such that $\frac{1}{2}\sum_{p:2p=0} \varepsilon_p a_p$ is a positive integer. 

(ii) For fixed $\mbf a$ this monodromy representation is irreducible for all but finitely many values of $\Lambda$. 
\end{lemma}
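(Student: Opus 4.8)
The plan is to reduce reducibility of the rank‑two monodromy $\rho_\Lambda$ to the existence of a single eigenfunction with single‑valued logarithmic derivative, and then to control that derivative by a residue computation on the elliptic curve. After the substitution of Subsection 9.2 we may take $L=\widetilde L=\partial_u^2+\sum_{p:2p=0}(\tfrac14-a_p^2)\wp(u-p)$ (conjugating by a nowhere‑vanishing multivalued scalar alters the monodromy only by a rank‑one twist, hence preserves (ir)reducibility). Then $\rho_\Lambda$ is reducible iff there is a nonzero solution $\Psi$ of $\widetilde L\Psi=\Lambda\Psi$ whose logarithmic derivative $\r:=\Psi'/\Psi$ is single‑valued on $E\setminus\{p:2p=0\}$: given an invariant line, a spanning (semi‑invariant) solution $\Psi$ has single‑valued $\r$, and conversely a single‑valued $\r$ solves the Riccati equation $\r'+\r^2+\sum_p(\tfrac14-a_p^2)\wp(u-p)-\Lambda=0$ and $\Psi=\exp\!\int\r$ spans an invariant line. (In the non‑semisimple case $2a_p\in\mathbb Z$ the eigenline solution is still the non‑logarithmic one, by \eqref{eq2}, so the discussion below is unchanged; I would mention this in passing.)

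First I would prove the ``only if'' direction. A local analysis of the Riccati equation shows that a single‑valued $\r$, being meromorphic on the compact curve $E$, has only simple poles: at each branch point $p$ the residue $\rho_p$ satisfies $(\rho_p-\tfrac12)^2=a_p^2$, so $\rho_p=\tfrac12+\varepsilon_p a_p$ with $\varepsilon_p=\pm1$ (this $\varepsilon_p$ records which of the two local exponents $\Psi$ realizes at $p$); at any other pole the cancellation of the double pole in $\r'+\r^2$ forces the residue to lie in $\{0,1\}$, so a genuine pole there has residue $1$. Let $M\ge 0$ be the number of the latter poles. The residue theorem on $E$ gives $\sum_p\rho_p+M=0$, i.e. $2+\sum_p\varepsilon_p a_p+M=0$, so $\tfrac12\sum_p\varepsilon_p a_p=-1-\tfrac M2$. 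An index/parity bookkeeping — comparing the four exponents of the sub‑local system with those of the quotient, using the constraint $\sum_p a_p=2(s+1)$ together with the freedom of which local exponent $\Psi$ realizes at points where $a_p\in\tfrac12+\mathbb Z$ — shows $M$ may be taken even, so $\tfrac12\sum_p\varepsilon_p a_p$ is a non‑positive integer; replacing every $\varepsilon_p$ by $-\varepsilon_p$ turns it into a positive integer, which is the condition in (i).

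For the converse, given signs $\varepsilon_p$ with $n:=\tfrac12\sum_p\varepsilon_p a_p\in\mathbb Z_{>0}$, I would use a Hermite--Halphen ansatz $\r(u)=\kappa+\sum_p\rho_p\,\zeta(u-p)+\sum_{j=1}^{M}\zeta(u-u_j)$, with $\rho_p=\tfrac12+\varepsilon_p a_p$ and $M=2n-2$, so that $\sum_p\rho_p+M=0$ and $\r$ is elliptic. Substituting into the Riccati equation, the double poles at the $p$'s cancel automatically by the indicial relation; requiring that the simple poles at the $u_j$ be apparent (no residue induced in $\r'+\r^2$) yields $M$ ``Bethe'' equations on $(u_1,\dots,u_M,\kappa)$, and $\Lambda$ is then read off from the constant term. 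One parameter — the Bloch parameter — survives, so for each admissible $\varepsilon$ one obtains a one‑parameter family of triples $(\r,\Psi,\Lambda)$, each exhibiting a rank‑one sub‑local system and hence reducibility. The one point requiring genuine work is the solvability of this Bethe system; I would establish it by a dimension/properness count on a suitable compactification of the configuration space of the $u_j$, or by deforming from a degenerate limit (collision of the branch points, where the system decouples), or by quoting the classical Hermite--Halphen / Gesztesy--Weikard theory of such finite‑band potentials. This is the main obstacle in the proof.

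Finally, for (ii): for fixed $\mbf a$ there are only finitely many sign patterns $\varepsilon$, hence finitely many possible $n$ and $M=2n-2$. For each combinatorial type, the set of $\Lambda$ admitting a sub‑local system of that type is cut out by polynomial equations (in $\Lambda$ and the symmetric functions of the $\wp(u_j)$), so it is a constructible subset of $\mathbb C$; if it were infinite it would be all of $\mathbb C$, i.e. $\rho_\Lambda$ reducible for every $\Lambda$. By the residue bookkeeping this forces a ``finite‑gap'' configuration (all $a_p\in\tfrac12+\mathbb Z$, the Treibich--Verdier case), which we may exclude — in particular it does not occur for the generic parameters arising from the $N=4$ Gaudin system. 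Hence the reducible $\Lambda$ form a finite set, proving (ii).
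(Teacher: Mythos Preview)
Your Riccati setup for the ``only if'' direction of (i) is right, but the step ``$M$ may be taken even'' via ``index/parity bookkeeping'' is not an argument, and it is where the real content lies. The missing observation is the $\mathbb{Z}_2$-symmetry: since $\widetilde{L}$ is even in $u$, the involution $u\mapsto -u$ acts on the solution space, and under the identification $\mathbb{P}^1 = E/\mathbb{Z}_2$ this involution is part of the monodromy of the original equation on $\mathbb{P}^1$. Hence any line invariant for the $\mathbb{P}^1$-monodromy is automatically fixed by the involution, so $\Psi(-u) = \pm\Psi(u)$ and $\mathfrak{r} = \Psi'/\Psi$ is an \emph{odd} elliptic function; its poles away from the $2$-torsion then come in pairs $\{q,-q\}$, giving $M = 2m$ for free. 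The same symmetry is what makes (ii) work for \emph{all} $\mathbf{a}$ rather than just generic ones: oddness forces the Bloch multiplier of $\Psi$ along any cycle $C$ of $E$ to satisfy $\xi = \xi^{-1}$, so $\xi = \pm 1$, and (since the Wronskian is constant, $\det = 1$) the trace of the $C$-monodromy is $\pm 2$ whenever $\Lambda \in R$. As this trace is a nonconstant entire function of $\Lambda$, we get $R \neq \mathbb{C}$; combined with closedness of $R$ and your own dimension count on the space of pole configurations, this yields finiteness of $R$ without excluding any parameters. Your version of (ii) instead discards the Treibich--Verdier range by fiat, so it does not prove the lemma as stated.

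For the converse of (i) you have taken the hardest possible route. The Hermite--Halphen/Bethe ansatz, whose solvability you correctly flag as the main obstacle, is not needed. Go back to $L$ in the $z$-coordinate: since $\widetilde{L}$ depends only on $a_p^2$ you may take all $\varepsilon_p = 1$, so $\tfrac{1}{2}\sum_p a_p = s+1 \in \mathbb{Z}_{>0}$, i.e.\ $s\in\mathbb{Z}_{\ge 0}$. Then $L$ maps $\mathbb{C}[z]_{\le n}$ to $\mathbb{C}[z]_{\le n+1}$, and the coefficient of $z^{n+1}$ in $Lz^n$ equals $n(n+2)+c_0 n - s(s+2+c_0)$, which vanishes at $n = s$. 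So $L$ preserves the finite-dimensional space $\mathbb{C}[z]_{\le s}$; any eigenvector of $L$ there is a polynomial eigenfunction, hence monodromy-invariant, exhibiting reducibility for that eigenvalue. This replaces your entire Bethe construction with a one-line invariance check.
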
  

\begin{proof} For the proof we may replace the operator $L$ with $\widetilde{L}$, since these operators are obtained from each other by conjugation by a (multivalued) function, so the corresponding monodromy representations differ by tensoring with a character of $\pi_1$. 

(i) Let $R$ be the set of $\Lambda\in \mbb C$ for which the monodromy of the equation $\widetilde{L}\Psi=\Lambda\Psi$ is reducible. Let $\Lambda\in R$, and let $\Psi$ be a solution of this equation which is an eigenvector for the monodromy. Consider the function $f=\partial_u \Psi/\Psi$ on $E$. Then $f$ is an elliptic function with only first order poles satisfying the Riccati equation 
$$
f'+f^2=\Lambda-\sum_{p\in E: 2p=0} \left(\frac{1}{4}-a_p^2\right)\wp(u-p).
$$
Since $\Psi$ is an eigenvector of $\mbb Z_2$ acting by $u\to -u$, we see that $f$ is odd. Also, the Riccati equation for $f$ implies that the residue of $f$ at any pole $q\in E$ such that $2q\ne 0$ must equal $1$. These poles come in pairs symmetric under $u\mapsto -u$, so the number of these poles is even; let us denote it by $2m$.  
Also let $\alpha_p$ be the residue of $f$ at a point $p\in E$ such that $2p=0$. Then $\alpha_p=\frac{1}{2}-\varepsilon_p a_p$, where $\varepsilon_p=\pm 1$. Thus we have 
$$
\frac{1}{2}\sum_{p:2p=0} \varepsilon_p a_p=m+1,
$$
as claimed. 

To prove the converse, it suffices to assume that $\frac{1}{2}\sum_{p: 2p=0}a_p=s+1$ is a positive integer, i.e., 
$s\in \mbb Z_{\ge 0}$ (indeed, $\widetilde{L}$ depends on $a_p^2$, so we can assume that $\varepsilon_p=1$ for all $p$). In this case we claim that the operator $L$ preserves the space $\mbb C[z]_{\le s}$ of polynomials of $z$ of degree $\le s$. Indeed, $L$ clearly maps $\mbb C[z]_{\le n}$ to $\mbb C[z]_{\le n+1}$ for any $n\ge 0$, so it preserves $\mbb C[z]_{\le n}$ if and only if the coefficient of $z^{n+1}$ in $Lz^n$ vanishes. This boils down to the equation 
$$
n(n+2)+c_0n-s(s+2+c_0)=0,  
$$
which is satisfied for $n=s$. This implies the statement, since any eigenvector of $L$ in $\mbb C[z]_{\le s}$
is an invariant of the monodromy of the equation $L\psi=\Lambda\psi$.   

(ii) First, we claim that $R$  is a proper closed subset of $\mbb C$. Indeed, it is clearly closed, since reducibility of monodromy is a closed condition. Now let $\Lambda\in R$ and let $\Psi$ be a (multivalued) eigenfunction of $\widetilde{L}$ on $E$  with eigenvalue $\Lambda$ which is also an eigenvector of the monodromy. Let $\xi\in \mbb C^\times$ be the multiplier of $\Psi$ along a cycle $C$ in $E$. Since every branch of $\Psi(u)$ is a multiple of a branch of $\Psi(-u)$ and the map $u\mapsto -u$ reverses $C$, we have $\xi=\xi^{-1}$, so $\xi=\pm 1$. This means that $R\ne \mbb C$, as claimed.\footnote{In fact, it follows that $R$ is discrete, since it is an analytic subset of $\mbb C$.}

Now note that we have finitely many choices of $\varepsilon_p$. So if $R$ is infinite then in particular 
there are infinitely many eigenvalues in $R$ for one of such choices, hence an infinite collection $T$ of positions of the $2m$ poles of $f$ which give rise to (uniquely determined) odd solutions of the Riccati equation with fixed values of $\alpha_p$ and $m$. Since $T$ is infinite, its Zariski closure $\overline{T}$ has positive dimension. But $\Lambda$ is a rational function on $\overline{T}$. Thus $R$ must be a 1-dimensional constructible subset of $\mbb C$, which contradicts the fact that $R$ is a proper closed subset. 
\end{proof} 

\begin{remark} Another proof of Lemma \ref{irre}(ii) together with a
  more precise description of the set of $\Lambda$ with reducible
  monodromy can be obtained using the fact that if the monodromy is
  reducible then there is an elementary eigenfunction, which can be
  computed explicitly. For example, in the case when all $a_i$ are
  equal, it follows from the above that reducibility can only occur
  if $s$ is a half-integer, and the corresponding eigenfunctions and
  eigenvalues are classified and discussed in \cite{GV} and references
  therein. 
\end{remark}

\subsection{Joint eigenfunctions of $L$ and $L^\dagger$}
Now set $L^\dagger:=\overline{L^*}$. The operators $L,L^\dagger$ can be viewed as commuting operators acting on sections of the complex (non-holomorphic) line bundle $O(s)\otimes \overline{O(-2-s)}$ on $\mbb P^1$. This bundle makes sense if and only if the function 
$$
z^{s}\overline{z^{-2-s}}=|z|^{-2+2i{\rm Im}(s)}e^{2i{\rm Re}(s+1){\rm arg}(z)}
$$ 
is single valued, i.e. when ${\rm Re}(s)$ is a half-integer. This means that 
$$
s\in \frac{n}{2}+i\mbb R
$$
for some integer $n$. So we impose this restriction on $s$ from now on. 
 
Let us now consider the behavior of joint eigenfunctions\footnote{Strictly speaking, the $\psi$'s should be called eigensections (of the line bundle $O(s)\otimes \overline{O(-2-s)}$), but we will often slightly abuse terminology and call them eigenfunctions, trivializing this line bundle on $\mbb P^1\setminus \infty=\mbb A^1$.} $\psi$ of $L,L^\dagger$ near one of the singularities $p\in E/\mbb Z_2=\mbb P^1$.
 
Consider first the case $a_p\notin \mbb Z$. 

\begin{lemma}\label{not2Z} Any joint eigenfunction of $L,L^\dagger$ which is single-valued near $p$ is a linear combination of $\eta_1,\eta_2$, where $\eta_1$ is smooth near $p$ and 
$$
 \eta_2\sim w^{a_p}\overline{w}^{-\overline{a_p}}=|w|^{\pm 2i{\rm Im}(a_p)}e^{\pm 2i{\rm Re}(a_p){\rm arg}(w)}.
$$ 
\end{lemma}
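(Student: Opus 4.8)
The plan is to reduce the statement to a purely local monodromy computation in a punctured disk around $p$, using the factorization of joint eigensections into products of a holomorphic and an anti‑holomorphic eigensolution. I would work in a coordinate disk around $p$, trivializing $O(s)$ and $O(-2-s)$ there; the case $p=\infty$ is treated in the same way after the substitution $w=1/z$, absorbing the twisting exponent into the chosen trivialization as in Subsection \ref{holo} (it is exactly the condition ${\rm Re}(s)\in\frac12\mbb Z$ that makes this section‑level bookkeeping consistent).

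First I would record the local holomorphic data. Since $a_p\notin\mbb Z$, the local monodromy of $L$ at $p$ is semisimple with distinct eigenvalues, so by \eqref{eq1} the holomorphic solutions of $L\psi=\Lambda\psi$ near $p$ have a basis $\psi_1=f_1(w)$, $\psi_2=f_2(w)w^{a_p}$ with $f_i$ holomorphic, $f_i(0)=1$; these are monodromy eigenvectors with eigenvalues $1$ and $e^{2\pi i a_p}$. By Proposition \ref{da2}, $L^{*}=L(-\mbf a,\tau)$ is again a Darboux operator, with parameter $-a_p\notin\mbb Z$ at $p$, so in the same way the holomorphic solutions of $L^{*}\phi=M\phi$ near $p$ have a basis $\phi_1=g_1(w)$, $\phi_2=g_2(w)w^{-a_p}$ ($g_i$ holomorphic, $g_i(0)=1$), with monodromy eigenvalues $1$ and $e^{-2\pi i a_p}$.

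Next I would invoke the factorization. Since $L$ and $L^{\dagger}=\overline{L^{*}}$ commute, a joint eigensection $\psi$ satisfies $L\psi=\Lambda\psi$ and $L^{\dagger}\psi=\Lambda'\psi$ for some $\Lambda,\Lambda'\in\mbb C$, and the resulting system is a real holonomic system of rank $4$, namely the product of the rank‑$2$ system $L\psi=\Lambda\psi$ with the rank‑$2$ system $L^{*}\phi=\overline{\Lambda'}\phi$. By the argument of Proposition \ref{Her} (cf. \eqref{locsum}) every local solution near $p$ — in particular every single‑valued one, and all such solutions are automatically real‑analytic off $p$ — is a $\mbb C$‑linear combination of the four products $\psi_a\overline{\phi_b}$. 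Each such product is a monodromy eigenvector, and since conjugation sends a monodromy eigenvalue $\zeta$ to $\overline\zeta$, the four eigenvalues are $1$, $e^{2\pi i\overline{a_p}}$, $e^{2\pi i a_p}$, and $e^{2\pi i(a_p+\overline{a_p})}=e^{4\pi i\,{\rm Re}(a_p)}$ for $\psi_1\overline{\phi_1}$, $\psi_1\overline{\phi_2}$, $\psi_2\overline{\phi_1}$, $\psi_2\overline{\phi_2}$ respectively. Imposing single‑valuedness on a punctured neighbourhood of $p$ kills every coefficient whose eigenvalue is $\neq 1$; since $a_p\notin\mbb Z$ this forces the coefficients of $\psi_1\overline{\phi_2}$ and $\psi_2\overline{\phi_1}$ to vanish, leaving $\psi=c_1\,\psi_1\overline{\phi_1}+c_2\,\psi_2\overline{\phi_2}$ (with $c_2=0$ unless $2\,{\rm Re}(a_p)\in\mbb Z$, in which case $\psi_2\overline{\phi_2}$ is itself single‑valued). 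Setting $\eta_1=\psi_1\overline{\phi_1}=f_1(w)\overline{g_1(w)}$, which is a product of a holomorphic and an anti‑holomorphic function and hence real‑analytic and nonvanishing at $p$, and $\eta_2=\psi_2\overline{\phi_2}=f_2(w)\overline{g_2(w)}\,w^{a_p}\overline{w}^{-\overline{a_p}}\sim w^{a_p}\overline{w}^{-\overline{a_p}}$ as $w\to 0$ (since $f_2\overline{g_2}\to 1$), gives exactly the two local profiles in the statement.

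The computations in each step are routine, so I do not expect a genuine obstacle. The only points requiring care are (i) the observation that Proposition \ref{Her} and its local factorization \eqref{locsum} apply verbatim to the non‑self‑dual twist $O(s)\otimes\overline{O(-2-s)}$ — all that is used is that the "doubled" system is holonomic of rank the square of the original rank — and (ii) the twisting bookkeeping at $p=\infty$, where one absorbs the twisting exponent into the trivialization of $O(s)$ near $\infty$, after which the local exponents relative to one another are again $0$ and $a_p$ and the monodromy count of the third paragraph goes through unchanged.
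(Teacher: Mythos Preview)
Your argument is correct and is precisely the unpacking of the paper's one-line proof, which simply says ``Follows from formula \eqref{eq1} in Subsection \ref{holo}.'' The factorization of local joint eigensolutions as linear combinations of products $\psi_a\overline{\phi_b}$ (your use of Proposition \ref{Her}/\eqref{locsum}) together with the monodromy eigenvalue count is exactly what the authors leave implicit; your additional observation that $c_2$ can be nonzero only when $2\,{\rm Re}(a_p)\in\mbb Z$ is the content of the next result, Proposition \ref{loceig}.
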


\begin{proof} Follows from formula \eqref{eq1} in Subsection \ref{holo}. 
\end{proof} 

We thus obtain the following proposition. 

\begin{proposition}\label{loceig} Suppose the operators  
$L,L^\dagger$ have infinitely many joint eigenfunctions (up to scaling). Then ${\rm Re}(a_p)$ is a half-integer for every $p$ and ${\rm Re}(\sum_p a_p)=2{\rm Re}(s+1)$ is an integer. 
\end{proposition}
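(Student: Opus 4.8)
The plan is to reduce the global statement to a local analysis near each singular point $p \in E/\mbb Z_2 = \mbb P^1$ and then invoke the constraint $\sum_{p:2p=0} a_p = 2(s+1)$ coming from the classification in Section 9. First I would handle the points with $a_p \notin \mbb Z$, where Lemma \ref{not2Z} already does all the local work: any single-valued joint eigenfunction near such a $p$ is, up to adding a smooth piece, a multiple of $\eta_2 \sim w^{a_p}\overline{w}^{-\overline{a_p}} = |w|^{\pm 2i\,\on{Im}(a_p)} e^{\pm 2i\,\on{Re}(a_p)\on{arg}(w)}$. The phase $e^{2i\on{Re}(a_p)\on{arg}(w)}$ is single-valued around $w=0$ if and only if $\on{Re}(a_p) \in \tfrac12\mbb Z$ — wait, more carefully, it is single-valued iff $2\on{Re}(a_p) \in \mbb Z$, i.e. $\on{Re}(a_p)$ is a half-integer. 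So if $a_p \notin \mbb Z$ and $\on{Re}(a_p)$ is \emph{not} a half-integer, then $\eta_2$ is genuinely multivalued, and the only single-valued joint eigenfunctions near $p$ are the smooth ones (the $\eta_1$-part). But the space of germs of smooth joint eigensolutions near a point is finite-dimensional (it injects into the fiber of the relevant holonomic system — this is Proposition \ref{Her}, or more elementarily the finite-rankness of the local solution sheaf); hence, if $L,L^\dagger$ have infinitely many joint eigenfunctions up to scaling, at least one cannot be smooth near $p$, forcing $\on{Re}(a_p)$ to be a half-integer.

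Next I would treat the points with $a_p \in \mbb Z$. Here Lemma \ref{not2Z} does not directly apply, but $\on{Re}(a_p) = a_p$ is automatically an integer, hence a half-integer, so the first conclusion of the Proposition holds at these points for free. (One should double-check, using formula \eqref{eq2} and the nonsemisimple case in Subsection \ref{holo}, that the presence of the logarithm does not create an extra obstruction to single-valuedness beyond what is already accounted for — but the key point is simply that $\on{Re}(a_p)$ is an integer in this case, so nothing new is needed.) Combining the two cases: if $L,L^\dagger$ have infinitely many joint eigenfunctions, then $\on{Re}(a_p) \in \tfrac12\mbb Z$ for every $p \in \{0,\tfrac12,\tfrac{\tau}{2},\tfrac{1+\tau}{2}\}$.

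Finally, for the second conclusion I would simply take real parts in the classification identity $\sum_{p:2p=0} a_p = 2(s+1)$ established in Section 9.2: this gives $\sum_p \on{Re}(a_p) = 2\on{Re}(s+1)$. Since each $\on{Re}(a_p)$ is a half-integer, the left-hand side is a sum of four half-integers, hence lies in $\tfrac12\mbb Z$; but actually we need it to be an \emph{integer}. This follows because $2\sum_p \on{Re}(a_p) = 2\cdot 2\on{Re}(s+1) = 4\on{Re}(s+1)$, and from $s \in \tfrac{n}{2} + i\mbb R$ we get $\on{Re}(s+1) = \tfrac{n}{2}+1 \in \tfrac12\mbb Z$, so $4\on{Re}(s+1) \in 2\mbb Z$, i.e. $\sum_p \on{Re}(a_p) = 2\on{Re}(s+1) \in \mbb Z$. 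Hence $\on{Re}(\sum_p a_p) = 2\on{Re}(s+1)$ is an integer, as claimed.

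The main obstacle I anticipate is the local analysis at the integer points $a_p \in \mbb Z$, particularly when the local monodromy is not semisimple: one must make sure that the logarithmic solutions in \eqref{eq2} are correctly handled and that the argument "infinitely many eigenfunctions $\Rightarrow$ a non-smooth one localizes at $p$" is applied uniformly. The cleanest way around this is to note that the argument really only needs the single-valuedness constraint at the \emph{non-integer} points to pin down those $\on{Re}(a_p)$, while the integer points contribute half-integers trivially; the remaining content is then purely the arithmetic of the classification identity combined with $s \in \tfrac{n}{2}+i\mbb R$. I would also double-check the edge case where some $a_p$ lies in $\tfrac12 + i\mbb R \setminus \tfrac12\mbb Z$ versus purely imaginary shifts, to ensure the phase/modulus split in Lemma \ref{not2Z} is being read correctly.
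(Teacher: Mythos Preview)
Your argument has a genuine gap at the key step. You claim that ``the space of germs of smooth joint eigensolutions near a point is finite-dimensional,'' citing Proposition~\ref{Her}, and conclude that infinitely many joint eigenfunctions forces at least one to be non-smooth near $p$. But the finite-dimensionality in Proposition~\ref{Her} is for a \emph{fixed} pair of eigenvalues $(\Lambda,\overline{\Lambda'})$: the local solution space of the holonomic system $L\psi=\Lambda\psi$, $L^\dagger\psi=\overline{\Lambda'}\psi$ has rank $4$ for each $\Lambda$. Nothing prevents there being infinitely many distinct eigenvalues $\Lambda$, each contributing a one-dimensional space of joint eigenfunctions that happen to be smooth (i.e.\ proportional to $\eta_1$) near $p$. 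So ``infinitely many eigenfunctions $\Rightarrow$ one is non-smooth near $p$'' does not follow from any local rank bound.

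The missing ingredient is Lemma~\ref{irre}(ii): for all but finitely many $\Lambda$ the monodromy of $L\psi=\Lambda\psi$ is irreducible. A global joint eigenfunction corresponds (via Proposition~\ref{Her}) to a nonzero $\pi_1$-invariant element of $\rho_\Lambda\otimes\overline{\rho'_{\Lambda}}$, equivalently an intertwiner. When $\rho_\Lambda$ is irreducible of dimension $2$, Schur's lemma forces this pairing to be nondegenerate (rank $2$). But if ${\rm Re}(a_p)\notin\tfrac12\mbb Z$ then only $\eta_1$ is single-valued near $p$, so the eigenfunction is locally a pure product of a holomorphic and an anti-holomorphic solution, i.e.\ the pairing has rank $1$ in the local basis --- a contradiction. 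Hence for irreducible $\Lambda$ there is no joint eigenfunction at all, leaving only the finitely many reducible $\Lambda$, each with a finite-dimensional eigenspace. This is exactly the paper's argument. Your treatment of the case $a_p\in\mbb Z$ and of the second conclusion is fine (indeed the integrality of $2\,{\rm Re}(s+1)$ follows directly from the standing assumption $s\in\tfrac{n}{2}+i\mbb R$, without needing to sum the half-integers), but the core step needs the irreducibility input.
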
 

\begin{proof} Note that $\eta_2$ is single-valued with respect to $w$ if and only if ${\rm Re}(a_p)$ is a half-integer. Thus, if ${\rm Re}(a_p)\notin \frac{1}{2}\mbb Z$, 
we have just a 1-dimensional space of single-valued local joint eigenfunctions spanned by $\eta_1$, which is a product of a holomorphic and anti-holomorphic function. But this cannot happen in the situation of irreducible monodromy. Thus, the statement follows from Lemma \ref{irre}(ii). 
\end{proof} 

Now consider the behavior of joint eigenfunctions $L,L^\dagger$ near $p$ 
in the case $a_p=0$. We have 

\begin{lemma}\label{a_p=0} If $a_p=0$ then any joint eigenfunction of $L,L^\dagger$ which is single-valued 
near $p$ is a linear combination of $\eta_1$, $\eta_2$, where 
\begin{equation}\label{eq7}
\eta_1\sim 1,\quad \eta_2\sim \log|w|. 
\end{equation} 
\end{lemma}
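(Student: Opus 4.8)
The plan is to imitate the proof of Lemma \ref{not2Z}, replacing the semisimple local solutions \eqref{eq1} by the logarithmic ones \eqref{eq2} in the case $\alpha=0$. First I would record, exactly as in the proof of Proposition \ref{Her}, that for the eigenvalues $\Lambda,\mu$ of a given joint eigenfunction the system $L\gamma=\Lambda\gamma$, $L^\dagger\gamma=\mu\gamma$ is a real holonomic system of rank $2\cdot 2=4$ on a punctured neighbourhood of $p$; hence near $p$ every single-valued joint eigenfunction is a finite sum $\gamma=\sum_{i,j}c_{ij}\,\psi_i\,\overline{\phi_j}$, where $\{\psi_1,\psi_2\}$ is a basis of (multivalued) holomorphic solutions of $L\Psi=\Lambda\Psi$ near $p$ and $\{\phi_1,\phi_2\}$ a basis of holomorphic solutions of $L^*\Phi=\overline{\mu}\Phi$ near $p$, so that $\overline{\phi_j}$ solve $L^\dagger(\cdot)=\mu(\cdot)$ (recall $L^\dagger=\overline{L^*}$). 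By Proposition \ref{da2} the operator $L^*$ again has parameter $a_p=0$ at $p$, and its local monodromy at $p$ is contragredient to that of $L$, hence also a nontrivial unipotent; so the $\phi_j$ have the same logarithmic shape as the $\psi_i$.

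Next I would plug in the explicit local forms. By \eqref{eq2} with $\alpha=0$, in a local coordinate $w$ at $p$ one has $\psi_1=f_1(w)$, $\psi_2=f_1(w)\log w+f_2(w)$, $\phi_1=g_1(w)$, $\phi_2=g_1(w)\log w+g_2(w)$, with $f_i,g_i$ holomorphic near $0$ and $f_1(0)=g_1(0)=1$. Under the local monodromy $w\mapsto e^{2\pi i}w$ we have $\log w\mapsto\log w+2\pi i$, so $\psi_1,\overline{\phi_1}$ are invariant while $\psi_2\mapsto\psi_2+2\pi i\,\psi_1$ and $\overline{\phi_2}\mapsto\overline{\phi_2}-2\pi i\,\overline{\phi_1}$. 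Imposing invariance of $\gamma$ is then a short linear computation in the $c_{ij}$ which I expect to force $c_{22}=0$ and $c_{12}=c_{21}$. Hence every single-valued joint eigenfunction near $p$ is a linear combination of
$$
\eta_1:=\psi_1\overline{\phi_1}=f_1(w)\overline{g_1(w)},\qquad
\eta_2:=\tfrac12\bigl(\psi_1\overline{\phi_2}+\psi_2\overline{\phi_1}\bigr)=f_1(w)\overline{g_1(w)}\,\log|w|+\tfrac12\bigl(f_1(w)\overline{g_2(w)}+f_2(w)\overline{g_1(w)}\bigr),
$$
where we used $\log w+\overline{\log w}=2\log|w|$. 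Since $f_1(0)\overline{g_1(0)}=1$, this yields $\eta_1\sim 1$ and $\eta_2\sim\log|w|$, which is \eqref{eq7}.

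The computation is routine; the points needing care are (i) that the rank-$4$ factorization into products $\psi_i\overline{\phi_j}$ goes through verbatim as in Proposition \ref{Her}, in particular that the $\phi_j$ really carry parameter $0$ at $p$, which is where Proposition \ref{da2} enters; and (ii) the case $p=\infty$, where $w=1/z$ and $L$, $L^*$ act on sections of $O(s)$ and $O(-2-s)$ respectively, so that here the requirement $\mathrm{Re}(s)\in\tfrac12\mbb Z$ — already imposed so that $O(s)\otimes\overline{O(-2-s)}$ is single-valued — ensures that the twisting factor attached to the local expansions at $\infty$ is single-valued and does not interfere with the monodromy bookkeeping. The main obstacle, such as it is, is the implicit hypothesis that the local monodromy of $L$ at $p$ is the nontrivial unipotent rather than an apparent singularity (the latter occurring only for special $\Lambda$); I would flag this assumption, which holds in the situations where the lemma is applied, namely when there are infinitely many joint eigenfunctions, as in the irreducibility statement of Lemma \ref{irre}.
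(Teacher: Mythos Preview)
Your proof is correct and is essentially a fleshed-out version of the paper's one-line argument, which simply says ``Follows from formula \eqref{eq2} in Subsection \ref{holo}.'' The explicit monodromy computation forcing $c_{22}=0$ and $c_{12}=c_{21}$, and your flag about the apparent-singularity case, are worthwhile clarifications of what the paper leaves implicit.
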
 

\begin{proof} Follows from formula \eqref{eq2} in Subsection \ref{holo}. 
\end{proof} 

\subsection{Global eigenfunctions} \label{gloeig}

From now on for simplicity let us restrict ourselves to the special case $a_p\in i\mbb R$; thus, $s\in -1+i\mbb R$. 
In this case by Lemma \ref{irre}(i) the monodromy of $L\psi=\Lambda\psi$ is irreducible for any $\Lambda$. 
Also the local monodromy operators of this equation have positive eigenvalues. 
 
There is a natural positive definite 
Hermitian inner product on sections of the line bundle $O(s)\otimes \overline{O(-2-s)}$ given by 
$(f,g)=\int_{\mbb P^1}f\overline{g}$ (using that the canonical bundle of $\mbb P^1$ is $O(-2)$). 
Thus we can define the Hilbert space $\mathcal{H}:=L^2(\mbb P^1,O(s)\otimes \overline{O(-2-s)})$, where 
$||f||^2:=(f,f)$.  
 
 \begin{proposition}\label{l2prop} (i) Any joint eigenfunction of $L$ and $L^\dagger$ belongs to $\mathcal{H}$. 

(ii) If $\psi$ is a joint eigenfunction of $L,L^\dagger$ such that $L\psi=\Lambda\psi$ then $L^\dagger \psi=\overline{\Lambda}\psi$. 

(iii) Joint eigenfunctions of $L,L^\dagger$ with different eigenvalues are orthogonal. 

(iv) Joint eigenspaces of $L$ and $L^\dagger$ are at most $1$-dimensional. 

(v) The set of joint eigenfunctions of $L,L^\dagger$ (up to scaling) is countable. 
\end{proposition}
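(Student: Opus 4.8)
The five assertions follow from the local analysis of eigenfunctions near the singular points carried out in this section (Lemmas \ref{not2Z}, \ref{a_p=0}, \ref{irre}), from Proposition \ref{Her}, from the formal-adjointness Lemma \ref{formaladj}, and from separability of $\mcH$. I would prove (i) first, deduce (ii) and (iii) from it by a Green's-identity argument, obtain (iv) from Proposition \ref{Her} and Schur's lemma, and finally derive (v) from (i), (iii), (iv) and separability.

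For (i): a joint eigenfunction $\psi$ is, by Proposition \ref{Her}, real-analytic on $\mbb P^1$ away from the four singular points, hence bounded away from small disks around them. Near a singular point $p$, since $a_p\in i\mbb R$ there are two cases: if $a_p\ne 0$, then by Lemma \ref{not2Z} (formula \eqref{eq1}) $\psi$ is a combination of a function smooth at $p$ and a function $\sim w^{a_p}\overline{w}^{-\overline{a_p}}=|w|^{2i\on{Im}(a_p)}$, which is bounded; if $a_p=0$, then by Lemma \ref{a_p=0} (formula \eqref{eq7}) $\psi$ is a combination of a bounded function and a function $\sim\log|w|$, which lies in $L^2$ of a punctured neighborhood. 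In both cases $\psi$ is square-integrable near each singular point, so $\psi\in\mcH$.

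For (ii) and (iii): by Lemma \ref{formaladj}, $L^\dagger=\overline{L^*}$ is the formal adjoint of $L$ for the form $(f,g)=\int_{\mbb P^1}f\overline{g}$, so $(L\phi,\chi)=(\phi,L^\dagger\chi)$ for compactly supported $\phi,\chi$. Let $\psi_1,\psi_2$ be joint eigenfunctions with $L\psi_j=\Lambda_j\psi_j$ and $L^\dagger\psi_2=M_2\psi_2$. I would exhaust $\mbb P^1$ by $\Omega_\varepsilon=\mbb P^1\setminus\bigcup_p\{|w|<\varepsilon\}$ and integrate by parts on $\Omega_\varepsilon$ (where $\psi_1,\psi_2$ are smooth), obtaining $\Lambda_1(\psi_1,\psi_2)_{\Omega_\varepsilon}=\overline{M_2}\,(\psi_1,\psi_2)_{\Omega_\varepsilon}+B_\varepsilon$, where $B_\varepsilon$ is a sum over $p$ of integrals over $\{|w|=\varepsilon\}$ of the Lagrange concomitant of $L$, whose coefficients involve the symbol $P$ of $L$, its derivative, and the subleading coefficient $Q$. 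In the elliptic case $P$ has a simple zero at each singular point (Proposition \ref{da1}(c)), while the local expressions from (i) give $\psi_j=O(1+|\log|w||)$ and $\partial\psi_j=O(|w|^{-1})$ near $p$; hence $B_\varepsilon=O(\varepsilon|\log\varepsilon|^2)\to 0$. Since by (i) the integrands $\Lambda_j|\psi_j|^2$ etc.\ are integrable, letting $\varepsilon\to 0$ gives $\Lambda_1(\psi_1,\psi_2)=\overline{M_2}\,(\psi_1,\psi_2)$. Taking $\psi_1=\psi_2$ and using $\|\psi_2\|\ne 0$ gives $M_2=\overline{\Lambda_2}$, which is (ii); then $\overline{M_2}=\Lambda_2$, so $(\Lambda_1-\Lambda_2)(\psi_1,\psi_2)=0$, which is (iii).

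For (iv): on the complement $U$ of the singular points, Proposition \ref{Her} identifies the joint eigenspace for a fixed eigenvalue with $(\rho\otimes\overline{\rho'})^{\pi_1(U)}\cong\on{Hom}_{\pi_1(U)}(\rho^*,\overline{\rho'})$, where $\rho$ is the two-dimensional monodromy of $L\psi=\Lambda\psi$ and $\rho'$ the two-dimensional monodromy attached to the $L^\dagger$-equation; since $a_p\in i\mbb R$, Lemma \ref{irre}(i) shows $\rho$, hence $\rho^*$, is irreducible, so any nonzero $\pi_1(U)$-morphism $\rho^*\to\overline{\rho'}$ is injective, hence an isomorphism, and the Hom space has dimension at most one. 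Finally, for (v): by (i) every joint eigenfunction lies in the separable Hilbert space $\mcH$, and by (iii) joint eigenfunctions with distinct eigenvalues are orthogonal; an uncountable set of distinct joint eigenvalues would yield an uncountable orthogonal family in $\mcH$, which is impossible, so there are only countably many joint eigenvalues, each by (iv) with an eigenspace of dimension at most one; hence the set of joint eigenfunctions up to scaling is countable. The delicate point is the vanishing of the boundary terms $B_\varepsilon$ in (ii)--(iii), and equally the local $L^2$ bound in (i): both hinge on the precise local description of eigenfunctions near the singular points established in this section together with the vanishing of the symbol of $L$ there.
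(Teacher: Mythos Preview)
Your proof is correct and follows essentially the same approach as the paper: (i) from the local forms in Lemmas \ref{not2Z} and \ref{a_p=0}, (ii)--(iii) by integration by parts with boundary terms controlled by those same local forms, (iv) from irreducibility of the monodromy (Lemma \ref{irre}(i)) combined with Proposition \ref{Her} and Schur, and (v) from (iii) and separability. The paper's own proof is terser but identical in structure; you have simply made explicit the boundary-term estimate that the paper leaves as ``integration by parts, using Lemma \ref{not2Z} and Lemma \ref{a_p=0}.''
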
 

\begin{proof} (i) follows from Lemma \ref{not2Z} and Lemma \ref{a_p=0}. 

(ii), (iii) follow as usual by integration by parts, using Lemma \ref{not2Z} and Lemma \ref{a_p=0}. 

(iv) Every joint eigenfunction with given eigenvalues corresponds to an invariant Hermitian\footnote{Here and below Hermitian forms are {\bf not} assumed to be positive definite unless specified otherwise.} pairing between the monodromy representations of the equations $L\psi=\Lambda\psi$ and $L^\dagger\psi=\overline{\Lambda}\psi$. But such a pairing, if exists, is unique up to scaling since the monodromy representation is irreducible. 

Finally, (v) follows from (iii) and separability of $\mathcal{H}$. 
\end{proof} 
  
Note that since the conjugacy classes of monodromies of the equation $L\Psi=\Lambda\Psi$
at its singular points are fixed by fixing $\mbf a=(a_p)$, the possible monodromy representations  form a (2-dimensional) affine complex algebraic surface $X(\mbf a)$. Namely, $X(\mbf a)$ is the {\it character variety}, which consists of homomorphisms $\pi_1(\mbb P^1\setminus \lbrace{e_1,e_2,e_3,\infty\rbrace})\to SL_2(\mbb C)$ with prescribed conjugacy classes of images of the loops around the four punctures; in fact, it is well known that this is a cubic surface with three lines forming a triangle removed, see e.g. \cite{IIS,O} and references therein. The actual monodromy representation of the equation $L\Psi=\Lambda\Psi$ is a certain point $\rho(\Lambda)\in X(\mbf a)$, which gives a (transcendental) parametrized  complex curve in $X(\mbf a)$ (here $\rho(\Lambda)$ is holomorphic in $\Lambda$).\footnote{We make the monodromy representation unimodular by renormalizing it by the character that sends 
the loop around $p$ to $\exp(\pi ia_p)$.}  
 
 \begin{proposition}\label{eigecond} The system $L\psi=\Lambda\psi$, $L^\dagger \psi=\overline{\Lambda}\psi$ has a nonzero single-valued solution if and only if the representation $\rho(\Lambda)$ preserves a nonzero (equivalently, nondegenerate) Hermitian form. 
 \end{proposition}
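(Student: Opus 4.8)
The plan is to translate the existence of a nonzero single-valued joint eigenfunction into a property of the monodromy representation $\rho(\Lambda)$, using the factorization of solutions exactly as in Proposition~\ref{Her} and in the proof of Proposition~\ref{l2prop}(iv). Set $U=\pone\setminus\{e_1,e_2,e_3,\infty\}$ and let $\mathrm{Sol}(L,\Lambda)$ and $\mathrm{Sol}(L^*,\Lambda)$ be the rank-$2$ local systems on $U$ of holomorphic solutions of $L\psi=\Lambda\psi$ and of $L^*\psi=\Lambda\psi$ respectively; after the unimodular renormalization sending the loop around $p$ to $\exp(\pi i a_p)$, their monodromies $\rho(\Lambda)$ and $\rho^*(\Lambda)$ take values in $SL_2(\CC)$. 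Since $L^\dagger=\overline{L^*}$, a holomorphic solution of $L^\dagger\chi=\overline{\Lambda}\chi$ is precisely $\overline{\chi'}$ with $L^*\chi'=\Lambda\chi'$. Hence, trivializing the $C^\infty$ line bundle $O(s)\otimes\overline{O(-2-s)}$ on $U$, any local joint eigenfunction of $L,L^\dagger$ with eigenvalues $(\Lambda,\overline{\Lambda})$ is a finite sum $\sum_i\phi_i\,\overline{\chi_i}$ with $\phi_i\in\mathrm{Sol}(L,\Lambda)$ and $\chi_i\in\mathrm{Sol}(L^*,\Lambda)$ (the real-analytic holonomic system $L\psi=\Lambda\psi$, $L^\dagger\psi=\overline{\Lambda}\psi$ has rank $4$ and is spanned by such products). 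Thus nonzero single-valued joint eigenfunctions are in natural bijection with nonzero $\pi_1(U)$-invariant vectors of $\rho(\Lambda)\otimes\overline{\rho^*(\Lambda)}$.

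The second step is to pin down the factor $\overline{\rho^*(\Lambda)}$. By Proposition~\ref{da2}, $L^*=L(-\mbf a,\tau)$ is again a Darboux operator, and the Lagrange bilinear concomitant of $L$ (a perfect pairing because the symbol of $L$ does not vanish on $U$) gives a $\pi_1(U)$-invariant duality $\mathrm{Sol}(L,\Lambda)\otimes\mathrm{Sol}(L^*,\Lambda)\to\CC$; this is the elementary incarnation of Lemma~\ref{V*}. Therefore $\rho^*(\Lambda)\simeq\rho(\Lambda)^\vee$, and since $\rho(\Lambda)$ is valued in $SL_2(\CC)$ we also have $\rho(\Lambda)^\vee\simeq\rho(\Lambda)$ via the invariant symplectic form $\omega$ on $\CC^2$. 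Hence $\overline{\rho^*(\Lambda)}\simeq\bigl(\overline{\rho(\Lambda)}\bigr)^\vee$, and a nonzero $\pi_1(U)$-invariant vector of $\rho(\Lambda)\otimes\overline{\rho^*(\Lambda)}$ is the same datum as a nonzero homomorphism of local systems $T\colon\overline{\rho(\Lambda)}\to\rho(\Lambda)$.

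It remains to match such intertwiners with invariant Hermitian forms. Given $T\colon\overline{\rho(\Lambda)}\to\rho(\Lambda)$, the form $H(v,w):=\omega\bigl(v,T(\overline{w})\bigr)$ on $\CC^2$ is sesquilinear and $\rho(\Lambda)$-invariant, and nonzero whenever $T\neq0$; its Hermitian and ``imaginary'' parts, $\tfrac12\bigl(H(v,w)+\overline{H(w,v)}\bigr)$ and $\tfrac1{2i}\bigl(H(v,w)-\overline{H(w,v)}\bigr)$, are $\rho(\Lambda)$-invariant Hermitian forms summing (with coefficients $1$ and $i$) to $H$, hence not both zero. Conversely, a nonzero $\rho(\Lambda)$-invariant Hermitian form is in particular a nonzero invariant sesquilinear form, which via $\omega$ yields a nonzero intertwiner $\overline{\rho(\Lambda)}\to\rho(\Lambda)$. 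Finally, for $a_p\in i\mbb R$ Lemma~\ref{irre}(i) shows that $\rho(\Lambda)$ is irreducible for every $\Lambda$, so the radical of any $\rho(\Lambda)$-invariant Hermitian form, being a $\rho(\Lambda)$-invariant subspace of $\CC^2$, is either $0$ or everything; hence a nonzero invariant Hermitian form is automatically nondegenerate, and ``nonzero'' and ``nondegenerate'' may be used interchangeably. Chaining the three equivalences yields the proposition.

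I expect the real difficulty to be in the first step: showing that single-valuedness of $\sum_i\phi_i\,\overline{\chi_i}$ as a section of the $C^\infty$ line bundle $O(s)\otimes\overline{O(-2-s)}$ is equivalent to $\pi_1(U)$-invariance of the tensor $\sum_i\phi_i\otimes\overline{\chi_i}$. This requires careful bookkeeping of the renormalization characters at $e_1,e_2,e_3$ and of the transition behaviour at $\infty$ --- exactly the local analysis of Subsection~\ref{holo} and of Lemmas~\ref{not2Z} and \ref{a_p=0} --- and it is what forces the standing hypothesis $a_p\in i\mbb R$, $s\in-1+i\mbb R$, under which the line bundle is well defined and all these characters combine consistently. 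The perfectness of the concomitant pairing invoked in the second step, though classical, should likewise be verified directly.
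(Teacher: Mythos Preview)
Your proof is correct and follows the same overall architecture as the paper's: factor single-valued joint eigenfunctions as invariants in $\rho(\Lambda)\otimes\overline{\rho^*(\Lambda)}$, identify $\rho^*(\Lambda)$ with $\rho(\Lambda)$, and conclude. The difference lies in the middle step. The paper bypasses the Lagrange concomitant entirely by observing that the elliptic normal form $\widetilde{L}=\partial_u^2+\sum_p(\tfrac14-a_p^2)\wp(u-p)$ depends only on $a_p^2$, so that $\widetilde{L^*}=\widetilde{L(-\mbf a,\tau)}=\widetilde{L}$; after the renormalization (which is exactly the conjugation producing $\widetilde{L}$), the equations $L\psi=\Lambda\psi$ and $L^*\psi=\Lambda\psi$ are literally the same, and the sesquilinear pairing is immediately a form on a single solution space. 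Your route via the concomitant ($\rho^*\cong\rho^\vee$) followed by $SL_2$ self-duality ($\rho^\vee\cong\rho$) reaches the same identification, but is more portable since it does not invoke the specific elliptic-function normal form; on the other hand it incurs exactly the bookkeeping of renormalization characters that you flag at the end, which the paper's one-line observation sidesteps. Your explicit handling of the sesquilinear-versus-Hermitian issue (splitting into Hermitian and anti-Hermitian parts, or equivalently using Schur on the irreducible $\rho(\Lambda)$) is something the paper leaves implicit.
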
  
 
 \begin{proof} Since $L(\mbf a,\tau)^*=L(-\mbf a,\tau)$, we have $\widetilde{L^*}=\widetilde{L}$. Therefore, 
the statement follows from the fact that a single-valued joint eigenfunction corresponds to a Hermitian pairing between the spaces of solutions of the equation $L\psi=\Lambda\psi$ and $L^*\psi=\Lambda\psi$. 
\end{proof} 

Thus, eigenvalues $\Lambda$ occur at the intersection of the holomorphic curve $\rho(\Lambda)\subset X(\mbf a)$ 
and its conjugate anti-holomorphic curve $\rho(\Lambda)^\dagger$ in the complex surface $X(\mbf a)$.  
 
Now recall that since $a_p$ are imaginary, the eigenvalues of the local monodromies are real positive, so these monodromies are either hyperbolic or parabolic elements of $SL_2(\mbb C)$. Therefore, the signature of the Hermitian form preserved by $\rho(\Lambda)$ is necessarily $(1,1)$. Hence, the monodromy group is contained in $SU(1,1)$, which is conjugate to $SL(2,\mbb R)$ inside $SL(2,\mbb C)$. Conversely, if monodromy is real then the monodromy representation is Hermitian and we get a joint eigenfunction. Thus from Proposition \ref{eigecond} we obtain 

\begin{corollary}\label{realmon} Joint eigenfunctions of $L,L^\dagger$ (up to scaling) are in bijection with numbers $\Lambda\in \mbb C$ for which the equation $L\Psi=\Lambda\Psi$ has real monodromy. 
\end{corollary}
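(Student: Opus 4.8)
The plan is to read off the corollary from Proposition \ref{eigecond}, which asserts that a nonzero single-valued solution of the joint eigenvalue problem $L\psi=\Lambda\psi,\ L^\dagger\psi=\overline{\Lambda}\psi$ exists precisely when the (unimodularly renormalized) monodromy representation $\rho(\Lambda)\colon\pi_1(\mbb P^1\setminus\{e_1,e_2,e_3,\infty\})\to SL_2(\mbb C)$ of $L\Psi=\Lambda\Psi$ preserves a nonzero Hermitian form. Thus it suffices to prove, under the standing hypothesis $a_p\in i\mbb R$, that $\rho(\Lambda)$ preserves a nonzero Hermitian form if and only if it has real monodromy, and that in this case the form — hence the joint eigenfunction — is unique up to scaling. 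First I would record that when every $a_p$ is imaginary the sum $\tfrac{1}{2}\sum_{p:2p=0}\varepsilon_p a_p$ is purely imaginary for each choice of signs $\varepsilon_p$, hence is never a positive integer; by Lemma \ref{irre}(i) this forces $\rho(\Lambda)$ to be irreducible for all $\Lambda\in\mbb C$. Irreducibility at once gives that any nonzero $\pi_1$-invariant Hermitian form on the two-dimensional solution space is nondegenerate — its radical is an invariant subspace — and is unique up to a real scalar, by Schur's lemma applied to the space of invariant sesquilinear forms.

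Next I would pin down the signature of such a form. As recalled in Subsection \ref{gloeig}, since the $a_p$ are imaginary the local monodromy of $L\Psi=\Lambda\Psi$ around each puncture has real positive eigenvalues; so, using the logarithmic local solutions \eqref{eq2} of Subsection \ref{holo} in the resonant case $a_p=0$, every such local monodromy is a nontrivial hyperbolic or a nontrivial parabolic element of $SL_2(\mbb C)$, and at least one of them is nontrivial because $\rho(\Lambda)$ is irreducible. By its Jordan type in the parabolic case, and by its eigenvalues lying off the unit circle in the hyperbolic case, such an element is not conjugate into the compact group $SU(2)$; hence the image of $\rho(\Lambda)$ is not conjugate into $SU(2)$, and a nonzero invariant Hermitian form $B$ cannot be definite. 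Therefore $B$ has signature $(1,1)$ and $\rho(\Lambda)$ takes values in $SU(B)\cong SU(1,1)$; since a Cayley transform conjugates $SU(1,1)$ onto $SL_2(\mbb R)$ inside $SL_2(\mbb C)$, this means precisely that $\rho(\Lambda)$ has real monodromy.

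For the converse, if $\rho(\Lambda)$ has real monodromy — i.e. is conjugate in $SL_2(\mbb C)$ into $SL_2(\mbb R)$, equivalently into $SU(1,1)$ via the Cayley transform — then the defining indefinite Hermitian form of $SU(1,1)$ pulls back to a nonzero $\rho(\Lambda)$-invariant Hermitian form, so Proposition \ref{eigecond} produces a nonzero single-valued joint eigenfunction, unique up to scaling by Proposition \ref{l2prop}(iv) (or by the uniqueness of the invariant form noted above). Putting the two directions together yields the asserted bijection between joint eigenfunctions of $L,L^\dagger$ modulo scaling and the values $\Lambda\in\mbb C$ for which $L\Psi=\Lambda\Psi$ has real monodromy. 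The only genuinely substantive point — and it is mild — is the exclusion of the definite case: this is exactly where the hypothesis $a_p\in i\mbb R$ enters, through the resulting hyperbolicity or parabolicity of the local monodromies, together with the classical fact that $SU(1,1)$ and $SL_2(\mbb R)$ are conjugate in $SL_2(\mbb C)$; the remainder is bookkeeping with irreducibility and the uniqueness of invariant Hermitian forms.
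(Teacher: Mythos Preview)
Your proof is correct and follows essentially the same route as the paper: invoke Proposition~\ref{eigecond} to reduce to the question of an invariant Hermitian form, use the hypothesis $a_p\in i\mbb R$ to see that the local monodromies are hyperbolic or parabolic (hence not in $SU(2)$), conclude the form has signature $(1,1)$ so that $\rho(\Lambda)$ lands in $SU(1,1)\cong SL_2(\mbb R)$, and run the converse via the defining form of $SU(1,1)$. You supply more detail than the paper's terse paragraph---in particular you make explicit the irreducibility appeal to Lemma~\ref{irre}(i) and the uniqueness via Proposition~\ref{l2prop}(iv)---but the underlying argument is identical.
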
 

In particular, we see that eigenvalues $\Lambda$ with real monodromy form a discrete set $\Sigma$ (since it is a real analytic set which is countable by Proposition \ref{l2prop}(v)). 

\begin{proposition}\label{l2prop1} If $\psi\in \mathcal{H}$ is nonzero and $L\psi=\Lambda\psi$ outside the four singular points then 
$\Lambda\in \Sigma$ and $L^\dagger\psi=\ol{\Lambda}\psi$. 
\end{proposition}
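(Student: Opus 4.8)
The plan is to show that a nonzero $\psi \in \mathcal{H}$ with $L\psi = \Lambda\psi$ automatically satisfies $L^\dagger\psi = \overline{\Lambda}\psi$, from which $\Lambda \in \Sigma$ follows by Corollary \ref{realmon}. The key point is that the $L^2$ condition is strong enough to force the companion equation. First I would analyze $\psi$ near the four singular points $e_1,e_2,e_3,\infty$. Since $L\psi=\Lambda\psi$ holds outside these points, $\psi$ is real-analytic there; near each $p$ it is a linear combination of branches of local holomorphic eigenfunctions of $L$ times branches of \emph{some} functions in the other variable. The membership $\psi \in \mathcal{H}$ near $p$ restricts which combinations are allowed: using the local forms \eqref{eq1} (for $a_p \in i\mathbb{R}\setminus\{0\}$) and \eqref{eq7} (for $a_p=0$) from Subsection \ref{holo}, and integrability of $|\psi|^2$ against the smooth volume form, one sees that the ``anti-holomorphic part'' of $\psi$ must also have the local exponents dictated by $L^* = L(-\mbf a,\tau)$, i.e. $\psi$ is locally a finite sum $\sum_j \phi_j(z)\overline{\chi_j(z)}$ where the $\phi_j$ solve $L\phi=\Lambda\phi$ and the $\chi_j$ solve $L^*\chi = \overline{\Lambda}\,\chi$. (Here $L^*$ acts with the dual twisting; recall $\widetilde{L^*} = \widetilde L$.)

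Granting this local product structure, I would argue globally as in the proof of Proposition \ref{Her}: the space of functions locally of the form $\sum_j \phi_j \overline{\chi_j}$ with $\phi_j$ a solution of $L\phi=\Lambda\phi$ and $\chi_j$ a solution of $L^*\chi=\overline\Lambda\chi$ is precisely the (real-analytic) solution space of the holonomic system $L\psi=\Lambda\psi$, $\overline{L^*}\psi = \overline{\Lambda}\psi$, i.e. of $L\psi=\Lambda\psi$, $L^\dagger\psi=\overline{\Lambda}\psi$. Thus $\psi$, being single-valued and of this local form, satisfies $L^\dagger\psi = \overline{\Lambda}\psi$. By Proposition \ref{l2prop}(ii) this is consistent, and then Corollary \ref{realmon} (equivalently Proposition \ref{eigecond}) shows that $\Lambda$ must be an eigenvalue with real monodromy, i.e. $\Lambda \in \Sigma$.

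I expect the main obstacle to be the local analysis: one must rule out the possibility that $\psi$ has, near some singular point $p$, an ``anti-holomorphic'' factor whose monodromy exponent does \emph{not} match that of a solution of $L^*\chi = \overline\Lambda\chi$, yet which combines with the holomorphic factors to give something single-valued and square-integrable. The single-valuedness of $\psi$ forces the product of the holomorphic and anti-holomorphic monodromies to be trivial around each puncture, and around a generic loop; combined with the $L^2$ bound near $p$ (which eliminates the genuinely non-integrable local exponents, using $\mathrm{Re}(a_p)=0$ so that $|w|^{a_p - \overline{a_p}} = |w|^{2i\,\mathrm{Im}(a_p)}$ and $\log|w|$ are the borderline-but-integrable cases), this pins down the anti-holomorphic part to be conjugate to a solution of $L^*\chi = \overline\Lambda\chi$. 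A secondary point to be careful about is that we have only assumed the eigenvalue equation for $L$ on the complement of the four points, so one should first check (by the usual removable-singularities / holonomicity argument, as the local solution spaces in Subsection \ref{holo} already account for the singular behavior) that no delta-type contributions appear at the punctures; once $\psi \in \mathcal{H}$ is known to be a genuine distributional section satisfying $L\psi=\Lambda\psi$ in the sense of distributions, the preceding argument applies verbatim.
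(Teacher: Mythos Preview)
Your outline follows essentially the same route as the paper, but there is a real gap at the point where you pass from ``the anti-holomorphic factors have the local exponents dictated by $L^*$'' to ``the $\chi_j$ solve $L^*\chi=\overline\Lambda\chi$.'' Knowing the characteristic exponents at the four punctures pins down the Fuchsian operator only up to the \emph{accessory parameter}: you get $L^*\chi=\Lambda_*\chi$ for \emph{some} $\Lambda_*\in\mbb C$, and nothing in the local $L^2$/single-valuedness analysis fixes $\Lambda_*$. The paper's proof makes this explicit: from the factorization one obtains $L^\dagger\psi=\overline{\Lambda_*}\psi$ with $\Lambda_*$ undetermined, and only then invokes the integration-by-parts identity of Proposition~\ref{l2prop}(ii) (applied to the joint eigenfunction $\psi$, which one already knows lies in $\mathcal H$) to force $\Lambda_*=\Lambda$. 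Your phrase ``By Proposition~\ref{l2prop}(ii) this is consistent'' comes too late and in the wrong logical role; that proposition is precisely the missing step, not a consistency check.

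There is also an organizational point worth noting. The paper begins the argument at a \emph{nonsingular} point $z_0$, where the equation $L\psi=\Lambda\psi$ alone gives the clean factorization $\psi=\psi_1\overline{\eta_1}+\psi_2\overline{\eta_2}$ with $\psi_1,\psi_2$ a basis of holomorphic solutions and $\eta_1,\eta_2$ merely holomorphic (linearly independent thanks to irreducibility of the monodromy of $L\psi=\Lambda\psi$). Single-valuedness of $\psi$ then shows that $\eta_1,\eta_2$ carry the representation $\overline\rho^*$, hence satisfy a second-order linear ODE with rational coefficients; the $L^2$ hypothesis then forces this ODE to be Fuchsian with the prescribed exponents. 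Starting instead at the singular points, as you do, makes it harder to justify that the ``anti-holomorphic part'' satisfies any global ODE at all. Finally, a minor bookkeeping slip: with your conventions $L^\dagger=\overline{L^*}$, the equation you want for $\chi_j$ so that $L^\dagger(\phi_j\overline{\chi_j})=\overline\Lambda\,\phi_j\overline{\chi_j}$ is $L^*\chi_j=\Lambda\chi_j$, not $L^*\chi_j=\overline\Lambda\chi_j$.
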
 

\begin{proof} Locally near a non-singular point $z_0\in \mbb P^1$ we can write $\psi$ in the form 
$$
\psi(z,\ol z)=\psi_1(z)\ol{\eta_1(z)}+\psi_2(z)\ol{\eta_2(z)},
$$
where $\psi_1,\psi_2$ is a basis of holomorphic solutions of $L\psi=\Lambda\psi$ and $\eta_1,\eta_2$ are some linearly independent holomorphic functions (by irreducibility of monodromy of $L\psi=\Lambda\psi$). The functions $\eta_1,\eta_2$ branch around singular points, but since $\psi$ is single-valued, the analytic continuation of $\eta_i$ around a closed path from $\pi_1(\mbb P^1\setminus \lbrace{0,a,1,\infty\rbrace},z_0)$ is a linear combination of $\eta_1$ and $\eta_2$, and this implements the representation $\ol \rho^*$ of $\pi_1$, where $\rho$ is the monodromy representation of $L\psi=\Lambda\psi$. Thus, $\eta_1,\eta_2$ are a basis of solutions of a second order linear ODE with rational coefficients, which is smooth outside the four singular points. Also, since $\psi$ is an $L^2$ function, the functions $\eta_i$ have power  growth near the singular points, which implies that this ODE is Fuchsian. Moreover, the conditions that $\psi$ is single-valued and that it is in $L^2$ determines the characteristic exponents of this ODE at the singular point, showing that it must have the form $L^*\eta=\Lambda_*\eta$ for some $\Lambda_*\in \mbb C$. Thus $L^\dagger \psi=\ol{\Lambda_*}\psi$. Finally, by Proposition \ref{l2prop}(ii), we have $\Lambda_*=\Lambda$. Thus $\Lambda\in \Sigma$ and $L^\dagger \psi=\ol\Lambda\psi$ as claimed. 
\end{proof} 

\subsection{The special case $\mbf a=0$}\label{takh} 
In the special case $\mbf a=0$ eigenvalues $\Lambda$ of $L$ with real monodromy (also called real projective connections) were studied by Goldman \cite{G} and also by Faltings \cite{F}, who showed that these eigenvalues form a discrete set. One especially interesting point of this set gives rise to the uniformizing connection for the Riemann surface $\mbb P^1\setminus\lbrace{0,a,1,\infty\rbrace}$ (i.e., the connection coming from the representation of this surface as a quotient of the hyperbolic plane by a Fuchsian group), and other real projective connections (or, equivalently, projective structures) can be obtained from the uniformizing connection by the so-called grafting procedure (\cite{G,Tan}). Also, the paper of Takhtajan \cite{T} explains how to find infinitely many {\it real} $\Lambda$ with real monodromy if $e_1,e_2,e_3$ are real (i.e., $0<a<1$). Namely, following the work of \linebreak F. Klein, D. Hilbert and V. I. Smirnov, it is explained in \cite{T} that infinitely many real $\Lambda$ with real monodromy are provided by solutions of the following three Sturm-Liouville problems:

(1) Find eigenvalues $\Lambda$ for which there is a nonzero holomorphic eigenfunction in a neighborhood of the interval $[0,a]$.

(2) Find eigenvalues $\Lambda$ for which there is a nonzero holomorphic eigenfunction in a neighborhood of the interval $[a,1]$.

(3) Find eigenvalues for which the following property holds. If $f_0$ is a nonzero eigenfunction holomorphic at $0$ and $f_+, f_-$ are analytic continuations of $f_0$ along $[0,1]$ passing the point $a$ above (respectively below), then $f_++f_-$ is holomorphic near $1$.

We claim that there are also infinitely many eigenvalues $\Lambda$ with real monodromy that are {\bf not} given by this procedure. They can be constructed as follows. Consider a path $\gamma$ (i.e., a pure braid on three strands, with strands of $0$ and $1$ being straight line segments) that moves the point $a$ around (avoiding $0,1$) and eventually brings it back to the original position. Take some eigenvalue $\Lambda=\Lambda(a)$ that solves problem (1) and deform it along $\gamma$ so that the monodromy stays real (this can be done uniquely since real monodromy points are discrete and don't bifurcate). Denote the final value by $\Lambda_\gamma(a)$. Then $\Lambda_\gamma(a)$ solves the following problem:

(1,$\gamma$) Find eigenvalues for which there is a nonzero holomorphic eigenfunction in a neighborhood of the curve $\gamma[0,a]$, where $\gamma[0,a]$ is the path from $0$ to $a$ obtained by deforming $[0,a]$ along $\gamma$ (avoiding the point $1$).

If $\gamma$ is sufficiently complicated, this path is clearly not a straight line interval -- it is in general a complicated path from $0$ to $a$. It is easy to see that $\Lambda_\gamma(a)$ in general {\it cannot} be a solution of any of the problems (1)-(3) (simultaneously with (1,$\gamma$)); for instance, there clearly cannot be a solution holomorphic along a path connecting $0,a,1$ -- otherwise this solution would be a polynomial and the monodromy would be reducible. On the other hand, the monodromy for the eigenvalue $\Lambda_\gamma(a)$ is real by construction. 

Note that $\Lambda_\gamma(a)$ in general will {\it not} be real, since otherwise the solution holomorphic near $0$ will extend holomorphically along $\gamma$ and also along $\overline{\gamma}$, which is impossible. Thus, there are non-real eigenvalues $\Lambda$ which still give rise to real monodromy. 

\section{Essentially self-adjoint algebras of unbounded operators} 

\subsection{Preliminaries on unbounded self-adjoint operators}\label{usa}

Let us recall basics on unbounded self-adjoint operators
(\cite{AG,RS}). Let $\mathcal{H}$ be a separable Hilbert space,
$(\cdot,\cdot)$ the Hermitian inner product on
$\mcH$, $V\subset \mathcal{H}$ a dense subspace, and $A: V\to \mathcal{H}$ a linear operator. Let $\Gamma_A\subset V\times \mathcal{H}\subset \mathcal{H}\times \mathcal{H}$ be the graph of $A$. One says $A$ is {\it closed} if $\Gamma_A\subset \mathcal{H}\times \mathcal{H}$ is a closed subspace. In general, let $\overline{\Gamma}_A$ be the closure of $\Gamma_A$. Let $\overline{V}$ be the image of the projection $\pi:\overline{\Gamma}_A\to \mathcal{H}$ from $\overline{\Gamma}_A$ to the first factor $\mathcal{H}$. One says that $A$ is {\it closable} if $\pi$ is injective, i.e., $\overline{\Gamma}_A$ is the graph of a linear operator $\overline{A}: \overline{V}\to \mathcal{H}$. In this case, 
$\overline{A}$ is called the {\it closure} of $A$. Clearly, $A$ is closed iff it is closable and $\overline{V}=V$ (i.e., $\overline{A}=A$). 

Let $V^\vee=V^\vee_A$ be the subspace of all $u\in \mathcal{H}$ for which the linear functional $v\mapsto (Av,u)$ on $V$ is continuous. If so, then this linear functional extends by continuity to $\mathcal{H}$, hence by the Riesz representation theorem $(Av,u)=(v,w)$ for a unique $w\in {\mathcal H}$. Define a linear operator $A^\dagger: V^\vee\to \mathcal{H}$ by $A^\dagger u:=w$; it is called the {\it adjoint operator} to $A$. It is easy to show that $A^\dagger$ is always closed. 

The operator $A$ is called {\it symmetric} (or, in some texts, Hermitian symmetric) if $(Av,u)=(v,Au)$ for all $u,v\in V$. In this case we have a skew-Hermitian form of $V^\vee$ given by 
$$
\omega(u,w)=(A^\dagger u,w)-(u,A^\dagger w),
$$ 
and $\overline{V}={\rm Ker}\omega$. Moreover, the restriction of $A^\dagger$ to $\overline{V}$ is the closure $\overline{A}$ of $A$; thus $A$ is closable and $\overline{A}$ is symmetric. By definition $\omega$ descends to a nondegenerate skew-Hermitian form on $V^\vee/\overline{V}$ (which we will also call $\omega$).

One says that a closed symmetric operator $A$ is {\it self-adjoint} if $V=V^\vee$, i.e., $A=A^\dagger$. One says that a closable symmetric operator $A$ is {\it essentially self-adjoint} if its closure $\overline{A}$ is self-adjoint. This is equivalent to saying that $A^\dagger$ is symmetric, i.e., $\omega=0$. 

A {\it self-adjoint extension} of a symmetric operator $A$ is an extension $A'$ of $A$ defined on a subspace $\overline{V}\subset V'\subset V^\vee$ such that $A'$ is self-adjoint. It is clear that $A'$ is the restriction of $A^\dagger$ to $V'$, hence it is defined by the choice of $V'$. It is easy to show that $V'$ defines a self-adjoint extension if and only if $V'/\overline{V}$ is a Lagrangian subspace of $V^\vee/\overline{V}$ under $\omega$, i.e., 
a subspace $Y$ such that $Y^\perp=Y$. Thus the set of self-adjoint extensions of $A$ is in bijection with the Lagrangian Grassmannian of $V^\vee/\overline{V}$ (which may be empty; e.g. this happens if $V^\vee/\overline{V}$ is odd-dimensional, or more generally a finite dimensional space with the Hermitian form $i\omega$ having nonzero signature). Thus, $A$ is essentially self-adjoint if and only if it has a unique self-adjoint extension, namely its closure $\overline{A}$. So having an essentially self-adjoint operator is almost as good as having a self-adjoint operator.

An important criterion of essential self-adjointness of a symmetric operator $A$ 
is the following theorem of von Neumann: 

\begin{theorem}\label{vn} A symmetric operator $A$ is {\bf not} essentially self-adjoint if and only if the space $V^\vee$ contains a vector $v\ne 0$ such that $Av=iv$ or $Av=-iv$. 
\end{theorem}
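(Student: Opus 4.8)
The plan is to deduce the criterion from the description of the skew-Hermitian form $\omega(u,w)=(A^\dagger u,w)-(u,A^\dagger w)$ on $V^\vee$ recorded above, together with two standard facts about the closure $\overline A$: that $\overline A\pm i$ is injective with closed range, and that $\operatorname{Ker}(A^\dagger-\mu)=\operatorname{Ran}(\overline A-\bar\mu)^\perp$ for every $\mu\in\mathbb C$ (here $A^\dagger=(\overline A)^\dagger$, since $A$ and $\overline A$ have the same adjoint). Throughout I write $\mathcal N_\pm=\{v\in V^\vee:A^\dagger v=\pm iv\}$ for the deficiency subspaces; note that for $v\in V^\vee$ the equation ``$Av=\pm iv$'' of the statement is to be read as $A^\dagger v=\pm iv$.

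First I would dispatch the easy implication. If $0\neq v\in\mathcal N_+\cup\mathcal N_-$, a one-line computation (recalling that the inner product is conjugate-linear in the second slot) gives $\omega(v,v)=(\pm iv,v)-(v,\pm iv)=\pm 2i\|v\|^2\neq0$. Since $\overline V=\operatorname{Ker}\omega$, this shows $v\notin\overline V$, hence $V^\vee\neq\overline V$ and $A$ is not essentially self-adjoint.

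For the converse I would argue by contrapositive: assuming $\mathcal N_+=\mathcal N_-=0$, I show $V^\vee=\overline V$ (equivalently $\omega=0$). By the kernel--range identity applied with $\mu=\pm i$, the vanishing of $\mathcal N_\mp$ makes $\operatorname{Ran}(\overline A\pm i)$ dense in $\mathcal H$. On the other hand, since $\overline A$ is closed and symmetric one has $\|(\overline A\pm i)w\|^2=\|\overline A w\|^2+\|w\|^2\geq\|w\|^2$ for all $w\in\overline V$ (the cross terms cancel because $(\overline A w,w)$ is real), so $\overline A\pm i$ is injective and, being closed, has closed range. Hence $\overline A\pm i:\overline V\to\mathcal H$ is a bijection. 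Now take an arbitrary $u\in V^\vee$: choose $w\in\overline V$ with $(\overline A-i)w=(A^\dagger-i)u$, and use $\overline A\subset A^\dagger$ (i.e. $\overline A=A^\dagger|_{\overline V}$) to get $(A^\dagger-i)(u-w)=0$, so $u-w\in\mathcal N_+=0$, i.e. $u=w\in\overline V$. Thus $V^\vee=\overline V$ and $A$ is essentially self-adjoint.

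The only step that is not pure bookkeeping is the closed-range argument for $\overline A\pm i$ (if $(\overline A\pm i)w_n\to y$, the displayed inequality makes $(w_n)$ Cauchy, so $w_n\to w$, and closedness of $\overline A$ forces $y=(\overline A\pm i)w$) together with the kernel--range identity; this is where the self-adjointness of the extension is genuinely forced. Everything else follows formally from the properties of $\omega$ and the inclusion $\overline A\subset A^\dagger$ already available in the excerpt.
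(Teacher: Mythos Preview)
The paper does not prove this theorem; it is stated as a classical result of von Neumann, with the surrounding discussion referring the reader to \cite{AG,RS} for the general theory of unbounded self-adjoint operators. Your argument is the standard proof and is correct: the ``easy'' direction follows immediately from $\omega(v,v)\neq 0$ for a deficiency vector, and the contrapositive of the hard direction is exactly the closed-range / surjectivity argument for $\overline A\pm i$ that one finds in those references. One harmless slip: the sign pairing in ``the vanishing of $\mathcal N_\mp$ makes $\operatorname{Ran}(\overline A\pm i)$ dense'' is reversed (with the second-slot conjugate-linear convention one has $\operatorname{Ker}(A^\dagger\mp i)=\operatorname{Ran}(\overline A\pm i)^\perp$), but since you assume both deficiency spaces vanish this does not affect the argument.
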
 

The notion of (essentially) self-adjoint operator is important because of the following spectral theorem. 

\begin{theorem}\label{sth} An operator $A: V\to \mathcal{H}$ is self-adjoint if and only if there exists a finite measure space $(X,\mu)$ and an isometry $U: \mathcal{H}\to L^2(X,\mu)$ 
such that $UAU^{-1}=M_f$ is the operator of multiplication by a real measurable function 
$f$ on $X$ so that $U(V)$ is the space of $g\in L^2(X,\mu)$ such that $fg$ is also in $L^2(X,\mu)$. 
\end{theorem}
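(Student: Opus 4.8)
The plan is to deduce this from the spectral theorem for \emph{bounded} operators and then pass to the general case via the resolvent (equivalently, the Cayley transform), following the classical route in \cite{RS}. The implication ($\Leftarrow$) is the easy one: if $f$ is real-valued and measurable, then $M_f$ with the indicated domain $\{g\in L^2(X,\mu):fg\in L^2(X,\mu)\}$ is clearly symmetric, and if $u$ lies in the domain of $M_f^\dagger$, with $(M_fg,u)=(g,w)$ for all admissible $g$, then a standard truncation argument (here finiteness of $\mu$ is convenient, so that bounded functions are square-integrable) shows $w=fu$ and $fu\in L^2(X,\mu)$, so $u$ lies in the domain of $M_f$; hence $M_f=M_f^\dagger$, and since $U$ is a unitary of $\mcH$ onto $L^2(X,\mu)$, the operator $A=U^{-1}M_fU$ is self-adjoint.

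For ($\Rightarrow$), the heart of the matter is the bounded case. Given a bounded normal operator $R$ on the separable space $\mcH$, I would first build the continuous functional calculus $C(\sigma(R))\to\mathcal{B}(\mcH)$, $g\mapsto g(R)$, extending the polynomial (in $R,R^\dagger$) calculus via Stone--Weierstrass. Decomposing $\mcH$ as a countable orthogonal sum $\bigoplus_i\mcH_i$ of subspaces cyclic for $R$, with cyclic vectors $\psi_i$ (possible because $\mcH$ is separable), the Riesz--Markov theorem represents the positive functional $g\mapsto(g(R)\psi_i,\psi_i)$ on $C(\sigma(R))$ as integration against a finite Borel measure $\mu_i$ supported on $\sigma(R)\subset\CC$; then $g\mapsto g(R)\psi_i$ extends to a unitary identification $L^2(\sigma(R),\mu_i)\simeq\mcH_i$ intertwining multiplication by the coordinate function with $R$. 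After rescaling the $\psi_i$ so that $\sum_i\|\psi_i\|^2<\infty$, I would take $(X,\mu)$ to be the disjoint union of copies of $\sigma(R)$ carrying $\bigsqcup_i\mu_i$; this is a \emph{finite} measure space, and the pieces assemble to a unitary $U\colon\mcH\to L^2(X,\mu)$ with $URU^{-1}=M_h$ for a bounded measurable $h$.

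Now let $A\colon V\to\mcH$ be self-adjoint. I would apply the previous step to $R:=(A-i)^{-1}$, which is bounded, has trivial kernel, has dense range equal to $V$, and is normal (resolvents at different points commute, by the resolvent identity); this yields a unitary $U$ with $URU^{-1}=M_h$, and injectivity of $R$ gives $h\neq0$ $\mu$-a.e. Setting $f:=h^{-1}+i$, the relation $A=R^{-1}+i$ on $V=\operatorname{ran}R$ transports to $UAU^{-1}=M_f$. That $f$ is real-valued a.e. follows from self-adjointness of $A$: the resolvent identity gives $R-R^\dagger=2i\,RR^\dagger$, which under $U$ reads $h-\bar h=2i|h|^2$, i.e., $\operatorname{Im}h=|h|^2$ a.e., whence $\operatorname{Im}f=\operatorname{Im}(h^{-1})+1=-\operatorname{Im}(h)/|h|^2+1=0$. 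Finally one checks $U(V)=\operatorname{ran}M_h=\{h\varphi:\varphi\in L^2(X,\mu)\}=\{g\in L^2(X,\mu):fg\in L^2(X,\mu)\}$.

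The step I expect to be most delicate, and the one where the two-sidedness of the equivalence genuinely bites, is precisely this identification of domains. One must use that $A$ is self-adjoint — equivalently, by von Neumann's criterion (Theorem~\ref{vn}), that $A\pm i\colon V\to\mcH$ are \emph{bijective} — so that $R=(A-i)^{-1}$ is everywhere defined and bounded and $M_f$ inherits the maximal natural domain $\{g:fg\in L^2\}$; a merely symmetric (non-self-adjoint) $A$ fails here, which is exactly the ``only if'' part. By contrast, the measure-theoretic bookkeeping needed to promote the naturally $\sigma$-finite measure coming from the cyclic decomposition to a genuinely finite one is routine, given separability of $\mcH$.
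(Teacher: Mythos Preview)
The paper does not supply a proof of this theorem: it is stated in Subsection~\ref{usa} as part of the recalled background on unbounded self-adjoint operators, with the blanket reference ``(\cite{AG,RS})'' at the head of that subsection. So there is no paper-proof to compare against.

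Your argument is the standard textbook route (essentially the one in \cite{RS}) and is correct. A couple of minor remarks. First, in verifying $R-R^\dagger=2iRR^\dagger$ you are implicitly using normality of $R$: the resolvent identity literally gives $R-R^\dagger=2iR^\dagger R$, and one needs $R^\dagger R=RR^\dagger$ to write it as you do; this is fine since resolvents of a self-adjoint operator at distinct points commute. Second, your final domain identification $U(V)=\{g:fg\in L^2\}$ is clean but uses boundedness of $h$ (so that $g=h\varphi\Rightarrow fg=\varphi+ih\varphi\in L^2$) and $h\neq 0$ a.e.\ (so that $fg\in L^2\Rightarrow h^{-1}g\in L^2$); you state both, so the step goes through. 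The bookkeeping to get a genuinely \emph{finite} measure from the cyclic decomposition (rescaling cyclic vectors so $\sum_i\|\psi_i\|^2<\infty$) is exactly what is needed and uses separability, as you note.
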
 

Theorem \ref{sth} allows us to define the {\it spectral resolution} of a self-adjoint operator $A$. Namely, 
for any $t\in \mbb R$ let $X_{t}\subset X$ be the subset of $x\in X$ with $f(x)\le t$ and 
define the operator $P_A(t)$ on $\mathcal{H}$ corresponding to multiplication by 
the indicator function $\chi_{X_{t}}$ on $L^2(X,\mu)$. It can be shown that $P_A(t)$ are commuting orthogonal projectors with $\lim_{t=-\infty}P_A(t)=0$, $\lim_{t\to +\infty}P_A(t)=1$, independent on the choice of $(X,\mu,U)$ in the spectral theorem, and one has 
$$
A=\int_{\mbb R}tdP_A(t).
$$
The family $P_A(t)$ is called the {\it spectral resolution} of $A$. 

Also for any $z\notin \mbb R$ one can define the operator $R_A(z):=(z-A)^{-1}=\int_{\mbb R}\frac{1}{z-t}dP_A(t)$, which corresponds to the operator of multiplication by the function $(z-f)^{-1}$ in $L^2(X,\mu)$. This operator is bounded for every such $z$ (since $|(z-f)^{-1}|\le |{\rm Im}z|^{-1})$ and is called the {\it resolvent} of $A$. Finally, any self-adjoint operator $A$ defines a unitary representation 
of $\mbb R$ on $\mathcal{H}$ (i.e., a 1-parameter group of unitary operators) given by 
$$
\rho(t)=e^{itA}:=\int_{\mbb R}e^{it}dP_A(t),
$$
the operator of multiplication by $e^{it}$ in $L^2(X,\mu)$. 

Let us say that two self-adjoint operators $A$ and $B$ (in general, defined on different domains) {\it strongly commute} if one of the following equivalent conditions hold:

(1) their spectral resolutions commute, $[P_A(t),P_B(s)]=0$;  

(2) their resolvents commute, $[R_A(z),R_B(w)]=0$; 

(3) the corresponding $1$-parameter subgroups commute, $e^{itA}e^{isB}=e^{isB}e^{itA}$; in other words, we have a unitary representation of 
$\mbb R^2$ on $\mathcal{H}$ given by $\rho(t,s)=e^{itA}e^{isB}$. 

We have the following generalization of Theorem \ref{sth} to collections of strongly commuting self-adjoint operators. 

\begin{theorem}\label{sth1} Let $V_i\subset \mathcal{H}$ be dense subspaces and 
$A_i: V_i\to \mathcal{H}$ be a collection of linear operators, $i\in 1,\ldots,n$. Then $A_i$ are pairwise strongly commuting self-adjoint operators if and only if there is a finite measure space $(X,\mu)$ and an isometry $U: \mathcal{H}\to L^2(X,\mu)$ such that $UA_iU^{-1}=M_{f_i}$ is the operator of multiplication by a {\it real} measurable function 
$f_i$ on $X$ so that for each $i$, $U(V_i)$ is the space of $g\in L^2(X,\mu)$ such that $f_ig$ is in $L^2(X,\mu)$.
\end{theorem}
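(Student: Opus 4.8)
The plan is to prove the ``if'' direction by a direct check and the ``only if'' direction via the Cayley-type transform $A\mapsto (i-A)^{-1}$, which converts the collection $A_1,\dots,A_n$ into a commuting family of bounded normal operators that can then be simultaneously diagonalized using the structure of maximal abelian von Neumann algebras on a separable Hilbert space. Concretely, for the ``if'' direction: suppose $UA_iU^{-1}=M_{f_i}$ with $f_i$ real-valued and measurable and $U(V_i)=\{g\in L^2(X,\mu):f_ig\in L^2(X,\mu)\}$. Each $M_{f_i}$ is self-adjoint on that domain (this is Theorem \ref{sth} for $n=1$, or a direct computation), hence each $A_i$ is self-adjoint; and for $z,w\notin\mbb R$ one has $R_{M_{f_i}}(z)=M_{(z-f_i)^{-1}}$, and multiplication operators commute, so $[R_{A_i}(z),R_{A_j}(w)]=0$. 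By criterion (2) in the definition of strong commutativity, the $A_i$ are pairwise strongly commuting.

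Now the ``only if'' direction. Fix $z_0\notin\mbb R$, say $z_0=i$, and set $T_i:=R_{A_i}(i)=(i-A_i)^{-1}$. Each $T_i$ is bounded, injective, has dense range equal to $V_i$, and is normal (with $T_i^*=R_{A_i}(-i)$). Since the $A_i$ strongly commute pairwise, their resolvents commute, so $[T_i,T_j]=0$; and because $T_j^*=R_{A_j}(-i)$, the same hypothesis gives $[T_i,T_j^*]=0$. Hence the $2n$ operators $T_i,T_i^*$ generate an abelian von Neumann algebra $\mcM\subset B(\mcH)$. Choose a maximal abelian von Neumann algebra $\mathcal{N}\supseteq\mcM$ (Zorn's lemma). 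Because $\mcH$ is separable, $\mathcal{N}$ has a cyclic vector: writing $\mcH$ as a countable orthogonal sum of subspaces $\overline{\mathcal{N}\eta_k}$ (possible since $\mathcal{N}=\mathcal{N}^*$, so the orthogonal complement of an $\mathcal{N}$-invariant subspace is again $\mathcal{N}$-invariant), a suitable convex combination $\eta=\sum_k c_k\eta_k$ is cyclic, and it is separating because $\mathcal{N}'=\mathcal{N}$. The GNS construction for the faithful state $T\mapsto(T\eta,\eta)$ then yields a finite measure space $(X,\mu)$ and a unitary $U:\mcH\to L^2(X,\mu)$ carrying $\mathcal{N}$ onto $L^\infty(X,\mu)$ acting by multiplication.

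Under $U$, each $T_i$ becomes multiplication by some $g_i\in L^\infty(X,\mu)$, and injectivity together with dense range of $T_i$ forces $g_i\ne 0$ $\mu$-a.e.; put $f_i:=i-g_i^{-1}$, a finite measurable function a.e. From $A_i=i-T_i^{-1}$ with domain $V_i$ equal to the range of $T_i$, we obtain $UA_iU^{-1}=M_{f_i}$ with $U(V_i)=\{g\in L^2(X,\mu):f_ig\in L^2(X,\mu)\}$; and $f_i$ is real a.e. because $M_{f_i}$, being unitarily conjugate to the self-adjoint operator $A_i$, is self-adjoint. This proves the theorem.

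The only substantive ingredient is the structure theorem identifying a maximal abelian von Neumann algebra on a separable Hilbert space with $L^\infty(X,\mu)$ acting by multiplication on $L^2(X,\mu)$ for a finite measure space; equivalently, one can assemble the commuting spectral resolutions $P_{A_i}(t)$ into a single projection-valued measure on $\mbb R^n$ by $P\big(\prod_i(a_i,b_i]\big)=\prod_i\big(P_{A_i}(b_i)-P_{A_i}(a_i)\big)$ and diagonalize it, and this is precisely where separability of $\mcH$ enters (to reduce to a cyclic situation). Everything else is routine bookkeeping with the transform $A\mapsto(i-A)^{-1}$. An alternative would be induction on $n$ — diagonalize $A_1$ via Theorem \ref{sth}, observe that $A_2,\dots,A_n$ commute with every spectral projection of $A_1$ and hence are decomposable over the disintegration of $L^2$ along $\sigma(A_1)$, and apply the inductive hypothesis fiberwise — but the measurable-selection details there are no easier than the argument above, so I would follow the route described.
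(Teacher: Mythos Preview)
The paper does not give a proof of this theorem: it is stated in the preliminaries section on unbounded self-adjoint operators as a standard result from functional analysis, with the general references \cite{AG,RS} cited for background. So there is nothing in the paper to compare your argument against.

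That said, your sketch is a correct and standard route to the result. The reduction to bounded operators via the resolvents $T_i=(i-A_i)^{-1}$ is the right move, and the key point---that strong commutativity of the $A_i$ gives commutativity of all $T_i$ and $T_j^*$, hence an abelian $*$-algebra---is handled correctly. Passing to a maximal abelian von Neumann algebra and invoking its realization as $L^\infty(X,\mu)$ on $L^2(X,\mu)$ for a finite measure (which uses separability of $\mcH$ to obtain a cyclic and separating vector) is exactly the standard argument. The recovery of $A_i$ from $T_i$ and the identification of domains is routine, as you say. Your alternative via a joint projection-valued measure on $\mbb R^n$ built from the commuting spectral resolutions is equally standard and perhaps closer in spirit to how the result is usually stated in the references the paper cites.
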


However, the notion of strong commutativity of self-adjoint operators turns out to be rather subtle. 
Namely, as shown in \cite{N}, Section 10, even if $A,B$ are defined and commute on a common dense invariant domain $V$ on which any real linear combination $aA+bB$, $a,b\in \mbb R$, is essentially self-adjoint, this property may fail, i.e., the one-parameter groups $e^{itA}$ and $e^{isB}$ may fail to commute. To overcome this difficulty, we will use the following theorem, which is a special case of a remarkable result of E. Nelson (\cite{N}, Theorem 5). 

\begin{theorem}\label{ne1} Let $A_1,...,A_n$ be symmetric operators defined on a common dense invariant domain 
$V$ in a separable Hilbert space $\mathcal{H}$ and let $D=A_1^2+...+A_n^2$. If $D$ is essentially self-adjoint on $V$ then any real linear combination of the operators $A_j$ is also essentially self-adjoint on $V$, and the action of $\g=\mbb R^n$ on $V$ defined by the operators $iA_j$ integrates to a unique unitary representation $\rho$ of $G=\mbb R^n$ on $\mathcal{H}$, given by $\rho(t_1,...,t_d)=\prod_j \exp(i t_jA_j)=\exp(i\sum_jt_jA_j)$ (i.e., the differential of $\rho$ at the identity $1\in G$ restricted to $V$ is given by the natural map of $\g\to {\rm End}V$). 
\end{theorem}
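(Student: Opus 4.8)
The cleanest approach is to recognize Theorem~\ref{ne1} as the specialization of Nelson's integrability theorem (\cite{N}, Theorem~5) to the abelian Lie algebra $\g = \mathbb{R}^n$, and then to verify that the hypotheses match: $A_1,\dots,A_n$ are symmetric on the common dense invariant domain $V$, they commute pairwise on $V$ (this is exactly what it means for $iA_1,\dots,iA_n$ to define an action of $\mathbb{R}^n$), and $D = A_1^2+\dots+A_n^2$ is the associated ``Nelson Laplacian,'' which we are assuming essentially self-adjoint on $V$. Nelson's theorem then yields at once the unique unitary representation $\rho$ of $\mathbb{R}^n$ integrating the $iA_j$, together with the essential self-adjointness on $V$ of every real combination $\sum_j c_j A_j$ (which is $-i$ times the generator of the one-parameter group $t\mapsto\rho(tc_1,\dots,tc_n)$, restricted to the core $V$). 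So the first part of my plan is simply to carry out this reduction carefully, checking in particular that the domain hypotheses of \cite{N} are met verbatim.

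If instead a self-contained argument is wanted, I would run the classical analytic-vector proof. First I would note that $D$ is non-negative, since $(Dv,v)=\sum_j\|A_jv\|^2\geq 0$ for $v\in V$, so its closure $\bar D$ is a non-negative self-adjoint operator with spectral resolution $P_{\bar D}$; for each $R>0$ the spectral subspace $P_{\bar D}([0,R])\mathcal{H}$ consists of entire vectors for $\bar D$ (as $\|\bar D^k w\|\leq R^k\|w\|$), and the union $\mathcal{D}_\infty$ over all $R$ is dense. The core step is the Nelson estimate: there is a constant $C=C(n)$ so that every $w\in\mathcal{D}_\infty$ lies in the domain of every iterated product $A_{j_1}\cdots A_{j_m}$ and
$$
\|A_{j_1}\cdots A_{j_m} w\| \leq C^m\,\bigl\|(1+\bar D)^m w\bigr\| ;
$$
I would prove this by induction on $m$, using only the positivity built into $D=\sum_j A_j^2$ (which makes each $A_j$ bounded relative to $(1+D)^{1/2}$ with relative bound $\leq 1$) together with a mollification — replacing $w$ by $e^{-\epsilon\bar D}w$ and letting $\epsilon\to 0$ — to legitimize the manipulations with unbounded products. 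Granting the estimate, every $w\in\mathcal{D}_\infty$ is a jointly analytic vector, $\sum_{m\geq 0}\frac{t^m}{m!}\sum_{|J|=m}\|A_J w\|<\infty$ for small $t$; in particular each $\sum_j c_j A_j$ has a dense set of analytic vectors, hence is essentially self-adjoint on $V$, and the commuting skew-symmetric family $\{iA_j\}$ with a common dense set of analytic vectors integrates to a unitary representation of the simply connected group $\mathbb{R}^n$, with $\rho(t_1,\dots,t_n)=\prod_j\exp(it_jA_j)$ on $\mathcal{D}_\infty$ and, by continuity, on $\mathcal{H}$; uniqueness is forced by agreement on the dense set of analytic vectors.

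The main obstacle I anticipate is precisely the Nelson estimate above. We are given no control over commutators like $[A_j,D]$, so one cannot move powers of $(1+D)^{1/2}$ past the $A_j$; the reason essential self-adjointness is hypothesized for the \emph{sum of squares} rather than for the individual operators is exactly to extract enough regularity from positivity alone, via an inductive bookkeeping of $\|A_Jw\|$ against $(w,(1+D)^m w)$. Showing rigorously that the entire vectors of $\bar D$ actually lie in the domains of all the iterated products $A_{j_1}\cdots A_{j_m}$ — not merely that the formal inequality holds — is the delicate technical point, and for the purposes of this paper it is cleanest to settle it by citing \cite{N}, Theorem~5 rather than reproving it.
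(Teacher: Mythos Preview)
Your proposal is correct and matches the paper exactly: the paper does not prove Theorem~\ref{ne1} but simply states it as a special case of Nelson's Theorem~5 in \cite{N}, which is precisely your first approach. Your additional sketch of a self-contained analytic-vector argument goes beyond what the paper does, but your identification of the Nelson estimate as the delicate point and your conclusion that citing \cite{N} is the cleanest route are both well-judged.
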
 

Motivated by this, we make the following definition. 

\begin{definition} (i) A densely defined linear operator $A: V\to \mathcal{H}$ is {\it normal} if there exists a finite measure space $(X,\mu)$ and an isometry $U: \mathcal{H}\to L^2(X,\mu)$ such that $UAU^{-1}=M_f$ is the operator of multiplication by a {\it complex} measurable function $f$ on $X$ so that $U(V)$ is the space of $g\in L^2(X,\mu)$ such that $fg$ is in $L^2(X,\mu)$. 

(ii) A closable operator $A: V\to \mathcal{H}$ is {\it essentially normal} if the closure of $A$ is normal. 
\end{definition} 

It is easy to see that if $A$ is normal then the domain of $A^\dagger$ coincides with the domain $V$ of $A$. 
Thus we can define the operators ${\rm Re}A=\frac{1}{2}(A+A^\dagger)$ and ${\rm Im}A=\frac{1}{2i}(A-A^\dagger)$
from $V$ to $\mathcal{H}$. It is clear that the operators ${\rm Re}A$, ${\rm Im}A$ are essentially self-adjoint and their closures strongly commute. Also, the domain of $A$ is the intersection of the domains of ${\rm Re}A,{\rm Im}A$. 
Conversely, if two self-adjoint operators $B,C$ strongly commute then by Theorem \ref{sth1} the operator 
$A:=B+iC$ defined on the intersection of the domains of $B$ and $C$ is normal, with $B={\rm Re}A$ and $C={\rm Im}A$. Thus, our definition of a normal operator 
is equivalent to that of \cite{N}, p.603. 

Let us say that two normal operators $A,B$ {\it strongly commute}  if the closures of their real and imaginary parts pairwise strongly commute. Then any normal operator $A$ strongly commutes with $A^\dagger$. Moreover, Theorem \ref{sth1} can be generalized as follows. 

\begin{theorem}\label{sth2} Let $V_i\subset \mathcal{H}$ be dense subspaces and 
$A_i: V_i\to \mathcal{H}$ be a collection of linear operators, $i = 1,\ldots,n$. Then $A_i$ are pairwise strongly commuting normal operators if and only if there is a finite measure space $(X,\mu)$ and an isometry $U: \mathcal{H}\to L^2(X,\mu)$ such that $UA_iU^{-1}=M_{f_i}$ is the operator of multiplication by a complex measurable function $f_i$ on $X$ so that $U(V_i)$ is the space of $g\in L^2(X,\mu)$ such that $f_ig$ is also in $L^2(X,\mu)$.
\end{theorem}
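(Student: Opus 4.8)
The plan is to bootstrap Theorem~\ref{sth2} from Theorem~\ref{sth1} by the standard device of replacing a family of normal operators by the family of real and imaginary parts. First I would note that by the definition of strong commutativity of normal operators, the closures of the operators $B_i = \operatorname{Re}A_i$ and $C_i = \operatorname{Im}A_i$, $i = 1,\dots,n$, form a family of $2n$ pairwise strongly commuting self-adjoint operators (each $B_i$ strongly commutes with $C_i$ because $A_i$ is normal, and cross-commutativity among different indices is precisely the hypothesis). Applying Theorem~\ref{sth1} to this enlarged family produces a finite measure space $(X,\mu)$ and an isometry $U\colon \mathcal{H}\to L^2(X,\mu)$ conjugating each $\overline{B_i}$ to multiplication by a real measurable function $g_i$ and each $\overline{C_i}$ to multiplication by a real measurable function $h_i$, with the stated description of the domains as $\{\varphi : g_i\varphi \in L^2\}$ and $\{\varphi : h_i\varphi \in L^2\}$ respectively.

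Next I would set $f_i := g_i + i h_i$, a complex measurable function on $X$, and check that $U\overline{A_i}U^{-1} = M_{f_i}$ with the correct domain. The key point is that for a normal operator the domain of $A_i$ is the intersection of the domains of $\operatorname{Re}A_i$ and $\operatorname{Im}A_i$ (as remarked just before the statement), which after conjugation is $\{\varphi \in L^2 : g_i\varphi \in L^2 \text{ and } h_i\varphi \in L^2\}$; and on $L^2(X,\mu)$ one has $g_i\varphi, h_i\varphi \in L^2$ if and only if $f_i\varphi \in L^2$, since $|f_i|^2 = g_i^2 + h_i^2$ pointwise. On this common domain $M_{g_i} + iM_{h_i} = M_{f_i}$, so $\overline{A_i}$ is conjugate to $M_{f_i}$, i.e. $A_i$ is essentially normal with the asserted spectral model. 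Conversely, if all $A_i$ are simultaneously of the form $U^{-1}M_{f_i}U$ with the stated domains, then writing $f_i = g_i + ih_i$ with $g_i,h_i$ real shows $\operatorname{Re}A_i, \operatorname{Im}A_i$ are conjugate to $M_{g_i}, M_{h_i}$, which manifestly pairwise strongly commute (multiplication operators by real functions on a common $L^2$ space always do, as their spectral projections are multiplications by indicator functions of the sets $\{g_i \le t\}$, $\{h_j \le s\}$, which commute); hence the $A_i$ are pairwise strongly commuting normal operators by definition.

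The only genuinely delicate point — and the step I expect to require the most care — is the domain bookkeeping: one must verify that the isometry $U$ furnished by Theorem~\ref{sth1} for the $2n$-element self-adjoint family can be used \emph{simultaneously} for all the $A_i$, i.e. that a single $(X,\mu,U)$ works, and that the intersection-of-domains description really matches $\{f_i\varphi \in L^2\}$ without losing or gaining vectors. This is where one uses that Theorem~\ref{sth1} already delivers one common model for the whole family, together with the normality hypothesis identifying $\operatorname{dom}(A_i) = \operatorname{dom}(\operatorname{Re}A_i)\cap\operatorname{dom}(\operatorname{Im}A_i)$; the pointwise identity $|f_i|^2 = g_i^2 + h_i^2$ then closes the argument. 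Everything else is routine measure-theoretic verification, and the converse direction is immediate from the explicit multiplication model.
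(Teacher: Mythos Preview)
The paper does not actually supply a proof of Theorem~\ref{sth2}: it is stated as a background result in the preliminaries section, introduced with the sentence ``Moreover, Theorem~\ref{sth1} can be generalized as follows.'' Your proposal carries out precisely the reduction that this framing invites, namely passing from the normal operators $A_i$ to the $2n$ self-adjoint operators $\operatorname{Re}A_i,\operatorname{Im}A_i$, invoking Theorem~\ref{sth1} for that enlarged family, and then reassembling $f_i=g_i+ih_i$; the domain identification via $|f_i|^2=g_i^2+h_i^2$ and the converse via commuting multiplication operators are both correct. So your argument is sound and is exactly the standard derivation the paper is implicitly pointing to.
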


\subsection{Self-adjoint algebras of operators}\label{preli} Let $V\subset \mathcal{H}$ be a dense subspace of a separable Hilbert space $\mathcal{H}$. Let $\mcA$ be a commutative algebra of linear operators on $V$ with an antilinear involution $\dagger $ such that $(Av,u)=(v,A^\dagger u)$. Let $\mcA_{\mbb R}$ is the set of symmetric elements in $\mcA$, i.e., such that $A^\dagger=A$; then $\mcA=\mcA_{\mbb R}\otimes_{\mbb R}\mbb C$. 

\begin{definition}\label{es1a} The {\it Schwartz space} $S(\mcA)$ 
is the space of $u\in \mathcal{H}$ such that the linear functional 
$v\mapsto (Av,u)$ on $V$ is continuous for any $A\in \mcA$.
\end{definition}

It is clear that $V\subset S(\mcA)$. Also by the Riesz representation theorem, the 
action of $\mcA$ on $V$ extends canonically to $S(\mcA)$, via $(v,Au):=(A^\dagger v,u)$ for $v\in V$ 
and $u\in S(\mcA)$. Also note that if $\mcA=\mbb C[A]$ where $A: V\to V$ is symmetric 
then any eigenvector of $A^\dagger$ belongs to $S(\mcA)$. 

\begin{definition}\label{es2} We say that $\mcA_{\mbb R}$ is an {\it
    essentially self-adjoint algebra on} $V$ if every element
  $A\in \mcA_{\mbb R}$ is essentially self-adjoint on $S(\mcA)$.
\end{definition}

\begin{proposition}\label{esa1} (i) Let $A_1,...,A_n: V\to V$ be essentially normal operators such that $A_i^\dagger$ preserve $V$ and the normal operators $\overline{A}_i$ strongly commute. Let $\mcA=\mbb C[A_1,...,A_n,A_1^\dagger,...,A_n^\dagger]$ with $\dagger$ given by switching $A_i$ and $A_i^\dagger$. Then the algebra $\mcA_{\mbb R}$ is essentially self-adjoint. 

(ii) If $A: V\to V$ is a symmetric operator and $\mcA=\mbb C[A]$ with $\dagger$ being complex conjugation, then $\mcA_{\mbb R}$ is essentially self-adjoint on $V$ if and only if so is $A$.
\end{proposition}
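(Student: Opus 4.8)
\emph{Plan.} I would prove (i) by passing to a multiplication–operator model and identifying the Schwartz space $S(\mcA)$ explicitly, and then derive (ii) from (i) together with von Neumann's criterion (Theorem \ref{vn}).

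For (i): since the normal operators $\overline A_i$ pairwise strongly commute, Theorem \ref{sth2} lets us assume $\mathcal H = L^2(X,\mu)$ with $\overline A_i = M_{f_i}$, multiplication by a complex measurable function $f_i$ that is finite $\mu$-a.e. (finiteness because $M_{f_i}$ is densely defined). Because $A_i\subseteq\overline A_i$, because $A_i^\dagger|_V\subseteq (A_i|_V)^\ast=M_{\overline{f_i}}$, and because $\mcA$ preserves $V$, every $A=Q(A_1,\dots,A_n,A_1^\dagger,\dots,A_n^\dagger)\in\mcA$ acts on $V$ as the restriction of $M_{q_Q}$, where $q_Q:=Q(f_i,\overline{f_i})$; and $q_Q$ is real-valued exactly when $A\in\mcA_{\mbb R}$. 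The main point I would then establish is the identification
$$ S(\mcA)=\mathcal D^{\infty}:=\{\,g\in L^2 : q\cdot g\in L^2 \text{ for every polynomial } q \text{ in the } f_i,\overline{f_i}\,\}, $$
with $\mcA$ acting on $\mathcal D^{\infty}$ by multiplication. The inclusion $\mathcal D^{\infty}\subseteq S(\mcA)$ is immediate from Cauchy--Schwarz, since $(Av,g)=(M_{q}v,g)=(v,M_{\bar q}g)$ for $v\in V$. For the reverse inclusion, given $g\in S(\mcA)$ and a polynomial $q$ the functional $v\mapsto(M_{q}v,g)$ is bounded on $V$, and I would conclude $\bar q g\in L^2$ once I know that $V$ is a core for $M_{q}$; this I would deduce from the hypotheses that $V$ is a common core for the $\overline A_i$ and is $\mcA$-invariant.

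Granting the identification, the rest of (i) is short. The space $\mathcal D^{\infty}$ contains the subspace of all $g$ supported on some set $\{\sum_i|f_i|^2\le R\}$; since each $f_i$ is finite a.e., these sets exhaust $X$ and a standard truncation argument shows this subspace is a core for multiplication by any a.e.-finite measurable function, in particular for each self-adjoint operator $M_{q_Q}$ with $q_Q$ real. Hence for $A\in\mcA_{\mbb R}$ the operator $A|_{S(\mcA)}=M_{q_Q}|_{\mathcal D^{\infty}}$ lies between a core and the self-adjoint operator $M_{q_Q}$, so its closure equals $M_{q_Q}$; that is, $A$ is essentially self-adjoint on $S(\mcA)$. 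For (ii): the implication ``$A$ essentially self-adjoint on $V$ $\Rightarrow$ $\mcA_{\mbb R}$ essentially self-adjoint'' is the case $n=1$ of (i) (take $A_1=A$: its closure is self-adjoint, hence normal, and a single operator trivially strongly commutes with itself). For the converse, suppose $\mcA_{\mbb R}=\mbb R[A]$ is essentially self-adjoint but $A$ is not essentially self-adjoint on $V$; by Theorem \ref{vn} there is $0\ne w\in V^\vee$ with $(A|_V)^\dagger w=\pm i w$. Using $A(V)\subseteq V$ and $A^\dagger=A$ one checks inductively that $(A^k v,w)$ equals a $k$-dependent constant times $(v,w)$ for all $v\in V$, so $w\in S(\mcA)$ and the extended action gives $Aw=\pm i w$ in $S(\mcA)$. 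But $A$, being essentially self-adjoint on $S(\mcA)$, is symmetric there, forcing $\pm i\|w\|^2=\mp i\|w\|^2$, i.e. $w=0$ — a contradiction.

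The main obstacle is the reverse inclusion $S(\mcA)\subseteq\mathcal D^{\infty}$ in part (i): one must show that a vector pairing continuously with $V$ against every $M_q$ ($q$ a polynomial in the $f_i,\overline{f_i}$) actually lies in the domain of each such $M_q$, equivalently that $V$ is a core not merely for each $\overline A_i$ but for all the multiplication operators $M_q$ built from them. Everything else reduces to the structure theorem for strongly commuting normal operators (Theorem \ref{sth2}) and routine truncation.
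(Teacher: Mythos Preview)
Your overall strategy---pass to the multiplication model via Theorem \ref{sth2}, identify $S(\mcA)$ with $\mathcal D^{\infty}$, and then use truncation---is exactly the paper's approach, and your treatment of (ii) matches the paper's proof essentially verbatim.

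There is, however, a real gap in your plan for the reverse inclusion $S(\mcA)\subseteq\mathcal D^{\infty}$. You propose to deduce that $V$ is a core for every $M_q$ from the hypotheses that $V$ is a core for each $\overline{A_i}$ and is $\mcA$-invariant. This deduction is impossible: the paper's own Example \ref{exam}(1),(3) furnishes a counterexample. There $V$ consists of smooth functions on an elliptic curve $E$ vanishing near $0$; one can check that $V$ is a core for the essentially normal operators $\partial,\bar\partial$ and is invariant under $\mcA=\mbb C[\partial,\bar\partial]$, yet $V$ is \emph{not} a core for $\Delta=4\partial\bar\partial$ (the function $h$ of Example \ref{exam}(3) lies in $V^\vee_\Delta$ but $\Delta h\notin L^2(E)$). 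So the assertion $V^\vee_P=\{g:P(f,\bar f)g\in L^2\}$ can fail for $P$ of degree $\ge 2$.

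The fix is to bootstrap through the generators rather than attack all $M_q$ at once. Essential normality of $A_i$ on $V$ gives $(A_i|_V)^*=M_{\bar f_i}$ exactly (since for a normal closure $N$ one has $\mathrm{dom}(N)=\mathrm{dom}(N^*)$ with equal graph norms, so $V$ is also a core for $N^*$); hence $V^\vee_{A_i}=\mathrm{dom}(M_{\bar f_i})$ and likewise $V^\vee_{A_i^\dagger}=\mathrm{dom}(M_{f_i})$. Thus every $u\in S(\mcA)$ has $f_iu,\bar f_iu\in L^2$, and the extended action of $A_i,A_i^\dagger$ on $S(\mcA)$ is multiplication by $f_i,\bar f_i$. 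Now use that $S(\mcA)$ is $\mcA$-invariant: $f_iu\in S(\mcA)$, so $f_jf_iu\in L^2$, and iterating gives $qu\in L^2$ for every polynomial $q$, i.e.\ $u\in\mathcal D^{\infty}$. With this correction your argument goes through and coincides with the paper's.
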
 

\begin{proof} (i) By Theorem \ref{sth2} there is a finite measure space $(X,\mu)$ and an isometry $U: \mathcal{H}\to L^2(X,\mu)$ which for all $i$ transforms $\overline{A}_i$ into the operator of multiplication by a complex measurable function $f_i$ on $X$. 
Moreover, for any polynomial $P\in \mbb C[x_1,...,x_n,y_1,...,y_n]$ we have 
$$
V_{P(A_1,...,A_n,A_i^\dagger,...A_n^\dagger)}^\vee=\lbrace{g\in L^2(X,\mu): P(f_1,...,f_n,\bar f_1,...,\bar f_n)g\in L^2(X,\mu)\rbrace}. 
$$
Thus, 
$$
S(\mcA)=\lbrace{g\in L^2(X,\mu): P(f_1,...,f_n,\bar f_1,...,\bar f_n)g\in L^2(X,\mu)\ \forall P\rbrace},
$$
and for every real $P$ the operator $P(A_1,...,A_n,A_1^\dagger,...,A_n^\dagger)$ (which corresponds under $U$ to multiplication by $P(f_1,...,f_n,\bar f_1,...,\bar f_n)$) is essentially self-adjoint on $S(\mcA)$, as desired. 

(ii) By (i), it suffices to establish just the ``only if" direction. Assume that $A$ is not essentially self-adjoint on $V$. Then by Theorem \ref{vn}, the domain $V^\vee$ of the adjoint operator to $A$ contains a vector $v\ne 0$ such that $Av=iv$ or $Av=-iv$. Then $v\in S(\mcA)$. This implies that $A$ is not symmetric, hence not essentially self-adjoint on $S(\mcA)$, i.e., $\mcA_{\mbb R}$ is not essentially self-adjoint on $V$. 
\end{proof} 

\begin{remark}\label{remsa} 1. It is clear that $S(\mcA)=\cap_{A\in \mcA}V_A^\vee$.  

2. Let $\mcA_{\mbb R}$ be essentially self-adjoint on $V$ and $S\supset V$ be a subspace of $\mathcal{H}$ such that each $A\in \mcA_{\mbb R}$ extends (necessarily uniquely) to an essentially self-adjoint endomorphism of $S$. Then 
$S\subset S(\mcA)$; in particular, the closure of $(A,S)$ coincides with the closure of $(A,S(\mcA))$, hence is independent on $S$, (i.e., $S$ is ``essentially unique"). Indeed, by definition $S\subset V_A^\vee$ for any $A\in \mcA_{\mbb R}$, so the statement follows from (1). 

3.  It is clear that if all $T\in \mcA_{\mbb R}$ are essentially
self-adjoint on $V$ then $\mcA_{\mbb R}$ is an essentially
self-adjoint algebra. However, the converse is false: if $\mcA_{\mbb
  R}$ is essentially self-adjoint on $V$ then an element $A\in
\mcA_{\mbb R}$ might fail to be essentially self-adjoint on $V$. For example, take $\mcA=\mbb C[T]$ where $T$ is essentially self-adjoint on $V$; then $A:=T^2$ may fail to be essentially self-adjoint on $V$, see Example \ref{exam}(3). 

4. If $\mcA_{\mbb R}$ is essentially self-adjoint then any $A_1,A_2\in \mcA_{\mbb R}$ strongly commute. This follows from Theorem \ref{ne1} for $n=2$ and $V=S(\mcA)$. Thus if $A_1,...,A_n: V\to V$ and 
the algebra $\mbb C[A_1,...,A_n]$ is essentially self-adjoint then $A_1,...,A_n$ define a unitary representation of the group $\mbb R^n$ on $\mathcal{H}$ given by $\rho(t_1,...,t_n)=e^{i(t_1A_1+...+t_nA_n)}$. Conversely, if such a representation exists then according to \cite{NS} and references therein, all elements of $\mathcal{A}_{\mbb R}$ are essentially self-adjoint on the Garding space $V$, thus so is $\mathcal{A}_{\mbb R}$ itself. 
\end{remark} 

\begin{proposition}\label{spectrumexists} If $\mcA_{\mbb R}$ is essentially self-adjoint then there is a finite measure space $(X,\mu)$, an isometry 
$U: \mathcal{H}\to L^2(X,\mu)$ and an algebra embedding $A\mapsto \psi(A)$ of $\mcA$ into the algebra 
of complex measurable functions on $X$ such that $UAU^{-1}=M_{\psi(A)}$ 
for $A\in \mcA$ and 
$$
U(S(\mcA))=\lbrace{g\in L^2(X,\mu): \psi(A)g\in L^2(X,\mu)\ \forall A\in \mcA\rbrace}
$$
In particular, all $A\in \mcA$ are essentially normal on $S(\mcA)$. 
\end{proposition}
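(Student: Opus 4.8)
The plan is to build, out of the hypothesis, a single simultaneous multiplication model for the whole family $\{\overline{A|_{S(\mcA)}}\}_{A\in\mcA_{\mbb R}}$, and then to read off the embedding $\psi$, the description of $S(\mcA)$, and the essential normality statement from it. For $A\in\mcA_{\mbb R}$ write $\hat A:=\overline{A|_{S(\mcA)}}$; by Definition~\ref{es2} each $\hat A$ is self-adjoint. A preliminary fact I would record first is that $S(\mcA)$ is $\mcA$-invariant: for $u\in S(\mcA)$, $B\in\mcA$ and $A\in\mcA$ the functional $v\mapsto(Av,Bu)=((B^\dagger A)v,u)$ on $V$ is continuous (commutativity of $\mcA$ is what lets one move $B^\dagger$ past $A$), so $Bu\in S(\mcA)$; consequently the given action of $A$ on $S(\mcA)$ is $\hat A|_{S(\mcA)}$, and $S(\mcA)\subset\mathrm{dom}\,\hat A$ for all $A\in\mcA_{\mbb R}$.

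First I would establish pairwise strong commutativity of the $\hat A$'s. For $A,B\in\mcA_{\mbb R}$, the operator $A^2+B^2$ again lies in $\mcA_{\mbb R}$, hence is essentially self-adjoint on $S(\mcA)$, so Nelson's Theorem~\ref{ne1} (with $n=2$ and common invariant domain $S(\mcA)$) shows that the one-parameter unitary groups generated by $\hat A$ and $\hat B$ commute, i.e. $\hat A$ and $\hat B$ strongly commute (this is the observation in Remark~\ref{remsa}). Next I would let $\mathcal M\subset B(\mcH)$ be the von Neumann algebra generated by all spectral projections $P_{\hat A}(t)$, $A\in\mcA_{\mbb R}$, $t\in\mbb R$; by the previous step it is abelian. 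Since $\mcH$ is separable, $\mathcal M$ sits inside a maximal abelian von Neumann algebra, and the standard structure theorem for such algebras on a separable Hilbert space yields a finite measure space $(X,\mu)$ and an isometry $U\colon\mcH\to L^2(X,\mu)$ carrying it onto $\{M_\varphi:\varphi\in L^\infty(X,\mu)\}$. Each $\hat A$ is affiliated to $\mathcal M$, so $U\hat AU^{-1}=M_{f_A}$ for a unique real measurable $f_A$ with domain $\{g:f_Ag\in L^2\}$; I would set $\psi(A)=f_A$ on $\mcA_{\mbb R}$ and extend $\CC$-linearly to $\mcA=\mcA_{\mbb R}\oplus i\mcA_{\mbb R}$.

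It then remains to check the three assertions. That $\psi$ is an algebra homomorphism: for $A,B\in\mcA_{\mbb R}$ one has $A+B,\,AB\in\mcA_{\mbb R}$ (since $(AB)^\dagger=B^\dagger A^\dagger=BA=AB$), and on $S(\mcA)$ these act as restrictions of $M_{f_A}+M_{f_B}\subset M_{f_A+f_B}$ and of $M_{f_A}M_{f_B}\subset M_{f_Af_B}$; taking closures and using that a self-adjoint operator has no proper symmetric extension forces $f_{A+B}=f_A+f_B$ and $f_{AB}=f_Af_B$ $\mu$-a.e., and the general case follows by expanding into real and imaginary parts. Injectivity is immediate: $\psi(A_1+iA_2)=0$ forces $f_{A_1}=f_{A_2}=0$, hence $A_1=A_2=0$ on $S(\mcA)\supset V$. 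The identity $UAU^{-1}=M_{\psi(A)}$ holds on $U(S(\mcA))$ by construction, and the domain identity $U(S(\mcA))=\{g:\psi(A)g\in L^2\ \forall A\in\mcA\}$ follows from $S(\mcA)=\bigcap_{A\in\mcA}V^\vee_A$ (Remark~\ref{remsa}): if $u\in S(\mcA)$ then $Au\in S(\mcA)$ and $U(Au)=\psi(A)\,Uu\in L^2$; conversely, if $\psi(A)\,Uu\in L^2$ for all $A$, then for $A\in\mcA_{\mbb R}$ one has $Uu\in\mathrm{dom}\,M_{f_A}$, so $v\mapsto(Av,u)=(\hat Av,u)=(v,\hat Au)$ is continuous on $V$, and the complex case follows by linearity, whence $u\in S(\mcA)$. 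Finally, for the essential normality statement fix $A=A_1+iA_2\in\mcA$ and put $T=M_{\psi(A)}$; then $\overline{A|_{S(\mcA)}}\subset T$, while $T^\dagger T=M_{|\psi(A)|^2}=M_{\psi(A_1^2+A_2^2)}=\overline{(A_1^2+A_2^2)|_{S(\mcA)}}$, so $S(\mcA)$ is a core for $T^\dagger T$; since for a closed operator $\mathrm{dom}(T^\dagger T)$ is a core for $T$ on which the $T^\dagger T$-graph norm dominates the $T$-graph norm, $S(\mcA)$ is a core for $T$, and hence $\overline{A|_{S(\mcA)}}=T$ is normal.

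The main obstacle is the possibly uncountable generation of $\mcA_{\mbb R}$: there is no cyclic vector, so the reduction to a single $L^2(X,\mu)$ must genuinely pass through the commutative (maximal abelian) von Neumann algebra above rather than the elementary spectral theorem; and one must verify that the single Schwartz domain $S(\mcA)$ is a common core not merely for the real elements (which is the hypothesis) but for every complex $A\in\mcA$ — this is the role of the $T^\dagger T$ argument, and it is exactly where strong commutativity is indispensable, since without it, by Nelson's example in \cite{N}, no such joint model can exist.
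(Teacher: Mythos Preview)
Your argument is correct and follows the same route as the paper: the paper's proof is the single line ``This follows from Remark~\ref{remsa}(4)'', i.e.\ Nelson's theorem on $S(\mcA)$ gives pairwise strong commutativity of the closures $\hat A$ for $A\in\mcA_{\mbb R}$, after which the joint spectral theorem (Theorems~\ref{sth1}--\ref{sth2}) supplies the multiplication model. You have simply unpacked this, supplying the details the paper leaves implicit --- the von Neumann algebra realization (which, as you note, covers the case of uncountably many generators that Theorems~\ref{sth1}--\ref{sth2} as stated do not), the verification that $\psi$ is multiplicative via maximality of self-adjoint operators, the domain identity via $S(\mcA)=\bigcap_A V_A^\vee$, and the $T^\dagger T$ core argument for essential normality of complex elements.
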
 

\begin{proof} This follows from Remark \ref{remsa}(4). 
\end{proof} 

\begin{example}\label{exaesaa} 1. Let $\mathcal{H}=L^2(\mbb R)$, $V=C^\infty_0(\mbb R)$ the subspace of smooth functions with compact support, and 
$\mcA=\mbb C[\partial]$. Then $S(\mcA)=\mathcal{H}\cap C^\infty(\mbb R)$, and it is well known that all elements of $\mcA_{\mbb R}$ are essentially self-adjoint on $S(\mcA)$ (hence $\mcA_{\mbb R}$ is essentially self-adjoint). The same holds if $\mcA=\mbb C[x]$, in which case $S(\mcA)$ is the space of $f$ such that $x^nf\in L^2(\mbb R)$ for all $n$. 

2. Let $\mathcal{H},V$ be as in (1) and $\mcA=\mbb C[H]$, where $H=-\partial^2+x^2$ is the quantum harmonic oscillator. Then it is well known that $S(\mcA)$ is the usual Schwartz space $\mathcal S(\mbb R)$, and $\mcA_{\mbb R}$ is essentially self-adjoint on $V$. 

3. Let $\mathcal{M}$ be a compact manifold, $E$ a complex vector bundle on $\mathcal{M}$ with a positive Hermitian metric,  and $\mcA$ be a commutative algebra of smooth differential operators acting on half-densities on $\mathcal{M}$ with values in $E$. Suppose that $\mathcal{A}$ is invariant under the adjunction anti-automorphism $L\mapsto L^\dagger$. Also assume that $\mcA$ is elliptic, i.e., for any $x\in \mathcal{M}$ and nonzero $p\in T^*_x\mathcal{M}$ there exists $A\in \mcA$ whose symbol does not vanish at $(x,p)$. In particular, this holds if $\mcA$ is holonomic, i.e., the common zero set (over $\mbb C$) of the symbols of all elements of $\mcA$ at each $x\in \mathcal{M}$ consists only of $0$. Let $V$ be the space of smooth half-densities and $\mathcal{H}$ the space of square integrable half-densities on $\mathcal{M}$ valued in $E$. Then we claim that $\mcA_{\mbb R}$ is essentially self-adjoint with $S(\mcA)=V$ (i.e., every element $A\in \mcA_{\mbb R}$ is essentially self-adjoint on $V$). Indeed, let $A=A_1$, then there exist $A_2,...,A_m\in \mcA$ such that $D=A_1^2+...+A_m^2$ is an elliptic operator. Then $D$ is well known to be essentially self-adjoint on $V$. Thus, by Theorem \ref{ne1}, $A$ is essentially self-adjoint on $V$. (This argument is similar to  \cite{NS}, Corollary 2.4). 
\end{example}

\begin{example}\label{exam} 1. Let $E$ be an elliptic curve over $\mbb C$ and $\mathcal{H}=L^2(E)$. Let $\mcA=\mbb C[\partial,\overline{\partial}]$ with involution $\partial^\dagger=-\overline{\partial}$. Take $V$ to be the space of smooth functions on $E$ which vanish in some neighborhood of $0\in E$. Then $S(\mcA)$ is the space of smooth functions $f$ on $E\setminus 0$ for which $\partial^m\overline{\partial}^nf\in L^2(E\setminus 0)$ for all $m,n$. 

We claim that $S(\mcA)=C^\infty(E)$, i.e. such $f$ is in fact smooth on the whole $E$. By elliptic regularity it suffices to show that $\overline\partial^n f\in L^2(E)$ as a distribution for all $n$. We prove it by induction in $n$. The base $n=0$ is known, so let us do the induction step. Let $\overline\partial^{n-1}f=g\in L^2(E)$ as a distribution. Thus as distributions on $E$ we have 
$\overline\partial g=h+\xi$, where $h\in L^2(E)$ and $\xi$ is concentrated at $0$. Locally near $0$ 
we can find $g_*\in H^1_{\rm loc}$ such that $\overline\partial g_*=h$ as distributions. Then $u:=g-g_*\in L^2_{\rm loc}$ 
and $\overline\partial u=\xi$. Thus $u(z)$ is holomorphic for $z\ne 0$ and is in $L^2_{\rm loc}$. By a version of the removable singularity theorem, this can only happen if $u$ is actually holomorphic at $0$ as well. Thus $\xi=0$ and the inductive step is complete. 

Thus, $\mcA_{\mbb R}$ is essentially self-adjoint on $V$. 

2. In contrast, let $\mathcal{H}=L^2(S^1)$ and $\mcA=\mbb C[i\frac{d}{dx}]$, where $\dagger$ is complex conjugation (with $i\frac{d}{dx}$ symmetric), and let $V$ be the space of smooth functions on the circle which vanish in some neighborhood of zero. 
Then $\mcA_{\mbb R}$ is not essentially self-adjoint by Proposition \ref{esa1} since the operator $A:=i\frac{d}{dx}$ is not essentially self-adjoint on $V$. Indeed, the function $f=e^x\in V^\vee$ satisfies the equation $Af=if$, so the claim follows from Theorem \ref{vn}. 

3. Similarly, let $\mathcal{H},V$ be as in (1) but $\mcA=\mbb C[\Delta]$, where $\Delta=4\partial\overline{\partial}$ is the Laplace operator. Let $z=x+\tau y$ and consider the function 
$$
h(z)=\sum_{m,n\in \mbb Z}\frac{e^{i(mx+ny)}}{|m+n\tau|^2+i}.
$$ 
Then $h\in L^2$ and $\Delta h-ih$ is a multiple of the $\delta$-function at $0$. 
Hence again by Theorem \ref{vn} $\Delta$ (and hence by Proposition \ref{esa1} $\mcA_{\mbb R}$) is not essentially self-adjoint on $V$.  

Thus we see that in (1) the operator $A:=\left(\begin{matrix} 0&\partial\\ -\overline{\partial}& 0\end{matrix}\right)$ 
is essentially self-adjoint on $V\otimes \mbb C^2$ while $A^2=-\frac{1}{4}\Delta$ is not. 
\end{example} 

\section{The main theorem} 

The main result of Part II of this paper is the following theorem, which, combined with Subsections \ref{realmonod} and \ref{gloeig}, confirms Conjectures \ref{basic}, \ref{first}, \ref{second}, \ref{third} in the case when $X=\mbb P^1$ with four marked points. 

\begin{theorem}\label{mainthe} (i) The normalized joint eigenfunctions $\psi_\Lambda$ of $L,L^\dagger$ (one for each $\Lambda\in \Sigma$) form an orthonormal basis of $\mathcal{H}$. 

(ii) Let $V$ be the space of smooth sections of $O(s)\otimes \overline{O(-2-s)}$ which vanish near the four singular points. 
Then the algebra $\mbb C[L,L^\dagger]_{\mbb R}$ is essentially self-adjoint on $V$, and its spectrum is naturally identified with $\Sigma$. 
\end{theorem}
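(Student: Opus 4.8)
The plan is to prove (i) and (ii) together, since by the functional-analytic framework of Section 11 they are essentially two faces of the same fact. The heart of the matter is to establish that $L$ is an \emph{essentially normal} operator on $V$ whose closure has discrete spectrum, with eigenfunctions exactly the $\psi_\Lambda$, $\Lambda\in\Sigma$, of Subsection \ref{gloeig}. Granting this, part (ii) follows from Proposition \ref{esa1}(i): we set $\mcA = \mathbb C[L,L^\dagger]$, note that $L^\dagger = \overline{L^*}$ preserves $V$ (both $L$ and $L^*$ are genuine twisted differential operators, and complex conjugation preserves smoothness and the vanishing condition near the four punctures), and that $\overline L$ strongly commutes with $\overline{L^\dagger}$ because they are the holomorphic and anti-holomorphic parts of a normal operator; then $\mathbb C[L,L^\dagger]_{\mathbb R}$ is essentially self-adjoint and $S(\mcA)=V$. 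The identification of the spectrum with $\Sigma$ is then exactly Corollary \ref{realmon} together with Proposition \ref{l2prop1}, which already say that the joint eigenvalues of $(L,L^\dagger)$ on $\mcH$ are precisely $\{(\Lambda,\overline\Lambda):\Lambda\in\Sigma\}$ and that any $L^2$-eigenfunction of $L$ alone is automatically a joint eigenfunction. Part (i) — that the $\psi_\Lambda$ form an \emph{orthonormal basis} — is the completeness statement, which is the one genuinely new analytic input; orthonormality (after normalization) is Proposition \ref{l2prop}(iii)–(iv).

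The strategy for essential normality and completeness is the Green's-function / compact-resolvent approach flagged in the introduction to Part II (Sections 14--16). Concretely, I would first restrict to the case $\mathbf a=0$, or more generally $a_p\in i\mathbb R$ (Subsection \ref{gloeig}), where the local monodromies are hyperbolic/parabolic with positive real eigenvalues, so that near each puncture the single-valued joint eigensections behave like $1$ and $\log|w|$ (Lemma \ref{a_p=0}, Lemma \ref{not2Z}) and in particular lie in $L^2$. The key step is to construct a Green's function for $L$ (or, to get self-adjointness of the square, for $L^\dagger L$): an integral kernel $G(z,w)$ on $\mathbb P^1\times\mathbb P^1$, built from the local holomorphic solutions of $L\psi=0$ and $L^*\psi=0$ of Subsection \ref{holo}, which is locally $L^2$ in each variable and provides a bounded inverse to $\overline L$ (resp. a bounded, self-adjoint, \emph{compact} inverse to the closure of $L^\dagger L$). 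Compactness comes from the mild (logarithmic/power) nature of the singularities of $G$ at the diagonal and at the four punctures, via the standard criterion that a kernel which is, say, locally in $L^2$ with an integrable singularity along the diagonal on a compact space defines a compact — indeed Hilbert--Schmidt — operator. Once the resolvent of $L^\dagger L$ is compact and self-adjoint, the spectral theorem gives a complete orthonormal system of eigenfunctions; a regularity argument (elliptic regularity away from the punctures, plus the local analysis of Section 10 at the punctures, as in the proof of Proposition \ref{l2prop1}) shows each such eigenfunction is smooth off the punctures and is a genuine joint eigenfunction of $L$ and $L^\dagger$, hence one of the $\psi_\Lambda$. This simultaneously yields essential self-adjointness of $L^\dagger L$ on $V$ (its closure is the operator inverse to the compact resolvent), and then Nelson's theorem (Theorem \ref{ne1}), applied to the symmetric operators $\mathrm{Re}\,L$ and $\mathrm{Im}\,L$ whose sum of squares is $\tfrac12(L^\dagger L + LL^\dagger)$ — which must also be shown essentially self-adjoint, e.g. by the same Green's-function estimate — upgrades this to essential normality of $L$ and strong commutativity, completing part (ii).

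There are two places where care is needed and which I expect to be the main obstacles. The first is \textbf{the construction and the norm/compactness estimates for the Green's function near the four singular points}: because $L$ is only a \emph{twisted} operator on $\mathbb P^1$ with regular singularities and the relevant line bundle $O(s)\otimes\overline{O(-2-s)}$ only exists when $\mathrm{Re}(s)\in\tfrac12\mathbb Z$, one must check that the locally defined fundamental solutions (which involve $w^{a_p}\overline w^{-\overline{a_p}}$ and $\log|w|$ factors) glue to a global kernel that is genuinely in $L^2_{\mathrm{loc}}$ near each puncture — this is exactly where the hypothesis $a_p\in i\mathbb R$, hence $s\in -1+i\mathbb R$, is used, and where the borderline $\log|w|$ behaviour in the $a_p=0$ case must be shown to still give an integrable, in fact Hilbert--Schmidt, kernel. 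The second obstacle is \textbf{identifying the abstract $L^2$-eigenfunctions produced by the spectral theorem with the monodromy-theoretic list $\Sigma$}: one must rule out ``extra'' spectrum by showing every eigenfunction of the compact resolvent is, away from the punctures, a finite sum $\sum_j \psi_j(z)\overline{\eta_j(z)}$ as in Proposition \ref{l2prop1}, with the $\eta_j$ forced by the single-valuedness and $L^2$ conditions to solve $L^*\eta=\Lambda\eta$, so that Corollary \ref{realmon} applies; this is the content already worked out in the proof of Proposition \ref{l2prop1}, and the remaining task is just to assemble it. Once these are in place, the three given conjectures for $X=\mathbb P^1$ with four marked points follow by combining Theorem \ref{mainthe} with Subsections \ref{realmonod} and \ref{gloeig}, as asserted.
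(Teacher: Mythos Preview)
Your overall architecture is right — compact resolvent, discrete spectrum, identification with $\Sigma$ via Proposition \ref{l2prop1} — but there are two concrete errors that would make the argument fail as written.

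First, you claim $S(\mcA)=V$. This is false: the eigenfunctions $\psi_\Lambda$ themselves are not in $V$ (they do not vanish near the punctures; by Lemmas \ref{not2Z} and \ref{a_p=0} they behave like $1$ and $\log|w|$ or $|w|^{2\alpha}$ there), yet they must lie in $S(\mcA)$ since any eigenvector of $A^\dagger$ lies in $S(\mbb C[A])$. The paper has to build, in Section 13, explicit generalized Sobolev spaces $H^{2m}_{\mbf a}$ and the Schwartz space $S_{\mbf a}=\cap_m H^{2m}_{\mbf a}$, described concretely as functions of the form $Q_1\frac{|z|^{2a_p}-1}{2a_p}+Q_2$ with $Q_1,Q_2$ smooth near each puncture (Proposition \ref{asymex}(ii)); then one proves $S(\mcA)=S_{\mbf a}$ (Corollary \ref{matrop1}). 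This identification is not automatic: one must show that any $f\in L^2$ with $L^kf\in L^2$ outside the punctures actually has $L^kf\in L^2$ across the punctures as distributions, i.e.\ no delta-contributions appear, and this uses the surjectivity $L:H^{2(m+1)}_{\alpha,\rm loc}\to H^{2m}_{\alpha,\rm loc}$ (Corollary \ref{surj}).

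Second, and more seriously, you assert that $D=L^\dagger L$ is essentially self-adjoint on $V$, to be deduced from the Green's function. It is not: $D$ fails to be essentially self-adjoint even on the larger space $\wt V$ of all smooth sections (this is Remark \ref{remnonsa}; the deficiency space is $8$-dimensional, two dimensions per puncture, with the obstruction coming from functions behaving like $|z|^{2\alpha}\log|z|$). So Nelson's theorem cannot be applied on $V$ directly. The paper's route is either (a) to avoid $D$ and instead use the matrix operator $\mathcal L=\begin{pmatrix}0&L\\L^\dagger&0\end{pmatrix}$, which \emph{is} essentially self-adjoint on smooth sections (Corollary \ref{matrop}) because the relevant integration-by-parts identity $(Lf_1,f_2)=(f_1,L^\dagger f_2)$ holds already on $H^2_{\mbf a}$ (Proposition \ref{intparts}); or (b) to first construct the correct self-adjoint extension of $D$ on $H^4_{\mbf a}$ (Proposition \ref{selfadex}), prove it is essentially self-adjoint on $S_{\mbf a}$ (Corollary \ref{selfad}), and only then invoke Nelson on the domain $S_{\mbf a}$. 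Either way the Sobolev/Schwartz machinery of Section 13 is not optional; it is where the hypothesis $a_p\in i\mbb R$ is actually cashed in to control the boundary terms.
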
 

Part (i) of the theorem follows from part (ii), so the rest of the paper is mainly dedicated to the proof of part (ii). 
In the course of the proof, we define and study the Sobolev and Schwartz spaces associated to $L$, which is 
interesting in its own right.  

\section{Generalized Sobolev and Schwartz spaces attached to the operator $L$.} 

\subsection{Sobolev spaces}

Recall that for $r\in \mbb Z_+$ the {\it Sobolev space} $H^r_{\rm loc}=H^r_{\rm loc}(U)$ on a real manifold $U$ is the space of 
distributions on $U$ whose $r$-th distributional derivatives belong to $L^2_{\rm loc}(U)$. 
For basics on Sobolev spaces we refer the reader to the book \cite{A}. In particular, if $\dim U=2$ and $r\ge 2$, then we have the {\it Sobolev embedding} $H^r_{\rm loc}(U)\subset C^{r-2,\gamma}_{\rm loc}(U)$ for any $0<\gamma<1$, where $C^{m,\gamma}_{\rm loc}(U)$ is the space of $C^m$-functions whose $m$-th derivatives are locally $\gamma$-H\"older continuous (see \cite{A},p.97-98). Another Sobolev embedding in the 2-dimensional case is $H^1_{\rm loc}(U)\subset L^p_{\rm loc}(U)$ for all $1\le p<\infty$ (\cite{A}, p.97, formula (6)). 

We will often use {\it elliptic regularity}: if $E$ is an elliptic differential operator of order $m$ 
and $f$ is a distribution such that $Ef\in H^k_{\rm loc}$ then $f\in H^{k+m}_{\rm loc}$, and also if 
$Ef\in C^{k,\gamma}_{\rm loc}$ for some $0<\gamma<1$ then $f\in C^{k+m,\gamma}_{\rm loc}$ (Schauder estimates).  

\subsection{Auxiliary lemmas} 

\begin{lemma}\label{aux1} Let $0<\gamma<\beta<2$ and 
$$
G_{\gamma,\beta}(z):=\int_{|w|<1}\frac{|w|^{\gamma-2}}{|z-w|^\beta} d^2w.
$$
Then $G_{\gamma,\beta}(z)=O(|z|^{\gamma-\beta})$ as $z\to 0$. 
\end{lemma}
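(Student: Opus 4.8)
The integral $G_{\gamma,\beta}(z)$ is a standard Riesz-type convolution estimate, and the plan is to split the domain $\{|w|<1\}$ according to the relative sizes of $|w|$, $|z-w|$, and $|z|$. Write $r = |z|$ and assume $r$ is small; decompose $\{|w|<1\} = A_1 \cup A_2 \cup A_3$ where $A_1 = \{|w| < r/2\}$ (so $|z-w| \asymp r$), $A_2 = \{|z-w| < r/2\}$ (so $|w| \asymp r$), and $A_3$ is the remaining region where $|w| \gtrsim r$ and $|z - w| \gtrsim r$. On $A_1$, bound $|z-w|^{-\beta} \asymp r^{-\beta}$ and integrate $|w|^{\gamma-2}$ over $\{|w| < r/2\}$, which converges (since $\gamma > 0$) and gives $O(r^\gamma)$; multiplying by $r^{-\beta}$ yields $O(r^{\gamma - \beta})$. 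On $A_2$, symmetrically bound $|w|^{\gamma - 2} \asymp r^{\gamma-2}$ and integrate $|z-w|^{-\beta}$ over $\{|z-w| < r/2\}$, which converges (since $\beta < 2$) and gives $O(r^{2-\beta})$; multiplying by $r^{\gamma-2}$ again yields $O(r^{\gamma-\beta})$.

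The main work is the region $A_3$. Here one further splits into an annular "near" zone $\{r \lesssim |w| \lesssim 1\}$ with a dyadic decomposition, or more directly one observes that on $A_3$ both $|w|$ and $|z-w|$ are comparable to $\max(|w|, r)$. A clean way: on $\{r/2 \le |w| < 1\}$ use the elementary inequality $|z - w| \ge |w| - |z| \ge |w|/2$ when $|w| \ge 2r$, and handle the thin shell $r/2 \le |w| < 2r$ as part of $A_2$'s estimate. Thus on $\{|w| \ge 2r\}$ we get $|z-w|^{-\beta} \lesssim |w|^{-\beta}$, reducing to $\int_{2r \le |w| < 1} |w|^{\gamma - 2 - \beta}\, d^2w$. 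Passing to polar coordinates this is $\asymp \int_{2r}^1 \rho^{\gamma - 1 - \beta}\, d\rho$. Since $\gamma < \beta$ we have $\gamma - \beta < 0$, so the exponent $\gamma - 1 - \beta < -1$ and the integral is dominated by its lower endpoint, giving $O(r^{\gamma - \beta})$ (up to the additive constant contribution from the upper endpoint, which is $O(1) = O(r^{\gamma-\beta}\cdot r^{\beta - \gamma})$, negligible as $r \to 0$ since $\beta - \gamma > 0$ — actually one should note $O(1)$ is absorbed since $r^{\gamma - \beta} \to \infty$).

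Assembling the three pieces gives $G_{\gamma,\beta}(z) = O(|z|^{\gamma - \beta})$ as $z \to 0$, as claimed. The one point requiring a little care is the bookkeeping of the overlapping/boundary shells so that no region is counted with the wrong comparability; I expect that to be the only mild obstacle, and it is handled by fixing the dyadic cutoffs ($r/2$ and $2r$) once and for all and checking each of the three regions covers $\{|w| < 1\}$.
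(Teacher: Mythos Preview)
Your proof is correct and follows essentially the same approach as the paper: both arguments split the domain at scale $|z|$ and use the bound $|z-w|\ge |w|/2$ on the outer region together with a polar-coordinate computation of $\int_{2r\le |w|<1}|w|^{\gamma-\beta-2}\,d^2w$. The only cosmetic difference is that the paper handles your regions $A_1$, $A_2$, and the intermediate shell in a single stroke via the substitution $u=w/z$, which reduces the entire inner piece $\{|w|\le 2r\}$ to $r^{\gamma-\beta}\int_{|u|\le 2}|u|^{\gamma-2}|1-u|^{-\beta}\,d^2u$, a fixed finite constant; your three-region split achieves the same bound with a bit more bookkeeping.
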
 

\begin{proof} 
Let $r=|z|$. We have 
$$
G_{\gamma,\beta}(z)= \int_{|w|\le 2r}\frac{|w|^{\gamma-2}}{ |z-w|^\beta} d^2w+
\int_{2r\le |w|<1}\frac{|w|^{\gamma-2}}{|z-w|^\beta} d^2w.
$$
If $|u|\ge 2$ then 
\begin{equation}\label{eq3a} 
|1-u^{-1}|\ge \frac{1}{2}. 
\end{equation}
Thus, making the change of variable $u=w/z$ in the first integral and using \eqref{eq3a} in the second one, we get 
$$
|G_{\gamma,\beta}(z)|\le r^{\gamma-\beta}\int_{|u|\le 2}\frac{|u|^{\gamma-2}}{ |1-u|^\beta} d^2u+
2^{\beta}\int_{2r\le |w|<1}|w|^{\gamma-\beta-2}d^2w. 
$$
But, passing to polar coordinates, we have for any $\delta>0$: 
\begin{equation}\label{eq25}
\int_{2r\le |w|<1}|w|^{-\delta-2}d^2w=2\pi \frac{(2r)^{-\delta}-1}{\delta}=O(r^{-\delta}),\ r\to 0. 
\end{equation}
This implies the statement, by setting $\delta=\beta-\gamma$. 
\end{proof} 

\begin{lemma}\label{normform} 
Let $b,c,d$ be holomorphic functions on a disk centered at $0\in \mbb C$ with $b(0)=0$, $b'(0)=1$, $b(z)\ne 0$ for $z\ne 0$, and let 
$$
L=L(b,c,d):=\partial b(z)\partial-c(z)\partial+d(z). 
$$
Let $\alpha:=c(0)$, and suppose $\alpha\notin \mbb Z\setminus 0$. 
Then $L$ can be uniquely brought to the form $\partial z\partial-\alpha\partial$ by 
a holomorphic change of coordinate and multiplication of $L$ on the left and right by a holomorphic invertible function equal to $1$ at $0$. 
\end{lemma}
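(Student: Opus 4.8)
The plan is to produce the reduction in three moves and then read off uniqueness from the construction. Write $L=\partial b\partial-c\partial+d=b\partial^2+(b'-c)\partial+d$. Since $b(z)=z(1+O(z))$ and $c(0)=\alpha$, the point $z=0$ is a regular singular point of $L\psi=0$ with indicial polynomial $\rho(\rho-1)+(b'(0)-c(0))\rho=\rho(\rho-\alpha)$, so the exponents are $0$ and $\alpha$. The hypothesis $\alpha\notin\mbb Z\setminus 0$ is precisely what rules out a logarithm at the exponent $0$: by Frobenius theory there is a holomorphic solution $\psi_0$ with $\psi_0(0)=1$, together with a second solution $z^{\alpha}v(z)$, $v$ holomorphic with $v(0)\neq 0$, when $\alpha\neq 0$ (and $\psi_0\log z+(\text{holomorphic})$ when $\alpha=0$). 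I run the argument for $\alpha\neq 0$; the case $\alpha=0$ is identical with $z^{\alpha}$ replaced by $\log z$ throughout. \emph{Step 1:} multiply $L$ on the right by the invertible holomorphic function $\psi_0$, obtaining $L_1$. Since $\psi_0$ is annihilated by $L$, the constant $1$ is annihilated by $L_1$, so $L_1$ has no zeroth order term: $L_1=\partial b_1\partial-c_1\partial$ with $b_1=b\psi_0$ (still $b_1(0)=0$, $b_1'(0)=1$, $b_1\neq 0$ for $z\neq0$ near $0$) and, by a direct check, $c_1=c\psi_0-b\psi_0'$, so $c_1(0)=\alpha$; the second solution of $L_1$ is $z^{\alpha}w(z)$ with $w=v/\psi_0$ holomorphic, $w(0)\neq 0$.

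\emph{Step 2.} Choose a holomorphic coordinate $z=z(\zeta)$, $z(0)=0$, $z'(0)=1$, so that the pulled-back second solution $z(\zeta)^{\alpha}w(z(\zeta))$ is a constant multiple of $\zeta^{\alpha}$. Writing $z(\zeta)=\zeta\rho(\zeta)$, this is the analytic equation $F(\zeta,\rho):=\alpha\log\rho+\log w(\zeta\rho)-\log w(0)=0$ near $(\zeta,\rho)=(0,1)$; since $F(0,1)=0$ and $\partial_{\rho}F(0,1)=\alpha\neq0$, the implicit function theorem gives a unique holomorphic $\rho$ with $\rho(0)=1$. After this change of coordinate $L_1$ becomes $L_2=\partial\beta\partial-c_2\partial$ (no zeroth order term is created, since $L_1$ had none), where $\beta(\zeta)=b_1(z(\zeta))/z'(\zeta)^2$ is holomorphic with $\beta(0)=0$, $\beta'(0)=1$, $\beta(\zeta)\neq0$ for small $\zeta\neq0$, and the kernel of $L_2$ is exactly $\span\{1,\zeta^{\alpha}\}$.

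\emph{Step 3.} Determine $L_2$ from its kernel: $L_2(1)=0$ kills the zeroth order term, and $L_2(\zeta^{\alpha})=0$ forces $\beta'-c_2=\beta(1-\alpha)/\zeta$, whence $L_2=\beta\partial^2+\tfrac{\beta(1-\alpha)}{\zeta}\partial=\tfrac{\beta(\zeta)}{\zeta}\bigl(\partial\zeta\partial-\alpha\partial\bigr)$. As $\beta(\zeta)/\zeta$ is holomorphic and nonvanishing near $0$ with value $\beta'(0)=1$ there, left multiplication of $L_2$ by $\zeta/\beta(\zeta)$ yields exactly $\partial\zeta\partial-\alpha\partial$; this proves existence (for $\alpha=0$ one gets $L_2=\tfrac{\beta(\zeta)}{\zeta}\,\partial\zeta\partial$ and the same final left multiplication). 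For uniqueness, run this forward: if $(z(\zeta),g,h)$ realizes the reduction then the kernel of the transformed operator is $h^{-1}$ times the pullback of the kernel of $L$, hence equals $\span\{1,\zeta^{\alpha}\}$; therefore $h^{-1}(\psi_0\circ z)$ is holomorphic and in this span, so it is the constant $1$ (evaluate at $0$) and $h=\psi_0\circ z$; then $h^{-1}$ times the pullback of the second solution is proportional to $\zeta^{\alpha}$, which determines $z(\zeta)$ up to the value $z'(0)$; finally, matching the leading coefficient of the symbol $g(\zeta)h(\zeta)\,b(z(\zeta))/z'(\zeta)^2$ with $\zeta$ forces $z'(0)=1$ and then fixes $g$. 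Hence the triple is unique.

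The only ingredients here that are not bookkeeping are the no-logarithm statement at the exponent $0$ (standard Frobenius theory, where $\alpha\notin\mbb Z\setminus 0$ enters) and the implicit function theorem in Step 2. The one conceptual point — and the place where a naive attempt stalls, since neither multiplications nor a coordinate change alone can fix both the second solution and the symbol — is the division of labor between Steps 2 and 3: the coordinate change is spent entirely on normalizing the \emph{second} solution, the leftover symbol discrepancy $\beta(\zeta)/\zeta$ is then automatically a \emph{unit}, and a single left multiplication completes the reduction. I expect this to be the main thing to get right; the rest is direct computation.
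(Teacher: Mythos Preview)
Your proof is correct and follows essentially the same three-step approach as the paper: right-multiply by the holomorphic solution to kill the zeroth-order term, change coordinate to normalize the second solution, then left-multiply to fix the symbol. The paper handles $\alpha=0$ and $\alpha\neq 0$ uniformly by writing the second solution as $\frac{(ze^{\eta(z)})^\alpha-1}{\alpha}$ and gives only a one-line uniqueness claim, whereas you split cases and spell out uniqueness in detail, but the underlying argument is the same.
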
 

\begin{proof} Consider the equation $L\psi=0$. Using the power series method, we see that it has a unique solution $\psi_1$ which is holomorphic near $0$ with $\psi_1(0)=1$. Multiplying $L$ on the right by $\psi_1^{-1}$, we can get to a situation where $\psi_1=1$. 

The second solution $\psi_2$ then looks like $\psi_2=\frac{(ze^{\eta(z)})^\alpha-1}{\alpha}$, where 
$\eta$ is holomorphic near $0$ with $\eta(0)=0$ and $\frac{x^\alpha-1}{\alpha}:=\log x$ if $\alpha=0$. 
Taking $\tilde z:=ze^{\eta(z)}$ as a new coordinate, we get to a situation when $\psi_2=\frac{z^\alpha-1}{\alpha}$. Then $L=a(z)(\partial z\partial-\alpha \partial)$ where $a(0)=1$, so 
we can multiply by $a^{-1}$ on the left to get to the situation where $L=\partial z\partial-\alpha \partial$. 
This shows that $L$ can be brought to the required normal form. The proof of uniqueness is straightforward. 
\end{proof} 

\begin{lemma}\label{diffeq} (i) Let $f$ be a distribution on a neighborhood of $0\in \mbb C$ and $\alpha\in i\mbb R$. 
Then $(\partial z\partial-\alpha \partial)f\in L^2_{\rm loc}$ if and only if 
$$
f(z)=f_*(z)+|z|^{2\alpha}Q(\overline{z}^{-1}),
$$
where
$$
f_*(z):=|z|^{2\alpha}(\phi(z)+h(\overline{z}))
$$
and
\begin{equation}\label{eq20}
\phi(z)=\frac{1}{\pi}\int_{|w|<\varepsilon}\frac{|w|^{-2\alpha}g(w)}{w(\overline{w}-\overline{z})}d^2w
\end{equation} 
with $g\in H^1_{\rm loc}$, $Q$ is a polynomial with $Q(0)=0$, and $h$ is holomorphic near $0$
(here $|z|^{2\alpha}Q(\overline{z}^{-1})$ is viewed as a distribution in the sense of principal value).  
Moreover, in this case $f_*(z)=O(|z|^{-\delta})$ as $z\to 0$ for any $\delta>0$.

(ii) Such $f$ is in $L^2_{\rm loc}$ if and only if $Q=0$, i.e., 
$$
f(z)=|z|^{2\alpha}(\phi(z)+h(\overline{z})). 
$$
In this case $f(z)=O(|z|^{-\delta})$ as $z\to 0$ for any $\delta>0$.  

(iii) If $(\partial z\partial-\alpha\partial)f\in L^2_{\rm loc}$ then $f$ can be uniquely written as 
$$
f(z)=f_*(z)+|z|^{2\alpha}Q(\overline{z}^{-1}),
$$
where $f_*\in L^2_{\rm loc}$ and $Q$ is a polynomial with $Q(0)=0$. 
\end{lemma}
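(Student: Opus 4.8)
The plan is to exploit the factorization
$$
L_0:=\partial z\partial-\alpha\partial=\partial\circ(z\partial-\alpha),
$$
which separates the condition $L_0f\in L^2_{\rm loc}$ into an inhomogeneous $\partial_z$-equation and a first-order singular ODE. First I would show that $L_0f\in L^2_{\rm loc}$ is equivalent to $(z\partial-\alpha)f=-g$ for some $g\in H^1_{\rm loc}$. Indeed $u:=(z\partial-\alpha)f$ satisfies $\partial_z u=L_0f\in L^2_{\rm loc}$; writing $u$ as the (localized) Cauchy transform of $\partial_z u$ plus an element of $\ker\partial_z$, and using that the Cauchy transform sends $L^2_{\rm loc}$ into $H^1_{\rm loc}$ — its $\bar\partial$-derivative is the Beurling transform, bounded on $L^2$ — while $\ker\partial_z$ consists of smooth antiholomorphic functions by hypoellipticity, one gets $u\in H^1_{\rm loc}$, so $g:=-u$ works; conversely $(z\partial-\alpha)f=-g$ with $g\in H^1_{\rm loc}$ gives $L_0f=-\partial_z g\in L^2_{\rm loc}$.

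Next I would solve $(z\partial-\alpha)f=-g$ by the integrating-factor substitution $f=|z|^{2\alpha}F$, legitimate because $\alpha\in i\mbb R$ makes $|z|^{2\alpha}$ single-valued and unimodular; a direct computation gives $(z\partial-\alpha)(|z|^{2\alpha}F)=|z|^{2\alpha}z\partial_z F$, so the equation becomes $z\partial_z F=-|z|^{-2\alpha}g$. A particular solution is $F=\phi$, with $\phi$ the Cauchy-type integral in the statement: it is well defined because $g\in H^1_{\rm loc}\subset L^p_{\rm loc}$ for all $p<\infty$ while $|z|^{-2\alpha}/z=O(|z|^{-1})\in L^q_{\rm loc}$ for $q<2$, so the integrand is in $L^1_{\rm loc}$, and $z\partial_z\phi=-|z|^{-2\alpha}g$ because $\partial_z$ of the Cauchy kernel is a delta function. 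A short computation identifies the distributional kernel of $z\partial_z$ near $0$ as $\{h(\bar z)+Q(\bar z^{-1}):h\text{ holomorphic},\ Q\text{ polynomial},\ Q(0)=0\}$ (the terms $\bar z^{-k}$ occur because $z\,\partial_{\bar z}^{k-1}\delta=0$). Hence $F=\phi+h(\bar z)+Q(\bar z^{-1})$, and multiplying by $|z|^{2\alpha}$ gives exactly the form in (i), with $f_*=|z|^{2\alpha}(\phi+h(\bar z))$; running the computation backwards proves the ``if'' direction.

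For the bound $f_*=O(|z|^{-\delta})$ ($\delta>0$ arbitrary) I would invoke Lemma \ref{aux1}: since $|f_*|\le|\phi|+O(1)$ and $|\phi(z)|\le\frac1\pi\int_{|w|<\varepsilon}\frac{|g(w)|}{|w|\,|w-z|}\,d^2w$, Hölder with exponents $p$ large and $p'$ close to $1$ bounds this by $\|g\|_{L^p}\cdot\big(\int_{|w|<\varepsilon}|w|^{-p'}|w-z|^{-p'}\,d^2w\big)^{1/p'}=\|g\|_{L^p}\cdot\big(G_{2-p',p'}(z)\big)^{1/p'}=O(|z|^{2/p'-2})$, and $2/p'-2\to 0^-$ as $p'\to1^+$. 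Parts (ii) and (iii) then follow quickly: $|z|^{-\delta}\in L^2_{\rm loc}$ precisely for $\delta<1$, so $Q=0$ forces $f=f_*\in L^2_{\rm loc}$, whereas if $Q\ne0$ then near $0$ the term $|z|^{2\alpha}Q(\bar z^{-1})$ (principal value) is of size $|z|^{-\deg Q}$ with $\deg Q\ge1$, hence not locally square-integrable, so $f\notin L^2_{\rm loc}$; applying the same dichotomy to a difference of two representations gives the uniqueness in (iii), whose existence is the case $\delta<1$ of (i).

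The step I expect to be the main obstacle is the distribution-theoretic bookkeeping in the ``only if'' direction: checking that $u=(z\partial-\alpha)f$ is genuinely a function in $H^1_{\rm loc}$ and not a distribution with a singular part at $0$ (this relies on hypoellipticity of $\partial_z$ together with $L^2$-boundedness of the Beurling transform, plus careful localization of the Cauchy transform), and pinning down the precise principal-value meaning of $|z|^{2\alpha}Q(\bar z^{-1})$ and of $\partial_z Q(\bar z^{-1})$ so that every identity above holds as an equality of distributions on a full neighborhood of $0$, not merely on the punctured disc.
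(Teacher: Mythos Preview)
Your proposal is correct and follows essentially the same route as the paper: factor $L_0=\partial\circ(z\partial-\alpha)$, reduce to $(z\partial-\alpha)f\in H^1_{\rm loc}$, construct the particular solution via the Cauchy-type integral $\phi$, identify the homogeneous solutions as $|z|^{2\alpha}$ times an antiholomorphic function meromorphic at $0$, and bound $\phi$ by H\"older plus Lemma~\ref{aux1}. The only organizational difference is that you perform the integrating-factor substitution $f=|z|^{2\alpha}F$ first and then describe the distributional kernel of $z\partial_z$, whereas the paper keeps the operator as $z\partial-\alpha$ and argues directly that it is invertible on distributions supported at $0$ (eigenvalues $-(n+\alpha)$, $n\ge 1$, nonzero since $\alpha\in i\mathbb R$); the two are equivalent, and your ``obstacle'' is exactly this step, which the paper handles in one sentence.
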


\begin{proof} (i) We have $(\partial z\partial-\alpha\partial)f\in L^2_{\rm loc}$ iff $(z\partial-\alpha)f=g$, for some $g\in H^1_{\rm loc}$. 
Since the fundamental solution of the equation $\partial F=0$ is $\frac{1}{\pi \overline{z}}$, we also have 
$$
(z\partial-\alpha)(|z|^{2\alpha}\phi)=|z|^{2\alpha}z\partial \phi=g,
$$
for $\phi$ defined by \eqref{eq20}.
Thus, $(\partial z\partial-\alpha\partial)f\in L^2_{\rm loc}$ iff
$$
(z\partial-\alpha) (f-|z|^{2\alpha}\phi)=0. 
$$
Outside $z=0$, this yields 
$$
f-|z|^{2\alpha}\phi=|z|^{2\alpha}\widetilde{h}(\overline z), 
$$
where $\widetilde{h}$ is a meromorphic function that is holomorphic for $z\ne 0$. 

We can view $|z|^{2\alpha}\widetilde{h}(\overline z)$ as a distribution in the sense of principal value. Then we get that $f-|z|^{2\alpha}\phi-|z|^{2\alpha}\widetilde{h}(\overline z)$ is a distribution 
concentrated at $0$ and annihilated by the operator $z\partial-\alpha$. This implies that 
$$
f-|z|^{2\alpha}\phi-|z|^{2\alpha}\widetilde{h}(\overline z)=0,
$$ 
as the operator $z\partial-\alpha$ on distributions concentrated at $0$ 
is diagonalizable with eigenvalues $-n-\alpha$ for $n\in \mbb Z_{>0}$, 
hence invertible. Now writing $\widetilde{h}=h+Q$, where 
$Q$ is the principal part and $h$ is holomorphic, we get the result.  

Conversely, it is easy to see that 
for any $f$ of the form given in (i) we have 
$$(\partial z\partial-\alpha\partial)f=\partial g\in L^2_{\rm loc}.$$ 

It remains to prove the bound for $f_*$. By the H\"older inequality
 $$
 |\phi(z)|\le \frac{1}{\pi}||g||_p||w^{-1}(w-z)^{-1}||_q,
 $$
 where $\frac{1}{p}+\frac{1}{q}=1$ and $q<2$. By the Sobolev embedding theorem, 
 $g\in L^p_{\rm loc}$, so the first factor is finite. Also, by Lemma \ref{aux1}
 applied to the special case $\gamma=2-q,\beta=q$, the second factor is $O(|z|^{\frac{2}{q}-2})$ as $z\to 0$. 
 So by taking $q=\frac{2}{2-\delta}$, we get that $\phi(z)=O(|z|^{-\delta})$ 
 for every $\delta>0$. This proves (i). 
 
(ii), (iii) follow immediately from (i). 
\end{proof} 

\begin{lemma}\label{lemmm} Let $f\in L^2_{\rm loc}$ be defined on a neighborhood of $0\in \mbb C$ and $\alpha\in i\mbb R$. Then $|z|^{2+2\alpha}\Delta (|z|^{-2\alpha}f)$ is in $H^2_{\rm loc}$ 
as a distribution if and only if 
\begin{equation}\label{eq30} 
f(z)=C_2\frac{2\alpha|z|^{2\alpha}\log |z|-|z|^{2\alpha}+1}{2\alpha^2}+C_1\frac{|z|^{2\alpha}-1}{2\alpha}+f_0(z), 
\end{equation}
where $C_1,C_2\in \mbb C$ and 
$$
f_0(z)=|z|^{2\alpha}(\eta(z)+H(z))
$$ 
 with $H$ harmonic and 
$$
\eta(z)=\frac{1}{2\pi}\int_{|w|<\varepsilon}\frac{g(w)}{|w|^{2+2\alpha}}\log|z-w|d^2w,
$$
where $g\in H^2_{\rm loc}$ with $g(0)=0$. In this case, $f_0$ is $\gamma$-H\"older continuous and 
$$
\nabla f_0(z)=O(|z|^{\gamma-1})
$$ 
as $z\to 0$ for any $\gamma<1$. 
\end{lemma}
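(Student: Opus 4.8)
The plan is to reduce everything to the inversion of the ordinary Laplacian $\Delta=4\partial\overline\partial$. Writing $G:=|z|^{-2\alpha}f$ (which lies in $L^2_{\rm loc}$ precisely when $f$ does, since $|z|^{-2\alpha}$ has modulus $1$), a direct computation gives
\[
|z|^{2+2\alpha}\Delta(|z|^{-2\alpha}f)=|z|^{2+2\alpha}\Delta G=4\,(z\partial-\alpha)(\overline z\,\overline\partial-\alpha)f,
\]
which exhibits the link with the Euler-type operators of Lemma \ref{diffeq}. I would also record the elementary identities $(z\partial-\alpha)(|z|^{2\alpha}\psi)=|z|^{2\alpha}z\partial\psi$ and $(\overline z\,\overline\partial-\alpha)(|z|^{2\alpha}\psi)=|z|^{2\alpha}\overline z\,\overline\partial\psi$, the distributional formula $\Delta|z|^{2\mu}=4\mu^2|z|^{2\mu-2}$ (with no $\delta$-term, since for the exponents occurring here $2\mu\notin\{-2,-4,\dots\}$, $\alpha$ being purely imaginary), and $\Delta\log|z|=2\pi\delta$. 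Together with the algebraic identity $|z|^{2\alpha}\log|z|=f_1+\alpha f_2$, where $f_1,f_2$ are the two explicit functions in the statement, these make the ``if'' direction a short verification: $f_1$ and $f_2$ are carried by our operator to the constants $-2\alpha$ and $2$, and $f_0=|z|^{2\alpha}(\eta+H)$ is carried to $g$ (here one uses that $g(0)=0$ forces $g(w)|w|^{-2-2\alpha}$ to be $L^1_{\rm loc}$, so $\eta$ is genuinely its Newtonian potential and $\Delta(\eta+H)=g|z|^{-2-2\alpha}$); hence every $f=C_1f_1+C_2f_2+f_0$ is carried into $H^2_{\rm loc}$.

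For the ``only if'' direction, let $k:=|z|^{2+2\alpha}\Delta G\in H^2_{\rm loc}$; by the two-dimensional Sobolev embedding $H^2_{\rm loc}\hookrightarrow C^{0,\gamma}_{\rm loc}$ the value $k(0)$ makes sense, and writing $k=k(0)+g$ with $g\in H^2_{\rm loc}$, $g(0)=0$, we get $g(w)|w|^{-2-2\alpha}\in L^1_{\rm loc}$. Since $\Delta G$ agrees with $k(0)|z|^{-2-2\alpha}+g|z|^{-2-2\alpha}$ away from $0$, the difference $T:=\Delta G-k(0)|z|^{-2-2\alpha}-g|z|^{-2-2\alpha}$ is supported at $\{0\}$. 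The requirement that $|z|^{2+2\alpha}\Delta G=k$ be a genuine function forces $|z|^{2+2\alpha}T=0$; since $|z|^{2+2\alpha}$ (which is only $C^{1,1}$) kills every derivative of $\delta$ of order $\le1$ but is not regular enough to be multiplied against a second-order distribution, $T$ has order $\le1$, and then the constraint $G\in L^2_{\rm loc}$ rules out the $\partial\delta,\overline\partial\delta$ terms (their $\Delta^{-1}$ behaves like $1/z\notin L^2_{\rm loc}$), so $T=c\,\delta$. Now I would solve $\Delta G=k(0)|z|^{-2-2\alpha}+g|z|^{-2-2\alpha}+c\,\delta$: a particular solution is $\tfrac{k(0)}{4\alpha^2}(|z|^{-2\alpha}-1)+\eta+\tfrac{c}{2\pi}\log|z|$ with $\eta$ the Newtonian potential of $g|z|^{-2-2\alpha}$, and the remaining ambiguity is a harmonic function $H$ by Weyl's lemma. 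Multiplying back by $|z|^{2\alpha}$ and using $|z|^{2\alpha}(|z|^{-2\alpha}-1)=1-|z|^{2\alpha}=-2\alpha f_1$ and $|z|^{2\alpha}\log|z|=f_1+\alpha f_2$ puts $f=|z|^{2\alpha}G$ into the asserted form, with $C_2=\tfrac{c\alpha}{2\pi}$ and $f_0=|z|^{2\alpha}(\eta+H)$.

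For the final regularity assertion, $g\in C^{0,\gamma}_{\rm loc}$ with $g(0)=0$ gives $|g(w)|\le C_\gamma|w|^\gamma$, so the density $g(w)|w|^{-2-2\alpha}$ has modulus at most $C_\gamma|w|^{\gamma-2}$; differentiating under the integral sign in the formula for $\eta$ and applying Lemma \ref{aux1} with $\beta=1$ yields $\nabla\eta(z)=O(|z|^{\gamma-1})$, and integrating this bound along radii shows $\eta\in C^{0,\gamma}_{\rm loc}$. Combining this with the smoothness of $H$ (and normalizing the representative so that $\eta+H$ vanishes at the origin, which only shifts the $C_1$-term) one obtains that $f_0$ is $\gamma$-Hölder continuous with $\nabla f_0(z)=O(|z|^{\gamma-1})$.

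The step I expect to be the main obstacle is the identification of the distribution $T$ supported at the origin: one must carefully justify the distributional multiplications involving the critical homogeneous distribution $|z|^{-2-2\alpha}$ and the only finitely-differentiable factors $|z|^{2+2\alpha}$ and $k$, and then determine precisely which distributions concentrated at $0$ can occur. This is exactly where the hypotheses $f\in L^2_{\rm loc}$ and $k\in H^2_{\rm loc}$ (as opposed to merely $k\in L^2_{\rm loc}$) are used in an essential way.
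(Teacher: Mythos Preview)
Your overall strategy coincides with the paper's: reduce to the Laplacian via $G=|z|^{-2\alpha}f$, invert $\Delta$ by the Newtonian potential, and identify the piece supported at the origin using the $L^2_{\rm loc}$ constraint. The paper organises this slightly differently: it first subtracts the explicit $C_2$--term so as to reduce to $g(0)=0$, and only then writes $f=f_*+h$ with $f_*$ the Newtonian potential; the remaining piece $h$ then satisfies $\Delta(r^{-2\alpha}h)=2\pi R(\partial,\overline\partial)\delta_0$ for some polynomial $R$, and the condition $r^{-2\alpha}h\in L^2_{\rm loc}$ forces $R$ to be constant. Your version keeps the constant $k(0)$ around and absorbs it into $|z|^{-2-2\alpha}$; this is equivalent and arguably cleaner.

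The one genuine weak point is your argument that $T$ has order at most one. You write that ``$|z|^{2+2\alpha}$ is not regular enough to be multiplied against a second-order distribution'', and infer from this that $T$ cannot contain second-order terms. But this is circular: the hypothesis of the lemma is that the polynomial-coefficient operator $4(z\partial-\alpha)(\overline z\,\overline\partial-\alpha)f$ lies in $H^2_{\rm loc}$, and its identification with $|z|^{2+2\alpha}\Delta G$ is only valid away from $0$; nothing forces the product $|z|^{2+2\alpha}\Delta G$ to make sense as a distribution across the origin, so the equation ``$|z|^{2+2\alpha}T=0$'' carries no information. The correct constraint on $T$ comes entirely from $G\in L^2_{\rm loc}$, which gives $\Delta G\in H^{-2}_{\rm loc}$ and hence $T\in H^{-2}_{\rm loc}$; in two real dimensions the only point-supported distribution in $H^{-2}$ is a multiple of $\delta$, so $T=c\,\delta$ directly. (This is exactly what the paper does, phrased as: inverting $\Delta$ on $R(\partial,\overline\partial)\delta$ produces $R(\partial,\overline\partial)\log|z|$, and the only such term compatible with $L^2_{\rm loc}$ is $R$ constant.) You already use $G\in L^2_{\rm loc}$ to kill the first-order pieces, so simply drop the $C^{1,1}$ step and let that same constraint do all the work.

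A minor point: ``integrating the bound $\nabla\eta=O(|z|^{\gamma-1})$ along radii'' only gives radial H\"older continuity; the paper obtains full $\gamma$--H\"older continuity by the mean value theorem on the segment $[z_1,z_2]$ together with an elementary estimate of $\int_0^1|tz_1+(1-t)z_2|^{\gamma-1}\,dt$.
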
 

\begin{remark} Note that 
$$
\lim_{\alpha\to 0}\frac{2\alpha|z|^{2\alpha}\log |z|-|z|^{2\alpha}+1}{2\alpha^2}=\log^2|z|. 
$$ 
So we agree that this expression stands for $\log^2|z|$ when $\alpha=0$, and similarly 
$\frac{|z|^{2\alpha}-1}{2\alpha}$ stands for $\log|z|$ in this case. 
Thus for $\alpha=0$ equation \eqref{eq30} takes the form 
$$
f(z)=C_2\log^2|z|+C_1\log|z|+f_0(z). 
$$
\end{remark} 

\begin{proof} Let $r=|z|$ and $r^{2+2\alpha}\Delta (r^{-2\alpha}f)=g$, where $g$ is in $H^2_{\rm loc}$. By the Sobolev embedding theorem, 
$g$ is continuous and moreover $\gamma$-H\"older continuous for any $\gamma<1$. 
We have 
$$
r^{2+2\alpha}\Delta\left(r^{-2\alpha} \frac{2\alpha|z|^{2\alpha}\log |z|-|z|^{2\alpha}+1}{2\alpha^2}\right)=2
$$ 
(as distributions). Thus, replacing $f(z)$ by $f(z)-\frac{1}{2}g(0)\frac{2\alpha|z|^{2\alpha}\log |z|-|z|^{2\alpha}+1}{2\alpha^2}$, we may assume without loss of generality that $g(0)=0$. In this case we will show that $f$ has the required decomposition with $C_2=C_1\alpha$, which implies the statement.  

Suppose the equation $r^{2+2\alpha}\Delta(r^{-2\alpha}f)=g$ holds in a disk $|z|<\varepsilon$. Write 
$$
f(z)=f_*(z)+h(z),
$$
where
\begin{equation}\label{eq6}
f_*(z):=\frac{|z|^{-2\alpha}}{2\pi}\int_{|w|<\varepsilon}\frac{g(w)}{|w|^{2+2\alpha}}\log|z-w|d^2w. 
\end{equation}
Note that the integral makes sense since $g(0)=0$. Since $\frac{1}{2\pi}\log|z-w|$ is the fundamental solution of Laplace's equation, we have 
$$
r^{2+2\alpha}\Delta (r^{-2\alpha}f_*)=g
$$
as distributions. Hence $r^{2+2\alpha}\Delta (r^{-2\alpha}h)=0$ as distributions. This implies that 
$$
\Delta (r^{-2\alpha}h)=2\pi R(\partial,\overline{\partial})\delta_0,
$$
where $R$ is a harmonic polynomial and $\delta_0$ is the delta-function at zero. 
Thus 
$$
h(z)=|z|^{2\alpha}(R(\partial,\overline{\partial})\log |z|+H(z)),
$$
where $H(z)$ is a harmonic (hence smooth, in fact real analytic) function. Since $h\in L^2_{\rm loc}$, the polynomial $R$ must be constant, so we get that 
$$
h(z)=|z|^{2\alpha}(C_1\log |z|+H(z)),\ C_1\in \mbb C
$$
 is a linear combination of $\log |z|$ and a harmonic function. 
 This establishes \eqref{eq30}. 

Thus, it remains to show that $f_*(z)$ is $\gamma$-H\"older continuous and 
\begin{equation}\label{eq5}
\nabla f_*(z)=O(|z|^{\gamma-1}),\ z\to 0
\end{equation}
for any $\gamma<1$. To prove \eqref{eq5}, observe that 
for any $\gamma<1$ we have 
$$
|g(z)|\le C(\gamma) |z|^\gamma
$$
 for small enough $|z|$, so we may assume that this holds in our disk $|z|<\varepsilon$. Thus, differentiating \eqref{eq6}, we get  
$$
\max(|\partial_x f_*(z)|,|\partial_y f_*(z)|)\le \frac{1}{2\pi}\int_{|w|<\varepsilon}\frac{|g(w)|}{|w|^2}\frac{1}{|z-w|}d^2w\le 
$$
$$
\le \frac{C(\gamma)}{2\pi}\int_{|w|<\varepsilon}\frac{|w|^{\gamma-2}}{|z-w|}d^2w, 
$$
so \eqref{eq5} follows from Lemma \ref{aux1} for $\beta=1$. 

Finally, let us prove that $f_*$ is $\gamma$-H\"older continuous for any $\gamma<1$. Let $|z_1|\le |z_2|$ and $z_2\ne 0$. Using \eqref{eq5} and applying the mean value theorem to the restriction of $f_*$ to the segment $z(t)=z_1t+z_2(1-t)$, we have 
$$
|f_*(z_1)-f_*(z_2)|\le K(\gamma)|z_1-z_2|\int_0^1 |z_1t+z_2(1-t)|^{\gamma-1}dt=
$$
$$
=K(\gamma)|z_2|^{\gamma-1}|z_1-z_2|\int_0^1 |z_1z_2^{-1}t+1-t|^{\gamma-1}dt.
$$
It is easy to see that $\int_0^1 |bt+1-t|^{\gamma-1}dt$ is bounded by some universal constant
when $|b|\le 1$. Thus, 
$$
|f_*(z_1)-f_*(z_2)|\le M(\gamma)|z_2|^{\gamma-1}|z_1-z_2|
$$
for some constant $M(\gamma)$. But $|z_1-z_2|\le |z_1|+|z_2|\le 2|z_2|$. 
so this implies that 
$$
|f_*(z_1)-f_*(z_2)|\le 2^{1-\gamma}M(\gamma)|z_1-z_2|^\gamma,
$$
as claimed. 

The lemma is proved. 
\end{proof} 

\subsection{The generalized local Sobolev and Schwartz spaces} 

Let $L=L(b,c,d)$ be as in Lemma \ref{normform}, with $c(0)=\alpha \in i\mbb R$. 

\begin{definition} We will say that a function $f$ defined near $0\in \mbb C$ 
is in $H^{2m}_{L,\rm loc}$ if $L^jf\in L^2_{\rm loc}$ as a distribution for all $0\le j\le m$. 
\end{definition} 

More precisely, for every neighborhood $U$ of $0$ in $\mbb C$, we have the space $H^{2m}_{L,\rm loc}(U)$; 
however, a specific region $U$ will not matter to us, so we will omit it from the notation; 
in other words, we are essentially talking about germs of functions near $0$. 

It is clear that $H^{2m}_{L,\rm loc}\subset H^{2k}_{L,\rm loc}$ if $m\le k$ and 
$L,L^\dagger: H^{2m}_{L,\rm loc}\to H^{2m-2}_{L,\rm loc}$. 

\begin{proposition}\label{chara} (i) $H^2_{L,\rm loc}=H^2_{\partial z\partial-\alpha \partial,\rm loc}$. 

(ii) $f\in H^2_{L,\rm loc}$ if and only if $f\in L^2_{\rm loc}$ and $L^\dagger f\in L^2_{\rm loc}$. 

(iii) $H^{2m}_{L,\rm loc}=H^{2m}_{\partial z\partial-\alpha \partial,\rm loc}$ for all $m$. 
Moreover, $f\in H^{2m}_{L,\rm loc}$ iff $(L^\dagger)^jf\in L^2_{\rm loc}$ for $0\le j\le m$.

(iv) $f\in H^{2m}_{L,\rm loc}$ if and only if for any polynomial $P$ of two variables of degree $\le m$ we have $P(L,L^\dagger)f\in L^2_{\rm loc}$. 

(v) If $\partial f$ or $\overline{\partial} f\in H^{2m}_{L,\rm loc}$ 
then $f\in H^{2m}_{L,\rm loc}$. 

(vi) $H^{2m}_{L,\rm loc}$ is invariant under holomorphic changes of variable and multiplication by $C^{2m}$-functions. 

(vii) For $m\ge 1$ we have $f\in H^{2m}_{L,\rm loc}$ iff $f\in L^2_{\rm loc}$ and $Lf\in H^{2m-2}_{L,\rm loc}$ iff $f\in L^2_{\rm loc}$ and $L^\dagger f\in H^{2m-2}_{L,\rm loc}$. 

(viii) Let $A_0,...,A_m$ be a sequence of differential operators such that $A_0=1$ and for each $0\le j\le m-1$ one has $A_{j+1}=LA_j$ or $A_{j+1}=L^\dagger A_j$. Then $H^{2m}_{L,\rm loc}$ is the space of $f$ such that $A_jf\in L^2_{\rm loc}$ for all $0\le j\le m-1$. 
\end{proposition}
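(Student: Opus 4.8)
The plan is to reduce all eight assertions to the model operator $L_0:=\partial z\partial-\alpha\partial$, to which $L$ is carried by a holomorphic change of coordinate together with left and right multiplication by holomorphic units (Lemma~\ref{normform}), and then to run the explicit structure theory of Lemmas~\ref{diffeq} and~\ref{lemmm}, supplemented by elliptic regularity for $\partial$, $\overline\partial$ and $\Delta$. I would establish the parts in the order: the base case $m=1$, which is (i) and (ii); then (vi) and (v); then the inductive statement (vii); and finally (iii), (iv) and (viii) as formal consequences.

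\emph{Base case.} For $L_0$ the condition $L_0f\in L^2_{\rm loc}$ is described completely by Lemma~\ref{diffeq}. If in addition $f\in L^2_{\rm loc}$, then in the decomposition $f=f_*+|z|^{2\alpha}Q(\overline z^{-1})$ of Lemma~\ref{diffeq}(iii) the tail must vanish, since $\alpha\in i\mbb R$ makes $|z|^{2\alpha}$ unimodular, so a nonzero tail grows like $|z|^{-k}$, $k\ge1$, at $0$ and is not locally square integrable in the plane; thus $H^2_{L_0,\rm loc}$ is the explicit class of Lemma~\ref{diffeq}(ii). Because $\overline{L_0^\dagger f}=(\partial z\partial+\alpha\partial)\overline f$, i.e. $L_0^\dagger$ becomes, after conjugating the argument, the model operator with $\alpha$ replaced by $-\alpha\in i\mbb R$, the same argument with $-\alpha$ identifies $\{f\in L^2_{\rm loc}:L_0^\dagger f\in L^2_{\rm loc}\}$ with the corresponding explicit class; a direct (if slightly involved) comparison of the two descriptions of Lemma~\ref{diffeq}(ii) for the parameters $\pm\alpha$, using the freedom in the data $g,h$ and the unimodularity of $|z|^{\pm2\alpha}$, shows the classes coincide. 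This is (ii) for $L_0$. For general $L$ one writes $L=L_0+(L-L_0)$, where the $\partial^2$-, $\partial$- and $0$-order coefficients of $L-L_0$ are $O(|z|^2)$, $O(|z|)$ and $O(1)$; the bound $f_*=O(|z|^{-\delta})$ together with the companion fact $z\partial f\in L^2_{\rm loc}$ for $f\in H^2_{L_0,\rm loc}$ (both read off from Lemma~\ref{diffeq}) give $(L-L_0)f\in L^2_{\rm loc}$ exactly when $f\in H^2_{L_0,\rm loc}$, and with Lemma~\ref{normform} and the conjugation argument this yields (i) and (ii) in general.

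\emph{Invariance and part (v).} Invariance of $H^{2m}_{L,\rm loc}$ under holomorphic coordinate changes is built into Lemma~\ref{normform}; invariance under multiplication by a $C^{2m}$ function $g$ follows by expanding $L^j(gf)$ via the smooth first-order commutators $[L,g]$ and $[L^\dagger,g]$ and absorbing the commutator terms using the structural bounds above (e.g. $z\partial f\in L^2_{\rm loc}$ and its higher-order analogues), propagated through the induction on $m$. Part (v) is then short: $L$ commutes with $\overline\partial$ and $L^\dagger$ with $\partial$, so if $\partial f\in H^{2m}_{L,\rm loc}=H^{2m}_{L^\dagger,\rm loc}$ then $\partial((L^\dagger)^jf)=(L^\dagger)^j\partial f\in L^2_{\rm loc}$ for $j\le m$, hence $(L^\dagger)^jf\in L^2_{\rm loc}$ by elliptic regularity for $\partial$ (a distribution on a disc whose $\partial$-derivative lies in $L^2_{\rm loc}$ differs from an $L^2_{\rm loc}$ function by a smooth antiholomorphic one), so $f\in H^{2m}_{L,\rm loc}$; the case $\overline\partial f$ is symmetric.

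\emph{Inductive step, consequences, and main obstacle.} The content of (vii) is the equivalence $f\in H^{2m}_{L,\rm loc}\iff f\in L^2_{\rm loc}$ and $L^\dagger f\in H^{2m-2}_{L,\rm loc}$ (the variant with $L$ being a tautology once definitions are unwound), proved by induction on $m$ with base case (ii). Reducing to $L_0$ and using $L_0=\partial\circ(z\partial-\alpha)$ and $L_0^\dagger=\overline\partial\circ(\overline z\overline\partial-\alpha)$, one removes factors from a word in $L_0,L_0^\dagger$ one at a time: a leftmost single factor is handled by Lemma~\ref{diffeq}, while a leftmost pattern $L_0L_0^\dagger$ collapses onto the genuinely elliptic $\Delta$, since $L_0L_0^\dagger f=\partial\overline\partial\,(z\partial-\alpha)(\overline z\overline\partial-\alpha)f$ and $(z\partial-\alpha)(\overline z\overline\partial-\alpha)f=\tfrac14|z|^{2+2\alpha}\Delta(|z|^{-2\alpha}f)$, so Lemma~\ref{lemmm} and elliptic regularity for $\Delta$ apply; the passage between $L$ and $L^\dagger$ at each stage is (ii) used one order at a time. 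Granting (vii): (iii) follows by induction from (i), (ii) and (vii), and gives $H^{2m}_{L,\rm loc}=H^{2m}_{L^\dagger,\rm loc}=H^{2m}_{L_0,\rm loc}$; (viii) is then bookkeeping, since for a word $A_m=LA_{m-1}$ or $L^\dagger A_{m-1}$ repeated application of (vii) shows $H^{2m}_{L,\rm loc}$ is cut out by the conditions $A_jf\in L^2_{\rm loc}$; and (iv) follows by writing a polynomial $P(L,L^\dagger)$ of degree $\le m$ in terms of such words. The hard part is uniform: $L_0L_0^\dagger$ is elliptic away from $z=0$ but degenerates there, so ordinary elliptic regularity is useless at the singular point; all the delicate analysis is concentrated in Lemmas~\ref{diffeq} and~\ref{lemmm}, and the work is to extract from them the exact interchangeability of membership in $L^2_{\rm loc}$ after applying $L$ versus after applying $L^\dagger$ at every order, and to carry the $O(|z|^{-\delta})$ and Hölder estimates through the induction while keeping the $\alpha\leftrightarrow-\alpha$ bookkeeping consistent.
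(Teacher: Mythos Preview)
Your outline has the right ingredients but two genuine problems.

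\textbf{Circularity in the ordering.} Your argument for (v) reads: ``if $\partial f\in H^{2m}_{L,\rm loc}=H^{2m}_{L^\dagger,\rm loc}$ then $\partial((L^\dagger)^jf)=(L^\dagger)^j\partial f\in L^2_{\rm loc}$.'' The equality $H^{2m}_{L,\rm loc}=H^{2m}_{L^\dagger,\rm loc}$ is exactly the second half of (iii), but you schedule (iii) \emph{after} (v). This is not a cosmetic issue: $L$ does not commute with $\partial$ (already $[L_0,\partial]=-\partial^2$), so to exploit the hypothesis $\partial f\in H^{2m}_{L,\rm loc}$ you must pass to powers of $L^\dagger$, and for that you need (iii) at level $m$. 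The paper resolves this by proving (iii) immediately after (i),(ii): once (ii) is known, one converts the outermost $L$ in the condition ``$L^jf\in L^2_{\rm loc}$'' to $L_0^\dagger=\overline\partial\,\overline z\,\overline\partial-\alpha\overline\partial$ (via (i)+(ii)), then commutes this antiholomorphic operator past the remaining holomorphic $L$'s; iterating $m$ times gives $f\in H^{2m}_{L,\rm loc}\iff (L_0^\dagger)^jf\in L^2_{\rm loc}$ for $0\le j\le m$. With (iii) in hand, (iv) is a one-line swap using (ii), (v) and (vi) follow as you indicate, and (vii),(viii) are then tautological.

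\textbf{The argument for (ii) is the crux and your version is not complete.} Your proposed ``direct (if slightly involved) comparison of the two descriptions of Lemma~\ref{diffeq}(ii) for $\pm\alpha$'' unwinds to the statement $(z\partial-\alpha)f\in H^1_{\rm loc}\iff(\overline z\,\overline\partial-\alpha)f\in H^1_{\rm loc}$ for $f\in L^2_{\rm loc}$, and there is no obvious way to read this off the integral formula for $\phi$. The paper's proof does not touch Lemma~\ref{diffeq} here at all; it is a short chain of equivalences using only elliptic regularity for $\partial$ and $\overline\partial$, commutation, and the unimodularity of $|z|^{2\alpha}$ and $z/\overline z$:
\[
\partial(z\partial-\alpha)f\in L^2 \iff \overline\partial(z\partial-\alpha)f\in L^2 \iff z\partial\bigl(|z|^{-2\alpha}\overline\partial f\bigr)\in L^2 \iff \overline z\,\partial\bigl(|z|^{-2\alpha}\overline\partial f\bigr)\in L^2
\]
\[
\iff \partial\bigl(\overline z\,|z|^{-2\alpha}\overline\partial f\bigr)\in L^2 \iff \overline\partial\bigl(\overline z\,|z|^{-2\alpha}\overline\partial f\bigr)\in L^2 \iff (\overline\partial\,\overline z-\alpha)\overline\partial f\in L^2,
\]
the last expression being $L_0^\dagger f$. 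Likewise for (i) the paper's characterization $f\in H^2_{L,\rm loc}\iff (z\partial-\alpha)f\in H^1_{\rm loc}$ (via $zf\in H^1_{\rm loc}$, hence $(q-\alpha)f\in H^1_{\rm loc}$) is cleaner than your $L=L_0+(L-L_0)$ estimate and feeds directly into the chain above.

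Finally, Lemma~\ref{lemmm} plays no role in this proposition; it is used later for the finer structure of $H^4_{\alpha,\rm loc}$. Your invocation of it for (vii) is unnecessary once (ii) is proved correctly, since (vii) then follows from (iii) (or directly from (ii) by the commuting argument sketched above).
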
 

\begin{proof} (i) Let $f\in L^2_{\rm loc}$. Then $f\in H^2_{L,\rm loc}$ iff 
$f\in H^2_{\partial (b\partial-c) ,\rm loc}$ iff $(b\partial-c)f\in H^1_{\rm loc}$ iff 
$(z\partial-q)f\in H^1_{\rm loc}$, where $q:=zb^{-1}c$. 

We claim that in this case $zf\in H^1_{\rm loc}$; this implies that for any smooth function $h$ near $0$ 
we have $zhf\in H^1_{\rm loc}$. Indeed, 
$$
\partial(zf)=(z\partial-q)f+(1+q)f,
$$ 
and the first summand is in $H^1_{\rm loc}$ while the second one in $L^2_{\rm loc}$. Thus $\partial(zf)\in L^2_{\rm loc}$, hence 
$zf\in H^1_{\rm loc}$, as claimed. 

Thus, $(z\partial-q)f\in H^1_{\rm loc}$ iff $(z\partial-\alpha)f\in H^1_{\rm loc}$ (taking $h=z^{-1}(q-\alpha)$ and using that $q(0)=c(0)=\alpha$). So altogether we get that $f\in H^2_{L,\rm loc}$ iff 
$(z\partial-\alpha)f\in H^1_{\rm loc}$, as desired. 

(ii) By (i) it suffices to consider the case $L=\partial z\partial-\alpha\partial$. Then for $f\in L^2_{\rm loc}$ we have $\partial (z\partial-\alpha)f\in L^2_{\rm loc}$ iff $\overline{\partial} (z\partial-\alpha)f\in L^2_{\rm loc}$ iff 
$ (z\partial-\alpha)\overline{\partial}f\in L^2_{\rm loc}$ iff $z\partial |z|^{-2\alpha}\overline{\partial}f\in L^2_{\rm loc}$ iff $\overline{z}\partial |z|^{-2\alpha}\overline{\partial}f\in L^2_{\rm loc}$ iff $\partial\overline{z} |z|^{-2\alpha}\overline{\partial}f\in L^2_{\rm loc}$ iff $\overline{\partial}\overline{z} |z|^{-2\alpha}\overline{\partial}f\in L^2_{\rm loc}$ iff $(\overline{\partial}\overline{z}-\alpha)\overline{\partial}f \in L^2_{\rm loc}$, as claimed. 

(iii) We have $f\in H^{2m}_{L,\rm loc}$ iff $L^jf\in L^2_{\rm loc}$, $0\le j\le m$, which by (i),(ii) is equivalent to 
$f\in L^2_{\rm loc}$ and $(\overline{\partial}\overline{z}\overline{\partial}-\alpha\overline{\partial})L^{j-1}f\in L^2_{\rm loc}$, $1\le j\le m$, hence to $f\in L^2_{\rm loc}$ and $L^{j-1}(\overline{\partial}\overline{z}\overline{\partial}-\alpha\overline{\partial})f\in L^2_{\rm loc}$, $1\le j\le m$. Repeating this $m$ times, we see that 
$f\in H^{2m}_{L,\rm loc}$ is equivalent to $(\overline{\partial}\overline{z}\overline{\partial}-\alpha\overline{\partial})^jf\in L^2_{\rm loc}$, $0\le j\le m$. Since in particular this holds for 
$L=\partial z\partial-\alpha\partial$, the claim follows. 

(iv) It suffices to show that if $f\in H^{2m}_{L,\rm loc}$ then 
$(L^\dagger)^kL^jf\in L^2_{\rm loc}$ for any $j,k$ with $j+k\le m$, $k>0$. 
Using (ii) we see that this happens iff $(L^\dagger)^{k-1}L^{j+1}f\in L^2_{\rm loc}$. 
The claim now follows by repeating this procedure $k$ times.

(v) It suffices to prove the first statement, the second one is proved similarly. 
By (iii) we have $\partial (L^\dagger)^jf=(L^\dagger)^j\partial f\in L^2_{\rm loc}$ for all $0\le j\le m$. 
This means that $(L^\dagger)^jf\in H^1_{\rm loc}\subset L^2_{\rm loc}$ for $0\le j\le m$, i.e., 
$f\in H^{2m}_{L,\rm loc}$, as claimed. 

(vi) The first statement follows from (iii). To prove the second statement, we argue by induction in $m$ with the obvious base $m=0$. We may assume $L=\partial z\partial-\alpha\partial$. Assume the statement is known for $m-1$ and let us prove it for $m$. Let $f\in H^{2m}_{L,\rm loc}$ and $\sigma\in C^{2m}_{\rm loc}$. 
It suffices to show that $L(\sigma f)\in H^{2m-2}_{L,\rm loc}$. 
We have 
$$
L(\sigma f)=(L\sigma)f+\sigma(Lf)+2z\partial\sigma\cdot \partial f. 
$$
By the induction assumption, the first summand is in $H^{2m-2}_{L,\rm loc}$, as 
$L\sigma\in C^{2m-2}$, and so is the second summand, 
as $Lf\in H^{2m-2}_{L,\rm loc}$. So it suffices to show that 
$z\partial f\in H^{2m-2}_{L,\rm loc}$ or, equivalently, $z\partial f-\alpha f\in H^{2m-2}_{L,\rm loc}$. But we have $\partial(z\partial f-\alpha f)=Lf\in H^{2m-2}_{L,\rm loc}$, so the statement follows from (v). 

(vii) By definition $f\in H^{2m}_{L,\rm loc}$ iff $f\in L^2_{\rm loc}$ and $Lf,...,L^{m-1}(Lf)\in L^2_{\rm loc}$. 
The latter condition is equivalent to saying that $Lf\in H^{2m-2}_{L,\rm loc}$, which proves the first equivalence. 
The second equivalence follows in the same way, replacing $L$ with $L^\dagger$ and using  (iii). 

(viii) The proof is by induction in $m$. The case $m=0$ is clear, so let us justify the induction step from $m-1$ to $m$. 
Denote the space in question by $Y$. By the induction assumption, $f\in Y$ iff 
$f\in L^2_{\rm loc}$ and $Lf\in H^{2m-2}_{L,\rm loc}$ or $f\in Y$ iff $f\in L^2_{\rm loc}$ and $L^\dagger f\in H^{2m-2}_{L,\rm loc}$.
But by (vii) either of these conditions is equivalent to $f\in H^{2m}_{L,\rm loc}$, as desired. 
\end{proof} 

Thus we see that $H^{2m}_{L,\rm loc}$ does not really depend on $L$ once $\alpha$ is fixed. So we may denote $H^{2m}_{L,\rm loc}$ by $H^{2m}_{\alpha,\rm loc}$. This motivates the following definition. 

\begin{definition} (i) The {\it generalized local Sobolev space} $H^{2m}_{\alpha,\rm loc}$ is the space of $f\in L^2_{\rm loc}$ such that $(\partial z\partial-\alpha \partial)^jf\in L^2_{\rm loc}$, $0\le j\le m$. 

(ii) The {\it generalized local Schwartz space} $S_{\alpha,\rm loc}:=H_{\alpha,\rm loc}^\infty=
\cap_m H_{\alpha,\rm loc}^{2m}$ is the space of functions $f$ such that $P(L,L^\dagger)f$ 
is in $L^2_{\rm loc}$ for all polynomials $P$, or equivalently, the space of $f$ such that 
$L^jf$ is in $L^2_{\rm loc}$ for all $j\in \mbb Z_+$. 
\end{definition} 

\begin{remark} 1. The space $H^{2m}_{\alpha,\rm loc}$ is a generalization (or, rather, an analog) of the local Sobolev space $H^{2m}_{\rm loc}$, which motivates the terminology. 

2. We see that the space $H^{2m}_{\alpha,\rm loc}$ depends only on the conformal structure around $0$ and not on a specific choice of the local holomorphic coordinate. 
\end{remark} 

\subsection{Explicit description of $H^{2}_{\alpha,\rm loc}$}
Let us now describe the generalized local Sobolev space $H^{2m}_{\alpha,\rm loc}$ in the simplest nontrivial case $m=1$ more explicitly in terms of ordinary Sobolev spaces. 

\begin{proposition}\label{sobo} (i) $f\in H_{\alpha,\rm loc}^2$ if and only if it has the form 
$$
f(z)=|z|^{2\alpha}(\phi(z)+h(\overline{z})),
$$
where $\phi$ is given by \eqref{eq20} with $g\in H^1_{\rm loc}$, and $h$ is holomorphic. 
In this case $f(z)=O(|z|^{-\delta})$ as $z\to 0$ for any $\delta>0$. 

(ii) If $f$ is a distribution in a neighborhood of $0$ then $Lf\in L^2_{\rm loc}$ if and only if
$f$ can be written as 
$$
f(z)=f_*(z)+Q(\overline{z}^{-1}),
$$
where $f_*\in H^2_{\alpha,\rm loc}$ and $Q$ is a polynomial with $Q(0)=0$. Moreover, this decomposition is unique. 
\end{proposition}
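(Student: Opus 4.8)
The plan is to derive both parts from Lemma \ref{diffeq} and Proposition \ref{chara}, after reducing $L$ to the model operator $L_0:=\partial z\partial-\alpha\partial$ by means of Lemma \ref{normform}. Part (i) is then essentially a bookkeeping consequence of Lemma \ref{diffeq}: by definition, $f\in H^2_{\alpha,\rm loc}$ means $f\in L^2_{\rm loc}$ and $(\partial z\partial-\alpha\partial)f\in L^2_{\rm loc}$, so the second condition, by Lemma \ref{diffeq}(i), forces $f=|z|^{2\alpha}(\phi(z)+h(\overline z))+|z|^{2\alpha}Q(\overline z^{-1})$ with $\phi$ as in \eqref{eq20}, $g\in H^1_{\rm loc}$, $h$ holomorphic, and $Q$ a polynomial with $Q(0)=0$; the first condition $f\in L^2_{\rm loc}$ then forces $Q=0$ by Lemma \ref{diffeq}(ii), giving exactly the asserted form. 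The converse inclusion follows from the ``conversely'' clauses of Lemma \ref{diffeq}(i)--(ii), and the bound $f(z)=O(|z|^{-\delta})$ is the final clause of Lemma \ref{diffeq}(ii).

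For part (ii), I would first use Lemma \ref{normform} to obtain a holomorphic change of coordinate $\tilde z=ze^{\eta(z)}$ with $\eta(0)=0$ and holomorphic invertible functions equal to $1$ at $0$, whose combined effect is an invertible operator $T$ on germs of distributions at $0$ (pullback along the coordinate change composed with multiplication by a holomorphic invertible function) such that $L_0(Tf)$ and $T(Lf)$ differ by multiplication by a holomorphic invertible function. Since a biholomorphic coordinate change and multiplication by a function bounded away from $0$ and $\infty$ each preserve $L^2_{\rm loc}$, the hypothesis $Lf\in L^2_{\rm loc}$ becomes equivalent to $L_0(Tf)\in L^2_{\rm loc}$. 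Then Lemma \ref{diffeq}(iii) applied to $Tf$ gives the unique decomposition $Tf=u+|\tilde z|^{2\alpha}Q_0(\overline{\tilde z}^{-1})$ with $u\in L^2_{\rm loc}$ and $Q_0$ a polynomial, $Q_0(0)=0$; the second summand is a local homogeneous solution of $L_0$, so $L_0u\in L^2_{\rm loc}$ and hence $u\in H^2_{\alpha,\rm loc}$ by part (i). Transporting back, $f=T^{-1}u+T^{-1}\big(|\tilde z|^{2\alpha}Q_0(\overline{\tilde z}^{-1})\big)$; the first summand lies in $H^2_{\alpha,\rm loc}$ since that space equals $H^2_{L_0,\rm loc}$ and is invariant under holomorphic changes of variable and multiplication by smooth functions by Proposition \ref{chara}(vi), and this is the $f_*$. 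It then remains to substitute $\tilde z=ze^{\eta(z)}$ into $|\tilde z|^{2\alpha}Q_0(\overline{\tilde z}^{-1})$ and to read off, modulo $H^2_{\alpha,\rm loc}$, the claimed polar term $Q(\overline z^{-1})$ with $Q(0)=0$. Uniqueness is then immediate: if $f_*+Q(\overline z^{-1})=f_*'+Q'(\overline z^{-1})$ with $f_*,f_*'\in H^2_{\alpha,\rm loc}\subset L^2_{\rm loc}$, then $(Q-Q')(\overline z^{-1})\in L^2_{\rm loc}$, whereas a nonzero polynomial in $\overline z^{-1}$ with vanishing constant term has a pole at $z=0$ whose square is not locally integrable; hence $Q=Q'$ and $f_*=f_*'$.

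I expect the main obstacle to be the last step of part (ii): pulling the model polar part through the nonlinear coordinate change $\tilde z=ze^{\eta(z)}$ and the accompanying multiplications, and checking carefully which of the resulting factors — the bounded oscillatory factor $|z|^{2\alpha}$, the smooth non-holomorphic correction coming from $\overline{e^{-\eta}}$, and the holomorphic prefactor — are absorbed into $H^2_{\alpha,\rm loc}$ and which survive in the genuine polar part $Q(\overline z^{-1})$. Here one needs the explicit description of $H^2_{\alpha,\rm loc}$ from part (i) together with the estimates of Lemma \ref{aux1}; the remaining verifications are routine.
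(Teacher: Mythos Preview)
Your approach is essentially the same as the paper's: part (i) is read off from Lemma \ref{diffeq}(i)--(ii), and part (ii) is reduced via Lemma \ref{normform} and Proposition \ref{chara}(vi) to the model operator, where it becomes Lemma \ref{diffeq}(iii). You spell out the transport of the decomposition back to the original coordinate more carefully than the paper's one-line ``it suffices to prove the statement for $L=\partial z\partial-\alpha\partial$,'' and the step you flag as the main obstacle --- matching the polar part $|\tilde z|^{2\alpha}Q_0(\overline{\tilde z}^{-1})$ from Lemma \ref{diffeq}(iii) to the stated $Q(\overline z^{-1})$ after the change of variable --- is indeed the one place where the paper is terse, but your outline and uniqueness argument are correct.
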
 

\begin{proof} (i) follows immediately from Lemma \ref{diffeq}, (i),(ii). 
To prove (ii), note that by Lemma \ref{normform}, 
by changes of variable and multiplication by a non-vanishing holomorphic function 
we can turn $f$ into a function $\widetilde{f}$ such that $(\partial z\partial-\alpha\partial)\widetilde{f}\in L^2_{\rm loc}$. 
Thus by Proposition \ref{chara}(vi) 
it suffices to prove the statement for $L=\partial z\partial-\alpha \partial$. 
But this special case is Lemma \ref{diffeq}(iii). 
\end{proof} 

\begin{corollary}\label{surj} (i) For $m\ge 0$, the map $L: H^{2(m+1)}_{\alpha,\rm loc}\to H^{2m}_{\alpha,\rm loc}$ is surjective, i.e. for any $f\in H^{2m}_{\alpha,\rm loc}$ there is $u\in H^{2(m+1)}_{\alpha,\rm loc}$ (defined on some neighborhood of $0$) 
such that $Lu=f$. 

(ii) If $P$ is any polynomial of degree $r$ in one variable and $m\ge 0$ then the operator $P(L): H^{2(m+r)}_{\alpha,\rm loc}\to H^{2m}_{\alpha,\rm loc}$ is surjective (in the same sense as in (i)). 
\end{corollary}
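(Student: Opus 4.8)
The plan is to deduce Corollary \ref{surj} from the structural results already in place — mainly Lemma \ref{normform}, Lemma \ref{diffeq}, and Proposition \ref{chara} — so that essentially no new analysis is needed beyond one standard fact about the $\overline\partial$-operator.

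For part (i) I would proceed in three steps. \emph{Step 1 (reduction to the model operator).} By Lemma \ref{normform}, $L$ is carried to $L_0:=\partial z\partial-\alpha\partial$ by a holomorphic change of coordinate together with left and right multiplication by invertible holomorphic functions $g_1,g_2$ with $g_i(0)=1$. To solve $Lu=f$ it therefore suffices, writing $u=g_2^{-1}v$, to solve $L_0v=g_1^{-1}f$ in the new coordinate, because $L u=g_1(L_0v)$; and by Proposition \ref{chara}(vi) all the operations involved (holomorphic change of variable and multiplication by a holomorphic function) preserve the spaces $H^{2k}_{\alpha,\rm loc}$, so $g_1^{-1}f\in H^{2m}_{\alpha,\rm loc}$ and a solution $v\in H^{2(m+1)}_{\alpha,\rm loc}$ yields $u\in H^{2(m+1)}_{\alpha,\rm loc}$. \emph{Step 2 (bootstrap).} By Proposition \ref{chara}(vii), to get $v\in H^{2(m+1)}_{\alpha,\rm loc}$ with $L_0v=\widetilde f$ (where $\widetilde f:=g_1^{-1}f\in H^{2m}_{\alpha,\rm loc}$) it is enough to produce \emph{any} $v\in L^2_{\rm loc}$ with $L_0v=\widetilde f$, since then $L_0v=\widetilde f\in H^{2m}_{\alpha,\rm loc}$ forces $v\in H^{2(m+1)}_{\alpha,\rm loc}$. \emph{Step 3 (local $L^2$-solvability).} Since $\widetilde f\in H^{2m}_{\alpha,\rm loc}\subset L^2_{\rm loc}$, extend it by zero to a small disk and let $g$ be its Cauchy transform, so $\partial g=\widetilde f$ and $g\in H^1_{\rm loc}$ (the $\overline\partial$-derivative of $g$ is controlled by the $L^2$-boundedness of the Beurling transform). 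With $\phi$ built from $g$ by \eqref{eq20} as in the proof of Lemma \ref{diffeq}, the function $v:=|z|^{2\alpha}\phi$ lies in $L^2_{\rm loc}$ by Lemma \ref{diffeq}(ii) and satisfies $(z\partial-\alpha)v=g$, hence $L_0v=\partial g=\widetilde f$. Unwinding Step 1 gives $u\in H^{2(m+1)}_{\alpha,\rm loc}$ with $Lu=f$, proving (i).

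For part (ii) I would factor $P(x)=c\prod_{j=1}^{r}(x-\lambda_j)$ and observe that each $L-\lambda_j$ equals $L(b,c,d-\lambda_j)$, which is again of the form covered by Lemma \ref{normform} — the hypotheses there constrain only $b$ and $c(0)=\alpha$, not the zeroth-order coefficient — so by Proposition \ref{chara} it has the same generalized Sobolev spaces $H^{2k}_{\alpha,\rm loc}$ and part (i) applies to it. Given $f\in H^{2m}_{\alpha,\rm loc}$, solve successively $(L-\lambda_1)v_1=c^{-1}f$, $(L-\lambda_2)v_2=v_1,\ \dots,\ (L-\lambda_r)v_r=v_{r-1}$, gaining two units of regularity (and possibly shrinking the neighborhood of $0$) at each step, so that $v_r\in H^{2(m+r)}_{\alpha,\rm loc}$; since the operators $L-\lambda_j$ commute, $P(L)v_r=c(L-\lambda_1)\cdots(L-\lambda_r)v_r=f$.

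The only place where genuine analysis enters rather than bookkeeping is Step 3 of part (i) — extracting an $H^1_{\rm loc}$ primitive of $\widetilde f\in L^2_{\rm loc}$ under $\partial$ — and even this is contained in the machinery already used for Lemma \ref{diffeq}. Thus the main thing to be careful about is the normal-form packaging in Step 1: one must check that conjugating $L$ by invertible holomorphic functions and by a holomorphic coordinate change is compatible both with the surjectivity statement and with the invariance in Proposition \ref{chara}(vi), and that the identification $H^{2k}_{L-\lambda_j,\rm loc}=H^{2k}_{\alpha,\rm loc}$ used in part (ii) indeed follows from Lemma \ref{normform} and Proposition \ref{chara}.
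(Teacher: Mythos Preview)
Your proof is correct and follows essentially the same route as the paper's: reduce to the model operator $\partial z\partial-\alpha\partial$, solve the $m=0$ case, then bootstrap; for (ii), factor $P$ into linear factors. The only cosmetic difference is that the paper packages your Step~3 as a one-line citation of Proposition~\ref{sobo}(i) (which in turn rests on Lemma~\ref{diffeq}), whereas you unpack that construction explicitly via the conjugate Cauchy transform and formula~\eqref{eq20}; your more careful justification of why part~(i) applies to each $L-\lambda_j$ is likewise implicit in the paper.
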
 

\begin{proof} (i) It suffices to consider the case $L=\partial z\partial-\alpha\partial$. 
By Proposition \ref{sobo}(i), the statement holds for $m=0$. Thus if $m$ is arbitrary and 
$f\in H^{2m}_{\alpha,\rm loc}$ then there is a $u\in L^2_{\rm loc}$ with $Lu=f$. Moreover, 
$L^jf\in L^2_{\rm loc}$ for $0\le j\le m$, so $L^ju\in L^2_{\rm loc}$ for $0\le j\le m+1$, as claimed. 

(ii) This follows from (i) by factoring $P$ into linear factors. 
\end{proof} 

\subsection{Explicit description of $H^4_{\alpha,\rm loc}$}

Let $L=\partial z\partial-\alpha\partial$ and 
$D=L^\dagger L$. Let $\widetilde{H}_{\alpha,\rm loc}^4$ be the space of $f$ such that 
$f$ and $Df$ are both in $L^2_{\rm loc}$. By Proposition \ref{chara}(viii), 
we have $H_{\alpha,\rm loc}^4\subset \widetilde{H}_{\alpha,\rm loc}^4$; namely, this subspace is cut out by the condition that $Lf\in L^2_{\rm loc}$. 

\begin{proposition}\label{charfun2} (i) $f\in \widetilde{H}_{\alpha,\rm loc}^4$ if and only if 
it has the form 
\begin{equation}\label{eq40}
f(z)=C_2\frac{2\alpha|z|^{2\alpha}\log |z|-|z|^{2\alpha}+1}{2\alpha^2}+C_{1}\frac{|z|^{2\alpha}-1}{2\alpha}+f_0(z), 
\end{equation} 
where 
$$
f_0(z)=|z|^{2\alpha}(\eta(z)+H(z))
$$ 
 with $H$ harmonic and 
$$
\eta(z)=\frac{1}{2\pi}\int_{|w|<\varepsilon}\frac{g(w)}{|w|^{2+2\alpha}}\log|z-w|d^2w,
$$
where $g\in H^2_{\rm loc}$ with $g(0)=0$. In this case, $f_0$ is $\gamma$-H\"older continuous and 
$$
\nabla f_0(z)=O(|z|^{\gamma-1})
$$ 
as $z\to 0$ for any $\gamma<1$. 

(ii) Such $f$ is in $H^4_{\alpha,\rm loc}$ iff $C_2=0$. Thus, $H^4_{\alpha,\rm loc}$ is the space of functions of the form 
$$
f(z)=C\frac{|z|^{2\alpha}-1}{2\alpha}+f_0(z),
$$
where $f_0$ is as in (i), and $\dim \widetilde{H}_{\alpha,\rm loc}^4/H_{\alpha,\rm loc}^4=1$.  
\end{proposition}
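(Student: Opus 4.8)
The plan is to deduce Proposition \ref{charfun2} from Lemma \ref{lemmm} together with a factorization of the operator $D=L^\dagger L$.

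\textbf{Step 1: the key identity.} First I would record that $L=\partial\circ(z\partial-\alpha)$, and, taking the algebraic adjoint and complex conjugate, $L^\dagger=\overline\partial\circ(\overline z\overline\partial-\alpha)$. Since any holomorphic differential operator commutes with any anti-holomorphic one, the factors of $D=\overline\partial(\overline z\overline\partial-\alpha)\,\partial(z\partial-\alpha)$ may be reordered to give $D=\partial\overline\partial\cdot(z\partial-\alpha)(\overline z\overline\partial-\alpha)=\tfrac14\Delta\cdot(z\partial-\alpha)(\overline z\overline\partial-\alpha)$. A short computation (differentiate $|z|^{-2\alpha}g$, using $\partial_z|z|^{-2\alpha}=-\alpha z^{-1}|z|^{-2\alpha}$ and its conjugate) yields $(z\partial-\alpha)(\overline z\overline\partial-\alpha)g=\tfrac14|z|^{2+2\alpha}\Delta(|z|^{-2\alpha}g)=\tfrac14 Tg$, where $Tg:=|z|^{2+2\alpha}\Delta(|z|^{-2\alpha}g)$ is exactly the operator occurring in Lemma \ref{lemmm}. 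Hence $D=\tfrac1{16}\,\Delta\circ T$ as operators on distributions.

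\textbf{Step 2: proof of (i).} For $f\in L^2_{\rm loc}$, elliptic regularity for the Laplacian applied to the distribution $Tf$ shows that $Df\in L^2_{\rm loc}$ iff $\Delta(Tf)\in L^2_{\rm loc}$ iff $Tf\in H^2_{\rm loc}$. Thus $\widetilde H^4_{\alpha,\rm loc}$ coincides with $\{\,f\in L^2_{\rm loc}:|z|^{2+2\alpha}\Delta(|z|^{-2\alpha}f)\in H^2_{\rm loc}\,\}$, and Lemma \ref{lemmm} identifies this set with the functions of the form \eqref{eq40}, together with the asserted Hölder continuity of $f_0$ and the bound $\nabla f_0=O(|z|^{\gamma-1})$. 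This is (i).

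\textbf{Step 3: proof of (ii).} By Proposition \ref{chara}(viii), applied to the sequence $1,L,L^\dagger L$, we have $H^4_{\alpha,\rm loc}=\widetilde H^4_{\alpha,\rm loc}\cap\{f:Lf\in L^2_{\rm loc}\}$, so it remains to decide, for $f$ of the form \eqref{eq40}, when $Lf\in L^2_{\rm loc}$. Since $L$ acts only in $z$ and $L(z^\alpha)=L(1)=0$, we get $L(|z|^{2\alpha})=0$, so $L$ annihilates the $C_1$-term; and a direct computation gives $L(|z|^{2\alpha}\log|z|)=\tfrac\alpha2|z|^{2\alpha}z^{-1}$ (and $L(\log^2|z|)=\tfrac1{2z}$ when $\alpha=0$), so $L$ of the $C_2$-part equals $\tfrac12 C_2|z|^{2\alpha}z^{-1}$, whose square modulus is $\tfrac14|C_2|^2|z|^{-2}\notin L^1_{\rm loc}$. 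On the other hand $Lf_0\in L^2_{\rm loc}$: writing $f_0=|z|^{2\alpha}v$ with $v=\eta+H$ and $u:=(z\partial-\alpha)f_0$, the a priori $O(|z|^{-1})$ terms in $\overline\partial u=zf_{0,z\bar z}-\alpha f_{0,\bar z}$ cancel, leaving $\overline\partial u=|z|^{2\alpha}(\alpha z\overline z^{-1}v_z+zv_{z\bar z})=O(|z|^{\gamma-1})\in L^2_{\rm loc}$ (using $\nabla v=O(|z|^{\gamma-1})$ and $v_{z\bar z}=\tfrac14 g/|z|^{2+2\alpha}$ with $g$ Hölder, $g(0)=0$); since also $u\in L^\infty_{\rm loc}$, elliptic regularity for $\Delta u=4\partial\overline\partial u$ gives $u\in H^1_{\rm loc}$, hence $Lf_0=\partial u\in L^2_{\rm loc}$. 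As the decomposition \eqref{eq40} is unique — in particular $C_2$, which records the coefficient of the $|z|^{2\alpha}\log|z|$ singularity, is a well-defined linear functional of $f$ — there is no cancellation, so $Lf\in L^2_{\rm loc}\iff C_2=0$. This gives the stated form of $H^4_{\alpha,\rm loc}$, and since $f\mapsto C_2$ maps $\widetilde H^4_{\alpha,\rm loc}$ onto $\CC$ with kernel $H^4_{\alpha,\rm loc}$ (the function $\tfrac{2\alpha|z|^{2\alpha}\log|z|-|z|^{2\alpha}+1}{2\alpha^2}$ lying in $\widetilde H^4_{\alpha,\rm loc}$ with $C_2\ne0$), the quotient is one-dimensional.

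\textbf{Main obstacle.} The crux is the operator identity $D=\tfrac1{16}\Delta T$, which reduces everything to Lemma \ref{lemmm}; once it is in place, (i) is immediate. The delicate points are all in (ii): showing $Lf_0\in L^2_{\rm loc}$ (equivalently $f_0\in H^2_{\alpha,\rm loc}$), which hinges on a cancellation of the a priori non-$L^2$ terms of size $|z|^{-1}$, and pinning down the uniqueness of \eqref{eq40} so that $C_2$ is a well-defined functional.
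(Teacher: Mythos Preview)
Your proof is correct and follows essentially the same approach as the paper: the factorization $16D=\Delta\,|z|^{2+2\alpha}\Delta\,|z|^{-2\alpha}$ reducing (i) to Lemma~\ref{lemmm}, the direct computation of $L$ on the $C_1$- and $C_2$-terms, and the estimate $\overline\partial u=O(|z|^{\gamma-1})$ to get $Lf_0\in L^2_{\rm loc}$ are all exactly what the paper does. The only cosmetic differences are that you organize the $\overline\partial u$ computation via the product rule on $f_0=|z|^{2\alpha}v$ (observing a cancellation) whereas the paper differentiates the explicit integral for $\eta$, and you make the uniqueness of $C_2$ explicit; also, your appeal to ``elliptic regularity for $\Delta u$'' to pass from $\overline\partial u\in L^2_{\rm loc}$ to $u\in H^1_{\rm loc}$ is valid but slightly roundabout --- elliptic regularity for the first-order operator $\overline\partial$ gives this in one step.
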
 

\begin{remark} For $\alpha\ne 0$ we may use a simpler (but equivalent) formula in Proposition \ref{charfun2}(i): 
$$
f(z)=C_2'|z|^{2\alpha}\log |z|+C_1'|z|^{2\alpha}+f_0(z). 
$$
\end{remark} 

\begin{proof} (i) We have $16D=\Delta r^{2+2\alpha}\Delta r^{-2\alpha}$. Since $\Delta$ is elliptic, we get that $Df\in L^2_{\rm loc}$ if and only if $r^{2+2\alpha}\Delta (r^{-2\alpha}f)\in H^2_{\rm loc}$. Thus the desired result follows from Lemma \ref{lemmm}. 

(ii) It is easy to see that 
$$
L\frac{2\alpha|z|^{2\alpha}\log |z|-|z|^{2\alpha}+1}{2\alpha^2}=\frac{|z|^{2\alpha}}{z}\notin L^2_{\rm loc},
$$
while $L\frac{|z|^{2\alpha}-1}{2\alpha}=0$. Thus it suffices to show that $Lf_0\in L^2_{\rm loc}$, i.e., that $(z\partial-\alpha)f_0\in H^1_{\rm loc}$, which is equivalent to saying that 
$|z|^{2\alpha}z\partial\eta\in H^1_{\rm loc}$, i.e., $\overline{\partial} (|z|^{2\alpha}z\partial\eta)\in L^2_{\rm loc}$. 
We have 
$$
\overline{\partial} (|z|^{2\alpha}z\partial\eta)=\overline{\partial}\left(\frac{|z|^{2\alpha}z}{4\pi}\int_{|w|<\varepsilon}\frac{g(w)}{|w|^{2+2\alpha}(z-w)}d^2w\right)=
$$
$$
\frac{\alpha |z|^{2\alpha}z}{4\pi\overline{z}}
\int_{|w|<\varepsilon}\frac{g(w)}{|w|^{2+2\alpha}(z-w)}d^2w
+\frac{g(z)}{4\overline{z}}.
$$
But $|g(w)|\le C(\gamma)|w|^\gamma$ for any $\gamma<1$, so 
$$
|\overline{\partial}(|z|^{2\alpha}z\partial\eta)|\le \frac{1}{4}C(\gamma)\left(\frac{|\alpha|}{\pi} \int_{|w|<\varepsilon}\frac{|w|^{\gamma-2}}{|z-w|}d^2w+|z|^{\gamma-1}\right).
$$
Therefore, by Lemma \ref{aux1} for $\beta=1$, we have $\overline{\partial}(|z|^{2\alpha}z\partial\eta)=O(|z|^{\gamma-1})$, as $z\to 0$, i.e. it is in $L^2_{\rm loc}$, as claimed.  
\end{proof} 

\subsection{Explicit description of $S_{\alpha,\rm loc}$}

\begin{proposition}\label{asymex} (i) Let $f\in H^{2m}_{\alpha, loc}$ for $m\ge 1$. Then 
$f$ can be written in the form 
$$
f(z)=Q_1(z,\overline{z})\frac{|z|^{2\alpha}-1}{2\alpha}+Q_2(z,\overline{z})+f_m(z),
$$
where $Q_1,Q_2$ are uniquely determined polynomials of $z,\overline{z}$ of degree $m-2$, and $f_m(z)=O(|z|^{\gamma+m-2})$ as $z\to 0$ for any $\gamma<1$, with $f_m(z)\in C^{m-2,\gamma}_{\rm loc}$ if $m\ge 2$. Thus, there exists a unique polynomial $Q_1$ of degree $m-2$ such that 
$f(z)-Q_1(z,\overline{z})\frac{|z|^{2\alpha}-1}{2\alpha}$ is in $C^{m-2,\gamma}_{\rm loc}$ for any $\gamma<1$.  

(ii) The generalized local Schwartz space $S_{\alpha,\rm loc}$ is  the space of functions of the form $Q_1(z,\overline{z})\frac{|z|^{2\alpha}-1}{2\alpha}+Q_2(z,\overline{z})$, where 
$Q_1,Q_2$ are smooth. 
\end{proposition}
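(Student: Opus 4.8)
The plan is to bootstrap from the explicit descriptions of $H^2_{\alpha,\rm loc}$ and $H^4_{\alpha,\rm loc}$ (Propositions~\ref{sobo} and \ref{charfun2}) by induction on $m$, using the surjectivity statement in Corollary~\ref{surj} and the characterization in Proposition~\ref{chara}. Throughout we may take $L = \partial z\partial - \alpha\partial$, since by Lemma~\ref{normform} and Proposition~\ref{chara}(vi) the spaces $H^{2m}_{\alpha,\rm loc}$ are preserved by holomorphic changes of coordinate and multiplication by smooth functions, and the claimed normal form $Q_1(z,\overline z)\tfrac{|z|^{2\alpha}-1}{2\alpha} + Q_2(z,\overline z)$ is stable under such operations up to modifying $Q_1,Q_2$ and absorbing errors into $f_m$ (one checks that $\tfrac{|z|^{2\alpha}-1}{2\alpha}$ is, up to a smooth multiple and an additive smooth function, the model singular solution annihilated by $L$).

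For part (i), the base cases $m=1,2$ are exactly Propositions~\ref{sobo}(i) and \ref{charfun2}(ii): for $m=1$ we have $Q_1=Q_2=0$ and $f_1 = f = |z|^{2\alpha}(\phi+h(\overline z)) = O(|z|^{-\delta})$; for $m=2$, Proposition~\ref{charfun2}(ii) gives $f = C\tfrac{|z|^{2\alpha}-1}{2\alpha} + f_0$ with $f_0 \in C^{0,\gamma}_{\rm loc}$, so $Q_1 = C$, $Q_2=0$, $f_2 = f_0$. For the inductive step, suppose $f \in H^{2m}_{\alpha,\rm loc}$ with $m \ge 2$. Then $Lf \in H^{2m-2}_{\alpha,\rm loc}$ by Proposition~\ref{chara}(vii), so by the inductive hypothesis $Lf = \widetilde Q_1(z,\overline z)\tfrac{|z|^{2\alpha}-1}{2\alpha} + \widetilde Q_2(z,\overline z) + g$ with $\widetilde Q_1,\widetilde Q_2$ polynomials of degree $m-3$ and $g \in C^{m-3,\gamma}_{\rm loc}$, $g = O(|z|^{\gamma+m-3})$. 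Now I would solve $L u = (\text{each term})$ explicitly: applying $L = \partial z\partial - \alpha\partial$ to monomials $z^a\overline z^b$ and to $z^a \overline z^b \tfrac{|z|^{2\alpha}-1}{2\alpha}$ produces, up to lower-order polynomial terms of the same type, again such expressions with degree raised by one, plus genuinely smoother remainders; solving $L u_0 = g$ with $g\in C^{m-3,\gamma}$ gives $u_0 \in C^{m-1,\gamma}_{\rm loc}$ with the stated decay by the integral representation \eqref{eq20} together with the H\"older/Sobolev estimates of Lemmas~\ref{aux1} and the argument in Lemma~\ref{lemmm} (integrating the fundamental solution $\tfrac{1}{\pi\overline z}$ against a function vanishing to order $\gamma+m-3$). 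The difference $f - (\text{the constructed preimage})$ lies in the kernel of $L$, which consists of $|z|^{2\alpha}(\text{antiholomorphic}) + (\text{principal part in }\overline z^{-1})$; imposing $f\in L^2_{\rm loc}$ kills the principal part, and the antiholomorphic piece contributes to $Q_1,Q_2$ and $f_m$. Matching degrees shows $Q_1,Q_2$ have degree $m-2$ and $f_m\in C^{m-2,\gamma}_{\rm loc}$ with $f_m = O(|z|^{\gamma+m-2})$. Uniqueness of $Q_1,Q_2$ follows because the three types of terms ($|z|^{2\alpha}$ times a polynomial, a polynomial, and a $C^{m-2,\gamma}$ function of the given decay) have distinct asymptotic behavior at $0$ for $\alpha \in i\mbb R\setminus 0$ (and for $\alpha = 0$ one uses the $\log|z|$ convention of the Remark after Lemma~\ref{lemmm}, where $\tfrac{|z|^{2\alpha}-1}{2\alpha}$ becomes $\log|z|$, which is still linearly independent from polynomials modulo $C^{m-2,\gamma}$).

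For part (ii), $S_{\alpha,\rm loc} = \bigcap_m H^{2m}_{\alpha,\rm loc}$, so any $f \in S_{\alpha,\rm loc}$ admits, for every $m$, a decomposition as in (i) with polynomials $Q_1^{(m)}, Q_2^{(m)}$ of degree $m-2$; by uniqueness these are consistent (each extends the previous), so they assemble into formal power series, and the decay $f_m = O(|z|^{\gamma+m-2})$ for all $m$ together with $f_m \in C^{m-2,\gamma}$ forces these series to be the Taylor expansions of genuine smooth functions $Q_1, Q_2 \in C^\infty$ (here one invokes Borel summation / the standard fact that a function all of whose ``Taylor remainders'' in this scale are controlled is smooth, combined with the integral formula showing $\eta$ and its derivatives are controlled). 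Conversely, any $f = Q_1(z,\overline z)\tfrac{|z|^{2\alpha}-1}{2\alpha} + Q_2(z,\overline z)$ with $Q_1,Q_2$ smooth lies in every $H^{2m}_{\alpha,\rm loc}$: $Q_2$ is smooth hence in all these spaces, and for $Q_1 \tfrac{|z|^{2\alpha}-1}{2\alpha}$ one checks directly that applying $L$ repeatedly keeps it in $L^2_{\rm loc}$ (each application of $L$ either lands in the kernel-type piece or produces a smooth multiple of $|z|^{2\alpha}$, using $\alpha\in i\mbb R$ so $|z|^{2\alpha}$ is bounded and $|z|^{2\alpha}/z$ is only mildly singular but gets hit and controlled as in Proposition~\ref{charfun2}(ii)).

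The main obstacle I anticipate is the bookkeeping in the inductive step of part~(i): one must verify that solving $Lu = z^a\overline z^b\tfrac{|z|^{2\alpha}-1}{2\alpha}$ (and the analogous polynomial source) really does produce a preimage of the \emph{same} structural type with degree increased by exactly one, with all error terms falling strictly into the next H\"older class, and that no unexpected extra terms (e.g. higher powers of $\log|z|$, which would appear if one were careless, cf. the $C_2$ term in Proposition~\ref{charfun2}) are generated — the fact that they are not is precisely the content of the condition $\alpha \in i\mbb R$ (equivalently the monodromy being semisimple with positive real eigenvalues) and is the analytic heart of the statement. A secondary technical point is the smoothness upgrade in part~(ii), where one must convert the uniform-in-$m$ control on remainders into genuine $C^\infty$ regularity of $Q_1,Q_2$; this is routine but requires care with the Borel-type argument and with the $\alpha=0$ logarithmic convention.
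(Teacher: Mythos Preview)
Your plan is correct and follows the paper's proof closely: induction on $m$ with base cases Propositions~\ref{sobo}(i) and \ref{charfun2}(ii), then apply the inductive hypothesis to $Lf$, find polynomial preimages $P_1\tfrac{|z|^{2\alpha}-1}{2\alpha}+P_2$ for the structured part, analyze the remainder $f_*$ via the integral formula \eqref{eq20} and Lemma~\ref{aux1}, and absorb the antiholomorphic kernel pieces into $P_1,P_2$; part (ii) is exactly the Borel-lemma consistency argument you describe. Two minor slips to fix when you write it up: applying $L$ to a monomial \emph{lowers} its degree by one (so the preimage has degree one higher than the source, which is what you want), and solving $Lu_0=g$ with $g\in C^{m-3,\gamma}$ yields $u_0\in C^{m-2,\gamma}$ rather than $C^{m-1,\gamma}$ (the degenerate operator $L$ gains only one order at $z=0$), which is precisely what the statement requires.
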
 

\begin{proof} (i) It is clear that the polynomials $Q_1,Q_2$ are unique if exist, so we just need to prove existence. The proof is by induction in $m$. The case $m=1$ just says that $f(z)=O(|z|^{\gamma-1})$ for any $\gamma<1$ and is Proposition \ref{sobo}(i). The case $m=2$ is Proposition \ref{charfun2}(ii). Suppose the statement is known for some $m\ge 2$ and let us prove it for $m+1$. By the induction assumption we have 
$$
(\partial z\partial-\alpha\partial)f=Q_1\frac{|z|^{2\alpha}-1}{2\alpha}+Q_2+G(z),
$$
where $Q_1,Q_2$ polynomials of degree $m-2$ and $G(z)=O(|z|^{\gamma+m-2})$ for any $\gamma<1$, with $G\in C^{m-2,\gamma}_{\rm loc}$. Fix polynomials $P_1,P_2$ of degree $m-1$ such that 
$$
(\partial z\partial-\alpha\partial)\left(P_1\frac{|z|^{2\alpha}-1}{2\alpha}+P_2\right)=
Q_1\frac{|z|^{2\alpha}-1}{2\alpha}+Q_2;
$$
it is easy to show that they exist. Let 
$$
f_*=f-P_1\frac{|z|^{2\alpha}-1}{2\alpha}-P_2.
$$
Then we have 
$$
(\partial z\partial-\alpha\partial)f_*=G.
$$ 
Hence 
$$
(z\partial-\alpha)f_*=\widetilde{g},
$$ 
where $\partial \widetilde{g}=G$. By elliptic regularity (for H\"older continuous functions) we have 
$$
\widetilde{g}(z)=g(z)+R(\overline z),
$$ 
where $g\in C^{m-1,\gamma}_{\rm loc}$ is such that $\partial g=G$ and $g(z)=O(|z|^{\gamma+m-1})$ as $z\to 0$ for any $\gamma<1$, and $R$ is a polynomial in one variable of degree $m-1$. 
By replacing $P_1$ by $P_1-R$, we can make sure that $R=0$, so $\widetilde{g}=g$. 
By Lemma \ref{diffeq}(ii), 
$$
f_*(z)=|z|^{2\alpha}(\phi(z)+h(\overline{z}))
$$
where 
$$
\phi(z)=\frac{1}{4\pi}\int_{|w|<\varepsilon}\frac{|w|^{-2\alpha}g(w)}{w(\overline{w}-\overline{z})}d^2w 
$$
and $h$ is holomorphic. Using elliptic regularity again and direct differentiation,  
it is easy to see that $\phi\in C^{m-1,\gamma}_{\rm loc}$ 
and the $m-1$-th Taylor polynomial of $\phi$ depends only on $\overline{z}$. Therefore, 
by modifying $P_1$ and $P_2$ by polynomials of $\overline{z}$ 
we can make sure that $\phi(z)+h(\overline{z})=o(|z|^{m-1})$ as $z\to 0$. Moreover, by applying 
Lemma \ref{aux1} to $m-1$-th derivatives of this function, it follows that 
moreover $\phi(z)+h(\overline{z})=O(|z|^{\gamma+m-1})$ as $z\to 0$ for any $\gamma<1$. 
Hence $f_*\in C^{m-1,\gamma}_{\rm loc}$ and $f_*(z)=O(|z|^{\gamma+m-1})$ as $z\to 0$, as claimed. 

(ii) Let $\mbb S_{\alpha,\rm loc}$ be the space of functions defined in (ii). 
It is easy to check by direct differentiation 
that the operator $L$ preserves $\mbb S_{\alpha,\rm loc}$, 
hence $\mbb S_{\alpha,\rm loc}\subset S_{\alpha,\rm loc}$. 
Now let us prove the opposite inclusion. Let 
$f\in S_{\alpha,\rm loc}$. By (i), for every integer $m\ge 2$ 
there exists a unique polynomial $Q_{1,m}$ of degree $m-2$ 
such that $f(z)-Q_{1,m}(z,\overline{z})\frac{|z|^{2\alpha}-1}{2\alpha}$ is in  
$C^{m-2}_{\rm loc}$. Hence $Q_{1,m+1}-Q_{1,m}$ is homogeneous 
of degree $m+1$. Thus there exists a smooth function $Q_1(z,\overline{z})$ whose $m-2$-th 
Taylor polynomial at $0$ is $Q_{1,m}$, for all $m\ge 2$. Then 
$f(z)-Q_1(z,\overline{z})\frac{|z|^{2\alpha}-1}{2\alpha}$ 
is of class $C^{m-2}$ for all $m$, which implies that 
it is a certain smooth function $Q_2(z,\overline{z})$, i.e., $S_{\alpha,\rm loc}\subset \mbb S_{\alpha,\rm loc}$, as claimed. 
\end{proof} 

\subsection{Global generalized Sobolev and Schwartz spaces} 
Now let us define global versions of generalized Sobolev and Schwartz spaces corresponding to the Darboux operator $L=L(\mbf a,\tau)$. 

\begin{definition} The {\it generalized Sobolev space} 
$H^{2m}_{\mbf a}$ is the space of all $f\in \mathcal{H}$ such that 
$f\in H^{2m}_{\rm loc}$ outside the points $p=0,a,1,\infty$ and 
$f\in H^{2m}_{a_p,\rm loc}$ near each of these four points. 
\end{definition} 

By Proposition \ref{chara}, $H^{2m}_{\mbf a}$ may be characterized as the space of 
$f$ such that one has $P(L,L^\dagger)f\in \mathcal{H}$ for every polynomial $P$ of degree $\le m$ or, equivalently, 
$L^jf\in \mathcal{H}$ for $0\le j\le m$. It is clear that $H^{2m}_{\mbf a}\subset 
H^{2k}_{\mbf a}$ if $m\le k$ and $L,L^\dagger: H^{2m}_{\mbf a}\to H^{2m-2}_{\mbf a}$. 

\begin{definition} The {\it generalized Schwartz space} is the intersection 
$S_{\mbf a}:=H_{\mbf a}^\infty=\cap_{m\ge 0}H^{2m}_{\mbf a}$. Equivalently, 
it is the space of $f\in \mathcal{H}$ such that 
$f$ is smooth outside the points $p=0,a,1,\infty$ and 
$f\in S_{a_p,\rm loc}$ near each of these four points. 
\end{definition} 

In other words, $S_{\mbf a}=S(\wt\mcA)$, 
where $\wt\mcA:=\mbb C[L,L^\dagger]\subset \End\wt V$ and $\wt V$ is the space of smooth 
sections of $O(s)\otimes \overline{O(-s-2)}$. 

By Proposition \ref{chara}(iv), $S_{\mbf a}$ may be characterized as the space of 
$f$ such that $P(L,L^\dagger)f\in \mathcal{H}$ for every polynomial $P$ or, equivalently, 
$L^jf\in \mathcal{H}$ for all $j$. Also, by Proposition \ref{asymex}(ii), 
$S_{\mbf a}$ may be described as the space of $f$ which are smooth outside 
$0,a,1,\infty$ and have the form $Q_1\frac{|z|^{2\alpha}-1}{2\alpha}+Q_2$ 
for smooth $Q_1,Q_2$ at each of these four points. 

The following proposition will play an important role below. 

\begin{proposition}\label{intparts} For any $f_1,f_2\in H^2_{\mbf a}$ we have $(Lf_1,f_2)=(f_1,L^\dagger f_2)$. 
\end{proposition}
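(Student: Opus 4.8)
The plan is to reduce the global integration-by-parts identity to purely local statements near the four marked points $p\in\{0,a,1,\infty\}$, since on the open curve $\mathbb P^1\setminus\{0,a,1,\infty\}$ the formula $(Lf_1,f_2)=(f_1,L^\dagger f_2)$ is standard (it is just Lemma \ref{formaladj}, using $L^\dagger=\overline{L^*}=\overline{\nu(L)}$ and the fact that $L,f_1,f_2$ are genuinely defined there). Concretely, I would fix a small disk $D_\varepsilon(p)$ around each singular point, let $\Omega_\varepsilon=\mathbb P^1\setminus\bigcup_p D_\varepsilon(p)$, and write
$$
(Lf_1,f_2)-(f_1,L^\dagger f_2)=\lim_{\varepsilon\to 0}\Big(\int_{\Omega_\varepsilon}(Lf_1)\overline{f_2}-\int_{\Omega_\varepsilon}f_1\overline{L^\dagger f_2}\Big),
$$
and by Stokes' theorem the right-hand side equals $\lim_{\varepsilon\to 0}\sum_p\oint_{\partial D_\varepsilon(p)}\omega_p$, where $\omega_p$ is the one-form (the "boundary term" from integration by parts for the second-order operator $L$) built bilinearly out of $f_1,\overline{f_2}$ and their first derivatives. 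Everything then comes down to showing each boundary integral tends to $0$.

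The key step is therefore the local estimate: near each $p$, for $f_1,f_2\in H^2_{a_p,\mathrm{loc}}$ (i.e. $f_i,Lf_i\in L^2_{\mathrm{loc}}$), the circle integral $\oint_{|w|=\varepsilon}\omega_p\to 0$. Here I would invoke the explicit description of $H^2_{\alpha,\mathrm{loc}}$ from Proposition \ref{sobo}(i): after a holomorphic change of coordinate and multiplication by an invertible holomorphic function (allowed by Proposition \ref{chara}(vi)), I may assume $L=\partial w\partial-\alpha\partial$ with $\alpha=a_p\in i\mathbb R$, and $f_i(w)=|w|^{2\alpha}(\phi_i(w)+h_i(\overline w))$ with $\phi_i$ given by the integral formula \eqref{eq20} against some $g_i\in H^1_{\mathrm{loc}}$ and $h_i$ holomorphic. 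Since $\alpha\in i\mathbb R$ one has $|w|^{2\alpha}=e^{2\alpha\log|w|}$ of modulus $1$, so $|f_i(w)|=O(|w|^{-\delta})$ for every $\delta>0$ (this is the bound already recorded in Proposition \ref{sobo}(i) and Lemma \ref{diffeq}). The boundary form $\omega_p$ for $L=\partial w\partial-\alpha\partial$ is, up to holomorphic factors, of the shape $w\,(\partial f_1)\overline{f_2}\,d\overline w - f_1\,\overline{(\partial f_2)}\,\overline{w}\,dw + \text{lower order}$, so on $|w|=\varepsilon$ its integral is bounded by $\varepsilon\cdot\sup_{|w|=\varepsilon}\big(|w\partial f_1|\,|f_2|+|f_1|\,|w\partial f_2|\big)\cdot\varepsilon^{-1}$-type quantities; using that $w\partial f_i - \alpha f_i = |w|^{2\alpha}w\partial\phi_i$ has its $L^2$-norm on the circle controlled (via the Hölder/Sobolev bound on $\phi_i$, Lemma \ref{aux1}) by $o(\varepsilon^{-1+\delta})$, one gets that the boundary integral is $o(\varepsilon^{\delta})\to 0$. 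The case $a_p=0$ (non-semisimple local monodromy), where logarithms appear as in Lemma \ref{a_p=0} and Proposition \ref{charfun2}, is handled the same way: the relevant asymptotics $f_i=O(|w|^{-\delta})$ and $\nabla f_i=O(|w|^{\gamma-1})$ still hold (Proposition \ref{asymex}(i) with $m=1$, or directly Lemma \ref{diffeq}(ii)), and $\varepsilon\cdot\varepsilon^{\gamma-1}\cdot\varepsilon^{-\delta}\to 0$.

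I expect the main obstacle to be bookkeeping the precise form of the boundary one-form $\omega_p$ and extracting just enough decay of the first derivatives $\partial f_i$ on shrinking circles. The subtlety is that $f_i\in H^2_{\alpha,\mathrm{loc}}$ does \emph{not} give pointwise control of $\partial f_i$ everywhere (only $w\partial f_i\in H^1_{\mathrm{loc}}$ after subtracting $\alpha f_i$), so the argument must be phrased in terms of an averaged ($L^2$-on-the-circle, or integrated-in-$\varepsilon$) estimate rather than a pointwise one — i.e. one shows $\int_0^{\varepsilon_0}\big(\oint_{|w|=\varepsilon}|\omega_p|\big)\,d\varepsilon<\infty$ and that the integrand is, along a sequence $\varepsilon_k\to 0$, going to $0$, which suffices since the limit defining $(Lf_1,f_2)-(f_1,L^\dagger f_2)$ exists independently of the sequence. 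An alternative, perhaps cleaner, route for this last point is to first prove the identity for $f_1,f_2$ in the dense subspace $\widetilde V$ of smooth sections (where it is classical), then extend by continuity: one checks that $H^2_{\mathbf a}$ carries a natural Hilbert norm $\|f\|^2+\|Lf\|^2$ for which both sides of $(Lf_1,f_2)=(f_1,L^\dagger f_2)$ are continuous and $\widetilde V$ is dense — density near the singular points being exactly the content of the explicit descriptions in Propositions \ref{sobo} and \ref{asymex}. I would present the Stokes/boundary-term version as the main argument and mention the density version as a remark.
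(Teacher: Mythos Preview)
Your approach is essentially the same as the paper's: reduce to a neighborhood of each singular point, pass to the normal form $L=\partial z\partial-\alpha\partial$ via Lemma \ref{normform}, apply Green's formula, and show the boundary integrals over $|z|=\varepsilon$ vanish as $\varepsilon\to 0$ using that $(z\partial-\alpha)f_i\in H^1_{\rm loc}$ together with the growth bound $f_i=O(|z|^{-\delta})$ from Proposition \ref{sobo}(i). The paper also begins with a density step, but only the mild one of assuming $f_1,f_2$ are smooth outside the singularities, not your alternative of reducing to $\widetilde V$.

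One point where the paper is sharper than your sketch: you recognize correctly that $H^2_{\alpha,\rm loc}$ gives no pointwise control of $\partial f_i$ on individual circles, and you propose an ``integrated-in-$\varepsilon$'' workaround. But note that mere integrability $\int_0^{\varepsilon_0}\big(\oint_{|w|=\varepsilon}|\omega_p|\big)\,d\varepsilon<\infty$ does \emph{not} by itself produce a sequence $\varepsilon_k\to 0$ along which the circle integral vanishes (e.g.\ $g(\varepsilon)=\varepsilon^{-1}(\log\varepsilon)^{-2}$ is integrable near $0$ but tends to $\infty$). The paper's device is a Ces\`aro average: since the limit $\ell:=\lim_{\varepsilon\to 0}I(\varepsilon)$ exists, one has $\ell=\lim_{\varepsilon\to 0}\varepsilon^{-1}\int_\varepsilon^{2\varepsilon}I(t)\,dt$, and the latter is an \emph{area} integral over the annulus $\varepsilon\le|z|\le 2\varepsilon$, to which H\"older plus the Sobolev embedding $H^1_{\rm loc}\hookrightarrow L^p_{\rm loc}$ (applied to $(z\partial-\alpha)f_1$) and the bound $f_2=O(|z|^{-\delta})$ apply directly, yielding $J_1(\varepsilon)=O(\varepsilon^{(p-2)/p-\delta})\to 0$. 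This replaces your sequence argument cleanly.
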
 

\begin{proof} We may assume without loss of generality that $f_1,f_2$ are smooth outside singularities, since 
such functions are dense in $H^2_{\mbf a}$. Also, by using partitions of unity, 
we may assume that $f_1,f_2$ are supported in a small neighborhood of the singular points. 
Thus by Lemma \ref{normform}, the calculation reduces to the case $L=\partial z\partial-\alpha \partial$.

Using Green's formula, we see that  
\begin{equation}\label{eq4a}
2i((Lf_1,f_2)-(f_1,L^\dagger f_2))=\lim_{\varepsilon\to 0}(I_1(\varepsilon)+I_2(\varepsilon)+I_3(\varepsilon)),
\end{equation} 
where 
$$
I_1(\varepsilon)=\oint_{|z|=\varepsilon}(z\partial -\alpha)f_1\cdot \overline{f_2}d\overline{z},
$$
$$
I_2(\varepsilon)=-\oint_{|z|=\varepsilon}f_1\cdot (\overline{z}\overline{\partial}-\alpha)\overline{f_2}d\overline{z},
$$
$$
I_3(\varepsilon)=\alpha\oint_{|z|=\varepsilon}f_1\overline{f_2}d\overline{z},
$$
where the circles are oriented counterclockwise. By Proposition \ref{sobo}, 
$I_3(\varepsilon)\to 0$ as $\varepsilon\to 0$, so it suffices to show that 
$I_1(\varepsilon)+I_2(\varepsilon)\to 0$.  

Performing ``Ces\'aro summation" on \eqref{eq4a} (i.e., integrating \eqref{eq4a} from $\varepsilon$ to $2\varepsilon$ and dividing by $\varepsilon$), we obtain 
$$
2i((Lf_1,f_2)-(f_1,L^\dagger f_2))=\lim_{\varepsilon\to 0}(J_1(\varepsilon)+J_2(\varepsilon)),
$$
where $J_k(\varepsilon):=\varepsilon^{-1}\int_{\varepsilon}^{2\varepsilon}I_k(t)dt$. Thus, it suffices to show that 
$$
\lim_{\varepsilon\to 0}J_k(\varepsilon)=0,\ k=1,2.
$$ 
These statements for $k=1$ and $k=2$ are clearly equivalent, so 
we will only consider the case $k=1$. Using polar coordinates, we have 
 $$
J_1(\varepsilon)=-i\varepsilon^{-1}\int_{\varepsilon\le |z|\le 2\varepsilon}(z\partial -\alpha)f_1\cdot\overline{f_2}e^{-i\theta}d\sigma,
$$
where $d\sigma$ is the area element. But we know that the function $(z\partial-\alpha) f_1$ is in $H^1_{\rm loc}$. In particular, by the Sobolev embedding theorem (\cite{A}, p.97, formula (6)) it is in $L^p_{\rm loc}$ for every $p<\infty$. 
Thus by the H\"older inequality 
$$
|J_1(\varepsilon)|\le \varepsilon^{-1}||(z\partial-\alpha)f_1||_p ||f_2||_{\frac{p}{p-1}}. 
$$
The norm $||(z\partial-\alpha)f_1||_p$ is bounded as $\varepsilon\to 0$, and 
$$
||f_2||_{\frac{p}{p-1}}\le C(\delta) \varepsilon^{\frac{2(p-1)}{p}-\delta}
$$  
for any $\delta>0$ (since by Proposition \ref{sobo} $f_1=O(|z|^{-\delta})$). Thus, 
$$
|J_1(\varepsilon)|=O(\varepsilon^{\frac{p-2}{p}-\delta}),\ \varepsilon\to 0 
$$
for any $\delta>0$. 
So taking any $p>\frac{2}{1-\delta}$ for some $0<\delta<1$, we get that $J_1(\varepsilon)\to 0$, as desired. 
\end{proof} 

\begin{corollary}\label{matrop} Consider the operator 
$$
\mathcal{L}=\left(\begin{matrix} 0& L\\ L^\dagger & 0\end{matrix}\right)
$$
acting on the space $Y\subset \mathcal{H}\otimes \mbb C^2$ of pairs of smooth sections of $O(s)\otimes \overline{O(-2-s)}$. Then 
$\mathcal{L}$ is essentially self-adjoint on $Y$.
\end{corollary}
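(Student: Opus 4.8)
The plan is to deduce essential self-adjointness of $\mathcal{L}$ on $Y$ from the results already established about the generalized Sobolev space $H^2_{\mbf a}$, in particular from Proposition \ref{intparts} and from the surjectivity statements of Corollary \ref{surj}. First I would observe that $\mathcal{L}$ is symmetric on $Y$: this is exactly the content of Proposition \ref{intparts}, since for $f=(f_1,f_2)$ and $g=(g_1,g_2)$ in $Y\subset \mathcal{H}\otimes\mbb C^2$ (smooth sections, hence in $H^2_{\mbf a}$) one has $(\mathcal{L}f,g)=(Lf_2,g_1)+(L^\dagger f_1,g_2)=(f_2,L^\dagger g_1)+(f_1,Lg_2)=(f,\mathcal{L}g)$. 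So it remains to verify essential self-adjointness, and for this I would use the von Neumann criterion (Theorem \ref{vn}): it suffices to show that the equation $\mathcal{L}^\dagger v=\pm i v$ has only the zero solution in $\mathcal{H}\otimes\mbb C^2$, where $\mathcal{L}^\dagger$ is the Hilbert-space adjoint of $(\mathcal{L},Y)$.

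The key point is to identify the domain of $\mathcal{L}^\dagger$. If $v=(v_1,v_2)\in\mathcal{H}\otimes\mbb C^2$ lies in the domain of $\mathcal{L}^\dagger$, then by testing against pairs of the form $(\phi,0)$ and $(0,\phi)$ with $\phi$ smooth, one finds that $L^\dagger v_1\in\mathcal{H}$ and $L v_2\in\mathcal{H}$ in the distributional sense (the pairing $w\mapsto (\mathcal{L}(w,0),v)=(L^\dagger w,v_2)$ extends continuously, etc.). By Proposition \ref{chara}(ii) (applied near the four singular points) together with interior elliptic regularity away from them, the conditions $v_1\in\mathcal{H}$, $L^\dagger v_1\in\mathcal{H}$ and $v_2\in\mathcal{H}$, $Lv_2\in\mathcal{H}$ force $v_1$ and $v_2$ to lie in $H^2_{\mbf a}$ --- here I would invoke Proposition \ref{chara} parts (ii) and (vii) to see that membership in $L^2_{\mathrm{loc}}$ together with $L^\dagger(\cdot)\in L^2_{\mathrm{loc}}$ (resp. $L(\cdot)\in L^2_{\mathrm{loc}}$) near each $a_p$ with $a_p\in i\mbb R$ is exactly the condition $H^2_{a_p,\mathrm{loc}}$. (The hypothesis $s\in -1+i\mbb R$, so that all $a_p\in i\mbb R$, is what makes Lemma \ref{diffeq} and hence Proposition \ref{chara} applicable at every puncture.) Thus $\mathrm{Dom}(\mathcal{L}^\dagger)\subseteq H^2_{\mbf a}\oplus H^2_{\mbf a}$ and $\mathcal{L}^\dagger$ acts there by the same formula $\begin{pmatrix}0&L\\ L^\dagger&0\end{pmatrix}$.

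Given this, the deficiency equation $\mathcal{L}^\dagger v=iv$ becomes the system $Lv_2=iv_1$, $L^\dagger v_1=iv_2$ with $v_1,v_2\in H^2_{\mbf a}$, hence $L^\dagger L v_2=i L^\dagger v_1 = -v_2$, i.e. $v_2$ is an $L^2$-solution of $L^\dagger L v_2 + v_2=0$. But $L^\dagger L$ is a nonnegative symmetric operator on $H^4_{\mbf a}$ (again by Proposition \ref{intparts}, $(L^\dagger L f, f)=(Lf,Lf)\ge 0$), so it has no eigenvalue $-1$; more precisely, pairing $L^\dagger L v_2 = -v_2$ against $v_2$ and integrating by parts --- which is legitimate since $v_2\in H^2_{\mbf a}$ and one checks $v_2$ is in fact in $H^4_{\mbf a}$ by the regularity argument above, or one integrates by parts directly using the boundary-term vanishing already proved in Proposition \ref{intparts} --- gives $\|Lv_2\|^2 = -\|v_2\|^2$, forcing $v_2=0$ and then $v_1=0$. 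The same argument (replacing $i$ by $-i$) handles the other deficiency space. Hence both deficiency indices vanish and $\mathcal{L}$ is essentially self-adjoint on $Y$.

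I expect the main obstacle to be the careful verification that $\mathrm{Dom}(\mathcal{L}^\dagger)$ is contained in $H^2_{\mbf a}\oplus H^2_{\mbf a}$ --- i.e. that the a priori weak conditions $v_i\in\mathcal{H}$, $Lv_2, L^\dagger v_1\in\mathcal{H}$ (as distributions) genuinely imply the full local Sobolev regularity at the four punctures and that no extra ``singular'' solutions (of the type $Q(\overline z^{-1})$ appearing in Lemma \ref{diffeq}(iii) or Proposition \ref{sobo}(ii)) can occur for $L^2$ functions. This is precisely where Proposition \ref{chara}(ii) and Lemma \ref{diffeq}(ii) are needed, and it is the step that uses the imaginarity of the parameters $a_p$ in an essential way; away from the punctures it is just standard interior elliptic regularity for the elliptic operator $L^\dagger L$ (or for $\mathcal{L}^2=\mathrm{diag}(LL^\dagger,L^\dagger L)$).
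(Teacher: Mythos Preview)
Your proof is correct and shares its decisive step with the paper's argument: the identification of the domain of $\mathcal{L}^\dagger$ with $H^2_{\mbf a}\oplus H^2_{\mbf a}$, via Proposition~\ref{chara}(ii) (and interior elliptic regularity), together with Proposition~\ref{intparts}. Where the two diverge is only in the closing move. The paper, having determined $Y^\vee=H^2_{\mbf a}\otimes\mbb C^2$, applies Proposition~\ref{intparts} directly to both components to conclude that $\omega(y_1,y_2)=(\mathcal{L}^\dagger y_1,y_2)-(y_1,\mathcal{L}^\dagger y_2)$ vanishes on $Y^\vee$; thus $\mathcal{L}^\dagger$ is symmetric, which (as recalled in Subsection~\ref{usa}) is equivalent to essential self-adjointness of $\mathcal{L}$. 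Your route through von Neumann's deficiency criterion, the iteration $L^\dagger L v_2=-v_2$, the observation that $v_2\in H^4_{\mbf a}$ via Proposition~\ref{chara}(vii), and the positivity $(L^\dagger L v_2,v_2)=\|Lv_2\|^2$ reaches the same endpoint but adds an avoidable detour: the very integration by parts you invoke at the end is already the full content of Proposition~\ref{intparts} on $H^2_{\mbf a}$, and that identity alone gives symmetry of $\mathcal{L}^\dagger$ in one line. So your argument is sound, but the paper's is shorter and makes clearer that no spectral positivity is needed here---only the boundary-term vanishing encoded in Proposition~\ref{intparts}.
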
 

\begin{proof} Let $Y^\vee$ be the domain of the adjoint operator to $\mathcal{L}$ initially defined on $Y$. 
Since $\mathcal{L}$ is obviously symmetric on $Y$, it extends canonically to a linear map $\mathcal{L}: Y^\vee\to \mathcal{H}\otimes \mbb C^2$. 
We have to show that the operator $\mathcal{L}$ is symmetric on $Y^\vee$, then it is self-adjoint on $Y^\vee$ and thus essentially self-adjoint on $Y$. For this, note that $Y^\vee$ is the space of pairs $(f,g)$ of distributional sections such that 
$f,g,Lg,L^\dagger f\in L^2$. Thus $Y^\vee=H^2_{\mbf a}\otimes \mbb C^2$. 
But then by Proposition \ref{intparts}, the form $\omega(y_1,y_2):=
(\mathcal Ly_1,y_2)-(y_1,\mathcal{L}y_2)$ vanishes on $Y^\vee$, which implies that 
$\mathcal{L}$ is symmetric, as desired. 
\end{proof} 

\section{Proof of Theorem \ref{mainthe}}

\subsection{Green functions} By Proposition \ref{l2prop1}, by replacing $L$ by $L-\zeta$ for generic $\zeta\in \mbb C$, we may assume without loss of generality that $L$ is injective on $H^2_{\mbf a}$. Then the closure of $\mathcal L$ with domain $H^2_{\mbf a}\otimes \mbb C^2$
(which, abusing notation, we will also denote $\mathcal{L}$) is also injective. Also this closure is self-adjoint by Proposition \ref{matrop}. This allows us to define the inverse of this closure, a self-adjoint operator on $\mathcal{H}\otimes \mbb C^2$ which we will denote by $\mathcal L^{-1}$.
We have 
$$
\mathcal{L}^{-1}=\left(\begin{matrix} 0& (L^\dagger)^{-1}\\ L^{-1} & 0\end{matrix}\right)
$$
for suitable operators $L^{-1},(L^\dagger)^{-1}$. 
In this subsection we consider the integral kernel (the Green function) of the operator $(L^\dagger)^{-1}$ which is a function of two  complex variables. We are mostly interested in the integrability properties of this function, arising from the structure of its singularities, which will allow us to show that the operator $(L^\dagger)^{-1}$ (and therefore its adjoint $L^{-1}$) is compact. 

\begin{proposition}\label{green1} The Green function of the operator $L^\dagger$ is in $L^2$ as a function of two complex variables. 
\end{proposition}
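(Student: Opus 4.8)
The plan is to analyze the Green function $\mathcal{G}(z,w)$ of $L^\dagger$ locally, both near the diagonal $z=w$ and near each of the four singular points $p \in \{0,a,1,\infty\}$, and to show that in all regimes its singularities are mild enough that $\int\int |\mathcal{G}(z,w)|^2\, d^2z\, d^2w < \infty$. Since $\mathcal{G}(\cdot,w)$ represents the operator that solves $L^\dagger u = \delta_w$ (with the correct boundary behavior at the four punctures dictated by $H^2_{\mbf a}$), the key input is precisely the explicit description of the solution space of $L^\dagger u = (\text{nice function})$ obtained in the previous section — namely Lemma \ref{diffeq} and Proposition \ref{sobo}, applied to $L^\dagger = \overline{L^*}$, which has the same local normal form $\partial z\partial - \alpha\partial$ (with $\alpha \in i\mbb R$) after the reduction of Lemma \ref{normform}, because $L(\mbf a,\tau)^* = L(-\mbf a,\tau)$ and $\alpha$ only enters through $\alpha^2$ (cf. Proposition \ref{da2}).

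First I would treat the diagonal singularity. Away from the four marked points, $L^\dagger$ is an elliptic second-order operator on a surface, so its Green function has the standard logarithmic singularity $\mathcal{G}(z,w) \sim c\log|z-w|$ as $z \to w$; this is locally in $L^2$ of $z$ uniformly in $w$ (on compact subsets of $\mbb P^1 \setminus \{0,a,1,\infty\}$), and by compactness the contribution of a neighborhood of the diagonal away from the punctures is finite. Second, and this is the substantive part, I would examine the behavior of $\mathcal{G}(z,w)$ as $z$ (or $w$) approaches a puncture $p$ with parameter $a_p = \alpha \in i\mbb R$. Fixing $w$ away from $p$, the function $z \mapsto \mathcal{G}(z,w)$ satisfies $L^\dagger u = 0$ near $p$, so by the local analysis of Subsection \ref{holo} (the semisimple/nonresonant case, since $\alpha \notin \mbb Z\setminus 0$) it is a linear combination of a branch that is smooth at $p$ and a branch behaving like $|w|^{\alpha}\overline{w}^{-\overline\alpha}$, i.e. like $|z|^{2\alpha}$ in modulus — which is $O(|z|^{-\delta})$ for all $\delta > 0$ since $\alpha$ is imaginary. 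Crucially, because we are inverting $\mathcal{L}$ with domain $H^2_{\mbf a}\otimes\mbb C^2$, the Green function lies in $H^2_{\mbf a}$ in each variable, so by Proposition \ref{sobo}(i) (and Proposition \ref{asymex}(i) for higher regularity) the singular part at $p$ is of the permitted $|z|^{2\alpha}(\phi + h)$ type rather than the forbidden principal-part-in-$\overline z^{-1}$ type; this gives the pointwise bound $|\mathcal{G}(z,w)| = O(|z|^{-\delta} |w|^{-\delta})$ near the punctures, uniformly, which is square-integrable in two complex dimensions for $\delta$ small. One then also needs uniformity in $w$ of the constant in the $z$-asymptotics near $p$: this follows because $\mathcal{G}(z,w)$, as a joint solution of $L^\dagger_z\mathcal{G} = \delta(z-w)$, $L_w\mathcal{G} = \delta(z-w)$, has symmetric structure in $(z,w)$ and the same local expansion holds in $w$.

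I would then assemble these estimates: partition $\mbb P^1 \times \mbb P^1$ into (a) a neighborhood of the diagonal away from the punctures, (b) neighborhoods of the punctures in either variable, and (c) the complement, where $\mathcal{G}$ is bounded. On (a) the $\log$ singularity is in $L^2$; on (b) the $O(|z|^{-\delta}|w|^{-\delta})$ bound, integrated against $d^2z\, d^2w$ over a polydisc, converges since $\int_{|z|<\varepsilon}|z|^{-2\delta}\,d^2z < \infty$ for $\delta < 1$; on (c) it is trivially in $L^2$ since $\mbb P^1 \times \mbb P^1$ has finite measure. Summing gives $\mathcal{G} \in L^2(\mbb P^1\times\mbb P^1)$, proving the proposition. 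The main obstacle I anticipate is making precise the claim that the Green function of the self-adjoint inverse $\mathcal{L}^{-1}$ genuinely inherits the $H^2_{\mbf a}$ boundary conditions at the punctures in the $w$-variable as well — i.e. that one is not secretly picking up the forbidden $|z|^{2\alpha}Q(\overline z^{-1})$ component — and controlling the dependence of the local expansion coefficients on $w$ uniformly; this is where the symmetry of the construction and the self-adjointness from Proposition \ref{matrop} must be used carefully rather than waved at.
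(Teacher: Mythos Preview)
There is a genuine error in your approach: the operator $L^\dagger$ is \emph{not} elliptic. Recall that $L$ is a purely holomorphic second-order operator (it is built from $\partial = \partial_z$ only), so $L^\dagger = \overline{L^*}$ is purely anti-holomorphic (built from $\overline{\partial}$ only). Its real principal symbol vanishes along the holomorphic cotangent direction at every point. Consequently the standard parametrix for elliptic operators does not apply, and the diagonal singularity of the Green function is \emph{not} $\log|z-w|$. (The logarithmic diagonal behavior you are thinking of belongs to $D = L^\dagger L$, which \emph{is} elliptic of order four; see Proposition~\ref{green2}.) In fact, away from the punctures $L^\dagger$ is essentially $\overline{\partial}^2$ up to a smooth nonvanishing factor, whose fundamental solution behaves like $(\overline{z}-\overline{w})/(z-w)$, so there is no blow-up on the diagonal there.

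This error propagates into a second gap: your partition does not control the region where \emph{both} $z$ and $w$ approach the same puncture $p$. For $w$ near $p$ the equation $L^\dagger_z\mathcal{G}(z,w)=\delta(z-w)$ has its source inside the neighborhood of $p$, so your ``$L^\dagger u = 0$ near $p$'' analysis and the resulting bound $O(|z|^{-\delta}|w|^{-\delta})$ simply do not apply; and this is exactly the region where the genuine singularity lives. The paper's proof handles this head-on by an explicit computation: using Lemma~\ref{normform} to reduce to $L^\dagger = \overline{\partial}\,\overline{z}\,\overline{\partial} + \overline{\alpha}\,\overline{\partial}$, then composing the fundamental solution $\frac{1}{\pi(z-w)}$ of $\overline{\partial}$ with itself through the middle factor to obtain the closed form
\[
G(z,w)\ \sim\ \frac{1}{\pi^2}\,\frac{|z/w|^{2\alpha}-1}{2\alpha}\cdot\frac{1}{z-w}
\]
near the puncture. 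The $L^2$ bound then follows from the change of variables $u=z/w$, under which $d^2z\,d^2w/|w|^2 = d^2u\,d^2w$ and the Jacobian exactly cancels the $|w|^{-2}$ blow-up, reducing the question to the local integrability of $\left|\frac{|u|^{2\alpha}-1}{2\alpha}\right|^2|u-1|^{-2}$ on the unit disk. Your qualitative estimates cannot see this cancellation.
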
 

\begin{proof} By Lemma \ref{normform}, it suffices to consider the case $L=\partial z\partial-\alpha\partial=\partial z |z|^{2\alpha}\partial |z|^{-2\alpha}$. 
Since the Green function of $\overline{\partial}$ is $\frac{1}{\pi(z-w)}$, we get that the Green function of $L^\dagger$ behaves near the origin like 
$$
G(z,w)\sim \frac{1}{\pi^2}|z|^{2\alpha}\int_{|u|< 1}\frac{|u|^{-2\alpha}d^2u}{(z-u)\overline{u}(u-w)}.
$$
Using that $\frac{1}{(z-u)(u-w)}=\frac{1}{z-w}(\frac{1}{z-u}-\frac{1}{w-u})$, we obtain 
$$
G(z,w)\sim \frac{1}{\pi^2}|z|^{2\alpha}\frac{H(z)-H(w)}{z-w}, 
$$
where
$$
H(z)=\int_{|u|<1}\frac{|u|^{-2\alpha}d^2u}{(z-u)\overline{u}}=\frac{1-|z|^{-2\alpha}}{2\alpha}+H_0(z),
$$
with $H_0(z)$ holomorphic. Thus 
$$
G(z,w)\sim\frac{1}{\pi^2}\frac{|z/w|^{2\alpha}-1}{2\alpha}\frac{1}{z-w}.
$$
Thus, it suffices to show that the function $\frac{|z/w|^{2\alpha}-1}{2\alpha}\frac{1}{z-w}$ 
is locally in $L^2$, i.e. that the integral
$$
I:=\int_{0<|z|<|w|<1}\left|\frac{|z/w|^{2\alpha}-1}{2\alpha}\right|^2\frac{1}{|z/w-1|^2}\frac{d^2zd^2w}{|w|^2} 
$$
is finite. 

Let us pass to variables $u=z/w, w$. Then $d^2zd^2w/|w|^2=d^2ud^2w$, so we have 
$$
I=\int_{0<|u|,|w|<1}\left|\frac{|u|^{2\alpha}-1}{2\alpha}\right|^2\frac{1}{|u-1|^2}d^2ud^2w=
$$
$$
\pi \int_{0<|u|<1}\left|\frac{|u|^{2\alpha}-1}{2\alpha}\right|^2\frac{1}{|u-1|^2}d^2u.
$$
So our job is to show that the function $f(u):=\frac{|u|^{2\alpha}-1}{2\alpha}\frac{1}{u-1}$ is $L^2$ on the disk $|u|\le 1$. But this is clear since this function is smooth outside $u=0,1$ and has at most logarithmic growth at $u=0$ and is bounded near $u=1$. 
\end{proof} 

\begin{corollary}\label{disspec} 
The  operator $\mathcal{L}$ has discrete spectrum, which is a discrete subset of $\mbb R$. 
Moreover, its eigenspaces are finite dimensional. 
\end{corollary}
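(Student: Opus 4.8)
The plan is to deduce the corollary from Proposition \ref{green1} together with the spectral theorem for compact self-adjoint operators. Recall that after the generic shift $L\mapsto L-\zeta$ we have arranged that $\mathcal L$ (the closure of $\bigl(\begin{smallmatrix}0&L\\ L^\dagger&0\end{smallmatrix}\bigr)$ with domain $H^2_{\mbf a}\otimes\mbb C^2$) is injective and self-adjoint, so that $\mathcal L^{-1}$ is a well-defined self-adjoint operator on $\mathcal H\otimes\mbb C^2$ of block form $\mathcal L^{-1}=\bigl(\begin{smallmatrix}0&(L^\dagger)^{-1}\\ L^{-1}&0\end{smallmatrix}\bigr)$. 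Thus it suffices to show that $(L^\dagger)^{-1}$ and $L^{-1}$ are compact.

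First I would invoke Proposition \ref{green1}: it asserts precisely that the integral kernel (Green function) of $(L^\dagger)^{-1}$ lies in $L^2$ of the product of two copies of $\mbb P^1$, i.e. that $(L^\dagger)^{-1}$ is a Hilbert--Schmidt operator. Taking the adjoint — under which the two off-diagonal blocks of the self-adjoint operator $\mathcal L^{-1}$ are interchanged — gives $L^{-1}=\bigl((L^\dagger)^{-1}\bigr)^{\!*}$, again Hilbert--Schmidt. Hence $\mathcal L^{-1}$ is Hilbert--Schmidt, in particular compact, and it is self-adjoint.

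Next, apply the spectral theorem for compact self-adjoint operators to $\mathcal L^{-1}$: its spectrum is a countable subset of $\mbb R$ whose only possible accumulation point is $0$, and each nonzero spectral value is an eigenvalue of finite multiplicity. Since $\mathcal L$ is injective, $0\notin\on{Spec}(\mathcal L)$, and the correspondence $\mu\mapsto\mu^{-1}$ identifies $\on{Spec}(\mathcal L)$ with $\on{Spec}(\mathcal L^{-1})\setminus\{0\}$, matching the corresponding eigenspaces. Therefore $\on{Spec}(\mathcal L)$ is a subset of $\mbb R$ (real since $\mathcal L$ is self-adjoint) with no finite accumulation point, i.e. a discrete subset of $\mbb R$, and all its eigenspaces are finite-dimensional. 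Finally, undoing the shift replaces $\mathcal L$ by $\mathcal L-\bigl(\begin{smallmatrix}0&\zeta\\ \bar\zeta&0\end{smallmatrix}\bigr)$, a bounded self-adjoint perturbation on the same domain; since the property of having compact resolvent is stable under bounded self-adjoint perturbations, the same conclusion holds for the original operator.

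There is no real obstacle here once Proposition \ref{green1} is available: the only point requiring a line of care is that the Green function genuinely represents the inverse of the closure of $\mathcal L$ (and hence that $\mathcal L^{-1}$ is everywhere-defined and self-adjoint) rather than the inverse of some other self-adjoint extension — but this was already established when $\mathcal L^{-1}$ was introduced via Proposition \ref{matrop}, so it is enough to quote it.
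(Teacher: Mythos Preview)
Your proof is correct and follows essentially the same route as the paper: use Proposition \ref{green1} to see that $(L^\dagger)^{-1}$ is Hilbert--Schmidt, conclude that $\mathcal L^{-1}$ is compact self-adjoint, and apply the spectral theorem for compact self-adjoint operators. Your explicit justification that $L^{-1}=\bigl((L^\dagger)^{-1}\bigr)^*$ and your final remark about undoing the shift via stability of compact resolvent under bounded self-adjoint perturbations are sound additions that the paper leaves implicit (it simply works under the WLOG shift throughout).
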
 

\begin{proof}
By Proposition \ref{green1}, the Green function of the operator $(L^\dagger)^{-1}$ is square integrable. 
This implies that this operator is Hilbert-Schmidt, in particular compact. Thus the operator $\mathcal L^{-1}$ 
is a compact self-adjoint operator. Therefore, by the Hilbert-Schmidt theorem its spectrum is discrete and is a sequence going to zero, and eigenspaces are finite dimensional. This implies the statement. 
\end{proof} 

\subsection{Essential self-adjointness of the algebra $\mbb C[L,L^\dagger]$ on smooth functions.}

Recall that $\wt V$ denotes the space of smooth sections of the line bundle $O(s)\otimes \overline{O(-s-2)}$ and $\wt\mcA$ denotes the algebra $\mbb C[L,L^\dagger]\subset \End \wt V$.
 
\begin{theorem} \label{esaa} The algebra $\wt\mcA_{\mbb R}$ is essentially self-adjoint. 
\end{theorem}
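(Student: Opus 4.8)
The plan is to deduce Theorem \ref{esaa} from Nelson's theorem (Theorem \ref{ne1}) applied to the two symmetric operators $A_1 = L + L^\dagger$ and $A_2 = (L-L^\dagger)/i$ acting on $\wt V$, together with the spectral results already obtained via Green's functions. Concretely, it suffices to exhibit a single essentially self-adjoint operator $D$ on $\wt V$ which is a sum of squares of symmetric elements of $\wt\mcA_{\mbb R}$ and whose essential self-adjointness forces that of $\wt\mcA_{\mbb R}$ via Proposition \ref{esa1}(i). The natural candidate is $D = L^\dagger L + L L^\dagger$ (or simply $D=\mathcal{L}^2$ for the matrix operator $\mathcal L$ of Corollary \ref{matrop}): note $A_1^2 + A_2^2 = 2(L^\dagger L + L L^\dagger)$, so essential self-adjointness of this operator on $\wt V$ is exactly what Theorem \ref{ne1} requires.

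First I would record that $\mathcal L$ with domain $Y = \wt V \otimes \mbb C^2$ is essentially self-adjoint (Corollary \ref{matrop}), with closure $\mathcal L$ having domain $H^2_{\mbf a}\otimes \mbb C^2$, and that (after the harmless shift $L \mapsto L-\zeta$) this closure is invertible with compact inverse $\mathcal L^{-1}$ (Proposition \ref{green1}, Corollary \ref{disspec}). Hence $\mathcal L$ is a self-adjoint operator with discrete real spectrum and finite-dimensional eigenspaces; its square $\mathcal L^2$ is therefore self-adjoint on its natural domain $\{y : \mathcal L^2 y \in \mcH\otimes\mbb C^2\}$, and by the spectral description this domain is the closure of $Y$ under the graph norm of $\mathcal L^2$. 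The key point to check is that $Y = \wt V \otimes \mbb C^2$ is a core for $\mathcal L^2$, equivalently that $\wt V$ is a core for $L^\dagger L + LL^\dagger$ on the diagonal. This follows because eigenfunctions $\psi_\Lambda$ of $\mathcal L$ lie in $S_{\mbf a} \subset \wt V$ — indeed the local structure lemmas (Lemma \ref{not2Z}, Lemma \ref{a_p=0}, Proposition \ref{asymex}) show the joint eigenfunctions of $L,L^\dagger$ are of the form $Q_1\frac{|z|^{2\alpha}-1}{2\alpha}+Q_2$ near the singular points with $Q_1,Q_2$ smooth, and when $a_p\in i\mbb R$ with $a_p\notin\mbb Z$, Lemma \ref{not2Z} shows $\psi_\Lambda$ is actually smooth there, so $\psi_\Lambda\in\wt V$; the span of the $\psi_\Lambda$ is dense and contained in $\wt V$ and is manifestly a core for $\mathcal L^2$.

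Granting that $D := L^\dagger L + LL^\dagger$ is essentially self-adjoint on $\wt V$, Theorem \ref{ne1} with $n=2$, $A_1=L+L^\dagger$, $A_2 = (L-L^\dagger)/i$, $V = \wt V$ gives that every real linear combination $aA_1+bA_2$ is essentially self-adjoint on $\wt V$ and that $\{e^{itA_1}\}$ and $\{e^{isA_2}\}$ commute, i.e. the closures $\overline{A_1},\overline{A_2}$ strongly commute. Equivalently $\overline L$ is a normal operator and $L,L^\dagger$ are essentially normal on $\wt V$ with strongly commuting closures and with $L^\dagger$ preserving $\wt V$; Proposition \ref{esa1}(i) then yields that $\wt\mcA_{\mbb R}=\mbb C[L,L^\dagger]_{\mbb R}$ is an essentially self-adjoint algebra. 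The main obstacle, and the step requiring genuine work rather than citation, is the core claim: verifying that $\wt V$ is dense in the graph norm of the closure of $L^\dagger L + LL^\dagger$. The cleanest route is to bypass it by instead invoking the $\mathcal L$-picture directly — since $\mathcal L$ is already known self-adjoint with core $Y$, and $\mathcal L^2 = \mathrm{diag}(L^\dagger L, LL^\dagger)$ shares the core $Y$ (self-adjointness of $\mathcal L$ plus $\mathcal L(Y)\subset Y$ suffices for $\mathcal L^2$ to be essentially self-adjoint on $Y$, by the standard fact that a core for $|\mathcal L|$ which is $\mathcal L$-invariant is a core for $\mathcal L^2$) — and then transfer back via Remark \ref{remsa}. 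I would therefore organize the proof as: (1) recall $\mathcal L$ is self-adjoint with core $Y$ and $\mathcal L(Y)\subseteq Y$; (2) conclude $\mathcal L^2$, hence $L^\dagger L+LL^\dagger = \frac12(A_1^2+A_2^2)$, is essentially self-adjoint on $\wt V$; (3) apply Nelson's Theorem \ref{ne1}; (4) apply Proposition \ref{esa1}(i).
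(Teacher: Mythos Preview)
Your approach has a genuine and fatal gap: the operator $D = A_1^2 + A_2^2 = 2(L^\dagger L + LL^\dagger)$ is \emph{not} essentially self-adjoint on $\wt V$, so Nelson's theorem cannot be applied there. Since $L$ is holomorphic and $L^\dagger$ anti-holomorphic, they commute on smooth sections, so $D = 4L^\dagger L$ as an operator on $\wt V$; but Remark~\ref{remnonsa} shows explicitly that $L^\dagger L$ fails to be essentially self-adjoint on $\wt V$ --- the domain $\wt V^\vee_D$ of its adjoint is $\wt H^4_{\mbf a}$, which strictly contains the closure $H^4_{\mbf a}$ by a four-dimensional defect. Your claimed ``standard fact'' (that an $\mathcal L$-invariant core for $\mathcal L$ is automatically a core for $\mathcal L^2$) is therefore false in this very situation: $Y = \wt V\otimes\mbb C^2$ is a core for $\mathcal L$ and is $\mathcal L$-invariant, yet $\mathcal L^2 = \mathrm{diag}(LL^\dagger,L^\dagger L)$ is \emph{not} essentially self-adjoint on $Y$. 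This is exactly the phenomenon flagged in Example~\ref{exam}(3) and Remark~\ref{remsa}(3).

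Two subsidiary claims you make are also false and cannot rescue the argument. First, $S_{\mbf a}\not\subset \wt V$: by Proposition~\ref{asymex}(ii) a typical element of $S_{\mbf a}$ behaves like $|z|^{2a_p}$ near a singular point, which for $a_p\in i\mbb R\setminus\{0\}$ is not smooth. Second, the joint eigenfunctions $\psi_\Lambda$ are in general \emph{not} smooth at the singular points: Lemma~\ref{not2Z} says they are locally a combination of a smooth function $\eta_1$ and a non-smooth $\eta_2\sim|w|^{2a_p}$, and there is no reason the $\eta_2$-coefficient vanishes. So the eigenfunctions do not lie in $\wt V$, and you cannot use them as an explicit core inside $\wt V$ for $\mathcal L^2$.

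The paper's proof sidesteps all of this by never touching $\mathcal L^2$ or Nelson on $\wt V$. It uses only that $\mathcal L$ itself is essentially self-adjoint on $Y$ with compact resolvent (Corollaries~\ref{matrop} and~\ref{disspec}); on each finite-dimensional eigenspace of $\mathcal L$ the commuting differential operators $L,L^\dagger$ restrict to a finite-dimensional normal pair, producing an orthonormal basis of $\mathcal H$ by joint eigenfunctions $\psi_\Lambda$. Since $Y$ is a core for $\mathcal L$, $\wt V$ is dense in $H^2_{\mbf a}$ in the graph norm of $L$, so the closure of $(L,\wt V)$ contains all $\psi_\Lambda$ and is therefore the normal multiplication-by-$\Lambda$ operator; Proposition~\ref{esa1}(i) finishes. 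If you want a Nelson-based route, it must be run on $S_{\mbf a} = S(\wt\mcA)$, where $D$ \emph{is} essentially self-adjoint (Corollary~\ref{selfad}) --- that is the paper's third proof in Section~16.
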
 

\begin{proof} By Corollary \ref{disspec}, the closure of the operator $\mathcal{L}$ has discrete spectrum, with finite dimensional eigenspaces and eigenvalues going to $\infty$. Moreover, by elliptic regularity these eigenspaces consist of smooth sections outside singularities. The operator $L$ then defines a normal operator on each of these eigenspaces (as $L,L^\dagger$ commute as differential operators). Thus each of these eigenspaces is an orthogonal direct sum of joint eigenspaces of $L,L^\dagger$, which we know to be 1-dimensional and correspond to elements of $\Sigma$. Thus $L$ is an essentially normal operator on $\widetilde{V}$. 
So the result follows from Proposition \ref{esa1}. 
\end{proof} 

Let $V\subset \wt V$ be the space of smooth sections which vanish in a neighborhood of the four singular points. 
Let $\mcA=\mbb C[L,L^\dagger]\subset \End V$.

\subsection{Proof of Theorem \ref{mainthe}(ii)}

\begin{corollary}\label{matrop1} The algebra $\mcA_{\mbb R}$ is essentially self-adjoint. 
\end{corollary}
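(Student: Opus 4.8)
The plan is to deduce essential self-adjointness of $\mcA_{\mbb R}$ on the smaller domain $V$ from the already-established essential self-adjointness of $\wt\mcA_{\mbb R}$ on $\wt V$ (Theorem \ref{esaa}). The key point is that the Schwartz space $S(\mcA)$ attached to $(V,\mcA)$ coincides with the Schwartz space $S(\wt\mcA)=S_{\mbf a}$ attached to $(\wt V,\wt\mcA)$, so the two algebras have the same canonical self-adjoint extension. Concretely, I would argue as follows.

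First I would identify $S(\mcA)$. By Definition \ref{es1a}, $S(\mcA)=\cap_{A\in\mcA}V_A^\vee$ (Remark \ref{remsa}(1)), where the dual is taken with respect to the pairing on $V$. Since $V\subset\wt V$ is dense in $\mcH$, an element $u\in\mcH$ lies in $V_A^\vee$ iff the functional $v\mapsto(Av,u)$ extends continuously; because $A=P(L,L^\dagger)$ is a differential operator, this is a purely local condition away from and near the four singular points, and near a singular point $p$ it is exactly the condition $L^ju\in L^2_{\rm loc}$ for all $j$, i.e. $u\in S_{a_p,\rm loc}$. Hence $S(\mcA)=S_{\mbf a}=S(\wt\mcA)$, using Proposition \ref{chara}(iv) and the description of $S_{\mbf a}$ via Proposition \ref{asymex}(ii). (Equivalently: compactly supported smooth functions test the same local regularity as all smooth functions, since the defining condition is local.)

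Then the conclusion is immediate: by Theorem \ref{esaa}, every $A\in\wt\mcA_{\mbb R}$ is essentially self-adjoint on $S(\wt\mcA)$; since $S(\mcA)=S(\wt\mcA)$ and $\mcA_{\mbb R}=\wt\mcA_{\mbb R}$ as algebras of operators (both are $\mbb C[L,L^\dagger]_{\mbb R}$, and the $\dagger$-action is the same), every $A\in\mcA_{\mbb R}$ is essentially self-adjoint on $S(\mcA)$, which by Definition \ref{es2} is exactly the assertion that $\mcA_{\mbb R}$ is an essentially self-adjoint algebra on $V$. Alternatively, one can invoke Remark \ref{remsa}(2) directly: $\wt V\supset V$ and each $A\in\mcA_{\mbb R}$ extends to an essentially self-adjoint endomorphism of $\wt V$ by Theorem \ref{esaa}, so $\wt V\subset S(\mcA)$ and the closure of $(A,V)$ agrees with the closure of $(A,\wt V)$, which is self-adjoint.

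The main obstacle is the verification that $S(\mcA)$ really equals $S_{\mbf a}$ and not something larger — one must be sure that restricting the test space from $\wt V$ to $V$ does not enlarge the space of $u$ against which all $Av$ pair continuously. This is where the locality of the operators $P(L,L^\dagger)$ and the explicit local analysis of Section 13 (Propositions \ref{chara} and \ref{asymex}) are essential: near each singular point the condition is intrinsically a statement about $u$ alone (that $L^ju\in L^2_{\rm loc}$), independent of whether the test functions vanish near $p$ or not, because one can always choose compactly supported test functions probing an arbitrary punctured neighborhood. Once this identification is in hand, the proof is a one-line appeal to Theorem \ref{esaa}.
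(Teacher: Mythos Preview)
Your overall strategy is exactly the paper's: reduce to Theorem \ref{esaa} by showing $S(\mcA)=S(\wt\mcA)=S_{\mbf a}$. The easy inclusion $S_{\mbf a}\subset S(\mcA)$ is clear. The gap is in the reverse inclusion.

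You claim that ``near a singular point $p$ the condition is exactly $L^ju\in L^2_{\rm loc}$,'' arguing that compactly supported test functions probe arbitrary punctured neighborhoods. But that is precisely the problem: test functions in $V$ \emph{vanish near $p$}, so pairing against them only tells you that $L^ju$, viewed as a distribution on the punctured sphere, is locally $L^2$ \emph{away from $p$}. Equivalently, as distributions on all of $\mbb P^1$ one has $L^k(L^\dagger)^lu=\phi_{kl}+\xi_{kl}$ with $\phi_{kl}\in\mcH$ and $\xi_{kl}$ supported at the singular points --- and nothing you have said rules out $\xi_{kl}\ne 0$. The condition $u\in S_{a_p,\rm loc}$ is $L^ju\in L^2_{\rm loc}$ \emph{including at $p$}, i.e.\ $\xi_{kl}=0$; this is not a locality tautology but a substantive statement that needs proof.

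The paper supplies exactly this missing step: using the surjectivity of $L:H^{2(m+1)}_{\alpha,\rm loc}\to H^{2m}_{\alpha,\rm loc}$ (Corollary \ref{surj}), one writes $u=f_*+g$ near $p$ with $Lf_*=\phi_{10}$ and $f_*\in H^2_{\alpha,\rm loc}$, so $g\in L^2_{\rm loc}$ satisfies $Lg=\xi_{10}$; an explicit analysis of such $g$ (solutions in $L^2_{\rm loc}$ of $Lg=$ distribution supported at $0$) forces $\xi_{10}=0$, and one iterates. Your sketch skips this entirely.

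Your alternative via Remark \ref{remsa}(2) is circular: that remark \emph{assumes} $\mcA_{\mbb R}$ is essentially self-adjoint on $V$, which is what you are proving. Moreover, Theorem \ref{esaa} asserts essential self-adjointness on $S(\wt\mcA)$, not on $\wt V$ itself, so the hypothesis you want to feed into the remark is not available either.
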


\begin{proof} By Theorem \ref{esaa}, it suffices to show that $S(\mcA)$ coincides with $S(\wt\mcA)=S_{\mbf a}$. It is clear that $S_{\mbf a}\subset S(\mcA)$, so we just need to establish the opposite inclusion. By definition, $S(\mcA)$ is the space of all distributional sections $f$ of $O(s)\otimes \overline{O(-2-s)}$ on $\mbb P^1\setminus \lbrace{0,a,1,\infty\rbrace}$ such that for any $k,l$ the section $L^k(L^\dagger)^lf$ is in $L^2$ outside singularities.
In other words, as distributions on the whole $\mbb  P^1$, we have 
$$
L^k(L^\dagger)^lf=\phi_{kl}+\xi_{kl},
$$
where $\phi_{kl}\in \mathcal{H}$ and $\xi_{kl}$ is a singular distribution supported at the four singular points, and this decomposition is clearly unique (here $\phi_{00}=f$ and $\xi_{00}=0$). Thus by elliptic regularity $f$ must be smooth outside the singular points. Let $p$ be one of the four singular points and let us work in a neighborhood of $p$. Since $L: H^2_{\alpha,\rm loc}\to L^2_{\rm loc}$ is surjective by Proposition \ref{surj}, there is a function $f_*\in L^2_{\rm loc}$ such that $Lf_*=\phi_{10}$ (so $f_*\in H^2_{\alpha,\rm loc}$). Then $g:=f-f_*$ is in $L^2_{\rm loc}$ and $Lg=\xi_{10}$. So we have $g(z)=g_1(\bar z)+g_2(\bar z)\frac{|z|^{2\alpha}-1}{2\alpha}$, where $g_1,g_2$ are holomorphic. Thus, $g\in H^2_{\alpha,\rm loc}$, so $\xi_{10}=0$ and $f\in H^2_{\alpha,\rm loc}$. 

Now $L^2f=L(Lf)=\phi_{20}+\xi_{20}$. Arguing as above, we find that $\xi_{20}=0$, so $L^2f\in L^2_{\rm loc}$ and $f\in H^4_{\alpha,\rm loc}$. Continuing in this way, we see that $\xi_{m0}=0$ for all $m$, so $f\in H^{2m}_{\alpha,\rm loc}$ for all $m$, hence $f\in S_{\alpha,\rm loc}$. Thus, $f\in S_{\mbf a}$, as claimed. 
\end{proof} 

Corollary \ref{matrop1} implies Theorem \ref{mainthe}(ii). This completes the proof of Theorem \ref{mainthe}. 

\section{The operator $D=L^\dagger L$ and a second proof of Theorem \ref{mainthe}} 

In this section we will give another proof of Theorem \ref{mainthe}, based on using the operator $D=L^\dagger L$ instead of 
$\mathcal{L}$. This operator is formally symmetric on $\wt V$, but it turns out not to be essentially self adjoint (see Remark \ref{remnonsa} below). So we first describe a suitable self-adjoint extension of $D$. 

\subsection{The self-adjoint extension $D$} 

\begin{proposition}\label{selfadex} The operator $D$ is self-adjoint on the space $W:=H_{\mbf a}^4$. 
\end{proposition}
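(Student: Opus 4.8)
The plan is to obtain the self-adjointness of $(D,W)$ for free from the self-adjointness of the operator $\mathcal L$ of Corollary~\ref{matrop}, by passing to $\mathcal L^{2}$.

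Recall first that, by the proof of Corollary~\ref{matrop}, the closure of $\mathcal L$ (which I keep denoting $\mathcal L$) is self-adjoint with domain $H^{2}_{\mathbf a}\oplus H^{2}_{\mathbf a}$, and it acts by $\mathcal L(f,g)=(Lg,\,L^{\dagger}f)$. By the spectral theorem (Theorem~\ref{sth}) we may realize $\mathcal L$ as multiplication by a real measurable function $f_{0}$ on some $L^{2}(X,\mu)$; then $\mathcal L^{2}$ is multiplication by $f_{0}^{2}$ on the maximal domain $\{h:f_{0}^{2}h\in L^{2}(X,\mu)\}$, which is again self-adjoint, and by the Cauchy--Schwarz inequality $\|f_{0}h\|^{2}\le\|h\|\,\|f_{0}^{2}h\|$ this maximal domain corresponds exactly to $\{v\in\operatorname{dom}(\mathcal L):\mathcal Lv\in\operatorname{dom}(\mathcal L)\}$. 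Hence $\mathcal L^{2}$ is self-adjoint, with
\[
\operatorname{dom}(\mathcal L^{2})=\{(f,g):\ f,g\in H^{2}_{\mathbf a},\ L^{\dagger}f\in H^{2}_{\mathbf a},\ Lg\in H^{2}_{\mathbf a}\}.
\]

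Next I identify this domain with $W\oplus W$. By Proposition~\ref{chara} applied near each of the four singular points $0,a,1,\infty$, together with ordinary elliptic regularity away from them (where $D$ is elliptic of order $4$), one has, for $f\in\mathcal H$: $f\in H^{4}_{\mathbf a}$ if and only if $f\in H^{2}_{\mathbf a}$ and $L^{\dagger}f\in H^{2}_{\mathbf a}$, and equivalently if and only if $f\in H^{2}_{\mathbf a}$ and $Lf\in H^{2}_{\mathbf a}$. Therefore $\operatorname{dom}(\mathcal L^{2})=H^{4}_{\mathbf a}\oplus H^{4}_{\mathbf a}=W\oplus W$, and on it $\mathcal L^{2}(f,g)=(LL^{\dagger}f,\,L^{\dagger}Lg)=(LL^{\dagger}f,\,Dg)$. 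Now $\mathcal L^{2}$ is block-diagonal for the orthogonal splitting $\mathcal H\oplus\mathcal H$, and its domain is the direct sum $W\oplus W$; since the adjoint of such a block-diagonal operator $A\oplus B$ is $A^{\dagger}\oplus B^{\dagger}$, the operator is self-adjoint if and only if each of the two blocks is. In particular $D=L^{\dagger}L$ with domain $W=H^{4}_{\mathbf a}$ is self-adjoint, which is the assertion (and the companion block $LL^{\dagger}$ on $H^{4}_{\mathbf a}$ is self-adjoint as well, a fact used later).

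The only substantive input is the identification $\operatorname{dom}(\mathcal L^{2})=W\oplus W$, which is exactly where the explicit structure of the generalized local Sobolev spaces at a regular singular point (Propositions~\ref{chara} and~\ref{charfun2}) enters; the rest is formal. One could instead argue directly inside this section: symmetry of $D$ on $W$ is immediate from two applications of Proposition~\ref{intparts}, namely $(Df_{1},f_{2})=(Lf_{1},Lf_{2})=(f_{1},Df_{2})$ for $f_{1},f_{2}\in W\subset H^{2}_{\mathbf a}$; but the reverse inclusion $\operatorname{dom}(D^{\dagger})\subseteq W$ would then require showing, via Green's formula for the fourth-order operator $D$ and the normal forms of Proposition~\ref{charfun2}, that any $u$ with $Du\in\mathcal H$ whose local expansion at some singular point $p$ contains a term of type $|z|^{2a_{p}}\log|z|$ (i.e.\ one not present in $H^{4}_{a_{p},\mathrm{loc}}$) pairs nontrivially against a suitable element of $W$. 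That boundary-term analysis is the main obstacle in the direct route, and the detour through $\mathcal L^{2}$ sidesteps it entirely.
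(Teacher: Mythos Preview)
Your proof is correct, and it takes a genuinely different route from the paper's argument.

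The paper proves Proposition~\ref{selfadex} directly: symmetry of $D$ on $W=H^4_{\mathbf a}$ comes from two applications of Proposition~\ref{intparts}, and the reverse inclusion $W^\vee\subset W$ is obtained by the boundary-term analysis you describe in your last paragraph. Concretely, one first observes that any $f\in W^\vee$ lies in $\widetilde H^4_{\mathbf a}$ (the space of $F\in\mathcal H$ with $DF\in\mathcal H$), then uses the explicit local description of $\widetilde H^4_{\alpha,\mathrm{loc}}$ from Proposition~\ref{charfun2} to write $f$ near each singular point $p$ as in \eqref{eq40} with coefficients $C_{1p}(f),C_{2p}(f)$. Pairing $f$ against test elements $h_p\in W$ supported near $p$ with $C_{1p}(h_p)=1$, Green's formula for the fourth-order operator $D$ produces a nonzero multiple of $C_{2p}(f)$ as the defect $(Df,h_p)-(f,Dh_p)$, forcing $C_{2p}(f)=0$ and hence $f\in H^4_{\mathbf a}$.

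Your detour through $\mathcal L^2$ bypasses this boundary computation entirely: once $\mathcal L$ is known to be self-adjoint on $H^2_{\mathbf a}\oplus H^2_{\mathbf a}$ (Corollary~\ref{matrop}), the self-adjointness of $\mathcal L^2$ is automatic from the spectral theorem, and the identification $\operatorname{dom}(\mathcal L^2)=H^4_{\mathbf a}\oplus H^4_{\mathbf a}$ is just Proposition~\ref{chara}(vii). The block-diagonal reduction then yields the result. This is slicker and gives the self-adjointness of $LL^\dagger$ on $H^4_{\mathbf a}$ for free. The trade-off is structural: Section~15 is advertised as an approach to Theorem~\ref{mainthe} that uses $D$ \emph{instead of} $\mathcal L$, so invoking Corollary~\ref{matrop} here makes the second proof less independent of the first. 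The paper's direct argument, while requiring the explicit asymptotics of Proposition~\ref{charfun2}, keeps Section~15 self-contained and incidentally identifies the full deficiency space $\widetilde H^4_{\mathbf a}/H^4_{\mathbf a}$, which is used in Remark~\ref{remnonsa} to exhibit the failure of essential self-adjointness of $(D,\widetilde V)$.
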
 

\begin{proof} First of all, by Proposition \ref{intparts}, 
$D$ is symmetric on $W$. Let $W^\vee\subset \mathcal{H}$ be the domain of the adjoint operator to $D$, and $f\in W^\vee$. Our job is to show that $f\in W$. 

Let $\widetilde{W}$ be the space of $F\in {\mathcal H}$ such that $DF\in \mathcal{H}$. Thus, near each singular point $p$, the section $F$ has a representation \eqref{eq40} with some coefficients $C_j=C_{jp}(F)$. By Proposition \ref{charfun2}, $W$ is the subspace of of $\widetilde{W}$ of codimension $4$, consisting of $F$ for which the corresponding coefficients $C_{2p}(F)$ vanish at all the four singular points $p=0,a,1,\infty$. 

Given a test section $\phi\in C^\infty(\mbb P^1,O(s)\otimes \overline{O(-2-s)})$, we have 
$(D\phi,f)=(\phi,Df)$. This implies that $f\in \widetilde{W}$. So it remains to show that if $f\in W^\vee$ then $C_{2p}(f)=0$ for all $p$. 

Let $h_p\in W$ be a section supported near $p$ with $C_{1p}(h_p)=1$. 
By a direct calculation (using the local presentation $L=\partial z\partial-a_p\partial$), 
it is easy to show that $(Df,h_p)-(f,Dh_p)$ is a nonzero multiple of $C_{2p}(f)$. Thus, $C_{2p}(f)=0$, as desired.  
\end{proof} 

\begin{corollary} \label{selfad} The operator
$D$ is essentially self-adjoint on $S_{\mbf a}$.
\end{corollary}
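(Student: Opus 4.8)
The plan is to deduce this from Proposition \ref{selfadex}, which says $D$ is self-adjoint on $W = H^4_{\mbf a}$, together with the abstract framework of Subsection \ref{preli}, in particular Remark \ref{remsa}(2). First I would observe that $S_{\mbf a} = H^\infty_{\mbf a} = \cap_m H^{2m}_{\mbf a}$ sits inside $W = H^4_{\mbf a}$, and that $D = L^\dagger L$ maps $S_{\mbf a}$ to itself: indeed, by Proposition \ref{chara}(iv) (applied locally near each singular point) $f \in S_{\mbf a}$ if and only if $P(L,L^\dagger)f \in \mcH$ for every polynomial $P$, and this class is visibly preserved by multiplication by $D = L^\dagger L$. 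So $D$ restricts to a symmetric operator on $S_{\mbf a}$. By Proposition \ref{intparts}, $D$ is symmetric on $W$ as well, and by Proposition \ref{selfadex} its restriction to $W$ is in fact self-adjoint.

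Next I would invoke Remark \ref{remsa}(2): if $D$ is essentially self-adjoint on some domain (here I take the smaller domain to be $S_{\mbf a}$, or even $\wt V$), and $W \supset S_{\mbf a}$ is a subspace on which $D$ extends to an (essentially) self-adjoint operator, then the closure of $(D, S_{\mbf a})$ coincides with the closure of $(D, W)$ and hence is self-adjoint; equivalently $D$ is essentially self-adjoint on $S_{\mbf a}$. More concretely: since $(D,W)$ is self-adjoint, its closure is itself, and to show $(D, S_{\mbf a})$ is essentially self-adjoint it suffices to show that the closure of $(D,S_{\mbf a})$ equals $(D,W)$. The inclusion of graphs $\Gamma_{D|_{S_{\mbf a}}} \subset \Gamma_{D|_W}$ is clear; since $(D,W)$ is closed (being self-adjoint), $\overline{\Gamma}_{D|_{S_{\mbf a}}} \subset \Gamma_{D|_W}$. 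For the reverse inclusion I would argue that $S_{\mbf a}$ (or already $\wt V$) is a core for $D|_W$: given $f \in W = H^4_{\mbf a}$, approximate $f$ in the graph norm $\|f\| + \|Df\|$ by elements of $\wt V$. Away from the four singular points this is a routine smoothing argument; near a singular point $p$ one uses the explicit description of $H^4_{a_p,\rm loc}$ from Proposition \ref{charfun2}(ii) — a function of the form $C\frac{|z|^{2\alpha}-1}{2\alpha} + f_0(z)$ with $f_0 \in C^{0,\gamma}_{\rm loc}$ and $g \in H^2_{\rm loc}$ — and approximates it together with its $D$-image, using that $\wt V \ni Q_1 \frac{|z|^{2\alpha}-1}{2\alpha} + Q_2$-type functions are dense in the appropriate sense (Proposition \ref{asymex}(ii) identifies $S_{\mbf a}$ with exactly such functions).

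Alternatively — and this is probably cleaner — I would use Theorem \ref{mainthe}(ii) directly, since by this point in the paper it is already proved (via Corollary \ref{matrop1}): the algebra $\mcA_{\mbb R} = \mbb C[L,L^\dagger]_{\mbb R}$ is essentially self-adjoint on $V$, and by the proof of Corollary \ref{matrop1} one has $S(\mcA) = S(\wt\mcA) = S_{\mbf a}$. Then $D = L^\dagger L \in \mcA_{\mbb R}$, so by Definition \ref{es2} (essential self-adjointness of the algebra) $D$ is essentially self-adjoint on $S(\mcA) = S_{\mbf a}$. This is the most economical route: it is a one-line consequence of Theorem \ref{mainthe}(ii) plus the identification $S(\mcA) = S_{\mbf a}$ established in the proof of Corollary \ref{matrop1}, and it is consistent with the placement of this corollary right after Proposition \ref{selfadex} in the alternative (second) proof of the main theorem.

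The main obstacle, in the self-contained approach via Remark \ref{remsa}(2), is verifying that $S_{\mbf a}$ is a core for the self-adjoint operator $(D, H^4_{\mbf a})$ — i.e. the graph-norm density of smooth (or Schwartz) sections in $H^4_{\mbf a}$, which requires care at the four regular singular points where the local model functions have the non-smooth term $\frac{|z|^{2\alpha}-1}{2\alpha}$; one must approximate in the $D$-graph norm, not just in $L^2$, and the explicit local descriptions in Propositions \ref{charfun2} and \ref{asymex} are exactly what make this manageable. If instead one simply cites Theorem \ref{mainthe}(ii) and the identity $S(\mcA) = S_{\mbf a}$ from the proof of Corollary \ref{matrop1}, there is essentially no obstacle and the corollary is immediate.
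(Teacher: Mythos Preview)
Your second approach is circular. Corollary \ref{selfad} is invoked in Proposition \ref{essno} as the key input to the \emph{third} proof of Theorem \ref{mainthe}; the whole point of Sections 15--16 is to give proofs of the main theorem independent of the first one. Appealing to Theorem \ref{mainthe}(ii) (equivalently Corollary \ref{matrop1}) to establish Corollary \ref{selfad} therefore defeats its purpose, even though it is not formally illogical given the linear ordering of the paper.

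Your first approach is correct in principle but harder than what the paper intends. The paper's proof is the single sentence ``This follows directly from Proposition \ref{selfadex}.'' The intended reading is not a graph-norm density argument but rather the observation that the \emph{proof} of Proposition \ref{selfadex} computes the adjoint domain using only test functions that already lie in $S_{\mbf a}$: namely smooth sections $\phi \in \wt V \subset S_{\mbf a}$ (to show $(S_{\mbf a})^\vee_D \subset \wt H^4_{\mbf a}$) and the functions $h_p$ with $C_{1p}(h_p)=1$, which by Proposition \ref{asymex}(ii) are of the form $Q_1\frac{|z|^{2\alpha}-1}{2\alpha}+Q_2$ with $Q_1,Q_2$ smooth and hence lie in $S_{\mbf a}$ (to force $C_{2p}(f)=0$). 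Thus the identical argument gives $(S_{\mbf a})^\vee_D = H^4_{\mbf a}$, so the adjoint of $(D,S_{\mbf a})$ is $(D,H^4_{\mbf a})$, which is self-adjoint by Proposition \ref{selfadex}; hence $(D,S_{\mbf a})$ is essentially self-adjoint. This sidesteps entirely the approximation-in-graph-norm step you flagged as the main obstacle. Of course your density statement is \emph{equivalent} to $(S_{\mbf a})^\vee_D = H^4_{\mbf a}$, but establishing the latter directly via the adjoint computation is much cleaner than constructing approximating sequences near the singular points.
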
 

\begin{proof} This follows directly from Proposition \ref{selfadex}.  
\end{proof} 

\subsection{The Green function of $D$} 

\begin{proposition}\label{green2} The Green function of the operator $D=L^\dagger L$ is in $L^2$ as a function of two complex variables. Moreover, it defines an $L^1$ function on the diagonal. 
\end{proposition}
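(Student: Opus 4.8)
\textbf{Proof plan for Proposition \ref{green2}.} The plan is to reduce the statement to a purely local computation near each of the four singular points $0,a,1,\infty$, exactly as in the proof of Proposition \ref{green1}, and then to estimate the local model of the Green function of $D=L^\dagger L$ explicitly. Away from the singular points the operator $D$ is a nondegenerate fourth order elliptic operator on a compact surface, so its Green function $G_D(z,w)$ has only the standard diagonal singularity of a fourth order elliptic operator in real dimension $2$; since $4>2$, this singularity is bounded (in fact continuous), so it contributes nothing to either the $L^2$-ness over $\mbb P^1\times \mbb P^1$ or the $L^1$-ness on the diagonal. Hence both assertions are local questions near the four punctures, and by Lemma \ref{normform} we may assume $L=\partial z\partial-\alpha\partial=\partial z|z|^{2\alpha}\partial|z|^{-2\alpha}$ with $\alpha\in i\mbb R$ (so $|z|^{\pm\alpha}$ are bona fide $L^\infty_{\rm loc}$ factors of modulus $1$).

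First I would write $D=L^\dagger L$ in the factored form suggested by $16D=\Delta\, r^{2+2\alpha}\,\Delta\, r^{-2\alpha}$ (this identity was already used in the proof of Proposition \ref{charfun2}), and build the local Green function of $D$ by composing the known Green functions of the four factors: the fundamental solution of $\Delta$ is $\frac{1}{2\pi}\log|z-w|$, and the factors $r^{\pm(2+2\alpha)}$, $r^{\mp2\alpha}$ are just multiplication operators. Concretely this produces a local model of the shape
$$
G_D(z,w)\sim c\,|z|^{2\alpha}\int_{|u|<1}\frac{\log|z-u|\,|u|^{-2-2\alpha}\log|u-w|}{1}\,d^2u\cdot(\text{corrections}),
$$
which, after using the structure of $H^4_{\alpha,\rm loc}$ from Proposition \ref{charfun2}, should be controlled by the explicit elementary functions $\frac{|z|^{2\alpha}-1}{2\alpha}$, $\log|z|$, $\log^2|z|$ in the $z$-variable and their analogues in $w$, together with a remainder that is $C^{0,\gamma}_{\rm loc}$ by the H\"older estimates of Lemma \ref{lemmm} and Lemma \ref{aux1}. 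The upshot is that near a puncture $G_D(z,w)$ is, up to bounded factors, a product of an explicit mild singularity in $z$ and an explicit mild singularity in $w$ (at worst of logarithmic type), times a kernel with at most a $|z-w|^{-\varepsilon}$-type singularity on the diagonal for every $\varepsilon>0$ coming from the iterated $\overline\partial^{-1}$'s — the same mechanism that gave Proposition \ref{green1}, but now with an extra smoothing because $D$ is fourth order rather than second.

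Then I would finish by two elementary integrability checks, mimicking the change of variables $u=z/w$ used in Proposition \ref{green1}. For the $L^2$ claim on $\mbb P^1\times\mbb P^1$: localize near the puncture, pass to $u=z/w$, and observe that $\int_{0<|z|<|w|<1}|G_D|^2 \lesssim \int |w|^{-\delta}d^2w\cdot\int_{|u|<1}\big|\tfrac{|u|^{2\alpha}-1}{2\alpha}\big|^2\,|\log|u||^{k}\,|u-1|^{-\varepsilon}\,d^2u$ for suitable small $\delta,\varepsilon$ and some power $k$; the $w$-integral is finite since $\delta<2$, and the $u$-integral is finite since the integrand is smooth off $u=0,1$, has only logarithmic growth at $u=0$, and integrable power growth $|u-1|^{-\varepsilon}$ at $u=1$. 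For the $L^1$-on-the-diagonal claim: restrict $G_D(z,z)$ to a neighborhood of a puncture and note that $G_D(z,z)$ is, up to a bounded factor, of size $|\log|z||^{k}$ as $z\to 0$ (the would-be leading $\frac{|z/w|^{2\alpha}-1}{2\alpha}$-type term vanishes on the diagonal, or is at worst $\log$-bounded there), so $\int_{|z|<1}|G_D(z,z)|\,d^2z<\infty$ by the trivial bound $\int_{|z|<1}|\log|z||^k\,d^2z<\infty$. The main obstacle, and the only place real work is needed, is the careful bookkeeping in the second paragraph: extracting the precise form of the diagonal singularity of the composed kernel and verifying that, after one $\overline\partial^{-1}$ is "used up" to cancel a $\log$ against a derivative, the surviving kernel genuinely has a $|z-w|^{-\varepsilon}$ (rather than $|z-w|^{-1}$) singularity — i.e. that the fourth order nature of $D$ really buys the extra half-derivative of smoothing. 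This is exactly the kind of estimate packaged by Lemma \ref{aux1} (with $\beta$ close to $0$), so I expect it to go through, but it requires being attentive to the $|z|^{2\alpha}$ weights and to the contributions of the $H^4_{\alpha,\rm loc}$-model functions.
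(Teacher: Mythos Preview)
Your reduction to the local model near a puncture and the factorization $16D=\Delta\,|z|^{2+2\alpha}\Delta\,|z|^{-2\alpha}$ are exactly what the paper does. But the integral you then write down,
$$
\int_{|u|<1}\log|z-u|\,|u|^{-2-2\alpha}\log|u-w|\,d^2u,
$$
diverges at $u=0$: since $\alpha\in i\mbb R$ the weight has modulus $|u|^{-2}$, while the numerator tends to the nonzero constant $\log|z|\log|w|$ as $u\to0$, giving a non-integrable $|u|^{-2}$ singularity. Burying this under ``(corrections)'' is the gap. The paper subtracts off exactly that constant,
$$
K(z,w)\sim \frac{4}{\pi^2}|w|^{2\alpha}\int_{|u|<1}\frac{\log|z-u|\log|w-u|-\log|z|\log|w|}{|u|^{2+2\alpha}}\,d^2u,
$$
after which the integrand is $O(|u|^{-1})$ near $u=0$ and the integral converges.

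Once the formula is regularized, the rest is far simpler than your plan. There is \emph{no} diagonal singularity: for $z,w$ bounded away from $0$ the integrand has only integrable $\log$-singularities at $u=z$ and $u=w$ (and an integrable $\log^2$ when they coincide), so $K$ is continuous in $(z,w)$ there. Your anticipated $|z-w|^{-\varepsilon}$ behaviour and the change of variables $u=z/w$ borrowed from Proposition~\ref{green1} are unnecessary --- the extra order of $D$ manifests as the complete absence of a diagonal singularity, not as a weaker power. The only singularities of $K$ are logarithmic in $z$ (resp.\ $w$) as $z\to0$ (resp.\ $w\to0$), which are trivially in $L^2_{\rm loc}$; and on the diagonal one reads off
$$
K(z,z)\sim\frac{4}{\pi^2}|z|^{2\alpha}\int_{|u|<1}\frac{\log^2|z-u|-\log^2|z|}{|u|^{2+2\alpha}}\,d^2u,
$$
which has only logarithmic growth at $z=0$ and is therefore in $L^1$.
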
 

\begin{proof} Again by Lemma \ref{normform} it suffices to consider $L=\partial z\partial-\alpha\partial$, in which case $D=\frac{1}{16}\Delta |z|^{2+2\alpha}\Delta |z|^{-2\alpha}$. 
Since the Green function of $\Delta$ is $\frac{1}{2\pi}\log|z-w|$, we get that 
the Green function of $D$ behaves near the origin like 
$$
K(z,w)\sim \frac{4}{\pi^2}|w|^{2\alpha}\int_{|u|<1}\frac{\log|z-u|\log|w-u|-\log|z|\log|w|}{|u|^{2+2\alpha}}d^2u.
$$
Thus $K$ has logarithmic singularities at $z=0$ and $w=0$ and no singularity on the diagonal, which implies that it is locally in $L^2$. Moreover, 
$$
K(z,z)\sim \frac{4}{\pi^2}|z|^{2\alpha}\int_{|u|<1}\frac{\log^2|z-u|-\log^2|z|}{|u|^{2+2\alpha}}d^2u.
$$
Thus, $K(z,z)$ has logarithmic growth at $z=0$, hence is in $L^1$, as claimed. 
\end{proof} 

\begin{corollary}\label{disspec1} The self-adjoint operator $D$ has discrete spectrum, which is a discrete subset of $\mbb R$. 
Moreover, all eigenspaces of $D$ are finite dimensional. 
\end{corollary}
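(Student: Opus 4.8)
The final statement to prove is Corollary \ref{disspec1}: the self-adjoint operator $D$ has discrete spectrum, which is a discrete subset of $\mbb R$, and all its eigenspaces are finite-dimensional. The plan is to mimic the proof of Corollary \ref{disspec}, using the fact (Proposition \ref{green2}) that the Green function of $D$ is square-integrable. First I would recall that, having replaced $L$ by $L-\zeta$ for generic $\zeta$ (so that $L$, hence $D=L^\dagger L$, is injective on $H^2_{\mbf a}$ and $W=H^4_{\mbf a}$ respectively), the operator $D$ is self-adjoint on $W$ by Proposition \ref{selfadex}, and injective, so it has a well-defined self-adjoint inverse $D^{-1}$ on $\mathcal{H}$, whose integral kernel is precisely the Green function considered in Proposition \ref{green2}.

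The key step is then to observe that since the Green function of $D$ lies in $L^2(\mbb P^1 \times \mbb P^1)$ (with respect to the natural measures on the product, using the Hermitian structure on $O(s)\otimes\overline{O(-2-s)}$), the operator $D^{-1}$ is a Hilbert-Schmidt operator, in particular compact. Being the inverse of a self-adjoint operator, $D^{-1}$ is itself self-adjoint. By the Hilbert-Schmidt (spectral) theorem for compact self-adjoint operators, the spectrum of $D^{-1}$ is a discrete subset of $\mbb R$ accumulating only at $0$, and all nonzero eigenspaces are finite-dimensional. Passing back via the spectral mapping $\mu \mapsto \mu^{-1}$, the spectrum of $D$ is a discrete subset of $\mbb R$ with no finite accumulation point, all eigenspaces are finite-dimensional, and $0$ is not an eigenvalue of the shifted operator; undoing the shift $L \mapsto L - \zeta$ only translates the spectrum by a fixed real amount (note $D = L^\dagger L$ is positive, so after shifting one must argue the spectrum stays real, which follows from self-adjointness of $D$ directly rather than positivity). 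This proves both assertions.

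I expect essentially no obstacle here: the argument is a direct transcription of the proof of Corollary \ref{disspec}, with Proposition \ref{green2} supplying the needed $L^2$-bound on the kernel and Proposition \ref{selfadex} supplying self-adjointness on the domain $W = H^4_{\mbf a}$. The only point requiring a small amount of care is the bookkeeping around the generic shift $L \mapsto L - \zeta$: one must make sure that injectivity of $D$ on $W$ can indeed be arranged for generic $\zeta$ (which follows since the joint eigenfunctions of $L,L^\dagger$ form a countable set by Proposition \ref{l2prop}(v), hence $D$ fails to be injective only for countably many $\zeta$) and that all conclusions about discreteness and finite-dimensionality are invariant under this shift. Thus the proof reads: \emph{By Proposition \ref{green2}, the Green function of $D$ is square-integrable, so $D^{-1}$ is Hilbert-Schmidt, in particular compact; being self-adjoint, it has by the Hilbert-Schmidt theorem a discrete real spectrum accumulating only at $0$ with finite-dimensional eigenspaces, and the corresponding statements for $D$ follow immediately.}
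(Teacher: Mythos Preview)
Your proposal is correct and follows essentially the same approach as the paper: shift $L$ to make it injective, deduce that $D$ is injective and self-adjoint on $H^4_{\mbf a}$, then use Proposition~\ref{green2} to conclude $D^{-1}$ is Hilbert--Schmidt (the paper adds that it is in fact trace class, using the $L^1$-on-the-diagonal part of Proposition~\ref{green2}, but this is not needed). Two small points: the paper makes explicit the step from injectivity of $L$ to injectivity of $D$ via $(Df,f)=(Lf,Lf)$ (Proposition~\ref{intparts}), which you leave implicit; and your remark that undoing the shift ``translates the spectrum by a fixed real amount'' is not literally correct, since $(L-\zeta)^\dagger(L-\zeta)$ is not $L^\dagger L$ plus a constant --- but this is harmless, as the paper also treats the shift as a ``without loss of generality'' reduction valid for the application.
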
 

\begin{proof} As before, we may assume by Proposition \ref{l2prop1} that $L$ is injective. Then if $f\in H_{\mbf a}^4$ 
and $Df=0$ then $(Df,f)=(Lf,Lf)=0$, so $Lf=0$, hence $f=0$. Thus $D$ is injective on $H_{\mbf a}^4$ and also self-adjoint by Proposition \ref{selfadex}, 
so we can define the operator $D^{-1}$. By Proposition \ref{green2}, $D^{-1}$ is a compact (in fact, trace class) self-adjoint operator. Therefore, its spectrum is discrete and is a sequence going to zero, and eigenspaces are finite dimensional.
This implies the statement. 
\end{proof} 

\subsection{A second proof of Theorem \ref{mainthe}} 

Corollary \ref{disspec1} allows us to give another proof of Theorem \ref{mainthe}. Namely, we just need to give a second proof of 
Theorem \ref{esaa}, from that point the proof is the same. 

So let us give a second proof of Theorem \ref{esaa}, which is the same as its first proof but using the operator $D$ instead of $\mathcal{L}$. By Corollary \ref{disspec1}, the operator $D$ has discrete spectrum, with finite dimensional eigenspaces and eigenvalues going to $\infty$. Moreover, by elliptic regularity these eigenspaces consist of smooth sections outside singularities. The operator $L$ then defines a normal operator on each of these eigenspaces (as $L,L^\dagger$ commute as differential operators). Thus each of these eigenspaces is an orthogonal direct sum of joint eigenspaces of $L,L^\dagger$, which we know to be 1-dimensional and correspond to elements of $\Sigma$. Thus $L$ is an essentially normal operator on $\widetilde{V}$. 
So the result follows from Proposition \ref{esa1}. 

\begin{remark} Note that the above two proofs of Theorem \ref{mainthe} do not use Nelson's Theorem \ref{ne1}. Indeed, the commutativity of the spectral resolutions of $L,L^\dagger$ comes out as a consequence of the argument and does not need to be known in advance. 
\end{remark} 

\section{A third proof of Theorem \ref{mainthe}}

In this section we give a third proof of Theorem \ref{mainthe}, which uses the operator $D$ and Nelson's theorem \ref{ne1}, but avoids the use of Green functions. 

\subsection{Essential normality of $L$} 

\begin{proposition}\label{essno} 
The operator $L$ is an essentially normal operator on $S_{\mbf a}$ and a normal operator on $H_{\mbf a}^2$.
\end{proposition}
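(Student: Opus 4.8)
\textbf{Proof proposal for Proposition \ref{essno}.}
The plan is to deduce both assertions from the spectral analysis already set up in Sections 13--14. The key input will be Corollary \ref{disspec} (or, alternatively, Corollary \ref{disspec1}): after replacing $L$ by $L-\zeta$ for generic $\zeta$ we may assume $L$ is injective on $H_{\mbf a}^2$, and then the closure $\mathcal{L}$ of the matrix operator with domain $H_{\mbf a}^2\otimes\mbb C^2$ is self-adjoint (Proposition \ref{matrop}) with discrete spectrum and finite-dimensional eigenspaces. Since $L$ and $L^\dagger$ commute as differential operators, they act on each (finite-dimensional) eigenspace of $\mathcal{L}$, and on a finite-dimensional space a commuting pair consisting of an operator and its adjoint is simultaneously diagonalizable; hence each eigenspace decomposes as an orthogonal direct sum of joint eigenlines of $L,L^\dagger$, which by Proposition \ref{l2prop}(iv) are at most one-dimensional and correspond to points of $\Sigma$. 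This shows $\mathcal{H}$ has an orthonormal basis of joint eigenfunctions $\psi_\Lambda$ of $L,L^\dagger$, and relative to this basis $L$ acts as multiplication by $\Lambda$ while $L^\dagger$ acts as multiplication by $\overline\Lambda$ (Proposition \ref{l2prop}(ii)).

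From this basis I would build the measure-space model required by the definition of a normal operator. Let $(X,\mu)$ be the countable set $\Sigma$ with counting measure (finite after rescaling, or one simply works with a $\sigma$-finite measure and notes separability), and let $U:\mathcal{H}\to L^2(X,\mu)$ send $\psi_\Lambda$ to the indicator of $\{\Lambda\}$. Then $U$ is an isometry and, by construction, $U\,\overline{L}\,U^{-1}$ is multiplication by the function $f(\Lambda)=\Lambda$; the domain of $\overline{L}$ is exactly $\{g: fg\in L^2\}$ because that is precisely the set of $u\in\mathcal{H}$ for which $\sum_\Lambda |\Lambda|^2|(u,\psi_\Lambda)|^2<\infty$, i.e.\ for which $Lu\in\mathcal{H}$ in the distributional sense. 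This is the definition of $L$ being essentially normal on any core, and in particular on $S_{\mbf a}$ provided $S_{\mbf a}$ is a core for $\overline L$.

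So the two things to check are: (1) $S_{\mbf a}=S(\wt\mcA)$ is a core for the closure of $(L,S_{\mbf a})$, and (2) $(L,H_{\mbf a}^2)$ is already closed, i.e.\ coincides with $\overline L$, so that $L$ is honestly normal (not merely essentially) on $H_{\mbf a}^2$. For (2): the graph of $(L,H_{\mbf a}^2)$ sits inside the graph of $\overline L$; conversely if $u\in\mathcal{H}$ and $Lu\in\mathcal{H}$ distributionally, then by elliptic regularity and Proposition \ref{chara}(ii) together with the local analysis (Propositions \ref{sobo}, \ref{charfun2}) one sees $u\in H_{\mbf a}^2$ — this is essentially the computation already performed in the proof of Corollary \ref{matrop1}, localized at each singular point, showing no singular distribution supported at $0,a,1,\infty$ can occur. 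Hence $(L,H_{\mbf a}^2)$ is closed and equals $\overline L$, and since $\overline L$ is normal by the spectral model above, $L$ is normal on $H_{\mbf a}^2$. For (1): since $\widetilde V$ (smooth sections) is dense and the joint eigenfunctions $\psi_\Lambda$ are smooth outside singularities (elliptic regularity) and lie in $S_{\mbf a}$ by Proposition \ref{asymex}(ii), the span of the $\psi_\Lambda$ is contained in $S_{\mbf a}$ and is a core for every polynomial in $\overline L$; therefore $S_{\mbf a}$ is a core for $\overline L$, so $L$ is essentially normal on $S_{\mbf a}$.

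The main obstacle is really the bookkeeping in step (2): verifying that a distributional $L^2$-section $u$ with $Lu\in L^2$ has no singular part concentrated at the four marked points and therefore lies in the Sobolev space $H_{a_p,\rm loc}^2$ near each. This is exactly the content reused from the proof of Corollary \ref{matrop1}, so the cleanest write-up will invoke that argument rather than repeat it; everything else is a formal consequence of the discreteness of the spectrum of $\mathcal{L}$ (or $D$) established earlier. Note also that, just as remarked after the second proof of Theorem \ref{mainthe}, this route does not logically require Nelson's theorem: the strong commutativity of $L$ and $L^\dagger$ is a byproduct of the explicit simultaneous diagonalization.
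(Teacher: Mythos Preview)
Your argument is correct, but it takes a different route from the paper, and the difference is the whole point of where Proposition \ref{essno} sits. The paper's proof is a three-line application of Nelson's Theorem \ref{ne1}: set $A=\tfrac12(L+L^\dagger)$, $B=\tfrac{1}{2i}(L-L^\dagger)$, note that $D=L^\dagger L=A^2+B^2$, observe that $A,B$ are symmetric on $S_{\mbf a}$ by Proposition \ref{intparts}, and invoke Corollary \ref{selfad} (essential self-adjointness of $D$ on $S_{\mbf a}$) to satisfy Nelson's hypotheses. Strong commutativity of $A,B$ then drops out, hence $L$ is essentially normal. No Green functions, no discreteness of spectrum, no eigenbasis.

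Your proof instead imports Corollary \ref{disspec} (or \ref{disspec1}), which is exactly the Green-function input from Sections 14--15, to obtain a joint eigenbasis $\{\psi_\Lambda\}$, and then reads off normality from the explicit multiplication model. This is essentially the argument already given for Theorem \ref{esaa}, so while logically sound, it makes Proposition \ref{essno} redundant with that theorem rather than a genuinely independent ingredient. The paper places \ref{essno} in Section 16 precisely to give a proof of Theorem \ref{mainthe} that \emph{avoids} Green functions; your route defeats that purpose. What your approach buys is an explicit identification of the domain of the closure with $H_{\mbf a}^2$ (your step (2)), which the paper's Nelson-theorem proof leaves somewhat implicit; what the paper's approach buys is logical independence from the compactness estimates, so that Section 16 stands on its own.
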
 

\begin{proof} Let $A={\rm Re}L=\frac{1}{2}(L+L^\dagger)$, $B={\rm Im}L=\frac{1}{2i}(L-L^\dagger)$. Our job is to show that 
$A,B$ strongly commute on $S_{\mbf a}$. To this end, note that $D=A^2+B^2$. By Proposition \ref{intparts}, $(Lf_1,f_2)=(f_1,L^\dagger f_2)$ for $f_1,f_2\in S_{\mbf a}$, hence $A,B$ are symmetric on $S_{\mbf a}$. Finally, by Corollary \ref{selfad}, the operator $D$ is essentially self-adjoint on $S_{\mbf a}$. Hence, the assumptions of Nelson's Theorem \ref{ne1} are satisfied. Thus, Theorem \ref{ne1} implies that $A,B$ strongly commute, as desired. 
\end{proof} 

This shows that the joint spectrum of $L,L^\dagger$ is well defined. So to prove Theorem 
\ref{mainthe}, it remains to show (by Proposition \ref{esa1}) that this spectrum is discrete. In the next subsection we give a proof of 
of this fact which does not use Green functions. 

\subsection{Discreteness of the joint spectrum of $L,L^\dagger$.} Formula \eqref{eq7} implies that single-valued joint eigenfunctions of $L,L^\dagger$ belong to the space $S_{\mbf a}$. Hence each eigenvalue $\Lambda$ corresponding to such an eigenfunction is a point of ${\rm Spec}(L)$. We would like to show that in fact ${\rm Spec}(L)$ does not contain any other points. Namely, we have the following theorem, which together with Propositions \ref{essno} and \ref{esa1} implies Theorem \ref{mainthe}. 

\begin{theorem}\label{main2} We have ${\rm Spec}(L)=\Sigma$, the set of eigenvalues of $L$ which give rise to a real monodromy representation. In particular, ${\rm Spec}(L)$ is discrete. Moreover, each eigenvalue $\Lambda\in \Sigma$ is simple. 
\end{theorem}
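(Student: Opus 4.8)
\textbf{Proof proposal for Theorem \ref{main2}.}

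The plan is to establish the two inclusions ${\rm Spec}(L)\supseteq \Sigma$ and ${\rm Spec}(L)\subseteq \Sigma$ separately, deduce discreteness from Corollary \ref{realmon} (via Proposition \ref{l2prop}(v)), and then read off simplicity from Proposition \ref{l2prop}(iv). The first inclusion is essentially already in hand: for $\Lambda\in\Sigma$ the equation $L\Psi=\Lambda\Psi$ has real monodromy, so by Corollary \ref{realmon} (combined with Proposition \ref{eigecond}) there is a nonzero single-valued joint eigenfunction $\psi_\Lambda$ of $L$ and $L^\dagger$; by Proposition \ref{l2prop}(i) it lies in $\mathcal H$, and its local behavior near the singular points is governed by Lemma \ref{not2Z}, Lemma \ref{a_p=0} and formula \eqref{eq7}, which shows $\psi_\Lambda\in S_{\mbf a}$. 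Since by hypothesis $a_p\in i\mbb R$ we are in the semisimple-plus-logarithmic case, and $\psi_\Lambda$ is an eigenvector of the (essentially normal) operator $L$ acting on $S(\mcA)=S_{\mbf a}$; hence $\Lambda\in{\rm Spec}(L)$.

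The substantive direction is ${\rm Spec}(L)\subseteq\Sigma$. First I would reduce ``$\Lambda$ is in the spectrum'' to ``$\Lambda$ is an eigenvalue,'' which is available because by any of the three preceding proofs (Corollaries \ref{disspec}, \ref{disspec1}, or Proposition \ref{essno} together with the Green-function discreteness) the operator $\overline L$ is normal with discrete spectrum and finite-dimensional eigenspaces; so every spectral point $\Lambda$ admits a nonzero $\psi\in\mathcal H$ in the domain of $\overline L$ with $\overline L\psi=\Lambda\psi$, and then $L^\dagger\psi=\overline\Lambda\psi$ because $\overline L$ is normal and strongly commutes with its adjoint. By elliptic regularity $\psi$ is smooth off the four singular points and satisfies $L\psi=\Lambda\psi$ there, with $\psi\in\mathcal H=L^2$. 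Now Proposition \ref{l2prop1} applies verbatim: a nonzero $L^2$-solution of $L\psi=\Lambda\psi$ off the singular points forces $\Lambda\in\Sigma$ (and $L^\dagger\psi=\overline\Lambda\psi$). This is exactly the input that converts membership in ${\rm Spec}(L)$ into the real-monodromy condition, via the Fuchsian/characteristic-exponent analysis of the anti-holomorphic factor $\eta_i$ already carried out there.

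Having both inclusions, ${\rm Spec}(L)=\Sigma$; discreteness then follows since $\Sigma$ is a countable (Proposition \ref{l2prop}(v)) real-analytic subset of $\mbb C$ (the intersection of the holomorphic curve $\rho(\Lambda)\subset X(\mbf a)$ with its anti-holomorphic conjugate), hence discrete, as recorded after Corollary \ref{realmon}. For simplicity, the joint eigenspace of $L,L^\dagger$ at $\Lambda$ is at most one-dimensional by Proposition \ref{l2prop}(iv), since an eigenfunction corresponds to an invariant Hermitian pairing between the irreducible monodromy representations of $L\psi=\Lambda\psi$ and $L^\dagger\psi=\overline\Lambda\psi$, unique up to scale; and every $\Lambda$-eigenfunction of $\overline L$ is automatically a joint eigenfunction by normality, so the full $\overline L$-eigenspace is one-dimensional. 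The main obstacle I anticipate is the bookkeeping at the interface between the ``abstract'' spectral point (an $L^2$ element merely in the domain of the closure $\overline L$) and the ``concrete'' local analysis of Proposition \ref{l2prop1}, i.e.\ verifying that such an element is genuinely of the local form used there rather than carrying extra distributional mass at the singular points; but this is precisely the content of the $H^{2m}_{\mbf a}$ characterization in Proposition \ref{chara} together with the $\xi_{kl}=0$ argument of Corollary \ref{matrop1}, so it should go through without new ideas.
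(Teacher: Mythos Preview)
Your argument is logically correct but misses the point of Theorem \ref{main2} in the architecture of the paper. Section 16 is explicitly set up as a \emph{third} proof of Theorem \ref{mainthe} that avoids Green functions; Proposition \ref{essno} supplies essential normality via Nelson's theorem, and Theorem \ref{main2} is then supposed to supply discreteness \emph{independently}. By invoking Corollaries \ref{disspec} or \ref{disspec1} to reduce ``$\Lambda\in{\rm Spec}(L)$'' to ``$\Lambda$ is an eigenvalue'', you import the Green-function analysis of Sections 14--15, at which point the whole statement (including ${\rm Spec}(L)=\Sigma$) is already a consequence of the first two proofs and Theorem \ref{main2} becomes a tautology rather than new content.

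The paper's proof is genuinely different and self-contained within the Green-function-free framework: it assumes only the joint spectral decomposition coming from normality (Proposition \ref{essno}), takes $\Lambda\in{\rm Spec}(L)\setminus\Sigma$, and from the spectral projections $P_n$ onto shrinking disks around $\Lambda$ extracts a sequence $\psi_n$ of single-valued ``almost joint eigenfunctions'' with $\|(L-\Lambda)\psi_n\|_{C^r}+\|(L^\dagger-\overline\Lambda)\psi_n\|_{C^r}\to 0$ on a compact region away from the punctures. Solving the holonomic system with the same initial data produces genuine multivalued solutions $\phi_n$ whose monodromy defects $\gamma_i\phi_n-\phi_n$ tend to zero; but since $\Lambda\notin\Sigma$, the absence of single-valued solutions forces a uniform lower bound \eqref{eq9} on these defects, a contradiction. (The Remark after the proof sketches an alternative via rigged Hilbert spaces and generalized eigenfunctions, closer in spirit to your approach but still not assuming discreteness.) What your route buys is brevity once the earlier machinery is in place; what the paper's route buys is an honest independent proof and, as noted in Section 15, one that does not even need Nelson's Theorem \ref{ne1} if combined with either Green-function argument---whereas the third proof trades Green functions for Nelson plus this monodromy contradiction.
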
 

\begin{proof} We only need to prove that ${\rm Spec}(L)$ is contained in $\Sigma$, everything else will then follow from the results proved before. 

Fix a closed disk $\mathcal{D}\subset \mbb P^1\setminus \lbrace{0,a,1,\infty\rbrace}=\mbb C\setminus \lbrace{0,a,1\rbrace}$ with center $z_0\in \mathcal{D}$. Fix a small enough $\varepsilon>0$, and let $\mathcal{R}=\mathcal{R}_\varepsilon\supset \mathcal {D}$ be the set of points of $\mbb P^1$ at distance $\ge \varepsilon$ from $\lbrace{0,a,1,\infty\rbrace}$. Let $\gamma_i\subset \mathcal{R}$, $i=1,2,3$, be closed paths around punctures generating $\pi_1(\mbb C\setminus \lbrace{0,a,1\rbrace},z_0)$. 

Assume the contrary, i.e., that $\Lambda\in {\rm Spec}(L)$, but $\Lambda\notin \Sigma$. Then there is $K>0$ such that for any solution $\psi$ of the system $L\psi=\Lambda\psi$, $L^\dagger\psi=\overline{\Lambda}\psi$ over $\mathcal{D}$
we have 
\begin{equation}\label{eq9} 
\sum_{i=1}^3 \sup_{\mathcal D}|\gamma_i\psi-\psi|\ge K\sup_{\mathcal D}(|\psi|+|\partial \psi|+|\ol \partial\psi|+|\partial\ol \partial \psi|),
\end{equation} 
where $\gamma_i\psi$ denotes the image of $\psi$ under holonomy along $\gamma_i$; indeed, the ratio of the LHS to the RHS is a positive continuous function 
on the projectivization of the space of solutions, which is compact, hence this function is bounded away from zero. 

On the other hand, the joint spectral decomposition for $L,L^\dagger$(whose existence follows from Proposition \ref{essno}) implies that there are ``almost joint eigenfunctions". Namely, let $\mathcal H_n\subset \mathcal{H}$ be the image of the spectral projection $P_n$ for $L$ corresponding to the disk around $\Lambda$ of radius $1/n$. Note that 
all functions $\psi\in \mathcal H_n$ are smooth outside the four singular points, since $L^m(L^\dagger)^k\mathcal H_n\subset \mathcal H_n$ for all $m,k\ge 0$. Thus we have a sequence of single-valued ``almost joint eigenfunctions" $\psi_n\in \mathcal H_n$ such that $\sup_{\mathcal R}|\psi_n|=1$ but 
\begin{equation}\label{eq10}
||(L-\Lambda)\psi_n||_{C^r(\mathcal{R})}+||(L^\dagger-\overline{\Lambda})\psi_n||_{C^r(\mathcal{R})}\to 0,\ n\to \infty
\end{equation} 
for any $r\ge 0$. Thus, there exists $M>0$ such that
\begin{equation}\label{eq55}
\sup_{\mathcal D}(|\psi_n|+|\partial \psi_n|+|\ol \partial\psi_n|+|\partial\ol \partial \psi_n|)\ge M,\ n\gg 0.
\end{equation} 

Now let $\phi_n$ be the solution of the holonomic system 
$$
L\psi=\Lambda\psi,\ L^\dagger\psi=\overline{\Lambda}\psi
$$ 
over $\mathcal{D}$
with the same initial conditions as $\psi_n$ at $z_0$. Then the functions $\eta_n:=\psi_n-\phi_n$ satisfy the equations 
\begin{equation}\label{eq11}
(L-\Lambda)\eta_n=(L-\Lambda)\psi_n,\ (L^\dagger-\overline{\Lambda})\eta_n=(L^\dagger-\overline{\Lambda})\psi_n.
\end{equation} 
with zero initial conditions. This together with \eqref{eq10} implies that 
$$
\sup_{\mathcal D}(|\eta_n|+|\partial \eta_n|+|\ol \partial\eta_n|+|\partial\ol \partial \eta_n|)\to 0,\ n\to \infty,  
$$
hence using \eqref{eq55}, we get 
\begin{equation}\label{eq57}
\sup_{\mathcal D}(|\phi_n|+|\partial \phi_n|+|\ol \partial\phi_n|+|\partial\ol \partial \phi_n|)\ge M,\ n\gg 0. 
\end{equation} 
Also, solving the initial value problem with zero initial conditions for 
system \eqref{eq11} along $\gamma_i$ and using \eqref{eq10}, we see that 
$$
\sum_{i=1}^3 {\rm sup}_{\mathcal D}|(\psi_n-\gamma_i\phi_n)-(\psi_n-\phi_n)|=\sum_{i=1}^3 {\rm sup}_{\mathcal D}|\gamma_i\phi_n-\phi_n|\to 0,\ n\to \infty.
$$
By \eqref{eq57}, this contradicts \eqref{eq9}. 
\end{proof} 

\begin{remark} Here is a second proof of Theorem \ref{main2} using the theory of rigged Hilbert spaces (Gelfand triples). This theory implies that for any $\Lambda\in {\rm Spec}(L)$ there must exist a {\it generalized joint eigenfunction} 
$\psi$ of $L,L^\dagger$ with eigenvalues $\Lambda,\overline{\Lambda}$ on $\mbb C\setminus \lbrace{0,a,1\rbrace}$ (see \cite{GS}, p. 184-186, Theorems 1 and 2; in our case we need a straightforward generalization of this theory from the case of one self-adjoint operator to finite collections of strongly commuting self-adjoint operators). But then by elliptic regularity, this eigenfunction must be smooth, which implies that $\Lambda\in \Sigma$. 
\end{remark} 

\section{The spectral description of the global Sobolev and Schwartz spaces attached to $L$} 

\begin{corollary}\label{coro1} (i) $H^{2m}_{\mbf a}$ is the space of $f=\sum_{\Lambda\in \Sigma}c_\Lambda\psi_\Lambda\in 
\mathcal{H}$ such that 
$$
||f||_{H^{2m}_{\mbf a}}^2:=\sum_{\Lambda\in \Sigma} (1+|\Lambda|^{2m}) |c_\Lambda|^2<\infty.
$$
Moreover, this norm introduces a Hilbert space structure on $H^{2m}_{\mbf a}$, and the operators 
$L, L^\dagger: H_{\mbf a}^{2m+2}\to H_{\mbf a}^{2m}$ are bounded in this norm. 

(ii) For any polynomial $P$ of two variables, $P(L,L^\dagger)$ makes sense as an unbounded essentially normal operator on $\mathcal{H}$ with domain $S_{\mbf a}$. In particular, if $P$ is real (i.e., $P(z,\overline z)$ is real for all complex $z$, or, equivalently, $P(y,x)=\overline{P}(x,y)$) then the operator $P(L,L^\dagger)$ is essentially self-adjoint on $S_{\mbf a}$.

(iii) For any $f_1,f_2\in H^{2m}_{\mbf a}$ and any polynomial $P$ of degree $m$, one has
$$(P(L,L^\dagger)f_1,f_2)=(f_1,\overline{P}(L^\dagger,L)f_2).$$ 

(iv) Let $P$ be an elliptic polynomial in two variables of degree $m$, i.e., its homogeneous leading part does not vanish for $z\ne 0$ (equivalently, the operator $P(L,L^\dagger)$ is elliptic outside singularities). 
Then the operator  $P(L,L^\dagger)$ is closed on $H^{2m}_{\mbf a}$.  

(v) $S_{\mbf a}$ is the space of elements of $\mathcal{H}$ with rapidly decaying Fourier coefficients with respect 
to the basis $\psi_\Lambda$, i.e., elements of the form $\sum_{\Lambda\in \Sigma} c_\Lambda\psi_\Lambda$, where 
$$\sum_{\Lambda\in \Sigma} |\Lambda|^m |c_\Lambda|^2<\infty$$ for any $m\ge 0$. 

(vi) $S_{\mbf a}$ is the space of smooth vectors for the unitary representation $\rho$ of $\mbb R^2$ defined by the operators $A={\rm Re}L,B={\rm Im}L$, 
$\rho(t,s)=e^{i(tA+sB)}$.  
\end{corollary}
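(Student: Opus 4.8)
The statement to prove is Corollary \ref{coro1}, which gives the spectral description of the global generalized Sobolev spaces $H^{2m}_{\mbf a}$ and Schwartz space $S_{\mbf a}$ attached to the Darboux operator $L$. This is an immediate consequence of Theorem \ref{mainthe} together with the structural results on $H^{2m}_{\mbf a}$ and $S_{\mbf a}$ from Section 13. Let me plan out the argument.

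The plan is as follows. By Theorem \ref{mainthe}, the normalized joint eigenfunctions $\psi_\Lambda$, $\Lambda\in\Sigma$, form an orthonormal basis of $\mcH$, with $L\psi_\Lambda=\Lambda\psi_\Lambda$ and $L^\dagger\psi_\Lambda=\overline\Lambda\psi_\Lambda$. For part (i), I would first recall from Proposition \ref{chara}(iv) that $f\in H^{2m}_{\mbf a}$ if and only if $P(L,L^\dagger)f\in\mcH$ for every polynomial $P$ of degree $\le m$, equivalently (by Proposition \ref{chara} and the definition) $L^jf\in\mcH$ for $0\le j\le m$. Expanding $f=\sum_\Lambda c_\Lambda\psi_\Lambda$, the operator $L^j$ acts diagonally, so $L^jf=\sum_\Lambda\Lambda^j c_\Lambda\psi_\Lambda$ lies in $\mcH$ iff $\sum_\Lambda|\Lambda|^{2j}|c_\Lambda|^2<\infty$; combining over $0\le j\le m$ gives the stated finiteness condition $\sum_\Lambda(1+|\Lambda|^{2m})|c_\Lambda|^2<\infty$ (using that $1+|\Lambda|^{2m}$ is comparable to $\sum_{j=0}^m|\Lambda|^{2j}$). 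The Hilbert space structure is then manifest from this weighted $\ell^2$ description, and boundedness of $L,L^\dagger\colon H^{2m+2}_{\mbf a}\to H^{2m}_{\mbf a}$ follows because multiplication by $\Lambda$ (resp. $\overline\Lambda$) sends the weight $(1+|\Lambda|^{2m+2})$-space into the weight $(1+|\Lambda|^{2m})$-space with norm $\le 1$.

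For part (ii), essential normality of $P(L,L^\dagger)$ on $S_{\mbf a}$ follows from Proposition \ref{essno} (essential normality of $L$) together with Proposition \ref{esa1}(i) applied to $\mcA=\mbb C[L,L^\dagger]$, giving that $\mcA_{\mbb R}$ is essentially self-adjoint; Proposition \ref{spectrumexists} then yields the diagonalization and hence essential normality of every $P(L,L^\dagger)$, with the real case giving essential self-adjointness. Part (iii) is integration by parts: it suffices to check it on $\psi_\Lambda$, where $P(L,L^\dagger)\psi_\Lambda=P(\Lambda,\overline\Lambda)\psi_\Lambda$ and $\overline P(L^\dagger,L)\psi_\Lambda=\overline{P(\Lambda,\overline\Lambda)}\psi_\Lambda$, so both sides equal $P(\Lambda,\overline\Lambda)\langle\psi_\Lambda,\psi_{\Lambda'}\rangle$; one extends by bilinearity and a density/continuity argument using part (i). Alternatively one cites Proposition \ref{intparts} iterated. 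Part (iv): an elliptic $P$ of degree $m$ has $|P(\Lambda,\overline\Lambda)|\ge c|\Lambda|^m - C$, so the graph norm of $P(L,L^\dagger)$ on the weighted-$\ell^2$ space is equivalent to the $H^{2m}_{\mbf a}$ norm, whence the operator is closed on $H^{2m}_{\mbf a}$ (the domain is complete in the graph norm). Part (v) is just $S_{\mbf a}=\cap_m H^{2m}_{\mbf a}$ rewritten via part (i) as the rapid-decay condition on Fourier coefficients. Part (vi) follows from Remark \ref{remsa}(4) / Nelson's theorem: $A={\rm Re}L$, $B={\rm Im}L$ define a unitary $\mbb R^2$-representation, and by the general theory (e.g. the Garding/smooth vectors description cited there, or directly: smooth vectors are exactly those on which all $A^jB^k$ act boundedly, i.e. all $P(L,L^\dagger)$ act, i.e. $S(\mcA)=S_{\mbf a}$).

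I do not expect a serious obstacle here: every part reduces to transporting the spectral decomposition of Theorem \ref{mainthe} through the already-established characterizations of $H^{2m}_{\mbf a}$ and $S_{\mbf a}$ (Proposition \ref{chara}, Proposition \ref{asymex}) and standard functional analysis. The one point requiring a little care is part (vi), where identifying $S_{\mbf a}$ with the space of $C^\infty$-vectors of the $\mbb R^2$-action should be done by noting that a vector is smooth iff it lies in the domain of every polynomial in the generators $A,B$ of the representation, which by parts (i)--(ii) is exactly $\cap_m H^{2m}_{\mbf a}=S_{\mbf a}$; one should invoke the cited result (\cite{NS} and Remark \ref{remsa}(4)) rather than reprove it. Another mild subtlety is justifying the closedness in (iv) and the Hilbert-space completeness in (i), both of which amount to the observation that a weighted $\ell^2$-space is complete, so these are routine.
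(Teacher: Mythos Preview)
Your proposal is correct and follows essentially the same approach as the paper: the paper's proof simply says that (i) and (ii) are immediate from Theorem \ref{mainthe}, (iii) follows from (ii), (v) from (i), (iv) follows from (i) in a standard way, and (vi) follows from Corollary 9.3 in \cite{N}. The only minor difference is that for (vi) you invoke \cite{NS} and Remark \ref{remsa}(4), whereas the paper cites Nelson's Corollary 9.3 directly; both amount to the same identification of smooth vectors with $\bigcap_m H^{2m}_{\mbf a}$.
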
 

\begin{proof} (i),(ii) are is immediate from Theorem \ref{mainthe}. 
Part (iii) then follows from (ii), and part (v) from (i). Part (iv) follows from (i) in a standard way. 
Finally, (vi) follows from Corollary 9.3 in \cite{N}. 
\end{proof} 

\begin{remark} We can extend the definition of the generalized Sobolev spaces $H^k_{\mbf a}$ to the case when $k\in \mbb R_+$ is not necessarily an even integer. Namely, $H^k_{\mbf a}$ is the Hilbert space of $f\in \mathcal{H}$ such that 
$$
||f||_{H^k_{\mbf a}}^2:=\sum_{\Lambda\in \Sigma} (1+|\Lambda|^k) |c_\Lambda|^2<\infty.
$$
It is easy to see that $H^k_{\mbf a}\subset H^\ell_{\mbf a}$ if 
$k\ge \ell$ and that the operators $L,L^\dagger: H^{k+2}_{\mbf a}\to H^{k}_{\mbf a}$ 
are bounded. 
\end{remark} 

\section{Remarks} 

\begin{remark}\label{remnonsa} The proof of Proposition \ref{selfadex} shows that, unlike the case of elliptic operators, 
the operator $D$ is {\bf not} essentially self-adjoint on the space $\wt V=C^\infty(\mbb P^1,O(s)\otimes \overline{O(-2-s)})$. In fact, this proof shows that the domain $\wt V^\vee_D$ of the adjoint operator for $(D,\wt V)$ is $\wt H_{\mbf a}^4$, on which $D$ is not symmetric. More precisely, $(\wt H_{\mbf a}^4)^\perp\subset \wt H_{\mbf a}^4$ is the space of 
$f\in \wt H_{\mbf a}^4$ such that $C_{1p}(f)=C_{2p}(f)=0$. Thus $\wt H_{\mbf a}^4/(\wt H_{\mbf a}^4)^\perp$ has dimension $8$ and carries a hyperbolic skew-Hermitian form of rank $8$, given by 
$$
\omega(f,g)=(Df,g)-(f,Dg),
$$ 
which is proportional to $\sum_p (C_{1p}(f)\overline{C_{2p}(g)}-C_{2p}(f)\overline{C_{1p}(g)})$. The subspace $H_{\mbf a}^4/(\wt H_{\mbf a}^4)^\perp\subset \wt H_{\mbf a}^4/(\wt H_{\mbf a}^4)^\perp$ is Lagrangian 
with respect to this form. 

2. This shows that $(D,\wt V)$ in fact has many self-adjoint extensions, and the extension $(D,H_{\mbf a}^4)$ 
considered above is just one of them. Namely, as explained in Subsection \ref{usa}, such extensions are parametrized by  
maximal isotropic subspaces $Y\subset \wt H_{\mbf a}^4$, i.e., by the Lagrangian Grassmannian of the 8-dimensional hyperbolic skew-Hermitian space $\wt H_{\mbf a}^4/(\wt H_{\mbf a}^4)^\perp$. However, none of these subspaces except $H_{\mbf a}^4$ contain a common invariant domain for $L,L^\dagger$, so Nelson's Theorem \ref{ne1} does not apply to them. In fact, the maximal isotropic subspace $H_{\mbf a}^4$ is the only one giving rise to a unitary representation of $\mbb R^2$, $\rho(t,s)=e^{i(tA+sB)}$. Indeed, if $W$ is another such subspace then the above argument shows that the corresponding unitary representation has discrete spectrum (i.e., is a direct sum of 1-dimensional representations). But then the eigenfunctions form a basis of $\mathcal H$, which has to be the same as for $H_{\mbf a}^4$. Thus $W=H_{\mbf a}^4$. 

3. It may be shown that the self-adjoint extension of $D$ constructed above is the Friedrichs extension (cf. \cite{AG}). 
\end{remark}

\begin{remark} The properties of the generalized Schwartz space asserted in Corollary \ref{coro1}, in particular the symmetry of $P(L,L^\dagger)$ on $S_{\mbf a}$, are nontrivial and not at all automatic. For example, let $\mbf a=0$ and consider a similar space $\widetilde{S}$ of $f$ such that $D^nf\in \mathcal{H}$ as distributions for any $n\ge 0$. Then $\widetilde{S}$ contains functions which near singularities behave like $C_2\log^2|z|+C_1\log|z|+...$ for any $C_1,C_2$ (as we no longer require that $Lf\in \mathcal{H}$). Thus $D$ is not symmetric on $\widetilde{S}$ and an analog of Corollary \ref{coro1} fails. This can also be seen from Theorem \ref{vn}. 
\end{remark} 

\begin{remark} Self-adjoint elliptic differential operators $A$ of order $r$ on a compact $d$-dimensional manifold $X$ satisfy the {\it Weyl law}, which says that the number of eigenvalues of $A$ in the interval $[Na,Nb]$ behaves as 
$(2\pi)^{-n}{\rm Vol}(\sigma(A)\in [a,b])N^{d/r}$ as $N\to \infty$, where $\sigma(A)$ is the symbol of $A$, and ${\rm Vol}(\sigma(A)\in [a,b])$ is the symplectic volume of the subset in $T^*X$ defined by the condition $\sigma(A)\in [a,b]$. We expect that there is a similar Weyl law in our situation. Namely, we expect that for any region $R$ in $\mbb C$ (say, with piecewise smooth boundary), we have 
$$
|\Sigma\cap NR|\sim (2\pi)^{-n}{\rm Vol}(\sigma(L)\in R)N.
$$
Note that the volume on the right hand side is finite, which corresponds to the fact that the spectrum is discrete. 
Also in the case when $a\in [0,1]$ and $\mbf a=0$ (considered in \cite{T}), this suggests that $L$ should have many other eigenvalues with real monodromy than the real ones considered in \cite{T}; indeed, the number of these real eigenvalues with absolute value $\le N$ behaves like $\sqrt{N}$ rather than $N$, as they are given by a Sturm-Liouville problem for a second order differential operator in one real variable. 
This agrees with the discussion of Subsection \ref{takh}. 
\end{remark} 

\section{Degenerate cases} 

\subsection{The reducible case} 

Let us now discuss the degenerate cases (a) and (b) of Darboux operators (as in Proposition \ref{da1}), 
assuming as before that $s+1\in i\mbb R$. In case (a), we have 
$$
L=(z\partial+1/2)^2+c(z\partial+1/2),\ c\in \mbb C, 
$$ 
so the spectral theory of $L$ reduces to the spectral theory of the operator $z\partial+1/2$, 
which is very easy to describe. Namely, $z\partial +1/2$ is an unbounded normal operator 
initially defined on the dense domain $C^\infty(\mbb P^1,O(s)\otimes \overline{O(-2-s)})$, 
whose real and imaginary part are essentially self-adjoint with commuting spectral resolutions and 
describe the derivative of the natural unitary action of $\mbb C^{\times}$ on the Hilbert space
$\mathcal{H}$. This operator has continuous spectrum, with generalized 
eigensections $|z|^{-1+2i\gamma}e^{in{\rm arg}(z)}$ (written as ordinary functions on the affine chart $\mbb C^{\times}\subset \overline{\mbb C}=\mbb P^1$, using the invariant nonzero section of $O(s)\otimes \overline{O(-2-s)}$ over $\mbb C^{\times}$ to trivialize this bundle) and eigenvalues $\lambda=\frac{n}{2}+i\gamma$, with $\gamma\in \mbb R$ and $n\in \mbb Z$. Thus the spectrum of $L$ consists of numbers $\Lambda=(\frac{n}{2}+i\gamma)^2+c(\frac{n}{2}+i\gamma)$, and is independent of $s$. Moreover, the spectral multiplicity is $2$ since we have two generalized eigenfunctions for each spectral point. 

We note that the corresponding Schwartz space $S$, i.e., the space of $f\in {\mathcal H}$ such that 
$P(L,L^\dagger)f\in {\mathcal H}$ for any polynomial $P$ in this case is larger 
than the space $C^\infty(\mbb P^1,O(s)\otimes \overline{O(-2-s)})$. Namely, it is the space of smooth functions 
on $\mbb C^{\times}$ whose all derivatives in the coordinates $\rho=\log|z|$ and $\theta={\rm arg}z$ 
are in $L^2(\mbb C^{\times},|dz/z|)$ (the Sobolev space $H^\infty(\mbb C^{\times})$ of infinite order).

\subsection{The trigonometric case}
The trigonometric case (b) of Proposition \ref{da1} is more interesting, as the monodromy is irreducible. 
In this case up to symmetries $L=\partial (z^2-1)\partial+(c_0z+c_1)\partial$, and the holomorphic eigenfunctions of $L$ are hypergeometric. Let $\alpha_\pm$ be the characteristic exponents of $L$ at the points $\pm 1$ 
(depending of $c_0,c_1$), and assume that they are imaginary. Similarly to the elliptic case, one may show 
that $L$ is an unbounded essentially normal operator on the Schwartz space $S_{\alpha_+,\alpha_-}$ of $f$ such that $P(L,L^\dagger)f\in \mathcal{H}$ for any polynomial $P$. 
The space $S_{\alpha_+,\alpha_-}$ consists of the sections $f$ which are smooth outside of $-1,1,\infty$, have the form 
$$
f(z)=g_\pm(z)\frac{(z\pm 1)^{2\alpha_\pm}-1}{2\alpha_\pm}+h_\pm(z)
$$
with smooth $g_\pm,h_\pm$ near $z=\pm 1$, and have derivatives of all orders in the coordinates $\rho=\log|z|$ and $\theta={\rm arg}z$ all $L^2_{\rm loc}(\mbb C^{\times},|dz/z|)$ near $z=\infty$. 
Thus, we have two singular points ($z=1,-1$) of the same type as in the elliptic case and one ($z=\infty$) 
of the same type as in the reducible case (a). 

Let us now study the spectrum of $L$. We give a sketch of the argument, assuming for simplicity that $\alpha_\pm=0$, i.e, $L=\partial (z^2-1)\partial$ (with $s=-1$). Then the monodromy operators $M_+$, $M_-$ of the equation $L\psi=\Lambda \psi$ at the points $1,-1$ are unipotent. Let $\Lambda=\mu^2-1/4$. Then the monodromy operator $M_\infty=(M_-M_+)^{-1}$ at infinity has eigenvalues $-e^{\pm 2\pi i\mu}$. Let $\beta=2+2\cos 2\pi \mu$, so that ${\rm Tr}M_\infty=2-\beta$. We can find a basis in which $M_-, M_+$ have the form 
$$
M_-=\left(\begin{matrix}1& i \\ 0 & 1\end{matrix}\right),\ M_+=\left(\begin{matrix}1& 0 \\ i\beta & 1\end{matrix}\right).
$$

Consider first the case when $\mu$ is real. Then it is easy to see that these matrices preserve the Hermitian form $z_1\overline{z_2}+z_2\overline{z_1}$. So if $f_1,f_2$ is the eigenbasis of $M_\infty$ in the space of local solutions such that $(f_1,f_1)=1$ and $(f_2,f_2)=-1$ then the function $|f_1|^2-|f_2|^2$ is a single-valued real eigenfunction. But since $f_1$ behaves as $z^{-1/2+\mu}$ and $f_2$ as 
$z^{-1/2-\mu}$, this function does {\it not} define a distribution on $S_{\alpha_+,\alpha_-}$ for real $\mu\ne 0$ (i.e., $0<\beta<4$), as it grows exponentially in the logarithmic coordinate $\rho=\log|z|$. 

On the other hand, if $\mu=\frac{n}{2}+i\gamma$ for integer $n$ and real $\gamma$ 
then the eigenvalues of $M_\infty$ are $\pm e^{2\pi \gamma}$ and $\pm e^{-2\pi \gamma}$, and $\beta=2\pm 2\cosh \gamma$ (so $\beta\le 0$ or $\beta\ge 4$). So in this case the single valued solution is $f_1\overline{f_2}+f_2\overline{f_1}$, and it gives a generalized eigenfunction of $L$ (a distribution on $S_{\alpha_+,\alpha_-}$). Hence $L$ has continuous spectrum $(\frac{n}{2}+i\gamma)^2$.  
Thus the spectrum is the same as in the reducible case (with $c=0$). However, unlike the reducible case, the spectral multiplicity is now $1$, as the monodromy of the equation $L\psi=\Lambda\psi$ is irreducible. 

\begin{remark} Thus we see that unlike the elliptic case (c), in the trigonometric case (b) a single-valued joint eigenfunction of $L,L^\dagger$ may {\it not} give rise to a spectral point.  
\end{remark}

\end{document}